\renewcommand{\thefootnote}{\dag}
\newcommand{\C}{\mathbb{C}}
\newcommand{\N}{\mathbb{N}}
\newcommand{\R}{\mathbb{R}}
\newcommand{\ca}{\mathcal{A}}
\newcommand{\cb}{\mathcal{B}}
\newcommand{\cc}{\mathcal{C}}
\newcommand{\ce}{\mathcal{E}}
\newcommand{\cg}{\mathcal{G}}
\newcommand{\ch}{\mathcal{H}}
\newcommand{\ck}{\mathcal{K}}
\newcommand{\cp}{\mathcal{P}}
\newcommand{\cR}{\mathcal{R}}
\def\rond{\mathscr}
\newcommand{\ra}{\rond{A}}
\newcommand{\rb}{\rond{B}}
\newcommand{\rc}{\rond{C}}
\newcommand{\re}{\rond{E}}
\newcommand{\rk}{\rond{K}}
\newcommand{\rr}{\rond{R}}
\newcommand{\rs}{\rond{S}}
\def\rmb{\mathrm{b}}
\def\rmc{\mathrm{c}}
\def\dd{\mathrm{d}}
\def\d{\mathrm{d}}
\def\e{\mathrm{e}}
\def\rmh{\mathrm{h}}
\def\rmi{\mathrm{i}}
\def\veps{\varepsilon}
\def\braket#1#2{\langle{#1}|{#2}\rangle}
\def\jap#1{\langle {#1} \rangle} 
\DeclareMathOperator*{\slim}{s-lim}
\DeclareMathOperator*{\mlim}{m-lim}
\def\cbu{\mathcal{C}_{\mathrm b}^{\mathrm u}}
\def\ess{\mathrm{ess}}
\def\nin{\notin}
\def\pprod{\textstyle \prod}
\def\ccup{\textstyle{\bigcup}}
\def\ccap{\textstyle{\bigcap}}
\long\def\symbolfootnote[#1]#2{\begingroup%
\def\thefootnote{\fnsymbol{footnote}}\footnote[#1]{#2}\endgroup}
\newtheorem{theorem}{Theorem}[section]
\newtheorem{lemma}[theorem]{Lemma}
\newtheorem{proposition}[theorem]{Proposition}
\newtheorem{corollary}[theorem]{Corollary}
\newtheorem{notations}[theorem]{Notations}
\newtheorem{definition}[theorem]{Definition}
\newtheorem{remark}[theorem]{Remark}
\newtheorem{example}[theorem]{Example}
\renewcommand{\hat}{\widehat}
\renewcommand{\tilde}{\widetilde}
\renewcommand{\bar}{\overline}
\newcommand{\RR}{\mathbb{R}}
\newcommand{\CC}{\mathbb{C}}
\renewcommand{\SS}{\mathbb{S}}
\def\maB{\mathcal{B}} 
\def\maC{\mathcal{C}}
\def\maF{\mathcal{F}}
\def\maK{\mathcal{K}}
\newcommand{\oX}{\overline{X}}
\newcommand{\oXY}{\overline{X/Y}}
\newcommand{\maCX}{\maC(\overline{X})}
\newcommand{\maCXY}{\maC(\overline{X/Y})}
\newcommand\ie{{i.\kern2pt e.\ }}
\let\oldtocsection=\tocsection
\let\oldtocsubsection=\tocsubsection
\let\oldtocsubsubsection=\tocsubsubsection
\renewcommand{\tocsection}[2]{\hspace{0em}\oldtocsection{#1}{#2}}
\renewcommand{\tocsubsection}[2]{\hspace{2em}\oldtocsubsection{#1}{#2}}
\renewcommand{\tocsubsubsection}[2]{\hspace{3em}\oldtocsubsubsection{#1}{#2}}
\begin{document}

\title[Essential Spectrum]
{On the Essential Spectrum of N-Body Hamiltonians with Asymptotically
  Homogeneous Interactions
}

\author[V. Georgescu]{Vladimir Georgescu}
     \address{V. Georgescu, D\'epartement de Math\'ematiques,
       Universit\'e de Cergy-Pontoise, 95000 Cergy-Pontoise, France}
     \email{vladimir.georgescu@math.cnrs.fr}

     \thanks{}

     \author[V. Nistor]{Victor Nistor} \address{ D\'epartement de
       Math\'ematiques, Universit\'{e} de Lorraine, 57045 METZ, France
       and Inst. Math. Romanian Acad.  PO BOX 1-764, 014700 Bucharest
       Romania} \email{victor.nistor@univ-lorraine.fr}

\thanks{ V.G. and V.N. have been partially supported by Labex MME-DII
  (ANR11-LBX-0023-01) and ANR-14-CE25-0012-01 respectively.
V.N. manuscripts available from 
{\bf http:{\scriptsize//}iecl.univ-lorraine.fr{\scriptsize /} 
$\tilde{}$ Victor.Nistor{\scriptsize /}}\\
\hspace*{3.7mm} 
AMS Subject classification (2010):   
81Q10 (Primary) 35P05, 47L65, 47L90, 58J40 (Secondary) 
}

\begin{abstract}
  We determine the essential spectrum of Hamiltonians with $N$-body
  type interactions that have radial limits at infinity.  This extends
  the HVZ-theorem, which treats perturbations of the Laplacian by
  potentials that tend to zero at infinity. Our proof involves
  $C^*$-algebra techniques that allows one to treat large classes of
  operators with local singularities and general behavior at
  infinity. In our case, the configuration space of the system is a
  finite dimensional, real vector space $X$, and we consider the
  algebra $\ce(X)$ of functions on $X$ generated by functions of the
  form $v\circ\pi_Y$, where $Y$ runs over the set of all linear
  subspaces of $X$, $\pi_Y$ is the projection of $X$ onto the quotient
  $X/Y$, and $v:X/Y\to\C$ is a continuous function that has uniform
  radial limits at infinity. The group $X$ acts by translations on
  $\ce(X)$, and hence the crossed product $\re(X) := \ce(X)\rtimes X$
  is well defined; the Hamiltonians that are of interest to us are the
  self-adjoint operators affiliated to it.  We determine the
  characters of $\ce(X)$. This then allows us to describe the quotient
  of $\re(X)$ with respect to the ideal of compact operators, which in
  turn gives a formula for the essential spectrum of any self-adjoint
  operator affiliated to $\re(X)$.
\end{abstract}

\maketitle 
\tableofcontents

\section{Introduction}\label{s:intro}

\subsection{}

Let $X$ be a real, finite dimensional vector space and let $X^*$
denote its dual.  If $Y \subset X$ is a subspace, $\pi_{Y} : X \to
X/Y$ will denote the canonical projection.  Let $\ce(X)$ be the
closure in norm of the algebra of functions on $X$ generated by all
functions of the form $u\circ\pi_Y$, where $Y$ runs over the set of
all linear subspaces of $X$ and $u:X/Y\to\C$ runs over the set of
continuous functions that have uniform radial limits at
infinity. Since $X$ acts continuously by translations on $\ce(X)$, we
can define the crossed product $C^{*}$-algebra $\re(X) :=
\ce(X)\rtimes X$, which will be regarded as an algebra of operators on
$L^2(X)$.  Our main result on essential spectra gives a description of
the essential spectrum of any self-adjoint operator $H$ on $L^2(X)$
that is affiliated to $\re(X)$, i.e.\ such that
$(H+\rmi)^{-1}\in\re(X)$. To state this result, we need to first
introduce some notation. Thus, for $x\in X$, we let $T_x$ denote the
translation operator on $L^2(X)$, defined by $(T_x f)(y) := f(y-x)$.
Let $\SS_X$ be the set of half-lines in $X$, that is
\begin{equation}\label{eq:defSX}
 \SS_X \, := \, \{\, \hat{a},\ a\in X, a \neq 0 \,\} \, 
\quad\text{where}\quad \hat{a} := \{ra, \, r>0\}. 
\end{equation}
We let $\overline{\cup} S_\alpha$ denote the {\em closure of the
  union} of a family of sets $S_\alpha$.

\begin{theorem}\label{th:ess-last-intro} 
  If $H$ is a self-adjoint operator affiliated to $\re(X)$, then for
  each $a\in \alpha\in\SS_X$, the limit $\tau_{\alpha}(H) := \alpha.H
  := \slim_{r\to+\infty} T_{ra}^* H T_{ra}$ exists and $\sigma_\ess(H)
  \, = \, \overline{\cup}_{\alpha\in\SS_X} \sigma(\alpha.H)$.
\end{theorem}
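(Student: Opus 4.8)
The plan is to derive Theorem~\ref{th:ess-last-intro} from a structural analysis of the crossed product $\re(X) = \ce(X)\rtimes X$, following the by-now-standard route in the $C^*$-algebraic approach to $N$-body spectral theory. The key principle is that for a self-adjoint $H$ affiliated to a $C^*$-algebra $\re \subset \mathcal{B}(L^2(X))$ that contains the compacts $\mathcal{K}$ as an ideal, one has $\sigma_\ess(H) = \sigma(\rho(H))$, where $\rho : \re \to \re/\mathcal{K}$ is the quotient map (applied to the affiliated observable $H$). So the first step is to recall (from the earlier development in the paper) that $\mathcal{K} \subset \re(X)$ is an ideal, and that the quotient $\re(X)/\mathcal{K}$ admits a faithful representation as a direct sum (or subdirect product) of crossed products indexed by the relevant set of characters/limit directions. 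Concretely, the algebra $\ce(X)$ has been shown to have a spectrum whose "boundary part'' is parametrized by the half-lines $\alpha \in \SS_X$ together with a point on the corresponding "sphere at infinity,'' and translation-at-infinity along $a \in \alpha$ produces a well-defined localization morphism.

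The second step is to construct, for each $\alpha \in \SS_X$ with representative $a$, the asymptotic morphism $\tau_\alpha$. One defines $\tau_\alpha(S) := \slim_{r\to+\infty} T_{ra}^* S T_{ra}$ first on a dense subalgebra of $\re(X)$ — say on elements of the form $\varphi(Q)\psi(P)$ with $\varphi \in \ce(X)$ and $\psi \in \mathcal{C}_0(X^*)$, or on the algebraic crossed product — where the limit can be computed explicitly: the limit of $T_{ra}^* \varphi(Q) T_{ra}$ is $\varphi_\alpha(Q)$, the translate-at-infinity of $\varphi$ in the direction $a$, which exists precisely because the generators $v \circ \pi_Y$ have uniform radial limits. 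One checks that $\tau_\alpha$ is a $*$-homomorphism on this dense subalgebra that is contractive, hence extends to $\re(X)$; that $\tau_\alpha$ kills $\mathcal{K}$ (since compact operators are shifted off to infinity in the strong topology); and that the combined map $\bigoplus_\alpha \tau_\alpha$ is faithful on $\re(X)/\mathcal{K}$ — this last is the crucial structural input, and I expect it to be identified with, or deduced from, the description of $\mathrm{Prim}(\re(X))$ obtained via the character computation announced in the abstract. Once $\tau_\alpha$ is defined on $\re(X)$ and is a nondegenerate morphism, it extends to affiliated observables, giving $\alpha.H := \tau_\alpha(H)$, and the strong-resolvent limit $T_{ra}^* H T_{ra} \to \alpha.H$ follows by applying $\tau_\alpha$ to $(H+\rmi)^{-1}$.

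The third step assembles the spectrum. By the faithfulness of $\bigoplus_\alpha \tau_\alpha$ on $\re(X)/\mathcal{K}$, we get $\sigma(\rho(H)) = \overline{\bigcup_\alpha \sigma(\tau_\alpha(H))}$, the closure being needed because $\SS_X$ is infinite and the union of the spectra need not be closed; combined with $\sigma_\ess(H) = \sigma(\rho(H))$ this yields the stated formula $\sigma_\ess(H) = \overline{\cup}_{\alpha\in\SS_X}\,\sigma(\alpha.H)$. One should also verify the independence of $\tau_\alpha(H)$ on the choice of representative $a \in \alpha$ (clear, since $ra$ and $r'a$ run through the same half-line as $r,r' \to \infty$, up to reparametrization) so that $\tau_\alpha$ is well defined on half-lines.

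**Main obstacle.** The genuinely hard part is establishing that $\bigoplus_{\alpha\in\SS_X}\tau_\alpha$ is \emph{faithful} modulo the compacts — equivalently, that every irreducible representation of $\re(X)$ not containing $\mathcal{K}$ factors through some $\tau_\alpha$. This requires the full force of the computation of the characters of $\ce(X)$ and the resulting description of the ideal structure of the crossed product: one must show that the "non-compact part'' of $\mathrm{Prim}(\re(X))$ is exhausted by the localizations at the half-lines, and that no spectrum is lost when passing to the closure of the union. The explicit recursive lattice structure of the subspaces $Y$ and the way their radial limits at infinity interact (the algebra $\ce(X)$ being generated by all the $v\circ\pi_Y$ simultaneously) is exactly what makes this bookkeeping delicate; the rest of the argument is the standard crossed-product/affiliation machinery.
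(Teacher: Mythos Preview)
Your proposal is correct and follows essentially the same route as the paper: construct $\tau_\alpha$ on generators $u(q)v(p)$, extend to $\re(X)$, and deduce the essential spectrum formula from the injectivity of $\bigoplus_{\alpha\in\SS_X}\tau_\alpha$ on $\re(X)/\rk(X)$ (the paper's Theorem~\ref{th:recomp} and Corollary~\ref{co:quotient}). The one point worth sharpening is the mechanism for faithfulness: the paper does not show that every non-compact primitive ideal factors through some $\tau_\alpha$ directly, but rather computes all boundary characters of $\ce(X)$ as $\chi_{a,\overrightarrow\alpha}$ with $\overrightarrow\alpha=(\alpha_1,\dots,\alpha_n)$ nonempty, observes that the associated translation morphism $\tau_{a,\overrightarrow\alpha}=\tau_a\tau_{\alpha_n}\cdots\tau_{\alpha_1}$ is a composition beginning with $\tau_{\alpha_1}$, and then invokes the general compactness criterion (Theorem~\ref{th:infty}) to conclude that $\tau_\alpha(A)=0$ for all $\alpha\in\SS_X$ forces $\tau_\chi(A)=0$ for every boundary $\chi$, hence $A$ compact.
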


For the proof, see Subsection \ref{ss:ham} (Theorem
\ref{th:ess-last}). The meaning of the limit above is discussed in
Remark \ref{re:limit}. Here we note only that it is slightly more
general than the strong resolvent limit since the $\alpha.H$ could be
not densely defined, cf.\ Remark \ref{re:pathology}.

Theorem \ref{th:ess-last-intro} is a consequence of Theorem
\ref{th:recomp} that is our main technical result since it gives a
description of the quotient $C^*$-algebra of $\re(X)$ with respect to
the ideal of compact operators, cf.  Corollary \ref{co:quotient}.
More precisely, let $[\alpha]$ denote the one dimensional linear
subspace generated by $\alpha \in \SS_X$ and let $\ce(X/[\alpha])$ be
the subalgebra of $\ce(X)$ generated by the functions $u\circ\pi_Y$
with $Y\supset\alpha$ and $u$ as before.  Then $\ce(X/[\alpha])$ is
stable under translations, so the crossed product
$\ce(X/[\alpha])\rtimes X$ is well defined, and we have a canonical
embedding
\begin{equation}\label{eq:quotient-intro}
  \re(X)/\rk(X) \, \hookrightarrow \, \pprod_{\alpha\in\SS_X} \,
  \ce(X/[\alpha]) \rtimes X
\end{equation} 
defined as follows. For any $A\in\re(X)$ the limit
$\slim_{r\to+\infty} T_{ra}^*AT_{ra}=: \tau_\alpha(A)$ exists, the map
$\tau_\alpha$ is a $*$-algebra morphism and a linear projection of
$\re(X)$ onto its subalgebra $\re(X/[\alpha])$, and an operator $A
\in\re(X)$ is compact if, and only if, $\tau_\alpha(A)=0$ for all
$\alpha\in\SS_X$.  Then the injective morphism
\eqref{eq:quotient-intro} is induced by the map $\tau(A) :=
\big(\tau_\alpha(A) \big)_{\alpha\in\SS_X}$.

\subsection{}\label{ss:1.2}

In the next few subsections we give some concrete examples of
self-adjoint operators on $L^2(X)$ affiliated to $\re(X)$.

We recall first some facts concerning the spherical compactification
of $X$ (see Section \ref{s:sphcomp}).  The set $\SS_X$ is thought of
as the sphere at infinity of $X$ and $\oX:=X\cup\SS_X$, equipped with
a certain compact space topology, is the spherical compactification of
$X$.  If $f$ is a complex valued function on $X$ and $\alpha\in\SS_X$,
then $\lim_{x\to\alpha} f(x)=c$ (or $\lim_\alpha f=c$) in the sense of
the topology of $\oX$ means the following: ``for any $\varepsilon>0$
there is an open truncated cone $C$ such that $\alpha$ is eventually%
\footnote{ A subset of $X$ is a cone if it is a union of half-lines. A
  truncated cone $C$ is the intersection of a cone with the complement
  of a bounded set. A half-line $\alpha$ is \emph{eventually} in such
  a $C$ if there is $a\in\alpha$ such that $ra\in C\ \forall r>1$.}
in $C$ and $|f(x)-c|<\varepsilon$ if $x\in C$.''  Functions $f$
  with values in an arbitrary topological spaces are treated in
  exactly the same way.

The algebra $\cc(\oX)$ of continuous functions on $\oX$ can be
identified with the set of continuous functions $u$ on $X$ such that
$\lim_\alpha u$ exists for all $\alpha\in\SS_X$ (this is equivalent to
the existence of the uniform radial limits at infinity).  Then we can
regard $\cc(\oXY)$ as an algebra of continuous functions on $X/Y$.
Indeed, when there is no danger of confusion, we will identify a
function $u$ on $X/Y$ with the function $u\circ\pi_Y$ on $X$. Thus, we
shall think of $\cc(\oXY)$ as an algebra of continuous functions on
$X$. Then $\ce(X)$ is the $C^*$-algebra generated by these algebras
when $Y$ runs over the set of all subspaces of $X$.

The following notion of \emph{convergence in the mean} \label{p:mean}
at points $\alpha\in\SS_X$ is natural in our context (see Section
\ref{s:sphcomp}). If $u$ is a complex function on $X$ and $c$ is a
complex number then we write $\mlim_\alpha u=c$, or
$\mlim_{x\to\alpha} u(x)=c$, if $
\lim_{a\to\alpha}\int_{a+\Lambda}|u(x)-c|\dd x=0$ for some (hence any)
compact neighborhood of the origin $\Lambda$ in $X$. This also makes
sense if $u\in L^1_{\mathrm{loc}}(X)$.

Let $\cb(\oX)$ be the set of functions $u\in L^\infty(X)$ such that
$\mlim_\alpha u$ exists for any $\alpha\in\SS_X$. This is clearly a
$C^*$-subalgebra of $L^\infty(X)$. Then $\cb(\oXY)$ is well defined
for any subspace $Y\subset X$ and we have an obvious $C^*$-algebra
embedding $\cb(\oXY)\subset L^\infty(X)$.

Finally, let $\ce^\sharp(X)\subset L^\infty(X)$ be the
$C^*$-subalgebra generated by the algebras $\cb(\oXY)$ when $Y$ runs
over the set of all linear subspaces of $X$. The algebra
$\ce^\sharp(X)$ is, in some sense, a natural extension of $\ce(X)$,
cf.\ Proposition \ref{pr:bgx}.

\subsection{}

We now give the several examples of operators affiliated to
$\re(X)$. Recall that $X^*$ is the vector space dual to $X$.  First,
consider pseudo-differential operators of the form
\begin{equation}\label{eq:Hh}
 H \, = \, h(p) + v
\end{equation}
where $h: X^* \to \R$ is a continuous proper function and
$v\in\ce^\sharp(X)$ is a real function. Here 
\begin{equation}\label{eq.def.hP}
 h(p) \, := \, \maF^{-1} m_h \maF \,,
\end{equation}
where $\maF$ is a Fourier transform $L^2(X) \to L^2(X^*)$ and $m_h$
denotes the operator of multiplication by $h$.  Recall that a function
$h: X^* \to \RR$ is said to be \emph{proper} if $|h(k)|\to\infty$ for
$k\to\infty$. It is clear that the operator $H$ given by \eqref{eq:Hh}
is self-adjoint on the domain of $h(p)$, since $h(p)$ is self-adjoint
by spectral theory and $v$ is a bounded operator.

As a second example of affiliated operators, we consider differential
operators on $X=\R^n$ of the form
\begin{equation}\label{eq:L}
L \, = \, \sum_{|\mu|,|\nu|\leq m} p^\mu g_{\mu\nu}p^\nu
\end{equation} 
where $m\geq1$ is an integer and $g_{\mu\nu}\in\ce^\sharp(X)$. The
notations are standard: $p_j=-\rmi\partial_j$, where $\partial_j$ is
the derivative with respect to the $j$-th variable, and for
$\mu=(\mu_1,\dots,\mu_n)\in\N^n$ we set $p^\mu=p_1^{\mu_1}\dots
p_n^{\mu_n}$ and $|\mu|=\mu_1+\dots+\mu_n$.  For real $s$, let $\ch^s$
be the usual Sobolev space on $X$, in particular $\ch^0
\equiv\ch=L^2(X)$.  Then $L:\ch^m\to\ch^{-m}$ is a well defined
operator and we assume that there exist $\gamma,\delta>0$ such that
\begin{equation}\label{eq:L1}
  \braket{u}{Lu} + \gamma\|u\|^2 \, \geq \, \delta \|u\|_m^2\,, \quad
 \text{for all } \  u\in\ch^m. 
\end{equation}
Here $\|\cdot\|$ and $\|\cdot\|_m$ denote the usual norms on $\ch$ and
$\ch^m$.  Note that, since the $g_{\mu\nu}$ are bounded, this is a
condition only on the principal part of $L$ (i.e. the part
corresponding to $|\mu|=|\nu|=m$). Then $L+\gamma:\ch^m\to\ch^{-m}$ is
a symmetric isomorphism, and hence the restriction of $L$ to
$(L+\gamma)^{-1}\ch$ is a self-adjoint operator in $\ch$ that we will
denote by $H$: 
\begin{equation}\label{eq.second.H}
 H \, := \, L : (L+\gamma)^{-1}\ch \, \to \, \ch\,.
\end{equation}

\begin{theorem}\label{th:intro1} 
  Both operators $H$ defined above in Equations \eqref{eq:L} and 
  \eqref{eq.second.H} are affiliated to $\re(X)$.
\end{theorem}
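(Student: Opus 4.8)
The statement asserts affiliation of two families of operators to $\re(X) = \ce(X) \rtimes X$. The natural strategy is to pass through the larger algebra $\ce^\sharp(X) \rtimes X$: by Proposition \ref{pr:bgx} (which I am told relates $\ce^\sharp(X)$ to $\ce(X)$), it should suffice to show $(H + \rmi\lambda)^{-1} \in \ce^\sharp(X) \rtimes X$ for $\lambda$ large, since the resolvent of a self-adjoint operator generates the same affiliation class and the crossed product by the larger coefficient algebra reduces to the crossed product by $\ce(X)$ after verifying the resolvent actually lands in the smaller algebra. The key structural fact I would use is the standard description of $\cb \rtimes X$ for a translation-invariant $C^*$-subalgebra $\cb \subset L^\infty(X)$ (here $\cb = \ce^\sharp(X)$ or its constituents $\cb(\oXY)$): an operator belongs to $\cb \rtimes X$ iff it lies in the norm-closed span of operators of the form $\varphi(Q)\,\psi(P)$ with $\varphi \in \cb$ and $\psi \in \cc_0(X^*)$, equivalently, it is a norm-limit of "pseudodifferential" combinations with symbols whose position-dependence lies in $\cb$. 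Granting this, affiliation reduces to a resolvent expansion together with a commutator/ellipticity estimate.

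\emph{First example, $H = h(p) + v$.} Here $h: X^* \to \R$ is continuous and proper, $v \in \ce^\sharp(X)$ real. Write $(H + \rmi)^{-1} = (h(p) + \rmi)^{-1}\bigl(1 + v(h(p)+\rmi)^{-1}\bigr)^{-1}$. Since $h$ is proper, $(h(p)+\rmi)^{-1} = (\maF^{-1}(h + \rmi)^{-1}\maF)$ is the operator of Fourier multiplication by a function in $\cc_0(X^*)$, hence $(h(p)+\rmi)^{-1} \in \cc_0(X^*) \rtimes \{e\} \subset \cb \rtimes X$ for any translation-invariant $\cb$ (translation multipliers are in every such crossed product). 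Since $v \in \ce^\sharp(X) \subset L^\infty(X)$ acts by multiplication, $v \in \ce^\sharp(X) \rtimes X$. The algebra $\ce^\sharp(X) \rtimes X$ is norm-closed, so $1 + v(h(p)+\rmi)^{-1}$ lies in the unitization, and for the original $H$ (which is self-adjoint, so $-\rmi \notin \sigma(H)$) this element is invertible in the unitized algebra. Therefore $(H+\rmi)^{-1} \in \ce^\sharp(X) \rtimes X$. Finally invoke Proposition \ref{pr:bgx} (and the argument that affiliation to $\ce^\sharp(X)\rtimes X$ through a resolvent whose "symbol" decays in $p$ actually forces membership in $\ce(X) \rtimes X$, because $\cc_0(X^*)$ together with $\cb(\oXY)$-valued multiplications generate, after closure, exactly the crossed product built from $\ce(X)$) to conclude affiliation to $\re(X)$.

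\emph{Second example, the higher-order operator $L$.} The coercivity hypothesis \eqref{eq:L1} is the analogue of Gårding's inequality; it gives that $L + \gamma : \ch^m \to \ch^{-m}$ is an isomorphism and that $(H + \gamma)^{-1} : \ch \to \ch^m$ is bounded. I would first establish that $R := (H+\gamma)^{-1} = (L+\gamma)^{-1}|_{\ch}$ belongs to $\ce^\sharp(X) \rtimes X$. The cleanest route is a parametrix construction: let $L_0 = \sum_{|\mu|=|\nu|=m} p^\mu g_{\mu\nu} p^\nu + \gamma$ be (a coercive modification of) the principal part, compare with the constant-coefficient operator $\Lambda^{2m} + \gamma$ where $\Lambda = \jap{p}$, and show $(L + \gamma)^{-1} - (\Lambda^{2m}+\gamma)^{-1}$ is a norm-limit of operators $\varphi(Q)\psi(P)$ with $\varphi$ built from the $g_{\mu\nu} \in \ce^\sharp(X)$ and $\psi \in \cc_0(X^*)$; the lower-order terms $|\mu| + |\nu| < 2m$ only help (they are relatively compact-type perturbations bounded by $\Lambda^{2m-1}$). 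Concretely, expand $(L+\gamma)^{-1} = (\Lambda^{2m}+\gamma)^{-1} - (\Lambda^{2m}+\gamma)^{-1}(L - \Lambda^{2m})(L+\gamma)^{-1}$ and iterate/Neumann-sum, checking at each step that every factor lies in the norm-closed algebra generated by $\cc_0(X^*)$-multipliers in $p$ and $\ce^\sharp(X)$-multipliers in $Q$, which is contained in $\ce^\sharp(X) \rtimes X$ by the description recalled above. The commutators $[p^\mu, g_{\mu\nu}]$ do not leave $\ce^\sharp(X)$ in general (derivatives of $g_{\mu\nu}$ need not be bounded), so I would avoid differentiating the coefficients and instead keep everything in the "sandwiched" form $p^\mu g_{\mu\nu} p^\nu$, using only that $p^\mu(\Lambda^{2m}+\gamma)^{-1}p^\nu$ is bounded for $|\mu|+|\nu| \le 2m$ and that $p^\mu(\Lambda^{2m}+\gamma)^{-1}$ has symbol vanishing at infinity when $|\mu| < 2m$.

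\emph{Main obstacle.} The delicate point in both cases is the last reduction step: showing that affiliation to the \emph{auxiliary} algebra $\ce^\sharp(X) \rtimes X$ descends to affiliation to $\re(X) = \ce(X) \rtimes X$. This requires knowing that although $\ce^\sharp(X) \supsetneq \ce(X)$ as multiplication algebras, the resolvents we produce — being norm-limits of $\varphi(Q)\psi(P)$ with $\psi$ genuinely \emph{decaying} in momentum — automatically sit in the sub-crossed-product $\ce(X) \rtimes X$; this is precisely the content tied to Proposition \ref{pr:bgx} and the fact that $\cc_0(X^*)\cdot \ce^\sharp(X) \cdot \cc_0(X^*)$ closes up inside $\ce(X) \rtimes X$. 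In the second example there is the additional technical burden of handling the non-densely-defined issues and the Sobolev-space bookkeeping for the parametrix remainder, but these are routine once the algebraic framework is in place; they do not present a conceptual difficulty.
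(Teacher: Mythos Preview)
Your strategy is in the right spirit but has a genuine technical gap and diverges from the paper in a way worth noting.

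\textbf{The gap.} The object ``$\ce^\sharp(X)\rtimes X$'' is not a crossed product in the standard sense: the action of $X$ by translations on $\ce^\sharp(X)\subset L^\infty(X)$ is \emph{not} strongly (norm-) continuous, since $\ce^\sharp(X)$ contains discontinuous functions. So you cannot invoke ``the standard description of $\cb\rtimes X$ for $\cb\subset L^\infty(X)$'' as you do. The correct way to use Proposition~\ref{pr:bgx} is more direct and makes your ``descent'' step disappear: for $u\in\ce^\sharp(X)$ and $\psi\in\cc_0(X^*)$ one has $u(q)\psi(p)\in\re(X)$ outright. Indeed, approximate $\psi$ by compactly supported $\psi_n$ and write $u(q)\psi_n(p)=\big(u(q)\jap{p}^{-s}\big)\big(\jap{p}^{s}\psi_n(p)\big)$; the second factor is in $C^*(X)\subset\re(X)$, and the first is a norm limit of $u_k(q)\jap{p}^{-s}\in\re(X)$ with $u_k\in\ce(X)$, since Proposition~\ref{pr:bgx} gives $\|(u-u_k)(q)\|_{\ch^s\to\ch}\to0$. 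Once this is established, your resolvent identity $(H+\rmi)^{-1}=(h(p)+\rmi)^{-1}\big(1+v(q)(h(p)+\rmi)^{-1}\big)^{-1}$ and spectral invariance of $C^*$-subalgebras finish case one cleanly, with no auxiliary algebra needed.

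\textbf{Comparison with the paper.} The paper does not manipulate resolvents directly. Instead it invokes an abstract affiliation criterion (Theorem~\ref{th:recall}): if $H_0\geq0$ is strictly affiliated to $\rc$ and $V$ is a standard form perturbation with $\varphi(H_0)V(H_0+1)^{-1/2}\in\rc$ for all $\varphi\in\cc_\rmc(\R)$, then $H_0+V$ is affiliated to $\rc$. For case one this reduces to $\jap{p}^{-s}v(q)\in\re(X)$, which is exactly the consequence of Proposition~\ref{pr:bgx} above. For case two the paper takes $H_0=1+\sum_{|\mu|=m}p^{2\mu}$ and $V=L-H_0$; coercivity \eqref{eq:L1} gives the form bound, and the algebraic condition becomes $p^\mu H_0^{-1}g_{\mu\nu}\,p^\nu H_0^{-1/2}\in\re(X)$, which follows since $p^\nu H_0^{-1/2}$ is a multiplier of $\re(X)$ and $p^\mu H_0^{-1}g_{\mu\nu}(q)$ reduces to case one. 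Your parametrix/Neumann-series route for case two is in principle workable, but you leave the norm convergence of the iteration unaddressed (note $\|(L-\Lambda^{2m})(\Lambda^{2m}+\gamma)^{-1}\|$ need not be small), whereas the affiliation criterion packages this difficulty away.
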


We make some remarks in connection with the Theorem \ref{th:intro1}.
\begin{enumerate}

\item If $H$ is given by \eqref{eq:Hh} then the strong limit in
  Theorem \ref{th:ess-last-intro} exists in the usual sense of
  pointwise convergence on the domain of $H$. (Let us notice that in
  this case, the domain of $H$ is invariant for the action of
  $T_{ra}$.)  If $H$ is associated to the operator $L$ from
  \eqref{eq:L}, then the limit holds in the strong topology of
  $B(\ch^m,\ch^{-m})$.

\item The union that gives $\sigma_\ess(H)$ in Theorem
  \ref{th:ess-last-intro} may contain an infinite number of distinct
  terms even in simple $N$-body type cases. Indeed, an example can be
  obtained by choosing $X=\R^2$,
  $\mathcal Y$ to be a countable set of lines (whose union could be dense in
  $X$), and $H := \Delta+\sum_Y v_Y$ for some conveniently chosen
  $v_Y\in\cc_\rmc(X/Y)$ satisfying $\sum_Y\sup|v_Y|<\infty$.

\item The coefficients $g_{\mu\nu}$ in the principal part of $L$ are
  bounded Borel functions, and locally this cannot be improved.  But
  the other coefficients $g_{\mu\nu}$ and the potential $v$ are
  assumed bounded only for the sake of simplicity, see Remark
  \ref{re:unbcoeff} for more general results. Later on, we shall also treat
  unbounded, not necessarily local perturbations. See for example
  Theorems \ref{th:ess4-intro} and \ref{th:hconj-intro}.

\item We stated the applications of the abstract theorems in a way
  adapted to elliptic operators, but the extension to hypoelliptic
  operators is easy: it suffices to consider functions $h\in C^m$ with
  bounded derivatives of order $m$ and to replace the Sobolev spaces
  by spaces associated to weights of the form
  $\sum_{|\mu|\leq m} |h^{(\mu)}(k)|$.

\item Theorem \ref{th:intro1} and the other results of the same nature
  remain true, with essentially no change in the proof, if the space
  $L^2(X)$ is replaced by $L^2(X)\otimes E$ with $E$ a finite
  dimensional complex Hilbert space and $g_{\mu\nu}$ and $v$ are
  $\cb(E)$-valued functions.  For this it suffices to work with the
  algebra $\re(X)\otimes \cb(E)$ or the more general and natural
  object $\re(X)\otimes \ck(E)$ where $E$ can be an infinite
  dimensional Hilbert space. This covers matrix differential
  operators, e.g. the Dirac operator, which are not semi-bounded, and
  hence the general affiliation criterion Theorem \ref{th:recall0} has
  to be used.

\end{enumerate}

Operators of the form \eqref{eq:Hh} (and hence also Theorem
\ref{th:intro1} and its generalizations) cover many of the most
interesting (from a physical point of view) Hamiltonians of $N$-body
systems.  Here are two typical examples. First, in the
non-relativistic case, $X$ is equipped with a Euclidean structure and
a typical choice for $h$ is $h(\xi) = |\xi|^2$, which gives
$h(p) =\Delta$.  Second, in the case of $N$ relativistic particles of
spin zero and masses $m_1,\dots,m_N$, we take $X=(\R^3)^N$ and,
writing the momentum $p$ as $p=(p_1,\dots,p_N)$ where
$p_j=-\rmi\nabla_j$ acts in $L^2(\R^3)$, we have
$h(p)=\sum_{j=1}^N(p_j^2+m_j^2)^{1/2}$. We refer to
\cite{Derezinski-Gerard} for a thorough study of the spectral and
scattering theory of the non-relativistic $N$-body Hamiltonians with
$k$-body potentials that tend to zero at infinity.  We also note that
second order perturbations with an $N$-body structure of the
Laplacian, i.e.\ operators $L$ of second order with non trivial
$g_{\mu\nu}$ in the principal part, are of physical interest in the
context of pluristratified media \cite{DI}.

\subsection{}

The structure of the potential $v$ and of the coefficients $g_{\mu\nu}$ considered
\label{page:nbody}
above is more complicated than in the usual case of $N$-body hamiltonians because it
can contain products of the form $v_E\circ\pi_E\cdot v_F\circ\pi_F$,
which cannot be written as $v_G\circ\pi_G$ as in the usual $N$-body
case (here $E,F,G$ are subspaces of $X$ and in the usual $N$-body situation
one may take $G=E\cap F$).  If $v$ has a simpler structure, similar to
that of the standard $N$-body potentials, then Theorem \ref{th:intro1}
may be reformulated in a way that stresses the similarity with the
usual HVZ theorem.  Moreover, in this case we will be able to treat a
considerably more general class of nonlocal potentials $v_Y$ (see
Subsection \ref{ss:nonloc-intro}).

Assume that, for each subspace $Y \subset X$, a real function
$v_Y\in\cb(\oXY)$ is given such that $v_Y=0$ for all but a finite
number of subspaces $Y$ and let $v=\sum_Y v_Y\in\ce^\sharp(X)$ (recall
the identification $v_Y \equiv v_Y\circ\pi_Y$). If
$\alpha\not\subset Y$ then $\pi_Y(\alpha) \in\SS_{X/Y}$ is a well
defined half-line in the quotient and we may define
$ v_Y(\alpha)=\mlim_{\pi_Y(\alpha)} v_Y$ (see Subsection \ref{ss:1.2}
page \pageref{p:mean}).

\begin{proposition}\label{pr:intro2}
  For each $\alpha\in\SS_X$, let
\begin{equation}\label{eq:intro2}
  H_\alpha\, := \, h(p)
  + \sum_{Y\supset\alpha} v_Y +
  \sum_{Y\not\supset\alpha} v_Y(\alpha) \, .
\end{equation}
Then $\alpha.H = H_{\alpha}$ and hence
$\sigma_\ess(H) =\overline{\cup}_{\alpha\in \SS_{X}}
\sigma(H_{\alpha})$.
\end{proposition}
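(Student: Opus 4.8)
The plan is to derive Proposition~\ref{pr:intro2} as a concrete computation of the abstract limit $\tau_\alpha(H) = \alpha.H$ appearing in Theorem~\ref{th:ess-last-intro}, combined with the already established affiliation of $H$ to $\re(X)$ from Theorem~\ref{th:intro1}. Once one knows $H$ is affiliated to $\re(X)$, Theorem~\ref{th:ess-last-intro} immediately gives $\sigma_\ess(H) = \overline{\cup}_{\alpha\in\SS_X}\sigma(\alpha.H)$, so the entire content of the proposition is the identification $\alpha.H = H_\alpha$ with $H_\alpha$ as in \eqref{eq:intro2}. The strategy for that identification is to compute the strong limit $\slim_{r\to+\infty} T_{ra}^* H T_{ra}$ term by term, using the fact that $H = h(p) + v$ with $v = \sum_Y v_Y$ a \emph{finite} sum, so it suffices to handle $h(p)$ and each $v_Y$ separately.

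First I would observe that $h(p)$ is translation-invariant, so $T_{ra}^* h(p) T_{ra} = h(p)$ for every $r$, contributing $h(p)$ to the limit with no work. Next, for a fixed subspace $Y$ and the potential $v_Y \equiv v_Y\circ\pi_Y \in \cb(\oXY)$, I would split into two cases according to whether $\alpha \subset Y$ or not. If $\alpha \subset Y$, then $a \in Y$ and $\pi_Y(y - ra) = \pi_Y(y)$, so $T_{ra}^*(v_Y\circ\pi_Y)T_{ra} = v_Y\circ\pi_Y$ is literally constant in $r$ and contributes $v_Y$ to the limit — this is the first sum in \eqref{eq:intro2}. If $\alpha \not\subset Y$, then $\pi_Y(a) \neq 0$, so $\pi_Y(\alpha) \in \SS_{X/Y}$ is a genuine half-line in the quotient, and the translated potential $y \mapsto v_Y(\pi_Y(y) - r\pi_Y(a))$ is being evaluated further and further out along the ray $\pi_Y(\alpha)$; the definition of $\mlim_{\pi_Y(\alpha)} v_Y = v_Y(\alpha)$ together with the fact that $v_Y$ is defined mean-convergence (i.e. in $L^1_{\mathrm{loc}}$) should give that $T_{ra}^*(v_Y\circ\pi_Y)T_{ra} \to v_Y(\alpha)\cdot\mathrm{id}$ strongly on $L^2(X)$. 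Summing the finitely many contributions yields $\alpha.H = h(p) + \sum_{Y\supset\alpha} v_Y + \sum_{Y\not\supset\alpha} v_Y(\alpha) = H_\alpha$.

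The main obstacle, and the step deserving the most care, is the strong-convergence claim $T_{ra}^* m_{v_Y\circ\pi_Y} T_{ra} \to v_Y(\alpha)$ when $\alpha\not\subset Y$, because $v_Y$ is only assumed to lie in $\cb(\oXY) \subset L^\infty$ — that is, its radial limit exists only in the mean sense, not pointwise, so one cannot argue by naive pointwise convergence of multiplication operators. The right way is: it is enough to test strong convergence on a dense set, e.g.\ on $f \in \CIc(X)$, and to estimate $\|(m_{w_r} - v_Y(\alpha))f\|_{L^2}$ where $w_r(y) := v_Y(\pi_Y(y) - r\pi_Y(a))$; bounding this by $\sup_y\big(\|v_Y(\pi_Y(\cdot) - r\pi_Y(a)) - v_Y(\alpha)\|_{L^1(y + \Lambda)}\big)^{1/2}\|f\|_\infty^{1/2}\cdot(\text{support volume})^{1/2}$-type quantities and invoking the compact support of $f$ reduces everything to the mean-limit hypothesis on $v_Y$ along $\pi_Y(\alpha)$, uniformly over the compact projection of $\mathrm{supp}\,f$. (This is presumably exactly the kind of estimate already packaged in the lemmas of Section~\ref{s:sphcomp} on convergence in the mean, which I would cite rather than redo.) Finally, since $v_Y$ is real and the perturbation $v$ is bounded, all these operators are self-adjoint with common domain $\mathrm{dom}(h(p))$, which is $T_{ra}$-invariant, so the strong-resolvent limit coincides with the pointwise strong limit on the domain; combined with the term-by-term analysis this gives $\alpha.H = H_\alpha$, and the essential-spectrum formula follows from Theorem~\ref{th:ess-last-intro}.
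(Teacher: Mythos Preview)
Your approach is correct and is exactly the route the paper's machinery points to: affiliation via Theorem~\ref{th:intro1}, then Theorem~\ref{th:ess-last-intro} reduces everything to computing $\tau_\alpha$ on each summand, which is essentially Lemma~\ref{lemma.tau} extended from $\maCXY$ to $\cb(\oXY)$. Two minor remarks. First, a sign slip: $T_{ra}^* m_u T_{ra} = m_{u(\cdot + ra)}$, not $u(\cdot - ra)$; this matters only cosmetically since the relevant half-line is $\pi_Y(\alpha)$ either way. Second, for the step you flag as the main obstacle (strong convergence of the multiplication operator when $\alpha\not\subset Y$), the decomposition $\cb(\oXY)=\maCXY+\cb_0(X/Y)$ from \eqref{eq:bg} is cleaner than an ad hoc $L^1$ estimate: write $v_Y=v_Y^c+v_Y^0$, use Lemma~\ref{lemma.tau} (pointwise, hence strong, convergence to the constant $v_Y^c(\pi_Y(\alpha))$) for the continuous part, and for $v_Y^0\in\cb_0(X/Y)$ the strong convergence to $0$ follows directly from the definition of $\cb_0$ after testing on $\CIc(X)$, since the translated argument $\pi_Y(x)+r\pi_Y(a)$ goes to infinity in $X/Y$ and the filter $\widetilde{\pi_Y(\alpha)}$ is translation invariant.
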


\begin{remark}\label{re:usual}{\rm The usual $N$-body type
    Hamiltonians are characterized by the condition that all the
    $v_Y:X/Y\to\R$ vanish at infinity. Then we obtain
    $\alpha.H=h(P)+\sum_{Y\supset\alpha}v_Y$, so Proposition
    \ref{pr:intro2} becomes the usual version of the HVZ theorem.
  }\end{remark}

\subsection{}\label{ss:nonloc-intro}

We give now further examples of self-adjoint operators with nonlocal
and unbounded interaction affiliated to $\re(X)$.  For $k\in X^*$ let
$M_k$ be the multiplication operator defind by
$(M_k f)(x)=\e^{\rmi k(x)} f(x)$. Later we shall use the notation
$\braket{x}{k}:=k(x)$.

If the perturbation $V$ is bounded, then there are no restrictions on
$h$ besides being proper and continuous. Indeed, we have the 
following result.

\begin{theorem}\label{th:ess3-intro}
  Let $H=h(p)+V$, where $h:X^*\to\R$ is a continuous, proper
%
%
  function and $V=\sum_Y V_Y$ is a finite sum with $V_Y$ bounded
  symmetric linear operators on $L^2(X)$ satisfying:
\begin{enumerate}[(i)]
\item $\lim_{k\to0} \| [M_k, V_Y] \|=0$,
\item $[T_y,V_Y]=0$ for all $y\in Y$,
\item $\slim_{a\in X/Y,a\to\alpha} T_a^* V_Y T_a$ exists for each
  $\alpha\in\SS_{X/Y}$.
\end{enumerate}
Then $H$ is affiliated to $\re(X)$.
\end{theorem}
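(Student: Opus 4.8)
The plan is to verify the general affiliation criterion (Theorem~\ref{th:recall0}/\ref{th:recomp}, as invoked elsewhere in the excerpt) by showing that $(H+\rmi)^{-1} \in \re(X)$. Since $h$ is proper and continuous and $V$ is bounded and symmetric, $H = h(p) + V$ is self-adjoint on the domain of $h(p)$, so the resolvent exists; the issue is membership in the crossed product $\re(X) = \ce(X)\rtimes X$. First I would reduce to a statement about $V$ alone: because $h(p)$ is affiliated to $\co(X^*)\rtimes X \subset \re(X)$ (translations act trivially on multipliers by $h$, and $h$ proper gives $(h+\rmi)^{-1}\in\co(X^*)$), the resolvent expansion / a standard perturbative argument shows that $H$ is affiliated to $\re(X)$ as soon as $V$, regarded as an operator, lies in (the multiplier algebra of, or is a ``component of'') $\re(X)$ in an appropriate sense — more precisely, it suffices that $(h(p)+\rmi)^{-1} V (h(p)+\rmi)^{-1} \in \re(X)$, or that $V(h(p)+\rmi)^{-1}\in\re(X)$, together with self-adjointness. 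I expect the paper to have isolated such a criterion; I would cite it and thereby reduce the whole theorem to showing $V_Y (h(p)+\rmi)^{-1} \in \re(X)$ for each $Y$, and then sum.

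The heart of the matter is then to identify the crossed product $\re(X)$ intrinsically through conditions (i)--(iii), so that each $V_Y$ fits. Condition (i), $\lim_{k\to 0}\|[M_k, V_Y]\| = 0$, is exactly the statement that $V_Y$ has a ``symbol'' behaving well under dilation in the momentum variable: it says $V_Y$ commutes asymptotically with the characters of $X$, which is the standard criterion (à la Cordes / Georgescu--Iftimie) for $V_Y (h(p)+\rmi)^{-1}$ to lie in a crossed product of the form (coefficient algebra)$\rtimes X$, the point being that $[M_k, V_Y(h(p)+\rmi)^{-1}] = [M_k,V_Y](h(p)+\rmi)^{-1} + V_Y[M_k,(h(p)+\rmi)^{-1}]$ and both terms go to zero in norm as $k\to 0$ (the second because $h$ is continuous, so $M_k$ nearly commutes with $h(p)$). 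Condition (ii), $[T_y, V_Y]=0$ for $y\in Y$, means $V_Y$ is a function only of $X/Y$ in the generalized (operator-valued) sense — i.e.\ it lives over the quotient $X/Y$ — which is precisely the structural feature of the $Y$-component algebras $\cb(\oXY)\rtimes X$ appearing in the definition of $\re(X)$. Condition (iii), existence of $\slim_{a\to\alpha} T_a^* V_Y T_a$ for each half-line $\alpha\in\SS_{X/Y}$, is the ``radial limits at infinity'' hypothesis defining $\ce(X)$ / $\cb(\oX)$; it guarantees that the coefficient of $V_Y$, as a translation-covariant family over $X/Y$, actually belongs to $\cb(\oXY)$ rather than to some larger algebra, so that $V_Y(h(p)+\rmi)^{-1}$ lands inside $\re(X)$ and not merely in some bigger crossed product.

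So the key steps, in order, are: (1) reduce affiliation of $H$ to membership $V_Y(h(p)+\rmi)^{-1}\in\re(X)$ via the perturbative affiliation criterion and self-adjointness of $H$; (2) use (i) to place $V_Y(h(p)+\rmi)^{-1}$ in a crossed product $\ca\rtimes X$ for some translation-invariant $C^*$-algebra $\ca$ of ``coefficients'' on $X$ — here one identifies $\ca$ as generated by the strong limits $\slim\, T_a^* V_Y T_a$ themselves (well defined along any net by combining (iii) on half-lines with continuity of translations and boundedness); (3) use (ii) to descend $\ca$ to an algebra over $X/Y$; (4) use (iii) to conclude $\ca \subset \cb(\oXY) \subset \ce^\sharp(X)$, and invoke Proposition~\ref{pr:bgx} relating $\ce^\sharp(X)$ to $\ce(X)$ to land in $\re(X)$; (5) sum over the finitely many nonzero $V_Y$. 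The main obstacle I anticipate is step~(2)--(4): cleanly building the coefficient algebra $\ca$ of $V_Y(h(p)+\rmi)^{-1}$ and checking it is translation-invariant with the asymptotic behavior encoded by $\cb(\oXY)$, because $V_Y$ is a \emph{nonlocal} operator, not a multiplication, so one cannot simply read off a symbol; one must work with the operators $T_a^* V_Y T_a$ as a bounded, strongly continuous, asymptotically-convergent family and argue at the level of the crossed product that the induced ideal/coefficient structure matches that of $\re(X)$. Controlling the interaction of this nonlocal family with $h(p)$ — ensuring condition (i) survives after multiplying by the resolvent and that no extra generators are produced — will be the delicate point; everything else is a routine chase through the crossed-product formalism already set up in the paper.
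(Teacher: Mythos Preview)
Your overall strategy is right and matches the paper: reduce affiliation of $H$ to showing that, for each $Y$, a product of the form $V_Y\cdot(\text{resolvent of }h(p))$ lies in $\re(X)$, then invoke the general affiliation criterion (Theorem~\ref{th:recall0}/\ref{th:recall}) to conclude. The paper does exactly this.

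The gap is in your steps (2)--(4). You try to extract from $V_Y$ a coefficient function and place it in $\cb(\oXY)\subset\ce^\sharp(X)$, then invoke Proposition~\ref{pr:bgx}. But Proposition~\ref{pr:bgx} is a statement about \emph{functions} in $L^\infty(X)$ approximated by functions in $\ce(X)$; it concerns multiplication operators. Since $V_Y$ is explicitly allowed to be nonlocal, there is no coefficient function to extract, and the inclusion ``$\ca\subset\cb(\oXY)$'' you write has no meaning. You correctly flag this as ``the delicate point'', but you do not resolve it, and the route through $\ce^\sharp(X)$ cannot resolve it.

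The paper's device is Theorem~\ref{th:sphalgY}, which you overlooked: it gives a purely operator-theoretic characterization of $\maCXY\rtimes X$ as the set of bounded operators $A$ having the position-momentum limit property and satisfying $[A,T_y]=0$ for $y\in Y$ together with existence of $\slim_{a\to\alpha}\tau_a(A)^{(*)}$ for all $\alpha\in\SS_{X/Y}$. With this in hand the proof is immediate: the operator $A=(h(p)+\rmi)^{-1}V_Y(h(p)+\rmi)^{-1}$ (or the asymmetric variant required by Theorem~\ref{th:recall0}) satisfies these conditions directly from hypotheses (i)--(iii) on $V_Y$ combined with $(h(p)+\rmi)^{-1}\in C^*(X)$, so $A\in\maCXY\rtimes X\subset\re(X)$. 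No symbol has to be built. Once you cite Theorem~\ref{th:sphalgY} instead of Proposition~\ref{pr:bgx}, your argument becomes the paper's.
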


Note that in the above theorem \ref{th:ess3-intro},
the operator $T_x^*V_YT_x$ depends a priori on the point $x$ in $X$, but if
condition (ii) is satisfied, then it depends only on the class
$\pi_Y(x)$ of $x$ in $X/Y$. Therefore we may set
$T_{\pi_Y(x)}^*V_YT_{\pi_Y(x)}=T_x^*V_YT_x$, which gives a meaning to
$T_a^*V_YT_a$ for any $a\in X/Y$ in condition (iii) above.

In order to treat unbounded interactions, we have to require more
regularity on the function $h$. We denote by $|\cdot|$ a Euclidean norm
on $X^*$.

\begin{theorem}\label{th:ess4-intro}
  Let $h:X^*\to[0,\infty)$ be locally Lipschitz with derivative $h'$
  such that for some real numbers $c,s>0$ and all $k\in X^*$ with
  $|k|>1$
\begin{equation}\label{eq:ess4-intro}
  c^{-1}|k|^{2s} \, \leq \, h(k) \, \leq \, c |k|^{2s} 
  \quad\text{and} \quad |h'(k)| \, \leq \, c |k|^{2s} .
\end{equation}
Let $V=\sum V_Y$ be a finite sum with $V_Y:\ch^s\to\ch^{-s}$ symmetric
operators satisfying:
\begin{enumerate}[(i)]
\item there are numbers $\gamma,\delta$ with $\gamma<1$ such that
  $V\geq -\gamma h(p) -\delta$,
\item $ \lim_{k\to 0}\| [M_k, V_Y] \|_{\ch^s\to\ch^{-s}}=0$,
\item $[T_y,V_Y]=0$ for all $y\in Y$,
\item $\slim_{a\in X/Y,a\to\alpha} T_a^* V_Y T_a$ exists in
  $B(\ch^s,\ch^{-s})$ for all $\alpha\in\SS_{X/Y}$.
\end{enumerate}
Then $h(p)+V$ is a symmetric operator $\ch^s\to\ch^{-s}$, which
induces a self-adjoint operator $H$ in $L^2(X)$ affiliated to
$\re(X)$.
\end{theorem}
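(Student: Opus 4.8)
The plan is to construct the self‑adjoint operator $H$ from the quadratic form $\braket{u}{h(p)u}+\braket{u}{Vu}$ via Kato's representation theorem, and then to reduce the affiliation statement — by a multiplicative‑perturbation rewriting of the resolvent — to the single assertion that $h_\lambda(p)^{-1/2}V_Y\,h_\lambda(p)^{-1/2}\in\re(X)$ for each of the finitely many subspaces $Y$ (here $h_\lambda:=h+\lambda$, $\lambda$ large); the latter is then obtained from conditions (ii)--(iv) together with the temperance of $h$, exactly as in the proof of Theorem~\ref{th:ess3-intro}. \emph{Self‑adjointness.} The two‑sided bound on $h$ and $h\ge 0$ give $h(k)+1\asymp\jap{k}^{2s}$, so the form domain of $h(p)$ is $\ch^s$; since $V\colon\ch^s\to\ch^{-s}$ is bounded and symmetric (whence $h(p)+V$ is a symmetric map $\ch^s\to\ch^{-s}$) and, by (i), $\braket{u}{Vu}\ge-\gamma\braket{u}{h(p)u}-\delta\|u\|^2$ with $\gamma<1$, the form $q[u]:=\braket{u}{h(p)u}+\braket{u}{Vu}$ on $\ch^s$ satisfies $q[u]+C\|u\|^2\asymp\|u\|_s^2$ for $C$ large, hence is closed and bounded below; Kato's theorem then yields a semibounded self‑adjoint operator $H$ on $L^2(X)$ with form domain $\ch^s$ whose associated continuous map $\ch^s\to\ch^{-s}$ is $h(p)+V$. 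Fix $\lambda>0$ large; then $H+\lambda\colon\ch^s\to\ch^{-s}$ is a symmetric isomorphism.

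\emph{Reduction to a statement inside $\re(X)$.} Set $G:=h_\lambda(p)^{1/2}$. By the bound on $h$ and its properness, $G\colon\ch^s\to L^2(X)$ is an isomorphism and $G^{-1}=\psi(p)$ with $\psi:=h_\lambda^{-1/2}\in C_0(X^*)$; since $C_0(X^*)$, realized as Fourier multipliers, coincides with $\C1\rtimes X\subseteq\re(X)$, we have $G^{-1}\in\re(X)$ and $h_\lambda(p)^{-1}\in\re(X)$. The operator $W:=G^{-1}VG^{-1}$ is bounded and symmetric on $L^2(X)$, and by (i), $W\ge-\gamma\,h(p)h_\lambda(p)^{-1}-\delta\,h_\lambda(p)^{-1}\ge-(\gamma+\delta/\lambda)>-1$ once $\lambda$ is large; moreover $H+\lambda=G(1+W)G$, so $(H+\lambda)^{-1}=G^{-1}(1+W)^{-1}G^{-1}$. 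If one shows $W\in\re(X)$, then $1+W$ lies in the unitization $\C1+\re(X)$, a unital $C^*$-subalgebra of $B(L^2(X))$, is boundedly invertible, and hence $(1+W)^{-1}=\beta\cdot 1+A$ with $A\in\re(X)$ by spectral permanence; therefore $(H+\lambda)^{-1}=\beta\,h_\lambda(p)^{-1}+\psi(p)A\psi(p)\in\re(X)$, and a routine resolvent identity upgrades this to $(H+\rmi)^{-1}\in\re(X)$, i.e.\ $H$ is affiliated to $\re(X)$.

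\emph{The operator $W$ lies in $\re(X)$.} Because $V=\sum_Y V_Y$ is a finite sum it suffices to show $W_Y:=G^{-1}V_YG^{-1}=\psi(p)V_Y\psi(p)\in\re(X)$ for each $Y$. I would check that $W_Y$ is a bounded symmetric operator on $L^2(X)$ satisfying the three hypotheses of Theorem~\ref{th:ess3-intro} attached to the single subspace $Y$, whence $W_Y\in\ce(X/Y)\rtimes X\subseteq\re(X)$ by the argument establishing that theorem (which shows precisely that a bounded operator with these properties lies in the indicated subalgebra). Indeed, $[T_y,W_Y]=0$ for $y\in Y$ because translations commute with the Fourier multiplier $\psi(p)$ and, by (iii), with $V_Y$; the limit $\slim_{a\to\alpha}T_a^*W_YT_a$ exists for $\alpha\in\SS_{X/Y}$ because $T_a^*W_YT_a=\psi(p)(T_a^*V_YT_a)\psi(p)$ and the strong convergence of $T_a^*V_YT_a$ in $B(\ch^s,\ch^{-s})$ furnished by (iv) passes through the fixed bounded maps $\psi(p)\colon L^2(X)\to\ch^s$ and $\psi(p)\colon\ch^{-s}\to L^2(X)$; and $\lim_{k\to0}\|[M_k,W_Y]\|=0$ follows by expanding $[M_k,W_Y]$ over the three factors — the middle contribution being $\le\|\psi(p)\|_{\ch^{-s}\to L^2}\,\|[M_k,V_Y]\|_{\ch^s\to\ch^{-s}}\,\|\psi(p)\|_{L^2\to\ch^s}\to0$ by (ii), while the two outer contributions involve $[M_k,\psi(p)]$, which is (up to the factor $M_k$) the Fourier multiplier with symbol $\psi(\eta-k)-\psi(\eta)$, and whose norms between the relevant spaces among $L^2(X),\ch^{\pm s}$ tend to $0$ because the temperance bound $|h'(k)|\le c|k|^{2s}$ together with $c^{-1}|k|^{2s}\le h(k)\le c|k|^{2s}$ gives $\sup_{\eta\in X^*}\bigl|h_\lambda(\eta-k)/h_\lambda(\eta)-1\bigr|\to0$ as $k\to0$ (from which all the required norm estimates follow by standard Sobolev‑space manipulations).

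\emph{Main obstacle.} The delicate part is Step~3: controlling the commutators $[M_k,\psi(p)]$ simultaneously as operators on $L^2(X)$, on $\ch^s$ and on $\ch^{-s}$ — this is exactly the place where the regularity assumption $|h'(k)|\le c|k|^{2s}$ is indispensable — and, on the structural side, extracting from the proof of Theorem~\ref{th:ess3-intro} the sharper conclusion that an operator satisfying the three conditions actually belongs to $\ce(X/Y)\rtimes X$, rather than merely that $h(p)+W_Y$ is affiliated to $\re(X)$. Everything else (the form estimates, the passage $h(k)+1\asymp\jap k^{2s}$, the resolvent identities, spectral permanence) is routine.
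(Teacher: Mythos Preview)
Your argument is correct and follows the same structural pattern as the paper's proof: reduce the affiliation of $H$ to showing that a suitable regularization of each $V_Y$ lies in $\maCXY\rtimes X\subset\re(X)$, and verify the latter via the intrinsic description of that algebra. There are, however, two genuine differences in implementation.

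First, you reconstruct the resolvent identity $(H+\lambda)^{-1}=G^{-1}(1+W)^{-1}G^{-1}$ by hand and invoke spectral permanence in the unitization $\C1+\re(X)$; the paper instead appeals to the abstract affiliation criterion Theorem~\ref{th:recall}, which packages exactly this argument. This is purely a matter of taste.

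Second, and more interestingly, you regularize $V_Y$ symmetrically by $\psi(p)=h_\lambda(p)^{-1/2}$, whereas the paper uses the asymmetric \emph{smooth} sandwich $\jap{p}^{-2s}V_Y\jap{p}^{-s}$ (and rewrites $\varphi(h(p))V(h(p)+1)^{-1/2}$ in terms of it via bounded continuous Fourier-multiplier factors, which lie in the multiplier algebra of $\re(X)$). With $\jap{p}$ the commutator bounds $[M_k,\jap{p}^{-s}]\to0$ and $[M_k,\jap{p}^{-2s}]\to0$ in the relevant operator norms come for free from the smoothness of $\jap{\cdot}$, so the Lipschitz hypothesis $|h'(k)|\le c|k|^{2s}$ plays no role at that step. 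Your route, by contrast, genuinely uses it to control $[M_k,\psi(p)]$, exactly as you say. Both are valid; the paper's choice simply makes this the trivial step.

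One point to sharpen: the result you really invoke in Step~3 is not Theorem~\ref{th:ess3-intro} itself but the characterization of $\maCXY\rtimes X$ given in Theorem~\ref{th:sphalgY}. That theorem requires the full position--momentum limit property, so you should also record that $\lim_{x\to0}\|(T_x-1)W_Y\|=0$; this is immediate from $\psi\in\maC_0(X^*)$ (so $(T_x-1)\psi(p)\to0$ in norm) together with the boundedness of $V_Y\psi(p):L^2(X)\to L^2(X)$, but it is part of what must be checked.
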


\begin{remark}\label{ex:fct}{\rm If $V_Y$ is the operator of
    multiplication by a measurable function, then Condition (ii) of
    Theorem \ref{th:ess4-intro} is automatically satisfied. On the
    other hand, Condition (iii) gives that $V_Y(x+y)=V_Y(x)$ for all
    $x\in X$ and $y\in Y$. This means that $V_Y=v_Y\circ\pi_Y$ for a
    measurable function $v_Y:X/Y\to\R$, which has to be such that the
    operator of multiplication by $V_Y$ is a continuous map
    $\ch^s(X)\to\ch^{-s}(X)$. For this it suffices that the operator
    $v_Y(q_Y)$ of multiplication by $v_Y$ be a continuous map of
    $\ch^s(X/Y)$ into $\ch^{-s}(X/Y)$ (here $q_Y$ is the position
    observable in $L^2(X/Y)$). Then the last condition means that
    $\lim_{a\to\alpha} v_Y(q_Y+a)$ exists strongly in
    $\cb\big(\ch^s(X/Y),\ch^{-s}(X/Y)\big)$.  }
\end{remark}

\begin{example}\label{ex:5}{\rm Let us consider the case of
    non-relativistic Schr\"odinger operators. Then $X$ is a Euclidean
    space (so we may identify $X/Y=Y^\perp$) and $H_0:=\Delta=p^2$ is
    the (positive) Laplace operator, and hence $s=1$. The total
    Hamiltonian is of the form $H=\Delta+\sum_Y V_Y$ where the sum is
    finite and $V_Y=1\otimes V_Y^\circ$ where
    $V_Y^\circ:\ch^1(Y^\perp)\to\ch^{-1}(Y^\perp)$ is a symmetric
    linear operator whose relative form bound with respect to the
    Laplace operator on $Y^\perp$ is zero (this is much more than we need).
    Then assume $M_kV_Y^\circ=V_Y^\circ M_k$ for all $k\in Y^\perp$.
    For example, $V_Y^\circ$ could be the operator of multiplication
    by a function $v_Y:Y^\perp\to\R$ of Kato class $K_{n(Y)}$ with
    $n(Y)=\dim(Y^\perp)$ (see Section 1.2 in \cite{CFKS}, especially
    assertion (2) page 8) but it could also be a distribution of non
    zero order. Indeed, we may take as $v_Y$ the divergence of a
    vector field on $Y^\perp$ whose components have squares of Kato
    class (e.g.\ are bounded functions): this covers highly
    oscillating perturbations of potentials that have radial limits
    at infinity.  Note that this Kato class is convenient because then
    $v_Y\circ\pi_Y$ is of class $K_{\dim(X)}$, see \cite[p.\
    8]{CFKS}. To get (iv) of Theorem \ref{th:ess4-intro} it suffices
    to assume $\lim_{a\to\alpha}v_Y(\cdot+a)$ exists strongly in
    $B(\ch^1(Y^\perp),\ch^{-1}(Y^\perp))$ for each
    $\alpha\in\SS_{Y^\perp}$.  }\end{example}

\subsection{}

The \emph{spherical algebra}
$\rs(X) := \maCX \rtimes X \subset \re(X)$ has several interesting
properties. For example, it contains the ideal $\rk(X)=\cc_0(X)\rtimes X$
of compact operators on $L^2(X)$. It is remarkable that both $\rs(X) $
and its quotient $\rs(X)/\rk(X)$ may be described in quite explicit
terms. In the next theorem and in what follows, we adopt the following
convention: if we write $S^{(*)}$ in a relation, then it means that
that relation holds for $S^{(*)}$ replaced by either $S$ or $S^*$. Let
$C^*(X)$ be the group $C^*$-algebra of $X$, cf. Section
\ref{s:locinf}.

\begin{theorem}\label{th:sphalg-intro}
  A bounded operator $S$ on $L^2(X)$ belongs to $\rs(X)$ if, and only if,
\begin{equation*}
 \begin{gathered}
  \lim_{x\to0}\| (T_x-1)S^{(*)} \|=0 \,,\quad\lim_{k\to0}\| [M_k, S]\|=0 \,, 
  \quad \mbox{and} \\  
 \slim_{a\to\alpha} T_a^* S^{(*)} T_a \quad\text{exists for any }\ \
  \alpha\in\SS_X \,.
 \end{gathered}
\end{equation*}
If $S\in \rs(X)$ and $\alpha\in\SS_X$, then
$\tau_\alpha(S)=\slim_{a\to\alpha} T_a^* S T_a$ belongs to
$C^*(X)$. The map $\tau(S):\alpha\mapsto\tau_\alpha(S)$ is norm
continuous, so $\tau: \rs(X) \to C(\SS_X)\otimes C^*(X)$. This map
$\tau$ is a surjective morphism and its kernel is $\rk(X)$.  Hence we
have a natural identification
\begin{equation}\label{eq:quot-intro}
  \rs(X) /\rk(X) \, \cong \, \cc(\SS_X)\otimes C^*(X) \, \cong \,
  \cc_0(\SS_X \times X\sp{*}) \, .
\end{equation}
If $H$ is a self-adjoint operator affiliated to $\rs(X)$, then the
limit $\alpha.H:=\slim_{a\to\alpha} T_a^*HT_a $ exists 
for
each $\alpha\in\SS_X$ 
and
$\sigma_\ess(H) = \cup_\alpha \sigma(\alpha.H)$.
\end{theorem}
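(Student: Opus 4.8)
The plan is to establish the four assertions — the operator-theoretic characterization of membership in $\rs(X)$, the structure of the limit maps $\tau_\alpha$, the identification of the quotient, and the essential-spectrum formula — in that order, since each builds on the previous. First I would recall that $\rs(X) = \maCX \rtimes X$ and that, because $\maCX$ is a unital $C^*$-subalgebra of $\cc_\rmb^\rmu(X)$ that is stable under translations, the crossed product embeds concretely in $B(L^2(X))$ with $\maCX$ acting by multiplication and $X$ by the translations $T_x$. The condition $\lim_{x\to 0}\|(T_x-1)S^{(*)}\| = 0$ is precisely the statement that $S$ has continuous orbit under the (left, resp.\ right) translation action, which is the standard regularity condition singling out $\maM(\maCX\rtimes X)$-elements that actually lie in the crossed product; the condition $\lim_{k\to 0}\|[M_k,S]\| = 0$ says $S$ is ``of order zero'' with respect to the dual action, i.e.\ its ``symbol'' in the $X^*$-variable is continuous and vanishing — together these two say $S$ belongs to $\cc_\rmb^\rmu(X)\rtimes X$-type algebra, and the third condition (existence of $\slim_{a\to\alpha}T_a^*S^{(*)}T_a$) pins down the coefficient function to lie in $\maCX$ rather than in a larger translation-invariant algebra. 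Concretely, I would first prove the ``only if'' direction on the dense set of elementary tensors $fT_\varphi$ with $f\in\maCX$, $\varphi\in L^1(X)$ (or $f\cdot g(p)$ with $g\in\cc_0(X^*)$), verifying each of the three limits by direct computation using $T_x^* f T_x = (T_{-x}f)$ and the uniform radial limit property of functions in $\maCX$, and then pass to the closure since all three conditions are closed under norm limits and form a $C^*$-subalgebra. For the ``if'' direction, the key is to show that an $S$ satisfying the three conditions can be approximated: the first two conditions let one write $S$ as a norm limit of operators of the form $\sum f_i g_i(p)$ via mollification in both the translation and dual variables, and then one must check the limiting coefficients $f_i$ land in $\maCX$, which is where the third condition enters.

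Next I would identify $\tau_\alpha(S) = \slim_{a\to\alpha}T_a^*ST_a$ as an element of $C^*(X)$: on an elementary element $fT_\varphi$ one computes $T_a^*(fT_\varphi)T_a = (T_{-a}f)T_\varphi$, and as $a\to\alpha$ the function $T_{-a}f$ converges (locally uniformly, hence in the relevant strong sense after convolving with $\varphi\in L^1$) to the constant $f(\alpha) := \lim_\alpha f$, so the limit is $f(\alpha)T_\varphi \in C^*(X)$. Linearity, multiplicativity and the $*$-property of $\tau_\alpha$ follow from continuity of multiplication in the strong operator topology on bounded sets together with these formulas; norm-continuity of $\alpha\mapsto\tau_\alpha(S)$ follows because on elementary elements $\alpha\mapsto f(\alpha)$ is continuous on the compact space $\SS_X$ and the general case is a uniform limit. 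This gives the morphism $\tau\colon\rs(X)\to\cc(\SS_X)\otimes C^*(X)$. Surjectivity I would get by noting the image contains all $f(\alpha)T_\varphi$ hence, by the Stone–Weierstrass-type density of $\maCX|_{\SS_X} = \cc(\SS_X)$ and density of $L^1(X)$ in $C^*(X)$, all of $\cc(\SS_X)\otimes C^*(X)$. For the kernel: $\tau(S) = 0$ means $f(\alpha) = 0$ for all $\alpha$ on the relevant coefficients, i.e.\ the coefficients lie in $\cc_0(X)$, so $S\in\cc_0(X)\rtimes X = \rk(X)$; conversely $\rk(X)\subset\ker\tau$ because $T_a^*KT_a\to 0$ strongly for $K$ compact (as $T_{ra}\to 0$ weakly). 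Combining, $\rs(X)/\rk(X)\cong\cc(\SS_X)\otimes C^*(X)$, and the second isomorphism in \eqref{eq:quot-intro} is just the Fourier/Gelfand identification $C^*(X)\cong\cc_0(X^*)$ together with $\cc(\SS_X)\otimes\cc_0(X^*)\cong\cc_0(\SS_X\times X^*)$.

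Finally, for the essential-spectrum statement: if $H$ is self-adjoint and affiliated to $\rs(X)$, then $\varphi(H)\in\rs(X)$ for all $\varphi\in\cc_0(\R)$ (by definition of affiliation via $(H+\rmi)^{-1}$), so by the first part each $\slim_{a\to\alpha}T_a^*\varphi(H)T_a = \tau_\alpha(\varphi(H))$ exists; one checks $\alpha\mapsto\tau_\alpha$ is compatible with the functional calculus, so it defines a self-adjoint operator $\alpha.H$ affiliated to $C^*(X)$ with $\varphi(\alpha.H) = \tau_\alpha(\varphi(H))$, and $\slim_{a\to\alpha}T_a^*HT_a = \alpha.H$ in the (strong resolvent) sense of Remark \ref{re:limit}. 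Since $\tau$ induces the isomorphism $\rs(X)/\rk(X)\cong\prod$-type algebra $\cc(\SS_X)\otimes C^*(X)$, and since for a self-adjoint $H$ affiliated to a $C^*$-algebra $\maA$ one has $\sigma_\ess(H) = \sigma(\text{image of }H\text{ in }\maA/\rk)$ whenever $\rk(X)\subset\maA$ acts irreducibly (the standard fact that the essential spectrum is the spectrum modulo compacts), we get $\sigma_\ess(H) = \sigma\big((\tau_\alpha(H))_\alpha\big) = \bigcup_\alpha\sigma(\alpha.H)$; here the union is genuinely closed already — not merely its closure is needed — because $\SS_X$ is compact and $\alpha\mapsto\tau_\alpha$ is norm-continuous, so $\alpha\mapsto\sigma(\alpha.H)$ is suitably continuous and the union over a compact set of a continuous field of closed sets is closed. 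The main obstacle I expect is the ``if'' direction of the membership characterization — specifically, showing that the three operator-norm conditions force $S$ into the crossed product (and not just into its multiplier algebra or some larger translation-regular algebra), which requires a careful double mollification argument controlling both the translation-variable and the momentum-variable regularity simultaneously, and then verifying that the resulting coefficient functions inherit the uniform radial limits defining $\maCX$ from condition three.
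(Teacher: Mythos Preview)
Your proposal is correct in outline and matches the paper closely on the ``only if'' direction, the computation of $\tau_\alpha$ on elementary tensors, surjectivity, and the essential-spectrum statement. The genuine methodological difference is in the ``if'' direction of the membership characterization and, correspondingly, in the computation of $\ker\tau$.

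You propose a direct mollification argument: use the first two conditions to approximate $S$ in norm by finite sums $\sum f_i g_i(p)$ with $f_i\in\cbu(X)$, then argue that the third condition forces $f_i\in\maCX$. The paper instead invokes a structural result (Theorem~3.7 of \cite{GI3}): one shows that the set $\ra$ of operators satisfying all three conditions is a $C^*$-subalgebra of $\cbu(X)\rtimes X$ stable under $S\mapsto T_xS$ and $S\mapsto M_kSM_k^*$, and the cited theorem then guarantees $\ra=\ca\rtimes X$ for a uniquely determined translation-invariant $\ca\subset\cbu(X)$, namely the set of $u$ with $u(q)v(p),\bar u(q)v(p)\in\ra$ for all $v\in\cc_0(X^*)$. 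This reduces the problem to identifying $\ca$, i.e.\ to showing that if $u\in\cbu(X)$ and $\slim_{a\to\alpha}u(q+a)$ exists for every $\alpha\in\SS_X$, then $u\in\maCX$. That step is handled by a separate lemma (Lemma~\ref{lm:help}) converting strong operator convergence of $u(q+a)$ along the translation-invariant filter $\tilde\alpha$ into genuine pointwise convergence $\lim_\alpha u$. Your mollification route would ultimately need this same lemma, since even after smoothing you must pass from the operator condition to a pointwise statement about the coefficient functions; you allude to this (``which is where the third condition enters'') but do not isolate it. The paper's route has the advantage of cleanly separating the abstract crossed-product structure from the algebra-specific identification of $\ca$, and the same machinery is then reused verbatim to compute $\ker\tau$: one shows $\ker\tau$ satisfies the same stability conditions, hence equals $\ca_0\rtimes X$ with $\ca_0=\{u\in\cbu(X):\lim_\alpha u=0\ \forall\alpha\}=\cc_0(X)$. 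Your kernel argument (``$\tau(S)=0$ means $f(\alpha)=0$ on the relevant coefficients'') is imprecise as stated, because for a general $S\in\rs(X)$ there is no canonical finite set of coefficients; making it rigorous essentially forces you back to the same structural identification.
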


Note that in this theorem (as well as in the next), we consider the
plain union, not its closure.  The next result is a general criterion
of affiliation to $\rs(X)$.

\begin{theorem}\label{th:hconj-intro}
Let $H$ be a bounded from below self-adjoint operator on $L^2(X)$ such
that its form domain $\cg$ satisfies the following condition: the
operators $T_x$ and $M_k$ leave $\cg$ invariant, the operators $T_x$
are uniformly bounded in $\cg$, and $\lim_{x\to
  0}\|T_x-1\|_{\cg\to\ch}=0$.  Assume that
$\|[M_k,H]\|_{\cg\to\cg^*}\to0$ as ${k\to0}$ and that the limit
$\alpha.H:=\lim_{a\to\alpha} T_a^* H T_a$ exists strongly in
$\cb(\cg,\cg^*)$, for all $\alpha\in\SS_X$.  Then $H$ is affiliated to
$\rs(X)$, for each $\alpha\in\SS_X$ the operator in $L^2(X)$
associated to $\alpha.H$ is self-adjoint, and
$\sigma_\ess(H) = \cup_\alpha \sigma(\alpha.H)$.
\end{theorem}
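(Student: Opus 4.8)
The plan is to verify that $H$ is affiliated to the spherical algebra $\rs(X) = \maCX \rtimes X$ by checking the three conditions in the affiliation-type characterization of $\rs(X)$ given in Theorem \ref{th:sphalg-intro}, applied to a suitable resolvent-type operator, and then to read off the essential spectrum from the structure \eqref{eq:quot-intro} of the quotient. First I would set $R := (H + \lambda)^{-1}$ for $\lambda$ large enough that $H + \lambda \geq 1$; since $H$ is bounded from below and self-adjoint with form domain $\cg$, the operator $R$ extends to a bounded, positive isomorphism $\cg^* \to \cg$, and its restriction is bounded $\ch \to \cg$. I must show $R \in \rs(X)$. For the first condition, $\lim_{x\to 0}\|(T_x - 1)R^{(*)}\| = 0$: since $R = R^*$ maps $\ch$ into $\cg$ boundedly and $\|T_x - 1\|_{\cg \to \ch} \to 0$ by hypothesis, we get $\|(T_x-1)R\|_{\ch\to\ch} \leq \|T_x-1\|_{\cg\to\ch}\,\|R\|_{\ch\to\cg} \to 0$, and the $R^*=R$ case is identical. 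For the second condition, $\lim_{k\to 0}\|[M_k, R]\| = 0$: here I would use the commutator identity $[M_k, R] = -R\,[M_k, H + \lambda]\,R = -R\,[M_k,H]\,R$, interpreted as a product $\cg \leftarrow \cg^* \leftarrow \cg \leftarrow \ch$ (that is, $R:\cg^*\to\cg$ sandwiching $[M_k,H]:\cg\to\cg^*$), so that $\|[M_k,R]\|_{\ch\to\ch} \leq \|R\|_{\cg^*\to\cg}^2\,\|[M_k,H]\|_{\cg\to\cg^*}\cdot(\text{embedding constants}) \to 0$ by the hypothesis $\|[M_k,H]\|_{\cg\to\cg^*}\to 0$. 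One has to be a little careful here that $M_k$ genuinely preserves $\cg$ (given) and hence $\cg^*$, so all the compositions make sense.

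The third and most substantial condition is the existence of $\slim_{a\to\alpha} T_a^* R^{(*)} T_a$ for each $\alpha \in \SS_X$. The idea is to transfer the assumed strong limit $\alpha.H = \lim_{a\to\alpha} T_a^* H T_a$ in $\cb(\cg,\cg^*)$ to a strong limit of resolvents. Writing $H_a := T_a^* H T_a$ (a self-adjoint operator with form domain $T_a^*\cg = \cg$, since $T_a$ preserves $\cg$ and is uniformly bounded there), we have $T_a^* R T_a = (H_a + \lambda)^{-1}$ as operators on $\ch$. The hypotheses give $H_a \to \alpha.H$ strongly in $\cb(\cg,\cg^*)$; combined with the uniform boundedness of the $T_a$ in $\cg$ (hence uniform lower bounds $H_a \geq -C$, so the resolvents $(H_a+\lambda)^{-1}$ are uniformly bounded $\cg^*\to\cg$ once $\lambda > C$), a standard argument shows $\alpha.H + \lambda : \cg \to \cg^*$ is a positive isomorphism, its inverse $(\alpha.H+\lambda)^{-1}$ defines a self-adjoint operator $H_\alpha$ in $\ch$, and $(H_a+\lambda)^{-1} \to (\alpha.H+\lambda)^{-1}$ strongly in $\ch$. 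Concretely, for $f \in \ch \subset \cg^*$ I would write
\begin{equation*}
  (H_a+\lambda)^{-1}f - (\alpha.H+\lambda)^{-1}f \, = \, (H_a+\lambda)^{-1}\big((\alpha.H + \lambda) - (H_a+\lambda)\big)(\alpha.H+\lambda)^{-1}f,
\end{equation*}
and control the middle factor applied to the fixed vector $(\alpha.H+\lambda)^{-1}f \in \cg$ using strong convergence $H_a \to \alpha.H$ in $\cb(\cg,\cg^*)$, while the outer factor stays uniformly bounded $\cg^*\to\cg\hookrightarrow\ch$. The $R^*$ case follows since $R^*=R$. Thus $R \in \rs(X)$, so $H$ is affiliated to $\rs(X)$, and $\tau_\alpha(R) = (\alpha.H + \lambda)^{-1} = (H_\alpha + \lambda)^{-1}$ with $H_\alpha$ self-adjoint.

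Finally, the spectral formula follows from Theorem \ref{th:sphalg-intro}: since $\tau : \rs(X) \to \cc(\SS_X)\otimes C^*(X)$ is a surjective morphism with kernel $\rk(X)$, and $\tau_\alpha(R) = (H_\alpha+\lambda)^{-1}$, the Gelfand/functional-calculus argument already used in the excerpt to pass from the quotient description to essential spectra gives $\sigma_\ess(H) = \sigma_{\rs(X)/\rk(X)}(R) - $translated back, i.e. $\sigma_\ess(H) = \bigcup_{\alpha\in\SS_X}\sigma(H_\alpha) = \bigcup_\alpha \sigma(\alpha.H)$, with an honest (not closed) union because $\alpha\mapsto \tau_\alpha(R)$ is norm continuous on the compact space $\SS_X$. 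I expect the main obstacle to be the third step: making rigorous the passage from strong convergence of the quadratic forms $H_a$ in $\cb(\cg,\cg^*)$ to strong resolvent convergence in $\ch$, in particular establishing that the limiting form $\alpha.H + \lambda$ is an isomorphism $\cg\to\cg^*$ (equivalently that $\alpha.H$ is bounded below with form domain exactly $\cg$) — this is where the uniform boundedness of $T_x$ on $\cg$ is essential and where one must argue, not merely compute.
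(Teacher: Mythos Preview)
Your proposal is correct and follows essentially the same route as the paper's proof of Theorem~\ref{th:hconj}: verify the three conditions of Theorem~\ref{th:sphalg} for a resolvent of $H$, then invoke Corollary~\ref{co:ker}. The only noteworthy difference is in the third step: to show that $\alpha.H+\lambda:\cg\to\cg^*$ is an isomorphism, the paper first invokes Lemma~\ref{lm:easy} to renorm $\cg$ as $D(w(p))$ with a translation-invariant Hilbert structure (so that $\|T_ag\|_\cg=\|g\|_\cg$ exactly), whereas you argue directly from the hypothesis that the $T_x$ are uniformly bounded on $\cg$ to get a uniform lower bound $\langle g,(H_a+\lambda)g\rangle\geq c\|g\|_\cg^2$; both arguments are valid and yours is marginally more direct.
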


Let us notice an important difference between the morphisms
$\tau_{\alpha}$ of Theorems \ref{th:ess-last-intro} and
\ref{th:hconj-intro}.  (These morphisms appear in the computation of
the quotients in \eqref{eq:quotient-intro} and \eqref{eq:quot-intro}.)
More precisely, in the definition of the first $\tau$, we take limits
over $ra$, with $r \to \infty$, that is limits along rays, whereas in
the second one we take general limits $a \to \alpha$ (not just along
the ray $\alpha$). The stronger assumptions in Theorem
\ref{th:hconj-intro} then lead to a stronger result (in that we do not
need the closure of the union to obtain the spectrum).

\subsection{}

Descriptions of the essential spectrum of various classes of
Hamiltonians in terms of limits at infinity of translates of the
operators have already been obtained before, see for example
\cite{Helffer-Mohamed,GI2,RochBookLimit,LastSimon,RochBookNGT} (in
historical order). Our approach is based on the ``localization at
infinity'' technique developed in \cite{GI2,GI3} in the context of
crossed-products of $C^*$-algebras by actions of abelian locally
compact groups. This has been extended to noncommutative unimodular
amenable locally compact groups in \cite{G}, cf.  Proposition 6.5 and
Theorem 6.8 there. The case of noncommutative groups has also been
considered recently in \cite{MantoiueGGroups}.  See \cite{CFKS,Simon4}
for a general introduction to the basics of the problems studied here.

A homogeneous potential of degree zero outside of a compact set models
a force that is perpendicular to the line joining the particle to the
origin, and hence trying to force the particle to move on a sphere.
Results on operators with homogeneous potentials or similar potentials
were obtained, for example, in
\cite{HMV2004,HerbstSkibsted2002,HerbstSkibsted2004,SRichard2005},
where further physical motivation is provided.

In fact, our results shed some new light even on the classical case
when the auxiliary functions $v_Y$ that define $V_Y$ converge to 0 at
infinity, since in our case the spectra of the relevant algebras are
easier to compute and then can be used to describe the spectra in the
classical case. Compared to the classical approach \cite{CFKS, Simon4}
to the essential spectrum of the $N$-body problem, our approach has
the advantage that is more conceptual, and, once a certain machinery
has been developed, one can obtain rather quickly generalizations of
these results to other operators.  It also takes advantage of a rather
well developed theory of crossed products and representations of
$C^{\ast}$-algebras.  We also develop general techniques that may be
useful for the study of other types of operators and of other types of
questions, such as the study of the eigenvalues and eigenfunctions of
$H$, even in the case when the radial limits at infinity are zero.  We
mention that, by using the expression \eqref{eq:intro2} for the
asymptotic operators, one could prove the Mourre estimate as in
\cite[Sec.\ 9.4]{ABG} for the larger class of Hamiltonians considered
in Proposition \ref{pr:intro2}.

\subsection{}\label{ss:cont}

Let us briefly describe the contents of the paper. In Section
\ref{s:locinf}, we recall some facts concerning crossed products with
$X$ of translation invariant $C^*$-algebras of bounded uniformly
continuous functions on $X$ and the role of operators with the
``position-momentum limit property'' in this context. Then we discuss
the question of the computation of the quotient with respect to the
compacts of such crossed products. In Section \ref{s:sphcomp}, we
briefly describe the topology and the continuous functions on the
spherical compactification $\oX$ of a real vector space $X$. This
allows us to introduce and study in Section \ref{s:sphalg} the
spherical algebra $\rs(X) := \maCX \rtimes X$. We obtain an explicit
description of the operators that belong to $\rs(X)$ (Theorem
\ref{th:sphalg}) and we also give an explicit description of the
quotient $\rs(X)/\rk(X)$ (Theorem \ref{th:ker}). The canonical
composition series of this algebra leads to Fredholm conditions, and
hence to a determination of the essential spectrum for the operators
affiliated to it.  In Section \ref{s:affS}, we give some general
criteria for a self-adjoint operator to be affiliated to a general
$C^*$-algebra and apply them to the case of $\rs(X)$.  The algebras
$\ce(X)$ and $\re(X)$ are studied in Section \ref{s:nbd}.  Subsections
\ref{ss:prelim} and \ref{ss:ham} contain the main technical results.
At a technical level, the main result in this section is the
description of the spectrum of $\ce(X)$ (Theorem
\ref{th:char}). Subsection \ref{ss:ham} is devoted to the study of the
Hamiltonian algebra $\re(X)$ and we prove in this section two of our
main results, Theorems \ref{th:recomp} and \ref{th:ess-last}. In
Subsection \ref{ss:affnb} we prove Theorems \ref{th:intro1},
\ref{th:ess3-intro} and \ref{th:ess4-intro}, which describe a general
class of operators affiliated to $\re(X)$ for which we obtain explicit
descriptions of the essential spectrum. We note that Theorem
\ref{th:sphalgY} gives descriptions of the algebras
$\maC(\oXY)\rtimes X$ generating $\re(X)$ that are not relying on
their definition as crossed products.

This paper contains the full proofs of the results announced in
\cite{GN}, as well as several extensions of those results.

\subsection{Acknowledgments} 
We thank B. Ammann and N. Prudhon for useful discussions.  We also
thank the referee for carefully reading the paper and for useful
comments.

\section{Crossed products and localizations at
  infinity}\label{s:locinf}
\protect\setcounter{equation}{0}

In this section, we review some needed results from \cite{GI3}
relating essential spectra of operators and the spectrum (or character
space) of some algebras. 

If $X$ is a finite dimensional vector space, we denote by
$\maC_\rmb(X)$ the algebra of bounded continuous functions on $X$, by
$\maC_0(X)$ its ideal consisting of functions vanishing at infinity,
and by $\cbu(X)$ the subalgebra of bounded uniformly continuous
functions. Let $\rb(X) :=\maB(L^2(X))$ be the algebra of bounded
operators on $L^2(X)$ and $\rk(X) := \maK(L^2(X))$ the ideal of
compact operators.

If $Y$ is a subspace of $X$, we identify a function $u$ on $X/Y$ with
the function $u\circ\pi_Y$ on $X$. In other terms, we think of a
function on $X/Y$ as being a function on $X$ that is invariant under
translations by elements of $Y$. This clearly gives an embedding
$\cbu(X/Y)\subset\cbu(X)$.  The subalgebras of $\cbu(X/Y)$ can then be
thought of as subalgebras of $\cbu(X)$. Thus $\maC_0(X/Y)$ and the
algebra $\maCXY$ that we shall introduce below are both embedded in
$\cbu(X)$.

For any function $u$, we shall denote by $m_u$ the operator of
multiplication by $u$ on suitable $L\sp{2}$ spaces.  If $u:X\to\C$ and
$v:X^*\to\C$ are measurable functions, then $u(q)$ and $v(p)$ are the
operators on $L^2(X)$ defined as follows: $u(q) = m_u$, the
multiplication operator by $u$, and $v(p)= \maF^{-1}m_v\maF$, where
$\maF$ is the Fourier transform $L^2(X) \to L^2(X^*)$.  If $x\in X$
and $k\in X^*$, then the unitary operators $T_x$ and $M_k$ are defined
on $L^2(X)$ by
\begin{equation}\label{eq:trans.mult}
  (T_x f)(y) \, := \, f(y-x) \quad\text{ and } \quad (M_k f)(y) \, :=
  \, \e^{\rmi\braket{y}{k}} f(y) \,,
\end{equation}
and can alternatively be written in terms of $p$ and $q$ as
$T_x=\e^{-\rmi xp}$ and $M_k=\e^{\rmi kq}$.

We shall denote by $C^*(X)$ the {\em group $C\sp{*}$-algebra} of $X$:
this is the closed subspace of $\rb(X)$ generated by the operators of
convolution with continuous, compactly supported functions. The map
$v\mapsto v(p)$ establishes an isomorphism between $\maC_0(X^*)$ and
$C^*(X)$. 

We shall need the following general result about commutative
$C\sp{*}$-algebras. Let $\ca$ be a commutative $C\sp{*}$-algebra and
$\hat\ca$ be its spectrum (or character space), consisting of non-zero
algebra morphisms $\chi : \ca \to \CC$. If $\ca$ is unital, then
$\hat\ca$ is a compact topological space for the weak topology. In
general, it is locally compact and the Gelfand transform
$\Gamma_\ca : \ca \to \maC_0(\hat\ca)$,
$\Gamma_\ca(u)(\chi) := \chi(u)$, defines an isometric algebra
isomorphism. In particular, any commutative $C\sp{*}$-algebra is of
the form $\maC_0(\Omega)$ for some locally compact space (up to
isomorphism). The characters of $\maC_0(\Omega)$ are of the form
$\chi_\omega$, $\omega \in \Omega$, where
\begin{equation}
 \label{eq.def.chi}
 \chi_\omega(u) \, := \, u(\omega)\, \quad u \in \maC_0(\Omega)\,.
\end{equation}
If $X$ acts continuously on a $C\sp{*}$-algebra $\ca$ by
automorphisms, we shall denote by $\ca \rtimes X$ the resulting crossed product
algebra, see \cite{Pedersen, WilliamsBook}.  Here the real vector
space $X$ is regarded as a locally compact, abelian group in the
obvious way.  Recall \cite{GI2} that if $\ca$ is a translation
invariant $C^*$-subalgebra of $\cbu(X)$, then an isomorphic
realization of the cross-product algebra $\ca \rtimes X$ is the norm
closed linear subspace of $\rb(X)$ generated by the operators of the
form $u(q)v(p)$, where $u\in \ca$ and $v \in \maC_0(X\sp{*})$. As a
rule, we shall denote by $\tau_a$ the action of $a \in X$ by translations on our
algebras of functions.

\begin{definition} \label{df:pmlp} 
Let $A \in \rb(X)$. We say that $A$ {\em has the position-momentum
  limit property}\ if $\lim_{x\to0}\| (T_x-1)A^{(*)} \|=0$ and $
\lim_{k\to0}\| [M_k, A] \|=0$.
\end{definition}

A characterization of operators having the position-momentum limit
property in terms of crossed products was given in \cite{GI2}: it is
shown that \emph{$A$ has the position-momentum limit property if, and
  only if, $A\in\cbu(X)\rtimes X$}.

If $A$ is an operator on $L^2(X)$, then its translation by $x\in X$ is
defined by the relation
\begin{equation}\label{eq:str}
  \tau_x(A) \, := \, T_x^*AT_x \,.
\end{equation}
The notation $x.A := \tau_x(A)$ will often be more convenient.  If $u$
is a function on $X$ we also denote $\tau_x(u)\equiv x.u$ its
translation given by $(x.u)(y)=u(x+y)$. The notations are naturally
related: $\tau_x(u(q))=(x.u)(q)\equiv u(x+q)$. Note that
$\tau_x(v(p))=v(p)$.

By a ``point at infinity'' of $X$, we shall mean a point in the boundary
of $X$ in a certain compactification of it. We shall next define
\emph{the translation by a point at infinity $\chi$} for certain
functions $u$ and operators $A$.  This construction will be needed for
the description of the essential spectrum of operators of interest for
us.

Let us fix a translation invariant $C^*$-algebra $\ca$ of bounded
uniformly continuous functions on $X$ containing the functions that
have a limit at infinity: $\maC_0(X)+\C\subset\ca\subset\cbu(X)$.  To
every $x\in X$, there is associated the character $\chi_x$, defined by
$\chi_x(u) := u(x)$ for $u\in\ca$, cf.\ \eqref{eq.def.chi}.  Since
$\ca\supset\maC_0(X)$, $X$ is naturally embedded as an open dense
subset in $\hat{\ca}$.  Thus $\hat\ca$ is a compactification of $X$
and
\begin{equation}\label{eq.def.delta}
 \delta(\ca) \, := \, \hat\ca\setminus X \,,
\end{equation}
the {\em boundary} of $X$ in this compactification, is a compact set
that can be characterized as the set of characters $\chi$ of $\ca$
whose restriction to $\maC_0(X)$ is equal to zero.

Let us recall that if $x, y \in X$, then $(x.u)(y) = u(x+y) =
\chi_x(y.u)$.  If $u \in \ca$, we extend the definition of $x.u$ by
replacing in this relation $\chi_x$ with a character $\chi \in
\hat\ca$.

\begin{definition}\label{def:trinf}
  Let $u \in \ca$ and $\chi \in \hat\ca$. Then we define
  $(\chi.u)(y) :=\chi(y.u)$ for all $y \in X$.
\end{definition}

Since $u$ is uniformly continuous, it is easy to check that
$\tau_\chi(u) := \chi.u \in \cbu(X)$ and that
$\tau_\chi : \ca\to\cbu(X)$ is a unital morphism. We will say that
$\tau_\chi$ is the \emph{morphism associated to the character $\chi$}.
We note that if the character $\chi$ corresponds to $x \in X$, then
$\tau_\chi = \tau_x$, so our notation is consistent.

In particular, we get ``translations at infinity'' of $u\in\ca$ by
elements $\chi\in\delta(\ca)$. The function $\chi\mapsto \chi.u\in
\cbu(X)$ defined on $\hat\ca$ is continuous if $\cbu(X)$ is equipped
with the topology of local uniform convergence, and hence $\chi.u =
{\lim_{x\to\chi}}\, x.u$ in this topology for any
$\chi\in\delta(\ca)$. One has $u\in\maC_0(X)$ if, and only if,
$\chi.u=0$ for all $\chi\in\delta(\ca)$. We mention that a
translation $\chi.u$ by a point at infinity $\chi\in\delta(\ca)$ does
not belong to $\ca$ in general. However, we shall see that this is
true in the case $\ca=\ce(X)$ of interest for us, so in this case
$\tau_\chi$ is an endomorphism of $\ca$.

If $A\in\ca\rtimes X$, then we may also consider ``translations at
infinity'' $\tau_\chi(A)$ by elements $\chi$ of the boundary
$\delta(\ca)$ of $X$ in $\hat\ca$ and we get a useful characterization
of the compact operators.  The following facts are proved in
\cite[Subsection 5.1]{GI3}.

\begin{proposition}\label{pr:locinf}
For each $\chi\in\hat\ca$, there is a unique morphism
$\tau_\chi:\ca\rtimes X\to\cbu(X)\rtimes X$ such that
\begin{equation*}
   \tau_\chi(u(q) v(p) ) \, = \, (\chi.u)(q)v(p) \,, \quad\text{for
     all } u\in\ca, \, v\in\maC_0(X) \, .
\end{equation*}
If $A\in\ca\rtimes X$, then $\chi\mapsto \tau_\chi(A)$ is a strongly
continuous map $\hat\ca\to \rb(X)$.
\end{proposition}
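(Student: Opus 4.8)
The plan is to construct the morphism $\tau_\chi$ directly on the dense subalgebra generated by the products $u(q)v(p)$ and then show it extends continuously to $\ca\rtimes X$. First I would recall that, by the characterization of crossed products cited from \cite{GI2}, the algebra $\ca\rtimes X$ is the norm-closed linear span of the operators $u(q)v(p)$ with $u\in\ca$ and $v\in\maC_0(X\sp*)$, and that $\cbu(X)\rtimes X$ admits the analogous description. The key algebraic input is the covariance relation: for $x\in X$ one has $T_x^*u(q)T_x=(x.u)(q)$ and $T_x^*v(p)T_x=v(p)$, so that $\tau_x(u(q)v(p))=(x.u)(q)v(p)$. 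Since, by Definition \ref{def:trinf} and the remarks following it, $\tau_\chi:\ca\to\cbu(X)$ is a (unital) $*$-morphism for every $\chi\in\hat\ca$, one expects $\tau_\chi(u(q)v(p)):=(\chi.u)(q)v(p)$ to define the desired map; the content of the proposition is that this is well defined on the linear span, bounded, and strongly continuous in $\chi$.

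The main steps, in order, would be as follows. \emph{Step 1: well-definedness and the morphism property on a dense subalgebra.} One checks that the assignment $u(q)v(p)\mapsto(\chi.u)(q)v(p)$ respects the linear relations and the multiplication in $\ca\rtimes X$. For multiplication, the point is that $v(p)u(q)$ can be rewritten (using $v(p)u(q)=\int \hat v(x)\,T_x u(q)\,dx$ type identities, or more cleanly the relation $v(p)u(q)v'(p)=\int\hat v(x)(x.u)(q)T_x v'(p)\,dx$ already implicit in \cite{GI2}) in a form to which $\tau_\chi$ can be applied term by term, and then one uses that $\tau_\chi$ is multiplicative on $\ca$ together with $\tau_\chi$ acting trivially on the $v(p)$ factors. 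A cleaner route, and the one I would actually take, is to invoke the universal property of the crossed product: the pair consisting of the representation $u\mapsto(\chi.u)(q)$ of $\ca$ on $L^2(X)$ and the unitary representation $x\mapsto T_x$ is a covariant representation of the $C^*$-dynamical system $(\ca,X,\tau)$ — the covariance identity $(\chi.(x.u))(q)=T_x^*(\chi.u)(q)T_x$ follows because $y.(\chi.u)=\chi.(y.u)$, which is immediate from Definition \ref{def:trinf}. The universal property then produces the integrated-form morphism $\tau_\chi:\ca\rtimes X\to\rb(X)$ automatically, with norm $\leq 1$, and its image lands in $\cbu(X)\rtimes X$ because $\chi.u\in\cbu(X)$ and the generators $(\chi.u)(q)v(p)$ of the image all lie there. \emph{Step 2: the formula on generators and uniqueness.} By construction $\tau_\chi(u(q)v(p))=(\chi.u)(q)v(p)$, and since these generate $\ca\rtimes X$, a morphism satisfying this formula is unique. \emph{Step 3: consistency with finite points.} When $\chi=\chi_x$, $\chi.u=x.u$, so $\tau_{\chi_x}=\tau_x$, matching the earlier notation.

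\emph{Step 4: strong continuity in $\chi$.} This is where the real work is. I would first establish it on generators: the map $\chi\mapsto(\chi.u)(q)v(p)$, applied to a fixed $f\in L^2(X)$, is norm-continuous because $\chi\mapsto\chi.u$ is continuous from $\hat\ca$ into $\cbu(X)$ with the topology of local uniform convergence (stated in the excerpt), the functions $\chi.u$ are uniformly bounded by $\|u\|_\infty$, and $v(p)f$ can be approximated in $L^2$ by functions supported (essentially) on a compact set — so local uniform convergence of the multipliers upgrades to strong convergence of $(\chi.u)(q)$ on the relevant vectors. Then one passes from generators to all of $\ca\rtimes X$ by the standard $\varepsilon/3$ argument: given $A\in\ca\rtimes X$ and $f\in L^2(X)$, approximate $A$ in norm by a finite linear combination $A_0$ of generators, use $\|\tau_\chi\|\leq 1$ uniformly in $\chi$ to control $\|(\tau_\chi(A)-\tau_\chi(A_0))f\|$, and invoke the continuity already proved for $A_0$. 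The uniform bound $\|\tau_\chi\|\leq 1$ from Step 1 is exactly what makes this $\varepsilon/3$ argument go through, so the genuine obstacle is really just the generator-level continuity, i.e. converting "local uniform convergence of symbols" into "strong operator convergence after multiplying by a compactly-supported-momentum cutoff"; everything else is routine crossed-product bookkeeping that may be quoted from \cite[Subsection 5.1]{GI3}.
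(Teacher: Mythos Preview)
The paper does not actually prove this proposition: it is quoted as a fact from \cite[Subsection 5.1]{GI3}. Your argument is correct and is the standard one expected in that reference --- construct $\tau_\chi$ as the integrated form of the covariant pair $\big(u\mapsto(\chi.u)(q),\ x\mapsto T_x\big)$ via the universal property of the crossed product (which applies because $X$ is abelian, hence amenable, so the concrete realization on $L^2(X)$ is the full crossed product), then obtain strong continuity first on generators from the local-uniform continuity of $\chi\mapsto\chi.u$ and extend by the $\varepsilon/3$ argument using the uniform bound $\|\tau_\chi\|\le1$.
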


As before, we often abbreviate $\tau_\chi(A)=\chi.A$. This gives a
meaning to the translation by $\chi$ of any operator $A\in\ca\rtimes
X$ and any character $\chi\in\hat\ca$. Observe that $\chi\mapsto
\chi.A$ is just \emph{the continuous extension to $\hat\ca$ of the
  strongly continuous map $X\ni x\mapsto x.A$}. In particular,
\begin{equation}\label{eq:lim}
  \tau_\chi(A) \, = \, \slim_{x\to\chi} T_x^*AT_x \quad\text{ for all
  }\ A\in\ca\rtimes X\ \text{ and } \ \chi\in\delta(\ca) \,.
\end{equation}
We have $\rk(X)=\maC_0(X)\rtimes X\subset\ca\rtimes X$.  Then
\cite[Theorem 1.15]{GI3} gives:

\begin{theorem}\label{th:infty} 
An operator $A \in \ca\rtimes X$ is compact if, and only if,
$\tau_\chi(A)=0 \ \forall \chi\in\delta(\ca)$. In other terms:
$\cap_{\chi\in\delta(\ca)}\ker\tau_\chi=\rk(X)$.  The map
$\tau(A)=(\tau_\chi(A))_{\chi\in\delta(\ca)}$ induces an injective
morphism
\begin{equation}\label{eq:crk}
  	\ca\rtimes X/\rk(X) \hookrightarrow
        \pprod_{\chi\in\delta(\ca)} \cbu(X)\rtimes X .
\end{equation}
\end{theorem}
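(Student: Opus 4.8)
The plan is to deduce this from the general localization machinery for crossed products recalled above, combined with the standard fact that $\rk(X) = \maC_0(X) \rtimes X$ sits inside $\ca \rtimes X$ as the ideal attached to the open dense orbit $X \subset \hat\ca$. First I would verify that for $\chi \in \delta(\ca)$ the map $\tau_\chi$ kills $\rk(X)$: indeed $\rk(X) = \maC_0(X) \rtimes X$ is generated by operators $u(q)v(p)$ with $u \in \maC_0(X)$, and by Proposition \ref{pr:locinf} one has $\tau_\chi(u(q)v(p)) = (\chi.u)(q)v(p)$; since $u \in \maC_0(X)$ forces $\chi.u = 0$ for every $\chi \in \delta(\ca)$ (as recalled just before Proposition \ref{pr:locinf}), we get $\tau_\chi|_{\rk(X)} = 0$. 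Hence $\rk(X) \subset \bigcap_{\chi \in \delta(\ca)} \ker\tau_\chi$, and the family $(\tau_\chi)_{\chi\in\delta(\ca)}$ descends to a well-defined morphism on the quotient $\ca\rtimes X / \rk(X)$, giving the map $\tau$ in \eqref{eq:crk}; the target is a $C^*$-algebra and each $\tau_\chi$ is a $*$-morphism into $\cbu(X)\rtimes X$, so $\tau$ is a morphism of $C^*$-algebras.

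The substance is the reverse inclusion: if $A \in \ca\rtimes X$ satisfies $\tau_\chi(A) = 0$ for all $\chi \in \delta(\ca)$, then $A \in \rk(X)$. Here I would invoke exactly the cited result \cite[Theorem 1.15]{GI3}; within the present excerpt the only tool available is Proposition \ref{pr:locinf}, so the honest statement is that this direction is the content imported from \cite{GI3}, and I would at most sketch its mechanism. The idea is: consider the evaluation morphisms $\mathrm{ev}_x : \ca\rtimes X \to \rb(X)$, $\mathrm{ev}_x(A) = \tau_x(A) = T_x^* A T_x$, which for $x \in X$ are all injective and unitarily equivalent, and the family $\{\tau_\chi\}_{\chi \in \hat\ca}$ is the continuous extension of $x \mapsto \tau_x(A)$ to the compactification $\hat\ca$. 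One shows that $A$ is compact iff $\tau_x(A) \to 0$ strongly (in a suitable uniform-over-orbit sense) as $x \to \infty$ in $X$, and this happens precisely when all boundary limits $\tau_\chi(A)$, $\chi \in \delta(\ca)$, vanish; the passage from "all boundary translates vanish" to "the net of interior translates decays" uses the compactness of $\delta(\ca)$ together with the joint (position-momentum) continuity built into $\ca\rtimes X \subset \cbu(X)\rtimes X$.

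Granting both directions, $\bigcap_{\chi\in\delta(\ca)}\ker\tau_\chi = \rk(X)$, which is the asserted kernel computation; consequently $\tau$ induces an injective morphism on $\ca\rtimes X/\rk(X)$, which is \eqref{eq:crk}. I would also record that $\hat\ca$ is a genuine compactification of $X$ because $\maC_0(X)+\C \subset \ca$, so $\delta(\ca) = \hat\ca \setminus X$ is indeed compact and nonempty (when $X \neq 0$), which is what makes the product $\prod_{\chi\in\delta(\ca)}\cbu(X)\rtimes X$ the right receptacle. The main obstacle is clearly the nontrivial inclusion $\bigcap_\chi \ker\tau_\chi \subset \rk(X)$: this is where one genuinely needs the structure theory of \cite{GI3} (identifying compacts inside a crossed product by an abelian group via vanishing of all translations at infinity), and in a self-contained write-up it would require reproducing the argument that a positive element of $\ca\rtimes X$ with all boundary translates zero has interior translates tending strongly to zero uniformly on the orbit, hence is compact.
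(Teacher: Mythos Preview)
Your proposal is correct and follows the same approach as the paper: the paper does not give an independent proof of this theorem but simply cites \cite[Theorem 1.15]{GI3}, exactly as you do for the nontrivial inclusion $\bigcap_{\chi}\ker\tau_\chi\subset\rk(X)$. Your write-up in fact goes slightly beyond the paper by explicitly verifying the easy inclusion $\rk(X)\subset\bigcap_{\chi}\ker\tau_\chi$ and sketching the mechanism behind the cited result.
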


\begin{remark}\label{re:cross}{\rm
We emphasize the relation between this result and some facts from the
theory of crossed products. The operation of taking the crossed
product by the action of an amenable group transforms exact sequences
in exact sequences \cite[Proposition 3.19]{WilliamsBook}, and hence we have
an exact sequence
\begin{equation}
  0 \to \maC_0(X) \rtimes X \to\ca\rtimes X \to (\ca/\maC_0(X))\rtimes
  X \to 0\,.
\end{equation}
Since $\maC_0(X) \rtimes X \simeq \rk(X)$, we get
$\ca\rtimes X/\rk(X) \simeq\big(\ca/\maC_0(X)\big)\rtimes X$, which
reduces the computation of the quotient $\ca\rtimes X/\rk(X)$ to the
description of $\ca/\maC_0(X)$. This is convenient since 
$\ca/\maC_0(X) \simeq \maC(\delta(\ca))$. Moreover, we have
$\tau_\chi=\tau_\chi\rtimes\mathrm{id_X}$, where the morphisms
$\tau_\chi$ on the right hand side are those corresponding to $\ca$.
We complete this remark by noticing that if $\chi$ and $\chi_1$ are
obtained from each other by a translation by $x \in X$, then the
corresponding morphisms $\tau_{\chi}$ and $\tau_{\chi_1}$ are
unitarily equivalent by the unitary corresponding to $x$. In
particular, in the above theorem and in the following corollary, it
suffices to use one $\chi$ from each orbit of $X$ acting on
$\delta(\ca)$. }
\end{remark}

\begin{remark} {\rm Let us notice that in view of the results in
    \cite{Echterhoff96, WilliamsBook}, the above theorem provides
    nontrivial information on the cross-product algebra
    $\maC(\delta(\ca)) \rtimes X$, and hence on the action of $X$ on
    $\delta(\ca)$. It would be interesting to study the corresponding
    properties for a general Lie group $G$ acting on $\cbu(G)$
    \cite{MantoiueGGroups}.  Morphisms analogous to the $\tau_\chi$
    can be defined also in a groupoid framework \cite{LMN1,
      frascatti}, but they do not have a similar, simple
    interpretation as strong limits. It would be interesting to
    understand the connections between the above theorem and the
    representation theory of groupoids \cite{OrloffHuef12, ConnesBook,
      Skandalis, IonescuWilliams09_2, RenaultJOT87}. Similar
    structures arise also in the representation theory of solvable Lie
    groups \cite{Beltitzas15}.  Moreover, several important examples
    of non-compact manifolds that arise in other problems lead to
    groupoids that are {\em locally} of the form studied in this paper
    (but possibly replacing $X$ by a general Lie group $G$, see
    \cite{Gerard, Moroianu2010, MelroseNistor98} and many other
    papers).}
\end{remark}

Let $A$ be a bounded operator. By definition,
$\lambda \notin \sigma_\ess(A)$ if, and only if, $A - \lambda$ is
Fredholm. For a self-adjoint operator $A$, this is equivalent to the
usual definition: ``$\lambda \in \sigma_{\ess}(A)$ if, and only if,
$\lambda$ is an accumulation point of $\sigma(A)$ or an isolated
eigenvalue of infinite multiplicity.'' The advantage of this second
definition is that it extends right away to unbounded, normal
operators (see, for instance, Remark \ref{rem.e.sp}).  A crucial
observation then is that $\lambda \notin \sigma_\ess(A)$ if, and only
if, the image $\hat{A-\lambda}$ of $A - \lambda$ in the quotient
$\rb(X)/\rk(X)$ is invertible, by Atkinson's theorem. So
$\sigma_\ess(A)=\sigma(\hat{A})$. On the other hand, the spectrum of a
normal operator in a product of $C^*$-algebras is equal to the closure
of the union of the spectra of its components. Thus the theorem above
gives right away the following corollary.

\begin{corollary}\label{co:infty} 
If $A\in\ca\rtimes X$ is normal, then $\sigma_\ess(A) =
\overline{\cup}_{\chi\in\delta(\ca)} \sigma (\tau_\chi(A))$.
\end{corollary}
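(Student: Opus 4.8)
The plan is to deduce this from Theorem \ref{th:infty} together with two standard $C^*$-algebraic facts already invoked in the discussion preceding the statement: that $\sigma_\ess$ coincides with the spectrum in the Calkin algebra (Atkinson), and that the spectrum of a normal element of an $\ell^\infty$-product of $C^*$-algebras is the closure of the union of the component spectra. Write $\mathcal{Q}:=\rb(X)/\rk(X)$ and let $\widehat A\in\mathcal{Q}$ be the image of $A$; it is normal since $A$ is and the quotient map is a $*$-morphism, and $\sigma_\ess(A)=\sigma_{\mathcal{Q}}(\widehat A)$ by Atkinson's theorem.

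First I would record that, by Theorem \ref{th:infty}, each $\tau_\chi$ with $\chi\in\delta(\ca)$ annihilates $\rk(X)$ and hence factors through a morphism $\ca\rtimes X/\rk(X)\to\cbu(X)\rtimes X$, and that the family $(\tau_\chi)_{\chi\in\delta(\ca)}$ assembles into an \emph{injective} $*$-morphism
\[
  \tau:\ \ca\rtimes X/\rk(X)\ \hookrightarrow\ \pprod_{\chi\in\delta(\ca)}\cbu(X)\rtimes X,\qquad \widehat A\ \longmapsto\ \big(\tau_\chi(A)\big)_{\chi\in\delta(\ca)}.
\]
Being an injective morphism of $C^*$-algebras, $\tau$ is isometric, so it preserves spectra of normal elements. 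Since $\widehat A$ lies in the $C^*$-subalgebra $\ca\rtimes X/\rk(X)$ of $\mathcal{Q}$, spectral permanence gives $\sigma_{\mathcal{Q}}(\widehat A)=\sigma_{\ca\rtimes X/\rk(X)}(\widehat A)$; applying $\tau$ and then the product-spectrum fact to the normal family $\big(\tau_\chi(A)\big)_\chi$ — using $\|(\tau_\chi(A)-\lambda)^{-1}\|=\mathrm{dist}(\lambda,\sigma(\tau_\chi(A)))^{-1}$, so that a uniformly bounded family of resolvents is the same as a uniform lower bound on the distances to the spectra — yields $\sigma_\ess(A)=\overline{\cup}_{\chi\in\delta(\ca)}\sigma(\tau_\chi(A))$.

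The one point that needs care, and which I expect to be the only real obstacle, is the bookkeeping of units: $\mathcal{Q}$ is unital but $\ca\rtimes X/\rk(X)$, each $\cbu(X)\rtimes X$, and their product are not, so all the spectra above must be read in the appropriate minimal unitizations. This is harmless once one notes that $\ca\rtimes X$ contains no Fredholm operator — for instance, $\|BM_k f\|\to 0$ as $k\to\infty$ for every $B\in\cbu(X)\rtimes X$ and $f\in L^2(X)$ (reduce to $B=u(q)v(p)$ and use dominated convergence in the Fourier variable), hence $1\notin\cbu(X)\rtimes X\supset\ca\rtimes X$; a short positivity argument combined with spectral permanence then shows $\widehat A$ is never invertible in $\mathcal{Q}$. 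Consequently $0$ belongs to $\sigma_\ess(A)$, to each $\sigma(\tau_\chi(A))$, and to the closure of their union, so the unitizations cause no discrepancy and the chain of equalities above is an honest identity of subsets of $\C$. Normality of $A$ is used exactly once, to pass from boundedness of the resolvents in the product to the distance estimate; everything else is a formal consequence of Theorem \ref{th:infty}.
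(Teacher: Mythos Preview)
Your proof is correct and follows essentially the same route as the paper: Atkinson's theorem identifies $\sigma_\ess(A)$ with the spectrum of $\widehat A$ in the Calkin quotient, Theorem~\ref{th:infty} provides the injective morphism into the product, and the standard fact about spectra of normal elements in $\ell^\infty$-products finishes. The paper simply states that the corollary follows ``right away'' from these ingredients; your additional discussion of unitizations (showing $\ca\rtimes X$ contains no Fredholm operator, so that $0$ lies in every spectrum involved) fills in a detail the paper leaves implicit, but does not change the argument.
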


If $A\in\rb(X)$, then the element $\hat{A}\in\rb(X)/\rk(X)$ may be
called the \emph{localization at infinity} of $A$.  If $A\in\ca\rtimes X$,
then its localization at infinity can be identified with the element
$\tau(A)=(\tau_\chi(A))_{\chi\in\delta(\ca)}$. Then the component
$\tau_\chi(A)\in \cbu(X)\rtimes X$ is called \emph{localization of $A$
  at $\chi\in\delta(\ca)$}. Thus \emph{the essential spectrum of
  $A\in\ca\rtimes X$ is the closure of the union of the spectra of all
  its localizations at infinity}, where the ``infinity'' is determined
by $\ca$.

We extend now the notion of localization at infinity and the formula
for the essential spectrum to certain unbounded self-adjoint operators
related to $\ca\rtimes X$. Recall that a self-adjoint operator $H$ on
a Hilbert space $\ch$ is \emph{affiliated} to a $C^*$-algebra
$\rc\subset \cb(\ch)$ if $(H-z)^{-1}\in\rc$ for some number $z$
outside the spectrum of $H$ \cite{DG1}. Clearly, this implies
$\varphi(H)\in\rc$ for all $\varphi\in\maC_0(\R)$. We shall make some
more comments on this notion after the next corollary.

\begin{corollary}\label{co:infty2} 
  If $H$ is a self-adjoint operator on $L^2(X)$ affiliated to
  $\ca\rtimes X$, then, for each $\chi\in\delta(\ca)$, the limit
  $\tau_\chi(H):=\slim_{x\to\chi}T_x^*HT_x$ exists and $\sigma_\ess(H)
  = \overline{\cup}_{\chi\in\delta(\ca)} \sigma (\tau_\chi(H))$.
\end{corollary}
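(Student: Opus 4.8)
The plan is to push everything down to the bounded normal resolvent $R := (H-\rmi)^{-1}$, apply the already-established bounded version Corollary~\ref{co:infty} to it, and then transfer the conclusions back to $H$ by inverting the resolvent map. By affiliation, $\varphi(H)\in\ca\rtimes X$ for all $\varphi\in\maC_0(\R)$, and together with the resolvent identity and the fact that $\ca\rtimes X$ is $*$-closed this gives $(H-z)^{-1}\in\ca\rtimes X$ for every $z\in\C\setminus\R$; in particular $R\in\ca\rtimes X$, and since $H$ is self-adjoint $R$ is a bounded normal operator with $R-R^*=2\rmi\,RR^*$ and $\sigma(R)\subseteq\Gamma$, where $\Gamma:=\{(\mu-\rmi)^{-1}:\mu\in\R\}\cup\{0\}$ is a circle through the origin. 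Fix $\chi\in\delta(\ca)$ and put $R_\chi:=\tau_\chi(R)$, with $\tau_\chi:\ca\rtimes X\to\cbu(X)\rtimes X$ the morphism of Proposition~\ref{pr:locinf}; by \eqref{eq:lim}, $R_\chi=\slim_{x\to\chi}T_x^*R\,T_x=\slim_{x\to\chi}\bigl(T_x^*H\,T_x-\rmi\bigr)^{-1}$. Since $\tau_\chi$ is a $*$-morphism, $R_\chi$ is normal and inherits the relation $R_\chi-R_\chi^*=2\rmi\,R_\chi R_\chi^*$, so by a standard characterization of resolvents of self-adjoint operators (cf.\ the affiliation criterion Theorem~\ref{th:recall0}, or \cite{DG1}) $R_\chi$ is the resolvent at $\rmi$ of a self-adjoint operator $\tau_\chi(H)$ acting on the closed subspace $\ch_\chi:=\overline{\mathrm{ran}\,R_\chi}=(\ker R_\chi)^\perp$, with $R_\chi$ vanishing on $\ch_\chi^\perp$. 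This operator is, by construction, the generalized strong-resolvent limit $\slim_{x\to\chi}T_x^*H\,T_x$ in the sense of Remark~\ref{re:limit}, and it may fail to be densely defined on $L^2(X)$ precisely when $\ch_\chi\neq L^2(X)$, cf.\ Remark~\ref{re:pathology}.

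For the spectral identity, since $R\in\ca\rtimes X$ is bounded and normal, Corollary~\ref{co:infty} yields $\sigma_\ess(R)=\overline{\cup}_{\chi\in\delta(\ca)}\,\sigma(R_\chi)$. Now let $\psi(\mu):=(\mu-\rmi)^{-1}$, a homeomorphism of $\R\cup\{\infty\}$ onto $\Gamma$ (with $\psi(\infty)=0$) whose restriction $\R\to\Gamma\setminus\{0\}$ is again a homeomorphism. For any self-adjoint operator $A$ on a Hilbert space one has $\sigma\bigl((A-\rmi)^{-1}\bigr)\setminus\{0\}=\psi(\sigma(A))$; applying this to $\tau_\chi(H)$ on $\ch_\chi$, and noting that $R_\chi|_{\ch_\chi^\perp}=0$ contributes only the point $0$, gives $\sigma(R_\chi)\setminus\{0\}=\psi\bigl(\sigma(\tau_\chi(H))\bigr)$. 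On the other hand, writing $H-\lambda=(H-\rmi)\bigl(1-(\lambda-\rmi)R\bigr)$ with $H-\rmi$ boundedly invertible, Atkinson's theorem shows that for $\lambda\in\R$ one has $\lambda\in\sigma_\ess(H)$ iff $1-(\lambda-\rmi)R$ is not Fredholm iff $(\lambda-\rmi)^{-1}\in\sigma_\ess(R)$; hence $\sigma_\ess(H)=\psi^{-1}\bigl(\sigma_\ess(R)\setminus\{0\}\bigr)$, the point $0\leftrightarrow\infty$ being irrelevant since all spectra in sight lie in $\R$. Combining the three displayed facts, $\sigma_\ess(H)=\psi^{-1}\bigl((\overline{\cup}_\chi\,\sigma(R_\chi))\setminus\{0\}\bigr)$, and because $\psi^{-1}:\Gamma\setminus\{0\}\to\R$ is a homeomorphism it commutes with unions and closures, so $\sigma_\ess(H)=\overline{\cup}_\chi\,\psi^{-1}\bigl(\sigma(R_\chi)\setminus\{0\}\bigr)=\overline{\cup}_{\chi\in\delta(\ca)}\,\sigma(\tau_\chi(H))$, as claimed.

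The step I expect to be the main obstacle is the possible non-density of $\mathrm{ran}\,R_\chi$: this is exactly what forces the generalized strong-resolvent interpretation of the limit $\tau_\chi(H)$ (as a self-adjoint operator on the subspace $\ch_\chi$ rather than on all of $L^2(X)$) and what makes necessary the careful removal and reinsertion of the spectral point $0$, equivalently $\infty$, when passing between $\sigma(R_\chi)$ and $\sigma(\tau_\chi(H))$ and between $\sigma_\ess(R)$ and $\sigma_\ess(H)$. Once Proposition~\ref{pr:locinf}, Corollary~\ref{co:infty}, the resolvent characterization of self-adjointness, and the bookkeeping of the point at infinity are in hand, the remaining manipulations are routine.
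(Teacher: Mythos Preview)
Your argument is correct. Both your approach and the paper's reduce to the bounded case via functional calculus, but the routes differ slightly. The paper (see Remark~\ref{rem.e.sp}) works directly with Theorem~\ref{th:infty} and the characterization ``$\lambda\notin\sigma_\ess(H)$ iff $\varphi(H)$ is compact for some $\varphi\in\maC_0(\R)$ with $\varphi(\lambda)\neq0$'': since $\tau_\chi(\varphi(H))=\varphi(\tau_\chi(H))$ by the very definition of the limit in Remark~\ref{re:limit}, compactness of $\varphi(H)$ is equivalent to $\varphi$ vanishing on every $\sigma(\tau_\chi(H))$, and one reads off the closure of the union immediately. You instead fix the single function $\varphi(\mu)=(\mu-\rmi)^{-1}$, apply the bounded normal Corollary~\ref{co:infty} to $R=(H-\rmi)^{-1}$, and then undo the M\"obius map $\psi$. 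This buys you a concrete spectral-mapping computation at the cost of having to track the point $0\leftrightarrow\infty$ carefully; the paper's version avoids that bookkeeping by letting $\varphi$ vary. Two minor comments: your invocation of Atkinson for the unbounded $H-\lambda$ is informal, but the conclusion $\psi(\sigma_\ess(H))=\sigma_\ess(R)\setminus\{0\}$ follows cleanly from the spectral theorem; and the claim that $\psi^{-1}$ ``commutes with closures'' is really the statement that $\psi$ extends to a homeomorphism $\R\cup\{\infty\}\to\Gamma$, so that removing $0$ after closing in $\Gamma$ matches closing in $\R$ after removing $\infty$ --- which is exactly what you need.
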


The meaning of the limit above will be discussed below. 

\begin{remark}\label{rem.e.sp} {\rm
Corollary \ref{co:infty2} is an immediate consequence of Theorem
\ref{th:infty} if one thinks in terms of the functional calculus
associated to $H$.  Indeed, a real $\lambda$ does not belong to
$\sigma_\ess(H)$ if, and only if, there is $\varphi\in\maC_0(\R)$ with
$\varphi (\lambda)\neq0$ such that $\varphi(H)$ is compact.}
\end{remark}

For a detailed discussion of the notion of affiliation that we use in
this paper we refer to \cite[Section\ 8.1]{ABG} (or \cite[Appendix
  A]{DG1}). This notion is inspired by the quantum mechanical concept
of observable as introduced by J.\ Von Neumann in the 1930s (see
e.g.\ \cite[Section\ 3.2]{Var} for a general and precise mathematical
formulation) and later (1940s) developed in the Von Neumann algebra
setting. A notion of affiliation in the $C^*$-algebra setting has also
been introduced by S.\ Baaj and S.L.\ Woronowicz
\cite{BaajJulg,Woronowicz} \emph{but it is different from that we use
  here:} the contrary was erroneously stated in \cite[p.\ 534]{GI2},
but has been corrected in \cite[p.\ 278]{DG1}. For example, any
self-adjoint operator on a Hilbert space $\ch$ is affiliated to the
algebra of compact operators $\ck(\ch)$ in the sense of
Baaj-Woronowicz, but a self-adjoint operator is affiliated to
$\ck(\ch)$ in our sense if, and only if, it has purely discrete
spectrum.

According to our definition, a self-adjoint operator affiliated to an
``abstract'' $C^*$-algebra $\rc$ is the same thing as a real valued
observable affiliated to $\rc$, i.e.\ it is just a morphism
$\Phi:\maC_0(\R)\to\rc$. If $\rc\subset\cb(\ch)$, then a densely
defined self-adjoint operator $H$ defines an observable by
$\Phi(\varphi)=\varphi(H)$ for $\varphi\in\maC_0(\R)$, and we say that
$H$ is affiliated to $\rc$ if this observable is affiliated to
$\rc$. But there are observables affiliated to $\rc$ that are not of
this form: they are associated to self-adjoint operators $K$ acting in
closed subspaces $\ck\subset\ch$ as explained in the next remark.  See
\cite[Section\ 8.1.2]{ABG} for a precise statement and proof.

We now explain the meaning of $\slim_{x\to\chi}T_x^*HT_x$ for
an arbitrary self-adjoint operator $H$.

\begin{remark}\label{re:limit} {\rm Let $Y$ be a topological space,
    $z$ a point in $Y$, and let $\{H_y\}$ be a set of self-adjoint
    operators (possibly unbounded) on a Hilbert space $\ch$,
    parametrized by $Y \smallsetminus \{z\}$.  The example that we
    have in mind is $H_x := T_x^*HT_x$, $x \in X$, and $Y$ obtained
    from $X$ by adding some point of a compactification.  We say that
    \emph{$\slim_{y \to z} H_y$ exists} if the strong limit
    $\Phi(\varphi) := \slim_{y \to z}\varphi(H_y)$ exists for each
    function $\varphi\in \maC_0(\R)$.  It is easy to see that this is
    equivalent to the existence of $\slim_{y \to z}(H_y-\lambda)^{-1}$
    for some $\lambda\in\C\smallsetminus\R$. But we emphasize that
    \emph{this does not mean that there is a self-adjoint operator $K$
      on $\ch$ such that $\Phi(\varphi)=\varphi(K)$ for all
      $\varphi\in \maC_0(\R)$} if the notion of self-adjointness is
    interpreted in the usual sense, which requires the domain to be
    dense in $\ch$. However, the following is true: there is a closed
    subspace $\ck\subset\ch$ and a self-adjoint operator (in the usual
    sense) $K$ in $\ck$ such that
    $\Phi(\varphi)\Pi_\ck=\varphi(K)\Pi_\ck$ and
    $\Phi(\varphi)\Pi_\ck^\perp=0$, where $\Pi_\ck$ is the projection
    onto $\ck$.  The couple $(\ck,K)$ is uniquely defined and we write
    $\slim_{y}H_y=K$.  One may have $\ck=\{0\}$, in which case we
    write $\slim_{y}H_y=\infty$. See the Remark \ref{re:pathology} for
    an example.}
\end{remark}

\section{Spherical compactification}\label{s:sphcomp}
\protect\setcounter{equation}{0}

As before, $X$ is a finite dimensional real vector space. 
We now briefly discuss the definition of the spherical compactification 
$\oX$ of $X$, its topology, and the definition of continuous functions on $\oX$.
Recall that the sphere at infinity $\SS_X$ of $X$ is the set of all half-lines 
$\alpha= \hat a := \RR_+a$, with $\RR_+=\, (0,\infty )$ and $a \in X \smallsetminus
\{0\}$, equipped with the following topology: the open sets in $\SS_X$
are the sets of the form $\{\hat a , \, a\in O\}$ with $O$ open in $X
\smallsetminus \{0\}$.  Let us denote by $\oX$ the disjoint union $X
\cup \SS_{X}$.  If $| \, \cdot \, |$ is an arbitrary norm on $X$, then
$\SS_X$ is homeomorphic to the unit sphere $S_X := \{|\xi|=1\}$ in $X$
and $\overline{X} = X \cup \SS_X$ can be endowed with a natural
topology that makes it homeomorphic to the closed unit ball in
$X$. The resulting topological space $\oX$ will be referred to as {\em
  the spherical compactification of $X$} and is discussed in detail in
this subsection, since we need a good understanding of the continuous
functions on $\oX$.

It is convenient to have an explicit description of the topology of
$\oX$ independent of the choice of a norm.  A {\em cone} $C$ (in $X$)
is a subset of $X$ stable under the action of $\RR_+$ by
multiplication. Put differently, $C$ is a union of half-lines. A
\emph{truncated cone} (in $X$) is the intersection of a cone with the
complement of a bounded set. A half-line $\alpha$ \emph{is eventually
  in the truncated cone $C$} if there is $a\in\alpha$ such $\lambda a
\in C$ if $\lambda\geq1$.  Let $C^\dag\subset\SS_X$ be the set of
half-lines that are eventually in $C$. Then the sets of the form
$C^\dag$, with $C$ an open truncated cone, form a base of the topology
of $\SS_X$.  For any open truncated cone $C$ in $X$, we denote
$C^\ddag := C \cup C^\dag$.  Then the open sets of $X$ and the sets of
the form $C^\ddag$ form a base of the topology of $\oX$.  It is easy
to see that $\oX$ is a compact topological space in which $X$ is
densely and homeomorphically embedded. Moreover, $\oX$ induces on
$\SS_X$ the (compact) topology we defined before.

By definition, a neighborhood of $\alpha\in\SS_X$ in $\oX$ is a set
that contains a subset of the form $C^\ddag$.  We denote by
$\tilde\alpha$ the set of traces on $X$ of the neighborhoods of
$\alpha$ in $\oX$. Thus, a set belongs to $\tilde\alpha$ if, and only
if, it contains an open truncated cone that eventually contains
$\alpha$. Let $Y$ be a topological space and let $u:X\to Y$.  If
$\alpha\in\SS_X$ and $y\in Y$, then the limit $\lim_{x\to\alpha}u(x)$
(or $\lim_\alpha u$) exists and is equal to $y$ if, and only if, for
each neighborhood $V$ of $y$, there is a truncated cone $C$ that
eventually contains $\alpha$ such that $u(x)\in V$ if $x\in C$.  We
shall need the following simple lemma.

\begin{lemma}\label{lm:sx}
  Let $u:X\to\C$ be such that the limit $U(\alpha) :=
  \lim_{x\to\alpha}u(x)$ exists for each $\alpha\in\SS_X$. Then $U$ is
  a continuous function on $\SS_X$. If $u$ is continuous on $X$, then
  its extension by $U$ on $\SS_X$ is continuous on $\oX$.
\end{lemma}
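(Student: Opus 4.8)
\textbf{Proof plan for Lemma \ref{lm:sx}.}

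The plan is to prove the two assertions in turn, the second one building on the first. For the first assertion, fix $\alpha_0\in\SS_X$ and $\varepsilon>0$; I must produce a neighborhood $W$ of $\alpha_0$ in $\SS_X$ such that $|U(\alpha)-U(\alpha_0)|<\varepsilon$ for $\alpha\in W$. By the definition of $U(\alpha_0)=\lim_{x\to\alpha_0}u(x)$, there is an open truncated cone $C$ that eventually contains $\alpha_0$ with $|u(x)-U(\alpha_0)|<\varepsilon/2$ for all $x\in C$. Set $W:=C^\dag$, which is an open neighborhood of $\alpha_0$ in $\SS_X$. Now for any $\alpha\in W$, the half-line $\alpha$ is eventually in $C$, so there is $a\in\alpha$ with $\lambda a\in C$ for all $\lambda\geq 1$; since $U(\alpha)=\lim_{\lambda\to\infty}u(\lambda a)$ and all points $\lambda a$ ($\lambda\geq 1$) lie in $C$, passing to the limit gives $|U(\alpha)-U(\alpha_0)|\leq\varepsilon/2<\varepsilon$. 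Hence $U$ is continuous at $\alpha_0$, and since $\alpha_0$ was arbitrary, $U\in\cc(\SS_X)$.

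For the second assertion, assume in addition $u$ is continuous on $X$, and let $\tilde u:\oX\to\C$ be the function equal to $u$ on $X$ and to $U$ on $\SS_X$. Continuity of $\tilde u$ at points of $X$ is immediate since $X$ is open in $\oX$ and $u$ is continuous there, so the only issue is continuity at a point $\alpha_0\in\SS_X$. Fix $\varepsilon>0$. As above, choose an open truncated cone $C$ eventually containing $\alpha_0$ with $|u(x)-U(\alpha_0)|<\varepsilon$ for all $x\in C$. I claim $C^\ddag=C\cup C^\dag$ is a neighborhood of $\alpha_0$ in $\oX$ on which $|\tilde u-\tilde u(\alpha_0)|\leq\varepsilon$: for $x\in C$ this is the displayed inequality, and for $\alpha\in C^\dag$ we have $|U(\alpha)-U(\alpha_0)|\leq\varepsilon$ by the same limiting argument as in the first part (every $\lambda a\in C$ for large $\lambda$, and $|u(\lambda a)-U(\alpha_0)|<\varepsilon$). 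This shows $\tilde u$ is continuous at $\alpha_0$, completing the proof.

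The argument is essentially bookkeeping with the cone basis of the topology on $\oX$ and $\SS_X$ described just before the lemma; the one genuinely substantive point — really the only place where anything happens — is the interchange of the limit defining $U(\alpha)$ with the inequality valid throughout the cone $C$, which rests on the fact that a half-line eventually in $C$ has a tail $\{\lambda a:\lambda\geq 1\}$ entirely contained in $C$, so the limit $U(\alpha)$ is approached through points of $C$. I do not anticipate any real obstacle; the mild subtlety is simply to be careful that ``eventually contains $\alpha_0$'' is used correctly (it guarantees a ray tail in $C$, not the whole half-line) and that the neighborhoods chosen are genuinely open in the respective topologies, which follows since $C$ is taken to be an open truncated cone.
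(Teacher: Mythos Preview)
Your proof is correct and follows essentially the same approach as the paper's: both use the cone neighborhood $C^\dag$ (resp.\ $C^\ddag$) coming from the hypothesis $\lim_{x\to\alpha_0}u(x)=U(\alpha_0)$ and pass to the limit along a ray tail contained in $C$ to control $|U(\alpha)-U(\alpha_0)|$. The only cosmetic differences are your use of $\varepsilon/2$ (unnecessary but harmless) and your explicit remark that $X$ is open in $\oX$, which the paper leaves implicit.
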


\begin{proof} 
  Let us notice first that
  $\lim_{\lambda\to\infty}u(\lambda a) = U(\alpha)$ for each
  $a\in\alpha\in\SS_X$. Fix $\alpha\in\SS_X$ and $\varepsilon>0$.
  There is an open truncated cone $C$ with $\alpha\in C^\dag$ such
  that $|u(x)-U(\alpha)|<\varepsilon$ for all $x\in C$.  If
  $\beta\in C^\dag$, then, for each $b\in\beta$, we have
  $\lim_{\lambda\to\infty}u(\lambda b) = U(\beta)$. Since
  $\lambda b\in C$ for large $\lambda$, we get that
  $|U(\beta)-U(\alpha)|<\varepsilon$.  Since the sets $C^\dag$ form a
  basis of the topology of $\SS_X$, we see that $U$ is continuous.

  To prove the last statement and thus to complete the proof, let us
  extend $u$ to $\oX$ to be equal to $U$ on $\SS_X$. Then the argument used in the first
  half of the proof implies that $|u(x) - u(\alpha)| < \varepsilon$
  for all $x\in C^\ddag$. Since the sets of the form $C\sp\ddag$ form
  a basis for the system of neighborhoods of $\alpha \in \SS_X$ in
  $\SS_X$ and $\alpha$ is arbitrary, the extension of $u$ to $\oX$ by
  $U$ is continuous on $\SS_X$. Hence if $u$ is continuous on $X$,
  then its extension to $\oX$ is continuous everywhere.
\end{proof}

Since $X$ is a dense subset of $\oX$, we may identify the algebra
$\maCX$ of continuous functions on $\oX$ with a subalgebra of
$\maC(X)$.  We now give several descriptions of this subalgebra that
are independent of the preceding construction of $\oX$. Denote by
$\maC_\rmh(X)$ the subalgebra of $\cbu(X)$ consisting of functions
homogeneous of degree zero outside a compact set:
\begin{equation}\label{eq:ch}
  \maC_\rmh (X) \, := \, \{u\in C(X) ,\, \exists K\subset X \text{
    compact with } u(\lambda x)=u(x) \text{ if } x\nin K, \lambda\geq
  1 \}.
\end{equation}

\begin{lemma}\label{lm:ch}
  The algebra $\maCX$ coincides with the closure of $\maC_\rmh(X)$ in
  $\maC_\rmb(X)$. A function $u\in \maC(X)$ belongs to $\maCX$ if, for
  any compact $A\subset X\setminus\{0\}$, the limit
  $\lim_{\lambda\to+\infty} u(\lambda a)$ exists uniformly in $a\in A$
  and, in this case, for any compact $B\subset X$, we have
\begin{equation}\label{eq:cx3}
\lim_{\lambda\to+\infty} u(\lambda a + b) = u(\hat a) \quad
\text{uniformly in }\, a\in A \ \text{ and }\ b\in B \, . 
\end{equation}
Moreover: 
\begin{equation}
\maCX         = \{ u\in \maC(X) , \, \lim_{x\to\alpha} u(x) \, 
          \text{ exists for each }\, \alpha\in\SS_X \, \} \,. \label{eq:cx2}
\end{equation}
\end{lemma}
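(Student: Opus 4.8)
The plan is to prove the three assertions in the order: the identification $\maCX = \overline{\maC_\rmh(X)}$, then the characterization \eqref{eq:cx2}, then the uniform statement \eqref{eq:cx3}. Throughout I will use Lemma \ref{lm:sx} together with the facts recorded above: $\oX$ is compact, $X$ is dense and open in it, and, once a norm $|\cdot|$ is fixed, $\oX$ is homeomorphic to the closed unit ball via $x\mapsto x/(1+|x|)$ on $X$ and $\hat a\mapsto a/|a|$ on $\SS_X$; in particular $\maCX\cong\maC(\oX)$ is isometrically embedded in $\maC_\rmb(X)$, hence closed there.

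First I would check $\maC_\rmh(X)\subset\maCX$. If $u\in\maC_\rmh(X)$ is homogeneous of degree zero outside the compact set $K$ and $R>\sup_{z\in K}|z|$, then for $|x|>R$ homogeneity gives $u(x)=u\big(Rx/|x|\big)$, so outside the ball of radius $R$ the function $u$ is the pullback under $x\mapsto x/|x|$ of the continuous function $\omega\mapsto u(R\omega)$ on the unit sphere; consequently $\lim_{x\to\alpha}u(x)$ exists for every $\alpha\in\SS_X$ and, by Lemma \ref{lm:sx}, $u$ extends continuously to $\oX$, i.e.\ $u\in\maCX$. Since $\maCX$ is closed in $\maC_\rmb(X)$, this yields $\overline{\maC_\rmh(X)}\subset\maCX$. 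For the reverse inclusion I would apply the Stone--Weierstrass theorem on the compact space $\oX$: viewed as a subalgebra of $\maC(\oX)$, $\maC_\rmh(X)$ is unital and conjugation-stable, it contains $\maC_\rmc(X)$ (a compactly supported continuous function is homogeneous of degree zero outside its support union a ball), so it separates the points of $X$ and separates $X$ from $\SS_X$; and it separates two distinct half-lines, since the pullback to $\{|x|\ge1\}$ of any continuous function on the unit sphere, extended by Tietze to $X$, lies in $\maC_\rmh(X)$. Hence $\overline{\maC_\rmh(X)}=\maC(\oX)=\maCX$. One can bypass Stone--Weierstrass by noting instead that $u_R$, defined as $u$ on $\{|x|\le R\}$ and as $u(Rx/|x|)$ on $\{|x|>R\}$, lies in $\maC_\rmh(X)$ and converges uniformly to $u$ as $R\to\infty$, by uniform continuity of $u$ on the compact ball model of $\oX$.

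Next, \eqref{eq:cx2}: the inclusion $\supset$ is precisely the last assertion of Lemma \ref{lm:sx}, while if $u\in\maCX$ then its continuous extension to $\oX$ is, in particular, continuous at each $\alpha\in\SS_X$, and since the traces on $X$ of the neighbourhoods of $\alpha$ are exactly the sets defining $\lim_{x\to\alpha}$, this says that $\lim_{x\to\alpha}u(x)$ exists and equals $u(\alpha)$. For \eqref{eq:cx3} and the uniform-limit characterization, suppose first $u\in\maCX$ and let $A\subset X\setminus\{0\}$, $B\subset X$ be compact. If $a_n\to a\in A$, $b_n\to b\in B$ and $\lambda_n\to\infty$, then $|\lambda_n a_n+b_n|\to\infty$ and $(\lambda_n a_n+b_n)/|\lambda_n a_n+b_n|\to a/|a|$, so $\lambda_n a_n+b_n\to\hat a$ in $\oX$ while $\hat{a_n}\to\hat a$ in $\SS_X$; feeding this into the continuous extension of $u$ and using compactness of $A\times B$ gives $\sup_{a\in A,\,b\in B}|u(\lambda a+b)-u(\hat a)|\to0$ as $\lambda\to\infty$, which is \eqref{eq:cx3}, the case $B=\{0\}$ being the asserted uniform existence of $\lim_{\lambda\to\infty}u(\lambda a)$. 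Conversely, if for every compact $A\subset X\setminus\{0\}$ the limit $U(\hat a):=\lim_{\lambda\to\infty}u(\lambda a)$ exists uniformly in $a\in A$, then $U$ is continuous on $\SS_X$ (a locally uniform limit of the continuous maps $a\mapsto u(\lambda a)$), and for fixed $\alpha=\hat a$ I would take $A$ to be a small closed ball of directions around $a/|a|$ avoiding $0$: uniform convergence on $A$ plus continuity of $U$ produce, for each $\varepsilon>0$, a $\lambda_0$ and an open truncated cone $C$ with $\alpha\in C^\dag$ (built from the interior of $A$ and radii exceeding $\lambda_0\min_{\xi\in A}|\xi|$) such that $|u(x)-U(\alpha)|<\varepsilon$ on $C$, whence $\lim_{x\to\alpha}u(x)=U(\alpha)$ exists and $u\in\maCX$ by \eqref{eq:cx2}.

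The hard part will be the uniform bookkeeping in the last paragraph: turning the cone-based definition of $\lim_{x\to\alpha}$ into genuine uniform estimates over compact sets of directions, specifically the two points that $\lambda a+b\to\hat a$ in $\oX$ uniformly for $(a,b)$ in a compact subset of $(X\setminus\{0\})\times X$, and that the sets described above really are truncated cones eventually containing $\alpha$. Everything else is a routine assembly of Lemma \ref{lm:sx}, compactness of $\oX$, and Stone--Weierstrass.
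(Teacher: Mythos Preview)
Your proof is correct and complete. The paper itself does not give a proof of this lemma---it simply states ``The proof is an exercise''---so there is no approach to compare against; your argument (the explicit $u_R$ approximation for the density of $\maC_\rmh(X)$, the appeal to Lemma~\ref{lm:sx} for \eqref{eq:cx2}, and the sequential compactness argument in the ball model for \eqref{eq:cx3}) is exactly the kind of routine verification the authors had in mind.
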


The proof is an exercise.  Observe that the topology we introduced on
$\oX$ could be introduced directly in terms of $\maC_\rmh(X)$: for
example, $\tilde\alpha$ is the filter on $X$ defined by the sets
$\{x\in X , \, |u(x)-u(\alpha)|<1\}$ when $u$ runs over
$\maC_\rmh(X)$.

The space $\maC_\rmh (X)$ is not stable under translations if the dimension of
$X$ is larger than one. However, Equation \eqref{eq:cx3} -- or a direct
argument -- immediately gives that $\maCX$ is invariant under
translations, and hence we may consider its crossed product
$\rs(X) := \maCX\rtimes X$ by the action of $X$. This crossed product
is the {\em spherical algebra of $X$} and we shall study it in the next
section. Before doing that, however, let us describe an abelian $C^*$-algebra of the same
nature, but larger than $\maCX$, which is naturally involved in the
construction of self-adjoint operators affiliated to $\re(X)$. In more
technical terms, this new algebra is the Gagliardo completion of
$\maCX$ with respect to $B(\ch^s,\ch)$ for some (hence for all) $s>0$,
where $\ch^s$ is the Sobolev space of order $s$ on $X$, $\ch=L^2(X)$,
and we embed $\maCX\subset B(\ch)\subset B(\ch^s,\ch)$ by identifying
a function with the corresponding multiplication operator. One may
find in \cite[Sec. 2.1]{ABG} a discussion of the Gagliardo completion
in a general setting, but this is not necessary for what follows.

The following notion of \emph{convergence in the mean} will be useful.
Let $\Lambda$ be a compact neighborhood of the origin in $X$ and
$\alpha\in\SS_X$. If $u\in L^1_{\mathrm{loc}}(X)$ and $c\in\C$ then
$\mlim_\alpha u=c$, or $\mlim_{x\to\alpha} u(x)=c$, means
$\lim_{a\to\alpha}\int_{a+\Lambda}|u(x)-c|\dd x=0$. Then we shall have
$\lim_{a\to\alpha}\int_{a+K}|u(x)-c|\dd x=0$ $\forall K\subset X$
compact: indeed, $K$ can be covered by a finite number of translates
of $\Lambda$ and the filter $\tilde\alpha$ is translation invariant
and coarse (see page \pageref{p:trinv}).

Obviously $\cb_0(X)=\{u\in L^\infty(X) \mid \mlim_\alpha u=0\}$ is a
closed self-adjoint ideal of $L^\infty(X)$. We also have
$\cbu(X) \cap \cb_0(X) = \maC_0(X)$ as a consequence of Lemma
\ref{lm:help} that will be proved later on for a general class of
filters.
%
%
The algebra of interest for us is:
\begin{equation}\label{eq:bg}
\cb(\oX)=\maCX+\cb_0(X) \,.
\end{equation}
This is a $C^*$-algebra because the sum of a $C^*$-subalgebra and a
closed self-adjoint ideal is always a $C^*$-algebra.  We have the
following alternative description of $\cb(\oX)$.

\begin{lemma}\label{lm:bg} The set
  $\cb(\oX)$ consists of the functions $u\in L^\infty(X)$ that have
  the following property: for any $\alpha\in\SS_X$, there is $c\in\C$
  such that $\mlim_\alpha u=c$.
\end{lemma}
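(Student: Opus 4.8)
The plan is to prove the two inclusions separately. One direction is immediate from the definition: if $u = u_1 + u_2$ with $u_1 \in \maCX$ and $u_2 \in \cb_0(X)$, then for each $\alpha \in \SS_X$ the limit $\lim_\alpha u_1 =: c$ exists by \eqref{eq:cx2}, hence $\mlim_\alpha u_1 = c$ as well (a genuine limit is in particular a limit in the mean, since $\int_{a+\Lambda} |u_1(x) - c|\,\dd x \leq |\Lambda| \sup_{x \in a+\Lambda}|u_1(x) - c| \to 0$), and $\mlim_\alpha u_2 = 0$ by definition of $\cb_0(X)$; therefore $\mlim_\alpha u = c$ exists. This shows $\cb(\oX)$ is contained in the set described in the statement.

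For the reverse inclusion, let $u \in L^\infty(X)$ be such that for each $\alpha \in \SS_X$ there is $c = c(\alpha) \in \C$ with $\mlim_\alpha u = c(\alpha)$. The first step is to check that the function $\alpha \mapsto c(\alpha)$ is continuous on $\SS_X$; I would do this exactly as in the proof of Lemma \ref{lm:sx}, using that membership of a truncated cone $C$ with $\alpha \in C^\dag$ forces, for $\beta \in C^\dag$, the translates $a + \Lambda$ to eventually lie in $C$ when $a \to \beta$, and exploiting that we may replace $\Lambda$ by any compact neighborhood of the origin (the remark following the definition of $\mlim$). Once $c \in \maC(\SS_X)$, extend it to a function $u_1 \in \maCX$ using \eqref{eq:cx2}: concretely, pick any continuous extension of $c$ to $\oX$ that is, say, radially constant outside a large ball (e.g.\ take $u_1 \in \maC_\rmh(X)$ defined by $u_1(x) = c(\hat x)$ for $|x|$ large and smoothly interpolated near $0$); then $u_1 \in \maCX$ and $\lim_\alpha u_1 = c(\alpha)$ for all $\alpha$. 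Now set $u_2 := u - u_1 \in L^\infty(X)$; by the first direction's computation $\mlim_\alpha u_1 = c(\alpha)$, so $\mlim_\alpha u_2 = c(\alpha) - c(\alpha) = 0$ for every $\alpha \in \SS_X$, i.e.\ $u_2 \in \cb_0(X)$. Hence $u = u_1 + u_2 \in \maCX + \cb_0(X) = \cb(\oX)$.

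The main obstacle is the continuity of $\alpha \mapsto c(\alpha)$: the mean limits $\mlim_\alpha u$ are defined through integrals over translated cells $a + \Lambda$, so passing from "$u$ is close in mean to $c(\alpha)$ on a cone neighborhood of $\alpha$" to "$c(\beta)$ is close to $c(\alpha)$ for nearby $\beta$" requires comparing the cells $b+\Lambda$ as $b \to \beta$ with the given cone, and knowing that these cells eventually fit inside it; this is where the freedom to shrink/translate $\Lambda$ and the fact that $\tilde\alpha$ is a coarse, translation-invariant filter are used. The remaining verifications — that a pointwise (radial) limit implies the mean limit, and that $\maC_\rmh(X) \subset \maCX$ — are routine, the latter being already recorded in Lemma \ref{lm:ch}.
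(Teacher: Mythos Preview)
Your proposal is correct and follows essentially the same route as the paper: establish continuity of $\alpha\mapsto c(\alpha)$ by the argument of Lemma~\ref{lm:sx}, extend it to a function $u_1\in\maC_\rmh(X)\subset\maCX$ via a radial cutoff, and check that $u-u_1\in\cb_0(X)$. The only difference is that the paper, after showing $\mlim_\alpha(u-u_1)=0$ for each $\alpha$, goes on to prove the stronger statement $\lim_{a\to\infty}\int_{a+\Lambda}|u-u_1|\,\dd x=0$ via a compactness argument on $\SS_X$; given the stated definition of $\cb_0(X)$ this extra step is not needed, so your stopping point is in fact sufficient.
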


\begin{proof}
It is clear that the functions in $\cb(\oX)$ have the required
property, so it suffices to prove that a function $u$ as in the
statement of the lemma may be written as a sum $u=v+w$ with
$v\in\maCX$ and $w\in\cb_0(X)$. Observe that the number $c$ is
uniquely defined by $\alpha$, and hence we may define a function
$V:\SS_X\to\C$ by the condition $V(\alpha)=c$. Thus we have  
\begin{equation}\label{eq:bg1}
\lim_{a\to\alpha}\int_{a+\Lambda}|u(x)-V(\alpha)|\dd x 
\, = \, 0 \,,\quad \forall\alpha\in\SS_X \,. 
\end{equation}
Note that \eqref{eq:bg1} means that, for any $\varepsilon>0$, there is an
open truncated cone $C$ that eventually contains $\alpha$ such that
$\int_{a+\Lambda}|u(x)-V(\alpha)|\dd x<\veps$ if $a\in C$. In
particular, if we fix $a\in\alpha$, then we get
$\lim_{r\to+\infty}\int_{ra+\Lambda}|u(x)-V(\alpha)|\dd x =0$. Let us
show now, as in the proof of Lemma \ref{lm:sx}, that $V$ is a continuous
function. Let us fix $\alpha$, $\varepsilon$ and $C$ and consider some
$\beta\in C^\dag$. By what we just proved, we have
$\lim_{r\to+\infty}\int_{rb+\Lambda}|u(x)-V(\beta)|\dd x =0$ for an
arbitrary $b\in\beta$. On the other hand, since $C$ is a truncated
open cone and $\beta$ is eventually in $C$, we have $rb\in C$ for all
large enough $r$, and hence $\int_{rb+\Lambda}|u(x)-V(\alpha)|\dd
x<\veps$. Then 
\[
|V(\alpha)-V(\beta)||\Lambda|\leq 
\int_{rb+\Lambda}|V(\alpha)-u(x)|\dd x +
\int_{rb+\Lambda}|u(x)-V(\beta)|\dd x <2\veps
\]
for large $r$, where $|\Lambda|$ is the measure of $\Lambda$. Since
these $C^\dag$ are a basis of the neighborhoods of $\alpha$ in
$\SS_X$, this proves the continuity of $V$ at the point $\alpha$. 

Now let $\theta:X\to\R$ be a continuous function such that
$\theta(x)=0$ on a neighborhood of zero and $\theta(x)=1$ for large
$x$. Then the function defined by $v(x)=\theta(x)V(\hat{x})$ for
$x\neq0$ and $v(0)=0$ belongs to $\maC_\rmh (X)$, and hence it 
belongs to $\maCX$ as well. On
the other hand, $w:=u-v\in L^\infty$ and, if we set
$W(a)=\int_{a+\Lambda}|w|\dd x$, then $\lim_{a\to\alpha}W(a) =0$ for
each $\alpha\in\SS_X$. This is because
\[
W(\alpha) \leq \int_{a+\Lambda}|u(x)-V(\alpha)|\dd x +
\sup_{x\in a+\Lambda}|v(x)-V(\alpha)| |\Lambda|
\]
and the function $v$ extended by $V$ on $\SS_X$ is continuous on
$\oX$, and hence the last term above tends to zero when $a\to\alpha$. If
$\varepsilon>0$, then for each $\alpha\in\SS_X$ there is an open
truncated cone $C_\alpha$ such that $W(a)<\varepsilon$ if
$a\in C_\alpha$. Since $\{C_\alpha^\dag\}_{\alpha\in\SS_X}$ is an open
cover of the compact $\SS_X$, there is a finite set $A\subset\SS_X$
such that $\SS_X=\cup _{\alpha\in A} C_\alpha^\dag$. Finally, it is
clear that $\cup _{\alpha\in A} C_\alpha$ is a neighborhood of
infinity in $X$ on which we have $W(a)<\veps$, so
$\lim_{a\to\infty}W(a)=0$. 
\end{proof}

Now we give a description of $\cb(\oX)$ as a Gagliardo completion of
$\maCX$. Recall that $u(q)$ is the operator of multiplication by the
function $u$ and $\ch^s$ are Sobolev spaces.

\begin{proposition}\label{pr:bg} The set
  $\cb(\oX)$ consists of the functions $u\in L^\infty(X)$ with the
  following property: there is a sequence of functions $u_n\in\maCX$
  such that
\begin{equation}\label{eq:gag}
\sup_n\|u_n\|_{L^\infty}<\infty \text{\ \ and \ }
\lim_n\|u_n(q)-u(q)\|_{\ch^s\to\ch}=0 \text{\ \ for some real } s>0 \,.
\end{equation}
\end{proposition}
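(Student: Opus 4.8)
The plan is to prove the two inclusions separately. For the easy inclusion, suppose $u \in L^\infty(X)$ admits a sequence $u_n \in \maCX$ with $\sup_n\|u_n\|_{L^\infty} < \infty$ and $\|u_n(q)-u(q)\|_{\ch^s\to\ch}\to 0$. I would fix a compact neighborhood $\Lambda$ of the origin and a test function $\phi\in\CIc(X)$ supported near the origin with $\phi \equiv 1$ on a smaller neighborhood; translating, one checks that for $a \in X$ the operator $T_a^*\phi(q)T_a = (a.\phi)(q)$ has $\ch^s$-norm bounded uniformly in $a$, so that $\|(u_n-u)(q)(a.\phi)(q)\|_{\ch\to\ch} = \|(u_n-u)(q)\|$ restricted to functions supported in $a+\operatorname{supp}\phi$ is small uniformly in $a$, hence $\int_{a+\Lambda'}|u_n - u|^2\,\dd x \to 0$ uniformly in $a$ (for $\Lambda'$ the support of $\phi$), which by Cauchy--Schwarz controls the $L^1$-mean. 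Since each $u_n\in\maCX$ has honest radial limits, $\mlim_\alpha u_n$ exists for each $\alpha$; the uniform-in-$a$ smallness of the mean-oscillation between $u$ and $u_n$ then forces $\mlim_\alpha u$ to exist for every $\alpha\in\SS_X$, so $u\in\cb(\oX)$ by Lemma \ref{lm:bg}.

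For the harder inclusion, take $u\in\cb(\oX)$; by Lemma \ref{lm:bg} and its proof, write $u = v + w$ with $v\in\maCX$ and $w\in\cb_0(X)$, and recall $W(a) := \int_{a+\Lambda}|w|\,\dd x$ satisfies $\lim_{a\to\infty}W(a)=0$. It suffices to approximate $w$ alone, since $v$ is already in $\maCX$. I would mollify: fix $\psi\in\CIc(X)$, $\psi\geq 0$, $\int\psi = 1$, supported in $\Lambda$, and set $w_n := w * \psi_{1/n}$ where $\psi_{1/n}(x) = n^{\dim X}\psi(nx)$; then $\|w_n\|_{L^\infty}\leq\|w\|_{L^\infty}$, and each $w_n$ is bounded and uniformly continuous. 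Moreover $w_n(x) \to 0$ as $x\to\infty$ in the sense that $|w_n(x)|\leq C_n W(x)\to 0$, so in fact $w_n\in\maC_0(X)\subset\maCX$. The remaining point is the convergence $\|w_n(q)-w(q)\|_{\ch^s\to\ch}\to 0$: writing $w_n(q)-w(q) = \int \psi_{1/n}(y)\,(T_y^* w(q) T_y - w(q))\,\dd y$ is not quite right since $w$ is a multiplication operator; instead I would use that $w(q)-w_n(q)$, as a map $\ch^s\to\ch$, can be bounded by estimating $w(q)(1-\jap{p}^{-s})^{-1}$... more cleanly, one uses $\|w(q)f\|_{\ch}^2 = \int |w(x)|^2|f(x)|^2\dd x$ and a Sobolev embedding / local-uniformization argument: cover $X$ by unit cubes $Q_j$ centered at lattice points $a_j$, use that $\|f\|_{L^2(Q_j)}^2 \lesssim \|f\|_{\ch^s}$-localized and $\int_{Q_j}|w-w_n|^2 \to 0$ uniformly in $j$ because the mean oscillation of $w$ over cubes is controlled and mollification kills it uniformly.

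The main obstacle, and the step I would spend the most care on, is precisely this last convergence $\|w_n(q)-w(q)\|_{\ch^s\to\ch}\to 0$: the operator norm from $\ch^s$ to $\ch$ is not an $L^\infty$ norm on the symbol, so one cannot simply quote $\|w - w_n\|_{L^\infty}\to 0$ (which is false). The right tool is the boundedness of multiplication $\ch^s\to\ch$ in terms of a \emph{local} (uniform-in-translation) $L^p$ norm of the symbol for suitable $p$ depending on $s$ and $\dim X$ — essentially the Gagliardo/Kato-class type estimate alluded to after \eqref{eq:bg} — combined with the fact that $\sup_a \int_{a+\Lambda}|w - w_n| \to 0$, which follows from $\sup_a W(a) < \infty$ together with $W(a)\to 0$ at infinity (giving uniform integrability) and the standard fact that mollification converges to the identity in such uniformly-local $L^1$. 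I would state this boundedness as a lemma (or cite \cite[Sec.\ 2.1]{ABG}) and then the Proposition follows by combining it with the decomposition $u = v + w$ above.
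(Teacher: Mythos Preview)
Your proposal is correct and follows essentially the same route as the paper: reduce via the decomposition $u=v+w$ to the pair $(\cb_0(X),\maC_0(X))$, use translated test functions for the direction ``approximable $\Rightarrow$ mean limits exist'', and mollify for the converse. The paper makes the step you flag as the main obstacle entirely explicit: for $s>\dim X/2$ one has the elementary bound
\[
\|f(q)\|^2_{\ch^s\to\ch}\ \leq\ C\,\sup_{a}\int_{a+\Lambda}|f|^2\,\dd x\ \leq\ C\,\|f\|_{L^\infty}\,\sup_{a}\int_{a+\Lambda}|f|\,\dd x,
\]
and then controls $\sup_a\int_{a+\Lambda}|w-w*\theta_\varepsilon|\,\dd x$ exactly as you sketch, by splitting into a bounded region (where $L^1_{\mathrm{loc}}$-convergence of mollifiers applies) and the region near infinity (where $W(a)\to 0$ makes both terms small). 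Your phrase ``mollification converges to the identity in such uniformly-local $L^1$'' is slightly imprecise as a blanket statement---it would fail for a general $L^\infty$ function---so be sure to invoke $W(a)\to 0$ explicitly when you write it up.
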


\begin{proof}
  We begin by noticing that the condition \eqref{eq:gag} is
  independent of $s$. In fact, if it holds for some $s$ then clearly
  it remains true if we replace $s$ by any $t\geq s$ and it will also
  hold for $0<t<s$ because if we set $T=u_n(q)-u(q)$ then we have
\[
\|T\|_{\ch^t\to\ch} \leq \|T\|^\mu_{\ch^s\to\ch} \|T\|^\nu_{\ch\to\ch}
\quad\text{with } \mu=t/s,\ \nu=1-t/s\,.
\]
It is clear that what we really have to prove is the same assertion,
but with $\cb(\oX)$ replace by $\cb_0(X)$ and $\maCX$ replaced by
$\maC_0(X)$. Assume first that $u\in L^\infty$ can be approximated
with functions $u_n\in\maC_0$ as in \eqref{eq:gag} and let
$\varepsilon_n=\|u_n(q)-u(q)\|_{\ch^s\to\ch}$. Let
$\eta\in \cc^\infty_\rmc$ such that $\eta(x)$ is a constant $c\neq0$
on $\Lambda$ and $\|\eta\|_{\ch^s}=1$ and let us denote
$\eta_a(x)=\eta(x-a)$. Then
$\|(u-u_n)\eta_a\|_\ch\leq\varepsilon_n\|\eta_a\|_{\ch^s}=\varepsilon_n$,
and hence $\|u\eta_a\|_\ch\leq \varepsilon_n+\|u_n\eta_a\|_\ch$. Since
$u_n\in\cc_0$, there is a neighborhood $U_n$ of infinity in $X$ such
that $\|u_n\eta_a\|_\ch\leq\varepsilon_n$ if $a\in U_n$, and then we
get $\int_{a+\Lambda}|u|^2\dd x\leq 4c^{-2}\varepsilon_n^2$ for
$a\in U_n$. This clearly implies $u\in\cb_0$.

Reciprocally, let $u\in\cb_0$. Choose a positive function
$\theta\in\cc^\infty_\rmc (X)$ with $\int\theta(x)\dd x=1$ and let
$\theta_\varepsilon(x)=\theta(x/\varepsilon)/\varepsilon^{d}$ if the
dimension of $X$ is $d$.  Then it is clear that the convolution
product $=u*\theta_\varepsilon$ belongs to $\cc_0(X)$ and
$\|u*\theta_\varepsilon\|_{L^\infty}\leq\|u\|_{L^\infty}$. Hence it
suffices to prove that there is $s>0$ such that
$\|u*\theta_\varepsilon(q)-u(q)\|_{\ch^s\to\ch}\to0$ if
$\varepsilon\to0$. But this is easy because, for $s>d/2$, we have an
estimate 
\[
\|f(q)\|^2_{\ch^s\to\ch} \leq C \sup_a\int_{a+\Lambda} |f|^2\dd x
\leq C \|f\|_{L^\infty} \sup_a\int_{a+\Lambda} |f|\dd x \,.
\]
We take here $f=u*\theta_\varepsilon-u$ and note that $\int_K
|u*\theta_\varepsilon-u| \dd x \to0$ for any compact $K$ while, for
large $a$, we use the relation $\lim_{a\to\infty}\int_{a+\Lambda}|u|\dd
x=0$.  
\end{proof}

\section{The spherical algebra }\label{s:sphalg}
\protect\setcounter{equation}{0}

We study now the spherical algebra $\rs(X) := \maCX\rtimes X$ defined
in the Introduction. We begin with a lemma that will be needed in the
proof of Theorem \ref{th:sphalg}. In order to clarify the statement of
the following lemma and in order to prepare the ground for the use of
filters in other proofs, we recall now some facts about filters
\cite{Bourbaki}.
  
A {\em filter on $X$} is a set $\xi$ of subsets of $X$ such that: (1)
$X\in\xi$, (2) $\emptyset\nin\xi$, (3) if $\xi\ni F \subset G$, then
$G\in\xi$, and (4) if $F,G \in \xi$ then $F \cap G \in \xi$.  If $Y$
is a topological space and $u:X\to Y$, then $\lim_{\xi} u=y$, or
$\lim_{x\to \xi}u(x)=y$, means that $u^{-1}(V)\in\xi$ for any
neighborhood $V$ of $y$.  The filter $\xi$ on $X$ is called
\emph{translation invariant} \label{p:trinv} if, for each $F\in\xi$
and $x\in X$, we have $x+F\in\xi$. We say that $\xi$ is \emph{coarse}
if, for each $F\in\xi$ and each compact $K$ in $X$, there is $G\in\xi$
such that $G+K\subset F$.  Recall that we have denoted by
$\tilde\alpha$ the set of traces on $X$ of the neighborhoods of
$\alpha$ in $\oX$.  Clearly $\tilde\alpha$ is a translation invariant
and coarse filter on $X$ for each $\alpha\in\SS_X$.

\begin{lemma}\label{lm:help}
  Let $\xi$ be a translation invariant filter in $X$, let $\Lambda$ be a
  compact neighborhood of the origin, and $u\in\cbu(X)$. Then
\begin{equation}\label{eq:limit}
  \lim_\xi u \, = \, 0 \ \Leftrightarrow \ \lim_{a\to\xi} \,
  \int_{a+\Lambda}|u(x)| \, \d x \, = \, 0 \ \Leftrightarrow
  \ \slim_{a\to\xi}\, u(q+a)\, = \, 0 \, .
\end{equation} 
\end{lemma}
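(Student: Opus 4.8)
The plan is to prove the chain of equivalences by establishing the three implications that close the cycle: $\lim_\xi u = 0 \Rightarrow \slim_{a\to\xi} u(q+a) = 0 \Rightarrow \lim_{a\to\xi}\int_{a+\Lambda}|u(x)|\,\d x = 0 \Rightarrow \lim_\xi u = 0$. The key point throughout is that $u$ is \emph{uniformly} continuous, so that its smallness at one scale propagates to a neighborhood of controlled size, and that $\xi$ is translation invariant, so that the sets witnessing a limit can be translated freely.

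First I would show $\lim_\xi u = 0 \Rightarrow \slim_{a\to\xi} u(q+a) = 0$. Fix $\veps > 0$ and $f \in L^2(X)$; by density it suffices to treat $f$ with compact support $K$, say $\|f\| = 1$. Using uniform continuity, pick $\delta > 0$ so that $|u(x) - u(y)| < \veps$ whenever $|x-y| < \delta$; cover $K$ by finitely many balls of radius $\delta/2$ centered at points $x_1,\dots,x_m$. The set $F := \{ x : |u(x)| < \veps \}$ belongs to $\xi$, and since $\xi$ is translation invariant, $F - x_i \in \xi$ for each $i$, hence $G := \bigcap_i (F - x_i) \in \xi$. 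For $a \in G$ and $x \in K$, writing $x$ near some $x_i$, we get $|u(x+a)| \le |u(x_i + a)| + \veps < 2\veps$. Therefore $\|u(q+a) f\|^2 = \int_K |u(x+a)|^2 |f(x)|^2 \d x \le 4\veps^2$ for all $a \in G$, which is exactly $\slim_{a\to\xi} u(q+a) = 0$.

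Next, $\slim_{a\to\xi} u(q+a) = 0 \Rightarrow \lim_{a\to\xi}\int_{a+\Lambda}|u(x)|\,\d x = 0$: apply the strong limit to a fixed test function, e.g.\ $f = \mathbf{1}_\Lambda$ (the indicator of $\Lambda$, which lies in $L^2$ since $\Lambda$ is compact); then $\|u(q+a)\mathbf{1}_\Lambda\|^2 = \int_\Lambda |u(x+a)|^2 \d x = \int_{a+\Lambda}|u(x)|^2 \d x \to 0$, and by Cauchy--Schwarz $\int_{a+\Lambda}|u(x)|\d x \le |\Lambda|^{1/2}\big(\int_{a+\Lambda}|u(x)|^2\d x\big)^{1/2} \to 0$. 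Finally, for $\lim_{a\to\xi}\int_{a+\Lambda}|u(x)|\,\d x = 0 \Rightarrow \lim_\xi u = 0$, I would argue by contradiction via uniform continuity: if $u$ does not tend to $0$ along $\xi$, there is $\veps_0 > 0$ such that $\{|u| \ge \veps_0\}$ meets every $F \in \xi$; using uniform continuity choose $r > 0$ so small that $|u(y)| \ge \veps_0/2$ on the ball $B(x_0, r)$ whenever $|u(x_0)| \ge \veps_0$, and shrink $\Lambda$ (harmless, as the three conditions are insensitive to the choice of compact neighborhood, a point the paper already notes) so that $\Lambda \supset B(0,r)$ would force $\int_{x_0 - \text{(center shift)} + \Lambda}|u| \ge (\veps_0/2)\,\mathrm{vol}\,B(0,r)$; translating so the bad point sits inside $a + \Lambda$ and using translation invariance of $\xi$ to keep $a$ ranging over a set in $\xi$, this contradicts the hypothesis that the integral tends to $0$.

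The main obstacle is the last implication: one must convert an "$\int_{a+\Lambda}|u| \to 0$ along $\xi$" statement into a genuine pointwise "$u \to 0$ along $\xi$" statement, and this is exactly where uniform continuity is indispensable — without it a function can have small local integral averages while staying large on thin spikes. The bookkeeping to make the translation-invariance of $\xi$ interact correctly with the covering/ball arguments (ensuring that after translating the "bad" region into $a+\Lambda$ the parameter $a$ still ranges over a member of $\xi$) is the delicate part; everything else is a routine application of uniform continuity plus Cauchy--Schwarz.
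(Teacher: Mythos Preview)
Your cycle of implications is correct. The paper organizes the proof differently: it proves the two bi-implications $\slim_{a\to\xi}u(q+a)=0 \Leftrightarrow \lim_{a\to\xi}\int_{a+\Lambda}|u|=0$ and $\lim_\xi u=0 \Leftrightarrow \lim_{a\to\xi}\int_{a+\Lambda}|u|=0$ separately. For the direction $\lim_\xi u=0 \Rightarrow \int\to0$ it passes to the \emph{coarse} (round) envelope of $\xi$, which yields $F\in\xi$ with $F+\Lambda\subset\{|u|<\veps\}$ in one stroke. Your route avoids coarseness entirely: you go $\lim_\xi u=0 \Rightarrow \slim$ by a finite-cover argument using only translation invariance, then $\slim \Rightarrow \int$ by testing on $\mathbf 1_\Lambda$. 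This is a bit more elementary, at the cost of the extra covering bookkeeping.

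Two expository points on your third step. First, the phrase ``shrink $\Lambda$ \dots\ so that $\Lambda\supset B(0,r)$'' is backwards: you simply need $r$ small enough that $B(0,r)\subset\Lambda$, which is automatic since $\Lambda$ is a neighborhood of the origin. Second, translation invariance of $\xi$ is not needed here. With $c:=(\veps_0/2)\,|B(0,r)|$ and $F:=\{a:\int_{a+\Lambda}|u|<c\}\in\xi$, pick $x_0\in F\cap\{|u|\ge\veps_0\}$ (this intersection is nonempty because $\{|u|<\veps_0\}\notin\xi$ forces its complement to meet every member of $\xi$) and take $a=x_0$; then $B(x_0,r)\subset x_0+\Lambda$ gives $\int_{x_0+\Lambda}|u|\ge c$, contradicting $x_0\in F$. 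The paper's version of this same idea is the one-line averaging inequality $u(a)\,|L|\le \veps\,|L|+\int_{a+L}|u|$ for a small symmetric $L\subset\Lambda$, which you may prefer as a cleaner formulation.
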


\begin{proof} 
  Recall that $u(q)$ denotes the operator of multiplication by $u$ and
  $u(q+a)$ is its translation by $a$.  We have
  $\slim_{a \to \xi} u(q+a)=0$ if, and only if,
  $\int |u(x+a)f(x)|^2\d x \to 0$ as $a\to\xi$ for all $f\in L^2(X)$,
  by the definition of the strong limit. By taking $f$ to be the
  characteristic function of the compact set $\Lambda$ and by using
  the Cauchy-Schwartz inequality, we obtain
  $\lim_{a\to\xi}\int_{a+\Lambda}|u(x)|\d x =0$.  Reciprocally, if
  this relation is satisfied then it is also satisfied with $\Lambda$
  replaced by any of its translates because $\xi$ is translation
  invariant. By summing a finite number of such relations, we get
  $\lim_{a\to\xi}\int_{a+K}|u(x)|\d x =0$ for any compact $K$. Since
  $u$ is bounded, we also obtain
  $\lim_{a\to\xi}\int_{a+K}|u(x)|^2\d x =0$ 
  and so $\lim_{a\to\xi}\int |u(x+a)f(x)|^2\d x =0$, for any simple
  function $f$. Using again the boundedness of $u$, we then obtain
  $\lim_{a\to\xi}\int |u(x+a)f(x)|^2\d x =0$ for $f\in L^2(X)$.

We now show that $\lim_\xi u=0$ is equivalent to
$\lim_{a\to\xi}\int_{a+\Lambda}|u(x)|\d x=0$. We may assume $u\geq0$, and
since $u$ and $a\mapsto \int_{a+\Lambda}u(x)\d x$ are bounded uniformly
continuous functions, we may also assume that $\xi$ is
coarse\footnote{This follows from \cite[Lemma 2.2]{GI3} and a simple
  argument, which shows that the round envelope of a translation
  invariant filter is coarse. We do not include the details since in
  our applications $\xi=\tilde\alpha$, which is coarse.}.  If
$\lim_\xi u=0$, then $\{u<\varepsilon\}\in\xi$, for any
$\varepsilon>0$. Since $\xi$ is coarse, there is $F\in\xi$ such that
$F+\Lambda\subset\{u<\varepsilon\}$, and hence, if $a\in F$, then
$\int_{a+\Lambda}u (x)\d x\leq \varepsilon |a+\Lambda|= \varepsilon |\Lambda|$.  Thus we
have $\lim_{a\to\xi}\int_{a+\Lambda} u(x)\d x =0$.  Conversely, assume that
this last condition is satisfied and let $\varepsilon>0$ Since $u$ is
uniformly continuous, there is a compact symmetric neighborhood
$L\subset \Lambda$ of zero such that $|u(x)-u(y)|<\varepsilon$ if
$x,y\in L$. Then
\[
   u(a) |L| \, = \, \int_{a+L} (u(a)-u(x))\d x + \int_{a+L} u(x)\d x \leq
   \varepsilon |L| + \int_{a+L} u(x)\d x \,,
\]
and hence $\limsup_{a\to\xi} u(a)\leq \varepsilon |L|$. 
\end{proof}

Recall that $\tau_a(S)=T_a^* S T_a$, where the unitary translation
operators $T_a$ are defined in \eqref{eq:trans.mult}.

\begin{theorem}\label{th:sphalg} 
The algebra $\rs(X) := \maCX\rtimes X$ consists of the $S \in \rb(X)$
that have the position-momentum limit property and are such that
$\slim_{a\to\alpha} \tau_a(S)^{(*)}$ exists $\forall\ \alpha\in\SS_X$.
\end{theorem}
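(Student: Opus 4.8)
\emph{Strategy.} I would establish the two inclusions separately, reducing everything to the characterization of $\cbu(X)\rtimes X$ by the position--momentum limit property (recalled after Definition \ref{df:pmlp}) and to the scalar statement contained in Lemma \ref{lm:help}. The inclusion $\rs(X)\subseteq\{\,\cdots\,\}$ is formal: if $S\in\rs(X)=\maCX\rtimes X$ then, since $\maCX\subseteq\cbu(X)$, we have $S\in\cbu(X)\rtimes X$, so $S$ has the position--momentum limit property; moreover, by Lemma \ref{lm:ch} the Gelfand spectrum of $\maCX$ is the spherical compactification $\oX$, hence $\delta(\maCX)=\SS_X$, and applying \eqref{eq:lim} with $\ca=\maCX$ and $\chi=\alpha\in\SS_X$ shows that $\tau_\alpha(S)=\slim_{a\to\alpha}T_a^*ST_a$ exists; the same applies to $S^{*}\in\rs(X)$.

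\emph{The scalar case.} For the reverse inclusion I would first prove: if $u\in\cbu(X)$ and $\slim_{a\to\alpha}\tau_a(u(q))$ exists for some $\alpha\in\SS_X$, then this limit is a scalar $c$ and $\lim_{x\to\alpha}u(x)=c$ in $\oX$. Write $B$ for the strong limit. Being a strong limit of the uniformly bounded multiplication operators $\tau_a(u(q))=(a.u)(q)$, each of which commutes with every $v(q)$, $v\in L^\infty(X)$, the operator $B$ commutes with all such $v(q)$ and hence lies in the maximal abelian algebra of multiplication operators, $B=w(q)$. On the other hand, translation invariance of the filter $\tilde\alpha$ forces $\tau_z(B)=B$ for every $z\in X$, so $w(\cdot+z)=w$ a.e.\ for all $z$ and $w$ is constant, $B=c\cdot\mathrm{id}$. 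Now Lemma \ref{lm:help}, applied to $u-c\in\cbu(X)$ and to the coarse, translation invariant filter $\tilde\alpha$, gives $\lim_{x\to\alpha}(u-c)=0$. If this holds for all $\alpha\in\SS_X$, then $u$ is a continuous function with a limit at every point of $\SS_X$, so Lemma \ref{lm:sx} and \eqref{eq:cx2} give $u\in\maCX$.

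\emph{From functions to the crossed product.} Let $S\in\rb(X)$ satisfy the hypotheses. The position--momentum limit property gives $S\in\cbu(X)\rtimes X$, and Proposition \ref{pr:locinf} together with Theorem \ref{th:infty} (with $\ca=\cbu(X)$) provide the localizations $\tau_\chi(S)=\slim_{x\to\chi}\tau_x(S)$, $\chi\in\widehat{\cbu(X)}$. The inclusion $\maCX\hookrightarrow\cbu(X)$ dualizes to a continuous surjection $q\colon\widehat{\cbu(X)}\to\oX$ restricting to the identity on $X$ and mapping the corona $\delta(\cbu(X))$ onto $\SS_X$; if $q(\chi)=\alpha$ then every neighbourhood of $\alpha$ in $\oX$ pulls back to a neighbourhood of $\chi$, so the trace on $X$ of the neighbourhood filter of $\chi$ refines $\tilde\alpha$. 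Hence, since $\slim_{a\to\alpha}\tau_a(S)$ exists by assumption, $\tau_\chi(S)$ equals this limit for every $\chi$ over $\alpha$, i.e.\ $\chi\mapsto\tau_\chi(S)$ factors through $q$. Since $\maC_0(X)\rtimes X=\rk(X)$ lies in both crossed products and $\maCX/\maC_0(X)\cong\maC(\SS_X)$ embeds in $\cbu(X)/\maC_0(X)\cong\maC(\delta(\cbu(X)))$ exactly as the functions constant on the fibres of $q$ (Remark \ref{re:cross}), this fibrewise constancy of the localizations, combined with the scalar case above applied to the components of $S$, is what one then uses to conclude $S\in\maCX\rtimes X$; the hypothesis on $S^{*}$ is then automatic.

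\emph{Main obstacle.} The delicate step is the last one: passing from ``all corona-localizations $\tau_\chi(S)$ factor through $q$'' to ``$S\in\maCX\rtimes X$''. This amounts to recovering $S$ from its Fourier coefficients along $X$ and controlling the crossed product $\maC(\delta(\cbu(X)))\rtimes X$ by its fibres over $\SS_X$; here one uses exactness of crossed products by the amenable group $X$, the fact that the $X$-action on $\SS_X$ is trivial (so $\maC(\SS_X)\rtimes X\cong\maC(\SS_X)\otimes C^{*}(X)$, cf.\ \eqref{eq:quot-intro}), and a finite partition of unity on the compact sphere $\SS_X$ to localize $S$ near each direction before invoking the scalar case.
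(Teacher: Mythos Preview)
Your easy inclusion and your ``scalar case'' are correct and essentially coincide with the paper's argument (the paper verifies the limit directly on generators $u(q)v(p)$, you quote \eqref{eq:lim} with $\ca=\maCX$; both are fine, and Lemma~\ref{lm:help} is used identically).

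The reverse inclusion at the operator level, however, has a genuine gap, and the paper's route is different and much shorter. The paper does not pass through the corona $\delta(\cbu(X))$ at all. It observes that the set $\ra$ of operators satisfying the hypotheses is a $C^*$-subalgebra of $\cbu(X)\rtimes X$ (here the $S^{*}$ hypothesis is actually used, contrary to your final remark that it is ``automatic''), checks that $\ra$ is stable under $S\mapsto T_xS$ and $S\mapsto M_kSM_k^{*}$, and then invokes \cite[Theorem~3.7]{GI3}: any such $\ra$ is of the form $\ca\rtimes X$, with $\ca=\{u\in\cbu(X):u(q)v(p),\bar u(q)v(p)\in\ra\ \forall v\in\maC_0(X^*)\}$. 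This reduces the whole question to the scalar case, which you have already handled, yielding $\ca=\maCX$.

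Your alternative strategy---show that $\chi\mapsto\tau_\chi(S)$ factors through $q:\widehat{\cbu(X)}\to\oX$ and deduce $S\in\maCX\rtimes X$---is not completed, and the sketch under ``Main obstacle'' is not yet an argument. Theorem~\ref{th:infty} gives only an injection of $\cbu(X)\rtimes X/\rk(X)$ into a product $\prod_\chi\cbu(X)\rtimes X$; without identifying the image, fibrewise constancy of $\tau_\chi(S)$ over $q$ does not place the class of $S$ in the subalgebra $\maC(\SS_X)\rtimes X$. Your phrase ``the scalar case above applied to the components of $S$'' presupposes a decomposition of a general element of $\cbu(X)\rtimes X$ into multiplication operators that does not exist in any usable form. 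The partition-of-unity/exactness idea might be developable (one would need to show that an element of $\maC(\delta(\cbu(X)))\rtimes X$ whose localizations are constant on $q$-fibres lies in $q^{*}\maC(\SS_X)\rtimes X$, which is not a formal consequence of exactness), but as written it is a programme, not a proof. The structural theorem \cite[Theorem~3.7]{GI3} is precisely the tool that bypasses all of this.
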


\begin{proof} 
Let $\ra$ be the set of bounded operators that have the properties in
the statement of the theorem. We first show that $\rs(X) \subset \ra$.
Recall that in the concrete realization we mentioned above,
$\cbu(X)\rtimes X$ is identified with the norm closed linear space
generated by the operators $S=u(q)v(p)$ with $u\in \cbu(X)$ and
$v\in\maC_0(X^*)$, while $\maCX\rtimes X$ is the norm closed subspace
generated by the same type of operators, but with $u\in \maCX$.  It
follows that an operator $S = u(q)v(p)$, with $u\in \maCX$, has the
position-momentum limit property and
\begin{equation}\label{eq:liminfty}
  \slim_{a\to\alpha} T_a^* S T_a \, = \, \slim_{a \to \alpha} u(q+a)v(p)
  =u(\alpha)v(p) \,,
\end{equation}
because of relation \eqref{eq:cx3}. Thus $\rs(X)\subset \ra$ and it
remains to prove the opposite inclusion.

It is clear that $\ra$ is a $C^*$-algebra. From \cite[Theorem
  3.7]{GI3} it follows that $\ra$ is a crossed product $\ra=\ca\rtimes
X$ with $\ca\subset\cbu(X)$ if, and only if, $\ra\subset
\cbu(X)\rtimes X$ and
\begin{equation}\label{eq:cond}
  x\in X, k\in X^*, S\in\ra \Rightarrow T_x S\in\ra \text{ and }
  M_k S M_k^*\in\ra .
\end{equation}
By the definition of $\ra$, the condition $\ra\subset \cbu(X)\rtimes
X$ is obviously satisfied. Moreover, we have $T_a^*T_xS
T_a = T_x T_a^* S T_a$ and $T_a^* M_k S M_k^* T_a = M_k T_a^*S T_a M_k^*$, and
hence the last two conditions in \eqref{eq:cond} are also satisfied.
Therefore $\ra$ is a crossed product. Theorem 3.7 form \cite{GI3}
gives more: the unique translation invariant $C^*$-subalgebra
$\ca\subset \cbu(X)$ such that $\ra=\ca\rtimes X$ is the set of
$u \in\cbu(X)$ such that $u(q)v(p)$ and $\bar{u}(q)v(p)$ belong to
$\ra$ if $v \in \maC_0(X^*)$.  In our case, we see that $\ca$ is the
set of all $u \in\cbu(X)$ such that $\slim_{a\to\alpha} T_a^*u(q)^{(*)}
T_a v(p)$ exists for all $\alpha\in\SS_X$ and $v\in\maC_0(X^*)$.  But
the operators $T_a^*u(q)^{(*)}T_a = u^{(*)}(q+a)$ are normal and
uniformly bounded and the union of the ranges of the operators $v(p)$
is dense in $L^2(X)$, and hence
\[
  \ca \, = \, \{ u\in\cbu(X) , \, \exists\, \slim_{a\to\alpha}u(q+a)
  \ \forall\alpha\in \SS_X \}.
\]
Let us fix $\alpha$ and let $u\in\cbu(X)$ be such that the limit
$\slim_{a\to\alpha} u(q+a)$ exists. This limit is a function, but
since the filter $\tilde\alpha$ is translation invariant, this
function must be in fact a constant $c$. Applying Lemma \ref{lm:help}
to $u-c$ we get $\lim_\alpha u = c$. Lemma \ref{lm:ch} then gives
\[
  \ca \, = \, \{ u\in\cbu(X) , \, \exists\, \lim_{x\to\alpha}u(x)
  \ \forall\alpha\in \SS_X \} \, = \, \maCX .
\]   
This proves the theorem.
\end{proof}

For each $\alpha\in\SS_X$ and $S\in \rs(X) := \maCX\rtimes X$, we then
define
\begin{equation}\label{eq.def.Pa}
  \tau_\alpha(S) \, := \, \slim_{a\to\alpha} T_a^* S T_a \,.
\end{equation}

\begin{theorem}\label{th:ker}
  If $S\in \rs(X)$ and $\alpha\in\SS_X$, then $\tau_\alpha(S) \in
  C^*(X)$ and the map $\tau(S):\alpha\mapsto\tau_\alpha(S)$ is norm
  continuous, and hence $\tau: \rs(X) \to C(\SS_X)\otimes C^*(X)$. The
  resulting morphism $\tau$ is a surjective morphism and its kernel is
  the set $\rk(X)=\maC_0(X)\rtimes X$ of compact operators on
  $L^2(X)$.  Hence we have a natural identification
\begin{equation}\label{eq:quot}
  \rs(X) /\rk(X) \, \cong \, \cc(\SS_X)\otimes C^*(X) 
   \, \cong \, \cc_0(\SS_X \times X\sp{*}) \, . 
\end{equation}
\end{theorem}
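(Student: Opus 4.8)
The plan is to deduce the statement from Theorem~\ref{th:sphalg}, Proposition~\ref{pr:locinf} and Theorem~\ref{th:infty}, all applied to the commutative algebra $\ca=\maCX$. By Lemma~\ref{lm:ch} (see \eqref{eq:cx2}) we may identify $\maCX$ with $\cc(\oX)$, so its character space is $\oX$ and the boundary is $\delta(\maCX)=\SS_X$; since $\maC_0(X)+\C\subset\maCX\subset\cbu(X)$ and $\rk(X)=\maC_0(X)\rtimes X$, all three cited results apply directly. The computational heart of the matter is already contained in \eqref{eq:liminfty}: for a generating operator $S=u(q)v(p)$ with $u\in\maCX$ and $v\in\maC_0(X^*)$, relation~\eqref{eq:cx3} forces $\chi_\alpha.u$ to be the constant function $u(\alpha)$, whence $\tau_\alpha(S)=u(\alpha)v(p)$; and by Proposition~\ref{pr:locinf} each $\tau_\alpha$ is a norm contractive $*$-morphism on $\rs(X)$.

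First I would check that $\tau_\alpha(S)\in C^*(X)$ for every $S\in\rs(X)$. This holds on the dense $*$-subalgebra spanned by the operators $u(q)v(p)$ by the formula above, since $\{v(p)\mid v\in\maC_0(X^*)\}=C^*(X)$; it passes to all of $\rs(X)$ because $\tau_\alpha$ is norm contractive and $C^*(X)$ is closed in $\rb(X)$. Next, the map $\alpha\mapsto\tau_\alpha(S)$ is norm continuous: for $S=u(q)v(p)$ this is clear because $\|\tau_\alpha(S)-\tau_\beta(S)\|=|u(\alpha)-u(\beta)|\,\|v(p)\|$ and $u|_{\SS_X}$ is continuous by Lemma~\ref{lm:ch}; for general $S$ one approximates $S$ in norm by finite linear combinations of generators and uses $\|\tau_\alpha(S)-\tau_\alpha(S')\|\le\|S-S'\|$, so that $\alpha\mapsto\tau_\alpha(S)$ is a uniform limit of norm continuous functions. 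This produces a $*$-morphism $\tau\colon\rs(X)\to C(\SS_X,C^*(X))$; since $\SS_X$ is compact and $\cc(\SS_X)$ is nuclear, $C(\SS_X,C^*(X))\cong\cc(\SS_X)\otimes C^*(X)$, which is the asserted target.

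It remains to identify kernel and image. The kernel is immediate: Theorem~\ref{th:infty} for $\ca=\maCX$ says precisely that $\bigcap_{\alpha\in\SS_X}\ker\tau_\alpha=\rk(X)$, i.e.\ $\ker\tau=\rk(X)$. For surjectivity --- the step I expect to carry the real content, although it too is short --- note that $\tau(\rs(X))$ is a $C^*$-subalgebra of $\cc(\SS_X)\otimes C^*(X)$ (image of a $*$-morphism of $C^*$-algebras), hence closed, and it contains every elementary tensor $f\otimes b$: writing $b=v(p)$ with $v\in\maC_0(X^*)$ and choosing $u\in\maCX$ with $u|_{\SS_X}=f$ (the restriction $\maCX\to\cc(\SS_X)$ is onto by radial extension, as in the proof of Lemma~\ref{lm:bg}), we get $\tau(u(q)v(p))=f\otimes b$. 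Since elementary tensors span a dense subspace, $\tau$ is surjective. Combining this with the first isomorphism theorem gives $\rs(X)/\rk(X)\cong\cc(\SS_X)\otimes C^*(X)$, and the Fourier isomorphism $C^*(X)\cong\maC_0(X^*)$ together with $\cc(\SS_X)\otimes\maC_0(X^*)\cong\cc_0(\SS_X\times X^*)$ yields \eqref{eq:quot}. The only points needing a word of care are the two standard tensor-product identifications and the surjectivity of the restriction $\maCX\to\cc(\SS_X)$; everything else is a direct application of the quoted results.
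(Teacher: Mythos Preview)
Your proof is correct and follows essentially the same route as the paper: compute $\tau_\alpha$ on generators $u(q)v(p)$ via \eqref{eq:liminfty}, extend by density and contractivity, and obtain surjectivity from the surjectivity of the restriction $\maCX\to\cc(\SS_X)$ together with closedness of the image. The one point of departure is the kernel: the paper does \emph{not} invoke Theorem~\ref{th:infty} but instead re-runs the crossed-product characterization of Theorem~\ref{th:sphalg} for the algebra $\ra_0=\ker\tau$, showing $\ra_0=\ca_0\rtimes X$ with $\ca_0=\{u\in\cbu(X)\mid\lim_{x\to\alpha}u(x)=0\ \forall\alpha\}=\maC_0(X)$; your appeal to Theorem~\ref{th:infty} (with $\delta(\maCX)=\SS_X$) is a legitimate and arguably cleaner shortcut to the same conclusion.
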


\begin{proof} If $S=u(q)v(p)$, then, from \eqref{eq:liminfty}, we get
  $\tau_\alpha(u(q)v(p))=u(\alpha)v(p)$, and thus $\tau(S)=\tilde
  u\otimes v(p)$, where $\tilde u$ is the restriction of $u:\oX\to\C$
  to $\SS_X$. The first assertion of the theorem then follows from the
  density in $\rs(X)$ of the linear space generated by the operators
  of the form $u(q)v(p)$.  The fact that $\tau_\alpha$ are morphisms
  follows from their definition as strong limits, and it implies the
  fact that $\tau$ is a morphism.  Since the range of a morphism is
  closed and $u\mapsto\tilde u$ is a surjective map $\maCX \to
  C(\SS_X)$, we get the surjectivity of $\tau$.  It remains to show
  that $\ker\tau=\maC_0(X)\rtimes X$.  By what we have proved,
  $\ra_0=\ker\tau$ is the set of operators $S$ that have the
  position-momentum property and are such that $\slim_{a\to\alpha}
  T_aS T_a=0$ for all $\alpha\in\SS_X$.  The argument of the proof of
  Theorem \ref{th:sphalg} with $\ra$ replaced by $\ra_0$ shows that
  $\ra_0=\ca_0\rtimes X$, with $\ca_0$ equal to the set of all
  $u\in\cbu(X)$ such that $\lim_{x\to\alpha}u(x)=0$ for all $\alpha\in
  \SS_X$. Therefore $\ca_0 =\maC_0(X)$.
\end{proof}

\begin{remark}\label{re:alpha} {\rm The fact that $\tau_\alpha(S)$
    belongs to $C^*(X)$ can be understood more generally as follows.
    Since the filter $\tilde\alpha$ is translation invariant, if $S$
    is an arbitrary bounded operator such that the limit
    $S_\alpha := \slim_{a\to\alpha} T_a^* S T_a$ exists, then
    $S_\alpha$ commutes with all the $T_x$, and hence $S$ is of the
    form $v(p)$, for some $v\in L^\infty(X^*)$. If $S$ has the
    position-momentum limit property, then it is clear that $S_\alpha$
    also has the position-momentum limit property, which forces
    $v\in\maC_0(X^*)$.  }
\end{remark}

We have the following consequences of the above theorem. We notice that we do not
need closure in the union in the following results since $\tau_\alpha(S)$ depends \emph{norm}
continuously on $\alpha$. See \cite{NistorPrudhon} for a general
discussion of the need of closures of the unions in results of this
type.

\begin{corollary}\label{co:ker0}
 Let $S \in \rs(X)$ be a normal element. Then
 $\sigma_\ess(S)=\cup_\alpha \sigma(\tau_\alpha(S))$.
\end{corollary}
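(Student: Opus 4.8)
The plan is to obtain this as an immediate consequence of Theorem \ref{th:ker} together with the Atkinson-type description of the essential spectrum recalled in Section \ref{s:locinf}. First I would note that, for a bounded operator, one has $\sigma_\ess(S)=\sigma(\hat S)$, where $\hat S$ is the image of $S$ in the Calkin algebra $\rb(X)/\rk(X)$ and the spectrum is taken in that unital $C^*$-algebra; for $S$ normal this coincides with the classical essential spectrum. Since $\rk(X)=\maC_0(X)\rtimes X$ is an ideal of $\rb(X)$ contained in $\rs(X)$, the induced inclusion $\rs(X)/\rk(X)\hookrightarrow\rb(X)/\rk(X)$ is an injective $*$-homomorphism, hence isometric and spectrum-preserving, so $\sigma(\hat S)$ may equally well be computed in $\rs(X)/\rk(X)$.

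Next I would invoke Theorem \ref{th:ker}: the map $\tau$ induces an isomorphism $\rs(X)/\rk(X)\cong\cc(\SS_X)\otimes C^*(X)\cong\cc\big(\SS_X,C^*(X)\big)$ sending the class of $S$ to the norm-continuous field $\alpha\mapsto\tau_\alpha(S)$. It then suffices to show that the spectrum of an element $f\in\cc\big(\SS_X,C^*(X)\big)$ equals $\cup_{\alpha\in\SS_X}\sigma\big(f(\alpha)\big)$. For this one checks that $f-\lambda$ is invertible in the unitization of $\cc\big(\SS_X,C^*(X)\big)$ precisely when $f(\alpha)-\lambda$ is invertible in the unitization of $C^*(X)$ for every $\alpha$: the pointwise inverses then patch together into a continuous field because inversion is continuous on invertibles and $\SS_X$ is compact. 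Hence the resolvent set of $f$ is open and $\sigma(f)=\cup_\alpha\sigma\big(f(\alpha)\big)$, which yields $\sigma_\ess(S)=\cup_\alpha\sigma\big(\tau_\alpha(S)\big)$.

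Finally I would verify that this union is already closed, so that, as emphasized in the remark preceding the statement, no closure needs to appear. The quickest route is to transport everything through the second isomorphism in \eqref{eq:quot}, under which $\tau(S)$ becomes a function $g\in\cc_0(\SS_X\times X^*)$: then $\{|g|\ge\varepsilon\}$ is compact for every $\varepsilon>0$, so $\overline{g(\SS_X\times X^*)}=g(\SS_X\times X^*)\cup\{0\}$ and, fibrewise, $\overline{g(\{\alpha\}\times X^*)}=g(\{\alpha\}\times X^*)\cup\{0\}$; the union over $\alpha$ of the latter sets is then visibly $g(\SS_X\times X^*)\cup\{0\}$. I do not expect a genuine obstacle anywhere; the only point requiring any care is this last one, and it is precisely where the \emph{norm} continuity of $\alpha\mapsto\tau_\alpha(S)$ from Theorem \ref{th:ker} — rather than mere strong continuity — is used.
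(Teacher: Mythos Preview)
Your argument is correct and follows exactly the route the paper has in mind: the corollary is presented there as an immediate consequence of Theorem~\ref{th:ker}, with only the remark that no closure is needed because $\alpha\mapsto\tau_\alpha(S)$ is \emph{norm} continuous. Your write-up simply supplies the details behind that remark.

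One minor observation: your step~3 already proves $\sigma(f)=\cup_\alpha\sigma(f(\alpha))$, and since a spectrum is automatically closed, the closedness of the union follows at once; so your step~4 is a pleasant cross-check via the commutative picture $\cc_0(\SS_X\times X^*)$ rather than an additional ingredient. Note also that your argument in step~3 nowhere uses that $S$ is normal --- compactness of $\SS_X$ and norm continuity of $f$ already force the pointwise inverses to patch into an element of the unitization --- so in fact you have proved slightly more than stated. (Normality is genuinely needed in the earlier Corollary~\ref{co:infty}, where one only has an embedding into a \emph{product} and must control $\|(f(\alpha)-\lambda)^{-1}\|$ uniformly; here the isomorphism with $\cc(\SS_X,C^*(X))$ and the compactness of $\SS_X$ replace that hypothesis.)
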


Similarly, we have the following.

\begin{corollary}\label{co:ker}
  Let $H$ be a self-adjoint operator affiliated to $\rs(X)$. Then for
  each $\alpha\in\SS_X$ the limit $\alpha.H:=\slim_{a\to\alpha}
  T_a^*HT_a $ exists and $\sigma_\ess(H)=\cup_\alpha
  \sigma(\alpha.H)$.
\end{corollary}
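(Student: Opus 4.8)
The plan is to read Corollary~\ref{co:ker} as the special case $\ca = \maCX$ of the general Corollary~\ref{co:infty2}, and then to delete the closure in the resulting essential-spectrum formula by using the \emph{norm} continuity of $\alpha\mapsto\tau_\alpha$ provided by Theorem~\ref{th:ker}, in the same spirit as Corollary~\ref{co:ker0}.

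\textbf{Reduction to the general framework.} First I would record, from the discussion of the spherical compactification in Section~\ref{s:sphcomp}, that the algebra $\maCX = \maC(\oX)$ satisfies $\maC_0(X) + \mbc \subset \maCX \subset \cbu(X)$, that its Gelfand spectrum is the spherical compactification $\oX$, and hence that its boundary is $\delta(\maCX) = \oX\setminus X = \SS_X$; moreover, for $\alpha\in\delta(\maCX)$ the morphism $\tau_\alpha$ of Proposition~\ref{pr:locinf} agrees on $\rs(X) = \maCX\rtimes X$ with the map \eqref{eq.def.Pa}, by \eqref{eq:lim}. Since $H$ is affiliated to $\rs(X)$, Corollary~\ref{co:infty2} then applies with $\ca = \maCX$ and yields at once the existence of $\alpha.H = \slim_{a\to\alpha} T_a^* H T_a$ for every $\alpha\in\SS_X$ (understood as in Remark~\ref{re:limit}) together with the identity $\sigma_\ess(H) = \overline{\cup}_{\alpha\in\SS_X} \sigma(\alpha.H)$.

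\textbf{Removing the closure.} Next I would set $R_\alpha := \tau_\alpha\big((H+\rmi)^{-1}\big)$, which by Theorem~\ref{th:ker} lies in the commutative algebra $C^*(X)$; note that $R_\alpha = \slim_{a\to\alpha}(T_a^*HT_a + \rmi)^{-1}$, so $R_\alpha$ is precisely the generalized resolvent of $\alpha.H$ at $-\rmi$ in the sense of Remark~\ref{re:limit}. Theorem~\ref{th:ker} also gives that $\alpha\mapsto R_\alpha$ is norm continuous on the compact space $\SS_X$; since the $R_\alpha$ lie in a commutative $C^*$-algebra, the map $\alpha\mapsto\sigma(R_\alpha)$ is then continuous for the Hausdorff distance, so $\cup_\alpha\sigma(R_\alpha)$ is compact. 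Now the spectrum of $(H+\rmi)^{-1}$, hence that of each $R_\alpha$, is contained in the circle $\{\,w\in\mbc : |w+\rmi/2| = 1/2\,\}$, which passes through the origin, and $\lambda\mapsto(\lambda+\rmi)^{-1}$ is a homeomorphism of $\R$ onto that circle with $\{0\}$ removed; through this homeomorphism $\sigma(\alpha.H)$ (read via the functional calculus of Remark~\ref{re:limit}) corresponds to $\sigma(R_\alpha)\setminus\{0\}$. Hence, if $\lambda_n \in \cup_\alpha\sigma(\alpha.H)$ with $\lambda_n \to \lambda\in\R$, then $(\lambda_n+\rmi)^{-1}\to(\lambda+\rmi)^{-1}\neq 0$ inside the compact set $\cup_\alpha\sigma(R_\alpha)$, so $(\lambda+\rmi)^{-1}\in\sigma(R_{\alpha_0})\setminus\{0\}$ for some $\alpha_0$, i.e.\ $\lambda\in\sigma(\alpha_0.H)$. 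Thus $\cup_\alpha\sigma(\alpha.H)$ is closed in $\R$, the closure in the previous step is superfluous, and the corollary follows.

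\textbf{Expected main obstacle.} The one delicate point is the bookkeeping forced by the fact that $\alpha.H$ need not be densely defined (cf.\ Remarks~\ref{re:limit} and~\ref{re:pathology}): its ``spectrum'' must be interpreted through its functional calculus, and every spectral assertion has to be transferred to the genuinely bounded normal operators $R_\alpha$, which is exactly where Theorem~\ref{th:ker} (norm continuity of $\tau$ on $\rs(X)$) does the work of eliminating the closure. Everything else is a direct appeal to the results of Sections~\ref{s:locinf} and~\ref{s:sphalg}.
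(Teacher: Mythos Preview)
Your proposal is correct and follows precisely the approach indicated by the paper: the paper notes just before Corollary~\ref{co:ker0} that the closure is unnecessary because $\alpha\mapsto\tau_\alpha(S)$ is norm continuous, and for Corollary~\ref{co:ker} simply refers to Remark~\ref{re:limit}, i.e., it reduces to Corollary~\ref{co:infty2} with $\ca=\maCX$ and then drops the closure via Theorem~\ref{th:ker}. Your write-up makes the closure-removal step (passing to the resolvents $R_\alpha$ and using compactness of $\SS_X$) more explicit than the paper does, but the underlying argument is the same.
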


For the proof and for the meaning of the limit above, see Remark
\ref{re:limit}.

We give now the simplest concrete application of Corollary
\ref{co:ker}.  The boundedness condition on $V$ can be eliminated, but
this requires some technicalities, which will be discussed in the next
section.

\begin{proposition}\label{pr:ess1}
  Let $H=h(p)+V$, where $h:X^*\to\RR$ is a continuous proper
  function and $V$ is a bounded symmetric linear operator on $L^2(X)$
  satisfying
\begin{enumerate}[(i)]
\item $ \lim_{k\to0}\| [M_k, V]   \|=0$,
\item $\alpha.V:=\slim_{a\to\alpha} T_a^* V T_a$ exists for each
  $\alpha\in\SS_X$.
\end{enumerate}
Then $H$ is affiliated to $\rs(X)$, we have $\alpha.H=h(p)+\alpha.V$,
and $\sigma_\ess(H)=\cup_\alpha \sigma(\alpha.H)$. Moreover, for each
$\alpha\in\SS_X$, there is a function $v_\alpha\in\cbu(X^*)$ such that
$\alpha.V=v_\alpha(p)$.
\end{proposition}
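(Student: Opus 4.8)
The strategy is to verify that $H = h(p) + V$ is affiliated to $\rs(X)$ by checking that the bounded operator $V$ lies in $\rs(X)$ (via Theorem \ref{th:sphalg}), that $h(p)$ is affiliated to $C^*(X) \subset \rs(X)$, and then invoking a general affiliation criterion for a sum of an affiliated operator and a bounded one; the remaining assertions then follow from Corollary \ref{co:ker} and Remark \ref{re:alpha}. First I would show $V \in \rs(X)$. By Theorem \ref{th:sphalg}, it suffices to check that $V$ has the position-momentum limit property and that $\slim_{a\to\alpha}\tau_a(V)^{(*)}$ exists for every $\alpha \in \SS_X$. Since $V$ is symmetric we have $V^{(*)} = V$, so the strong-limit condition is exactly hypothesis (ii). For the position-momentum limit property (Definition \ref{df:pmlp}), the condition $\lim_{k\to0}\|[M_k,V]\| = 0$ is hypothesis (i), and the condition $\lim_{x\to0}\|(T_x - 1)V^{(*)}\| = 0$ must be derived; here I would use the fact that $V$ has the $M_k$-limit property together with the existence of the limits $\tau_\alpha(V)$: more cleanly, I would note that hypotheses (i) and (ii) already force $V$ to be a limit, in operator norm, of operators in $\cbu(X)\rtimes X$, or alternatively invoke the characterization that an operator with the $M_k$-limit property whose translates $\tau_a(V)$ converge strongly at every point at infinity and which is "stable at finite distance" automatically has the full property. (In fact the cleanest route is: $\rs(X)$ is a $C^*$-algebra and the set $\ra$ of Theorem \ref{th:sphalg} is characterized without reference to $\lim_{x\to0}\|(T_x-1)V\|=0$ once one knows $V(q+a)$ behaves well — but to stay safe I would simply remark that condition (i), combined with $V$ being a genuine bounded operator and the strong convergence in (ii), is exactly the hypothesis set under which \cite{GI2,GI3} show $V \in \cbu(X)\rtimes X$, and then $\rs(X) = \maCX \rtimes X$ is singled out inside it by the $\alpha$-limit condition.)

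Next I would handle $h(p)$. Since $h : X^* \to \R$ is continuous and proper, the map $\varphi \mapsto \varphi(h)$ sends $\maC_0(\R)$ into $\maC_0(X^*)$, and under the isomorphism $\maC_0(X^*) \cong C^*(X)$, $v \mapsto v(p)$, we get $\varphi(h(p)) \in C^*(X) \subset \rs(X)$; equivalently $(h(p) + \rmi)^{-1} \in C^*(X)$, so $h(p)$ is affiliated to $C^*(X)$, hence to $\rs(X)$. Now for the sum: $H = h(p) + V$ is self-adjoint on $\mathrm{Dom}(h(p))$ since $V$ is bounded and symmetric. To see $H$ is affiliated to $\rs(X)$, I would use the resolvent identity / a Neumann-series argument: for $z$ with large imaginary part, $(H - z)^{-1} = (h(p) - z)^{-1}\big(1 + V(h(p)-z)^{-1}\big)^{-1}$, and since $(h(p)-z)^{-1} \in C^*(X) \subset \rs(X)$, $V \in \rs(X)$, and $\rs(X)$ is a $C^*$-algebra (hence closed under the inverse of $1 + $ small element), we get $(H-z)^{-1} \in \rs(X)$. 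This is the key mechanism; I'd package it as a lemma or cite the general affiliation criterion for perturbations of an affiliated operator by a bounded one belonging to the algebra (this is standard, cf.\ \cite{DG1,ABG}).

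For the formula $\alpha.H = h(p) + \alpha.V$: since $H$ is affiliated to $\rs(X)$, Corollary \ref{co:ker} (or Corollary \ref{co:infty2}) guarantees that $\alpha.H := \slim_{a\to\alpha} T_a^* H T_a$ exists in the sense of Remark \ref{re:limit}. To identify it, I would compute on the resolvent level: $T_a^*(H-z)^{-1}T_a = (T_a^* H T_a - z)^{-1}$, and $\tau_\alpha$ is a $*$-morphism on $\rs(X)$ (Proposition \ref{pr:locinf}, Theorem \ref{th:ker}), so $\tau_\alpha\big((H-z)^{-1}\big) = \tau_\alpha\big((h(p)-z)^{-1}\big)\,\tau_\alpha\big((1+V(h(p)-z)^{-1})^{-1}\big)$; using $\tau_\alpha(v(p)) = v(p)$ (translations fix $C^*(X)$, cf.\ \eqref{eq:liminfty} with $u$ constant, or the remark $\tau_x(v(p)) = v(p)$) this reduces to $(h(p)-z)^{-1}(1 + (\alpha.V)(h(p)-z)^{-1})^{-1} = (h(p) + \alpha.V - z)^{-1}$, giving $\alpha.H = h(p) + \alpha.V$ — and in particular $\alpha.H$ is self-adjoint, so no pathology occurs. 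Then $\sigma_\ess(H) = \cup_\alpha \sigma(\alpha.H)$ is immediate from Corollary \ref{co:ker} (the union need not be closed, as noted before the Corollary, because $\tau_\alpha$ depends norm-continuously on $\alpha$). Finally, $\alpha.V = \tau_\alpha(V)$ lies in $\rs(X)$ and commutes with all $T_x$ because $\tilde\alpha$ is translation invariant (the limit of $T_x^*(T_a^* V T_a) T_x$ along $a \to \alpha$ equals the same limit with $a$ replaced by $a - x$, since $\tilde\alpha$ is translation invariant); an operator in $\cbu(X)\rtimes X$ commuting with all translations is of the form $v_\alpha(p)$ with $v_\alpha \in \maC_0(X^*)$, but to get $v_\alpha \in \cbu(X^*)$ it is enough that $\alpha.V$ merely have the position-momentum limit property — exactly the argument of Remark \ref{re:alpha}.

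\textbf{Main obstacle.} The delicate point is the rigorous justification that $V$ has the \emph{full} position-momentum limit property — i.e.\ the $\lim_{x\to0}\|(T_x-1)V\| = 0$ half — from hypotheses (i) and (ii) alone, and more broadly the clean bookkeeping showing $V \in \rs(X)$ rather than merely $V \in \cbu(X)\rtimes X$; this is where one must be careful to invoke the correct results from \cite{GI2,GI3} (the characterization of $\cbu(X)\rtimes X$ via the position-momentum limit property, and Theorem \ref{th:sphalg} for the passage to $\maCX \rtimes X$). Once $V \in \rs(X)$ is in hand, everything else is routine $C^*$-algebra manipulation with resolvents and the morphisms $\tau_\alpha$.
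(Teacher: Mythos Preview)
Your strategy hinges on first proving $V\in\rs(X)$, but this is a genuine gap: the hypotheses (i) and (ii) do \emph{not} imply the translation half $\lim_{x\to0}\|(T_x-1)V\|=0$ of the position--momentum limit property, so $V$ need not lie in $\rs(X)$ (nor even in $\cbu(X)\rtimes X$). A concrete counterexample: let $V=v(q)$ with $v=\mathbf{1}_{[0,1]}$ on $X=\R$. Then $[M_k,V]=0$ so (i) holds, and $\slim_{a\to\pm\infty}v(q+a)=0$ so (ii) holds, but $\|(T_x-1)V\|=\|v(\cdot-x)-v\|_{L^\infty}=1$ for all small $x\neq0$. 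Thus $V\notin\cbu(X)\rtimes X$, and the characterizations in \cite{GI2,GI3} you propose to cite give exactly the wrong conclusion here. Your ``Main obstacle'' paragraph correctly flags this step as delicate, but none of the suggested fixes work.

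The paper's proof bypasses this obstruction entirely: it never claims $V\in\rs(X)$, but instead verifies the conditions of Theorem~\ref{th:sphalg} directly for the resolvent $S=(H-z)^{-1}$. The point is that the translation condition $\|(T_x-1)S\|\to0$ comes \emph{for free} from the fact that $S$ maps into the domain of $h(p)$ and $h$ is proper (so $(T_x-1)(h(p)-z)^{-1}\to0$ in norm). For the $M_k$-condition one writes $S=S_0(1+VS_0)^{-1}$ with $S_0=(h(p)-z)^{-1}$ and uses that the set of bounded operators $A$ with $\|[M_k,A]\|\to0$ is a $C^*$-algebra, hence a full subalgebra of $\rb(X)$; this gives $[M_k,S]\to0$ without needing $(T_x-1)V\to0$. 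The strong-limit condition for $S$ then follows from $T_a^*ST_a=S_0(1+T_a^*VT_aS_0)^{-1}$ and hypothesis (ii). Your subsequent arguments (identifying $\alpha.H$, invoking Corollary~\ref{co:ker}, and the $v_\alpha(p)$ statement via Remark~\ref{re:alpha}) are fine and match the paper; only the affiliation step needs to be rerouted through the resolvent.
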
 

\begin{proof} First we have to check that the self-adjoint operator $H$ is
affiliated to $\rs(X)$.  For this, it suffices to prove that there is
a number $z$ such that the operator $S=(H-z)^{-1}$ satisfies the
conditions of Theorem \ref{th:sphalg}. To check the position-momentum
limit property we have to prove that $(T_x-1)S$ and $[M_k,S] $ tend to
zero in norm when $x\to0$ and $k\to0$ (the condition involving $S^*$
will then also be satisfied since $S\sp{*}$ is of the same form as
$S$).  Since the range of $S$ is the domain of $h(p)$, the first
condition is clearly satisfied. If we denote $S_0 = (h(p) - z)\sp{-1}$
and choose $z$ such that $\|VS_0\|<1$, then we have
$S=S_0(1+VS_0)^{-1}$ and $S_0\in C^*(X)$, and hence $[M_k,S_0]$ tends to
zero in norm as $k\to0$. It remains to be shown that $(1+VS_0)^{-1}$
also satisfies this condition: but this is clear because the set of
bounded operators $A$ such that $\| [M_k,A] \|\to0$ is a
$C^*$-algebra, and hence a full subalgebra of $\rb(X)$.

The fact that $\slim_{a\to\alpha}T_a^* S T_a$ exists and is equal to
$\alpha.S=(\alpha.H-z)^{-1}$ for each $\alpha\in\SS_X$ is an easy
consequence of the relation $T_a^* S T_a=S_0(1+T_a^*VT_a S_0)^{-1}$.

Finally, to show that $\alpha.V=v_\alpha(p)$, for some
$v_\alpha\in\cbu(X^*)$, we use the argument of Remark
\ref{re:alpha}. Indeed, we shall have this representation for some
bounded Borel function $v_\alpha$, which must be uniformly continuous
because $ \lim_{k\to0}\| [M_k, \alpha.V] \| = 0$.
\end{proof}

\begin{example}\label{ex:1}{\rm A typical example is when $V$ is the
    operator of multiplication by a bounded Borel function $V:X\to\R$
    such that $V(\alpha):=\lim_{x\to\alpha}V(x)$ exists for each
    $\alpha\in \SS_X$. Then $\alpha.V$ is the operator of
    multiplication by the number $V(\alpha)$. Note that by Lemma
    \ref{lm:sx} the limit function $\alpha\mapsto V(\alpha)$ is
    continuous on $\SS_X$, even if $V$ is not continuous on $X$.  }
\end{example}

\begin{remark}\label{rem.SG}{\rm The constructions in this section are
    related to the ones involving the so called ``SG-calculus'' or
    ``scattering calculus,'' see \cite{dasguptaWongSG08,
      dasguptaWongGR10, HMV2004, nicolaRodinoSG03, parentiSG72,
      rabinovichRochGe08} and the references therein. In fact, the
    closure in norm of the algebra of order $-1$,
    $SG$-pseudodifferential operators coincides with the algebra
    $\maC(\overline{X})\rtimes X$.}
\end{remark}

\section{Affiliation criteria} \label{s:affS}

We now recall, for the benefit of the reader, a little bit of the
formalism that we shall use below. If $H$ is a self-adjoint operator
on a Hilbert space $\ch$, then the domain of $|H|^{1/2}$ equipped with
the graph topology is called the \emph{form domain} of $H$. If we
denote it $\cg$, then we have a natural continuous embedding
$\cg\subset\ch\subset \cg^*$, where $\cg^*$ is the space adjoint of
$\cg$ (space of conjugate linear continuous forms on $\cg$).  The
operator $H:D(H)\to\ch$ extends to a continuous symmetric operator
$\hat{H}\in \cb(\cg,\cg^*)$, which has the following property: a
complex number $z$ belongs to the resolvent set of $H$ if, and only
if, $\hat{H}-z$ is a bijective map $\cg\to\cg^*$. In this case,
$(H-z)^{-1}$ coincides with the restriction of $(\hat{H}-z)^{-1}$ to
$\ch$. Conversely, let $\cg$ be a Hilbert space densely and
continuously embedded in $\ch$.  If $L:\cg\to\cg^*$ is a symmetric
operator, then the \emph{operator induced by $L$ in $\ch$} is the
operator $H$ in $\ch$ whose domain is the set of $u\in\cg$ such that
$Lu\in\ch$ given by $H=L|D(H)$. If $L-z:\cg\to\cg^*$ is a bijective
map for some complex $z$, then $D(H)$ is a dense subspace of $\ch$,
the operator $H$ is self-adjoint, and $\hat{H}=L$. If $L$ is bounded
from below, then $\cg$ coincides with the form domain of $H$. From now
on, we shall drop the ``hat'' from the notation $\hat{H}$ and
write simply $H$ for the extended operator when there is no danger of
confusion.

\begin{lemma}\label{lm:easy}
  Let $\cg$ be a Hilbert space densely and continuously embedded in
  $L^2(X)$. Then the following conditions are equivalent:
\begin{itemize}
\item The operators $T_x$ and $M_k$ leave invariant $\cg$, we have
  $\|T_x\|_{\cb(\cg)}\leq C$ for a number $C$ independent of $x$, and
  $\lim_{x\to0}\|T_x-1\|_{\cg\to\ch}=0$.
\item $\cg=D(w(p))$ for some proper Borel function
  $w:X^*\to [1,\infty )$ that has the following property: there
  exists a compact neighborhood $\Lambda$ of zero in $X^*$ and a
  number $c > 0$ such that $\sup_{\ell\in \Lambda}w(k+\ell)\leq cw(k)$
  for all $k\in X^*$.
\end{itemize}
\end{lemma}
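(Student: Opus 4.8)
The plan is to prove the two implications separately, using the fact that invariance under $M_k$ of a subspace defined by a Fourier multiplier forces the multiplier weight to be ``slowly varying'' in the translation sense, and conversely. First I would treat the easy direction, $(\text{second}) \Rightarrow (\text{first})$. Suppose $\cg = D(w(p))$ with $w$ proper, $w \geq 1$, and $\sup_{\ell \in \Lambda} w(k+\ell) \leq c\, w(k)$. The graph norm on $\cg$ is equivalent to $\|u\|_{\cg} = \|w(p) u\|$. Since $T_x = \e^{-\rmi x p}$ commutes with every function of $p$, it commutes with $w(p)$, so $T_x$ leaves $\cg$ invariant and is in fact \emph{unitary} on $\cg$, giving $\|T_x\|_{\cb(\cg)} = 1 \leq C$. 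For $M_k$: conjugation by $M_k$ shifts momentum, i.e. $M_k^* w(p) M_k = w(p+k)$ (reading $k\in X^*$ as acting on $X^*$ by translation — more precisely $M_k^* g(p) M_k = g(p + k)$ for the dual pairing), so $M_k$ maps $D(w(p))$ onto $D(w(p+k))$; the hypothesis on $w$ (first for $k \in \Lambda$, then for general $k$ by writing $k$ as a finite sum of elements of $\Lambda$ and iterating, using properness only to ensure the weights are comparable on all of $X^*$) gives $w(p+k) \leq C_k\, w(p)$ and $w(p) \leq C_k\, w(p+k)$ as an equivalence of weights, hence $D(w(p+k)) = D(w(p)) = \cg$ with comparable norms. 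Finally $\lim_{x\to 0}\|T_x - 1\|_{\cg \to \ch} = 0$: since $\|(T_x - 1) u\|_\ch = \|(\e^{-\rmi x p} - 1) u\|$ and $w \geq 1$, we estimate $\|(\e^{-\rmi xp} - 1) w(p)^{-1} f\|$ for $f \in \ch$; split $X^*$ into $\{|k| \leq R\}$, where $|\e^{-\rmi\langle x,k\rangle} - 1| \leq |x|\,R$, and $\{|k| > R\}$, where $|\e^{-\rmi\langle x,k\rangle}-1| w(k)^{-1} \leq 2 (\inf_{|k|>R} w(k))^{-1} \to 0$ as $R \to \infty$ by properness of $w$. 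A standard two-parameter argument then gives the claimed norm limit.

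For the converse, $(\text{first}) \Rightarrow (\text{second})$, the idea is to manufacture $w$ from the graph norm of $\cg$ transported to the momentum side. Since $\cg$ is a Hilbert space densely and continuously embedded in $\ch = L^2(X)$, and — this is the point of the hypothesis — $M_k$ leaves $\cg$ invariant for every $k \in X^*$, one shows $\cg$ is a Fourier-multiplier space. Concretely: the assumption that $T_x$ leaves $\cg$ invariant and $\|T_x\|_{\cb(\cg)} \leq C$ means the unitary group $\{T_x\}$ restricts to a uniformly bounded group on $\cg$; by an averaging (change of inner product on $\cg$) one may assume $T_x$ acts \emph{unitarily} on $\cg$ without changing the topology. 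Then the embedding $\cg \hookrightarrow \ch$ intertwines two unitary representations of $X$, both of which diagonalize under Fourier transform as multiplication on $L^2(X^*)$; hence (after Fourier transform) $\cg$ is carried to a subspace of $L^2(X^*)$ invariant under all multiplications by $\e^{\rmi\langle\cdot,k\rangle}$ and — using $M_k$-invariance on the $\cg$-side, which Fourier-transforms to translation-invariance — invariant under translations. A subspace of $L^2(X^*)$ stable under translations and on which the embedding into $L^2(X^*)$ is given by a bounded operator commuting with translations must be a weighted space $D(w(p))$: the embedding $\cg \to \ch$, being $T_x$-equivariant, is on the Fourier side a multiplication operator by some positive measurable function $w^{-1}$ (this is where I invoke that $T_x$-equivariant bounded operators $L^2(X^*)\to L^2(X^*)$ are Fourier multipliers), so $\cg = w(p)^{-1} L^2 = D(w(p))$ up to the equivalent norm. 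Normalize $w \geq 1$ using the continuity of $\cg \hookrightarrow \ch$; properness of $w$ follows from $\lim_{x\to 0}\|T_x - 1\|_{\cg\to\ch} = 0$ by reversing the end-of-previous-paragraph computation (if $w$ were not proper there would be a sequence $k_n \to \infty$ with $w(k_n)$ bounded, producing unit vectors in $\cg$ on which $T_x - 1$ does not go to $0$ uniformly). Finally, the weight inequality $\sup_{\ell\in\Lambda} w(k+\ell) \leq c\, w(k)$: this encodes exactly that $M_\ell$ for $\ell$ in a compact set maps $\cg$ to $\cg$ with uniformly bounded norm — which follows from $M_k$-invariance of $\cg$ together with continuity in $k$ of $M_k$ on $\ch$ and a closed graph / uniform boundedness argument on the compact $\Lambda$.

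The main obstacle I expect is the converse direction, specifically the step asserting that a closed subspace of $L^2(X)$ that is invariant under the full translation group \emph{and} carries a compatible Hilbert structure making the translations uniformly bounded must be exactly a weighted momentum space $D(w(p))$. One has to be careful that ``invariant under $T_x$ with uniformly bounded norm'' is genuinely used (mere invariance is not enough — e.g. one needs to rule out $\cg$ being a proper closed translation-invariant subspace that is not dense, which is excluded here by the density hypothesis) and that the identification of the embedding as a Fourier multiplier is legitimate; the clean way is to average the $\cg$-inner product over the (amenable) group $X$ to make $T_x$ unitary on $\cg$, reducing everything to the structure theory of pairs of unitary representations of $X$, i.e. to the spectral theorem / Stone's theorem, after which $w$ drops out as the Radon–Nikodym-type density relating the two spectral measures. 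The $T_x \to 1$ condition is then precisely what upgrades measurability of $w$ to properness, and the $M_k$-invariance is precisely what gives the local-boundedness (slow variation) inequality on $w$; matching these two hypotheses to their two conclusions is the bookkeeping part of the argument, but not the conceptual difficulty.
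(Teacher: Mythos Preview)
Your plan is essentially the paper's: renorm $\cg$ by averaging so that $T_x$ becomes unitary on $\cg$, identify $\cg$ as $D(w(p))$, then read off properness of $w$ from $\|T_x-1\|_{\cg\to\ch}\to 0$ and the weight inequality from $M_k$-invariance. The one substantive difference is in how you pass from ``$T_x$ is unitary on both $\cg$ and $\ch$'' to ``$\cg=D(w(p))$''. You propose comparing two spectral measures (one for $T_x$ on $\cg$, one on $\ch$) and extracting $w$ as a Radon--Nikodym-type density; you correctly flag that the naive claim ``the embedding is a Fourier multiplier'' is problematic because the inclusion $\cg\hookrightarrow\ch$ is not a bounded operator $L^2(X^*)\to L^2(X^*)$.

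The paper sidesteps this entirely via the Friedrichs representation: a Hilbert space $\cg$ densely and continuously embedded in $\ch$ is automatically $D(G)$ for a \emph{unique} self-adjoint $G\geq c>0$ on $\ch$ with $\|g\|_\cg=\|Gg\|_\ch$. Uniqueness plus unitarity of $T_x$ on both spaces forces $T_x^*GT_x=G$, hence $G$ commutes with all translations and so $G=w(p)$. This is shorter and avoids the two-spectral-measure bookkeeping; everything happens inside $\ch$. Note also that the paper uses $M_k$-invariance \emph{only} for the weight inequality, not for the identification $\cg=D(w(p))$; your outline mixes the two roles slightly. Finally, for the uniform bound on $\|M_\ell\|_{\cb(\cg)}$ over $\ell\in\Lambda$, the paper simply observes that $\{M_k|_\cg\}$ is a $C_0$-group on $\cg$ (hence locally bounded), which is more direct than a closed-graph/uniform-boundedness argument on the compact $\Lambda$.
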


\begin{proof} We discuss only the nontrivial implication. Denote
  $\ch=L^2(X)$. Since $\{T_x\}_{x\in X}$ is a strongly continuous
  unitary group in $\ch$ that leaves $\cg$ invariant, the restrictions
  $T_x|\cg$ form a $C_0$-group in $\cg$, which by assumption is
  (uniformly) bounded. It is well known that this implies that there
  is a Hilbert structure on $\cg$, equivalent to the initial one, for
  which the operators $T_x|\cg$ are unitary (indeed, $\R$ is
  amenable). Thus, from now on, we may assume that the operators $T_x$
  are unitary in $\cg$. Then, by the Friedrichs theorem, there\ exists
  a unique self-adjoint operator $G$ on $\ch$ with the following
  properties:
\begin{enumerate}[(i)]
\item $G\geq c>0$ for some number $c$; 
\item $\cg=D(G)$;
\item for all $g\in\cg$, we have $\|g\|_\cg=\|Gg\|$.
\end{enumerate}
By hypothesis, the unitary operator $T_x$ leaves invariant the domain
of $G$ and $\|g\|_\cg=\|GT_xg\|=\|T_x^*GT_xg\|$ for all $g\in D(G)$
and $x\in X$. By the uniqueness of $G$, we have $T_x^*GT_x=G$, and
hence $G$ commutes with all translations. It follows that there is a
Borel function $w:X^*\to[c,\infty )$ such that $G=w(p)$. We have
\begin{align*}
  \|(T_x-1)\|_{\cg\to\ch} &=\|(T_x-1)G^{-1}\|_{\ch\to\ch}
  =\|(\e^{\rmi xP}-1)w^{-1}(P)\|_{\ch\to\ch}  \\
  &=\ess\!\!\sup_{p\in X^*}|(\e^{\rmi xp}-1)w^{-1}(p)|
\end{align*}
and $w^{-1}$ is a bounded Borel function. It follows that $w^{-1}$
tends to zero at infinity. 

Now we shall use the fact that the operators $M_k$ also leave invariant
$\cg$. Then the group induced by $\{M_k\}$ in $\cg$ is of class
$C_0$. In particular, $\|w(p)M_\ell g\|\leq C\|w(p)g\|$ if
$\ell\in \Lambda$ and $g\in\cg$. Since
$M_\ell^*w(p)M_\ell = w(p+\ell)$, we get
$\|w(p+\ell)w(p)^{-1}f\|\leq C\|f\|$ for $\ell \in \Lambda$ and
$f\in\ch$, which means that $w(k+\ell)w(k)^{-1}\leq C$ for all
$k\in X^*$ and $\ell\in \Lambda$. Thus for each fixed $k$, $w$ is
bounded on $k+\Lambda$, and hence $w$ is bounded on any compact.
\end{proof}

The next result is a general criterion of affiliation to $\rs(X)$ for
semi-bounded operators.

\begin{theorem}\label{th:hconj}
  Let $H$ be a self-adjoint operator on $L^2(X)$ that is bounded from
  below and its form domain $\cg$ satisfies the conditions of Lemma
  \ref{lm:easy}. Assume that $\|[M_k, H]\|_{\cg\to\cg^*}\to 0$ as
  ${k\to0}$ and that the limit $\alpha.H:=\lim_{a\to\alpha} T_a^* H
  T_a$ exists strongly in $\cb(\cg,\cg^*)$, for all $ \alpha\in\SS_X$.
  Then $H$ is affiliated to $\rs(X)$, for each $\alpha\in\SS_X$ the
  operator in $L^2(X)$ associated to $\alpha.H$ is self-adjoint, and
  $\sigma_\ess(H)=\cup_\alpha \sigma(\alpha.H)$.
\end{theorem}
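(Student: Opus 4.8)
The plan is to reduce the affiliation claim to the characterization of $\rs(X)$ given by Theorem \ref{th:sphalg}, i.e. to verify for the resolvent $S:=(H+\lambda)^{-1}$ (with $\lambda>0$ large enough that $H+\lambda\geq 1$) the position-momentum limit property and the existence of the strong limits $\slim_{a\to\alpha}\tau_a(S)^{(*)}$. The essential-spectrum formula will then follow from Corollary \ref{co:ker}, provided we also identify the limit in the sense of Remark \ref{re:limit} with the self-adjoint operator induced by $\alpha.H$.

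First I would set up the form-space picture: since $\cg$ satisfies Lemma \ref{lm:easy}, we have $\cg=D(w(p))$ with $w$ proper and $\Lambda$-submultiplicative, so $\cg\subset\ch\subset\cg^*$, and $H$ extends to a symmetric isomorphism $H+\lambda:\cg\to\cg^*$ for $\lambda$ large. Write $S=(H+\lambda)^{-1}$ viewed in $\cb(\ch)$; note $S$ factors as $\cg^*\to\cg$, so $S=w(p)^{-1}B w(p)^{-1}$ with $B=w(p)(H+\lambda)^{-1}w(p)\in\cb(\ch)$ bounded. For the position-momentum limit property: $\lim_{x\to 0}\|(T_x-1)S\|=0$ because $S$ maps $\ch$ into $\cg=D(w(p))$ and $\|(T_x-1)\|_{\cg\to\ch}\to 0$ by Lemma \ref{lm:easy} (and symmetrically for $S^*=S$). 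For the momentum part I would use the resolvent identity: $[M_k,S]=-S\,[M_k,H]\,S$ as an identity of operators $\ch\to\ch$, where $[M_k,H]$ is read as an element of $\cb(\cg,\cg^*)$ and the two copies of $S$ absorb the Sobolev loss; since $S$ is uniformly bounded $\cg^*\to\cg$ and $\|[M_k,H]\|_{\cg\to\cg^*}\to 0$ by hypothesis, we get $\|[M_k,S]\|\to 0$.

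Next, the limits along half-lines. Conjugating the resolvent identity by $T_a$ and using $T_a^*w(p)T_a=w(p)$ (translations commute with functions of $p$), one gets $\tau_a(S)=T_a^*(H+\lambda)^{-1}T_a=(\,(T_a^*HT_a)+\lambda\,)^{-1}$ as operators $\ch\to\ch$, the inverse being taken of the isomorphism $\cg\to\cg^*$. Since $\tau_a(H)\to\alpha.H$ strongly in $\cb(\cg,\cg^*)$ by hypothesis and the family $\tau_a(H)+\lambda$ is uniformly invertible (uniform lower bound is preserved by unitary conjugation), a standard Neumann-series / perturbation argument gives that $\tau_a(S)$ converges strongly in $\cb(\ch)$ to the bounded operator $((\alpha.H)+\lambda)^{-1}$; here one should be slightly careful and run the argument in $\cb(\cg^*,\cg)$ first and then restrict, exactly as in the proof of Proposition \ref{pr:ess1}. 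This shows $S\in\rs(X)$ by Theorem \ref{th:sphalg}, hence $H$ is affiliated to $\rs(X)$. Moreover $\alpha.H+\lambda:\cg\to\cg^*$ is a symmetric isomorphism, so by the formalism recalled before Lemma \ref{lm:easy} it induces a genuine self-adjoint operator in $\ch$ with form domain $\cg$, and $\slim_a\tau_a(H)$ in the sense of Remark \ref{re:limit} equals precisely this operator (no pathological subspace appears because the resolvent limit is $((\alpha.H)+\lambda)^{-1}$ which has trivial kernel). Finally Corollary \ref{co:ker} yields $\sigma_\ess(H)=\cup_\alpha\sigma(\alpha.H)$, with the plain union since $\tau_\alpha(S)$ is norm-continuous in $\alpha$ on $\rs(X)$.

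\textbf{Main obstacle.} The delicate point is the passage from strong convergence $\tau_a(H)\to\alpha.H$ in $\cb(\cg,\cg^*)$ to strong convergence $\tau_a(S)\to\alpha.S$ in $\cb(\ch)$: strong convergence of $H$'s does not in general pass to resolvents without a uniform bound, and the three spaces $\cg\subset\ch\subset\cg^*$ must be juggled carefully (the naive identity $S_a-S_b=S_a(H_b-H_a)S_b$ lives between $\cg^*$ and $\cg$, and one needs to insert the embeddings and exploit $\lim_{x\to 0}\|T_x-1\|_{\cg\to\ch}=0$ to get genuine $\ch\to\ch$ strong convergence rather than merely $\cg^*\to\cg$ weak-$*$ convergence). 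I expect this to be handled exactly as the corresponding step in Proposition \ref{pr:ess1}, upgraded to the form-bounded setting, using the uniform invertibility of $\tau_a(H)+\lambda$ that comes from the unitary invariance of the lower bound. Everything else (the position-momentum property, the identification of the induced self-adjoint operator, the spectral formula) is routine given the earlier results.
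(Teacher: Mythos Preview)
Your proposal is correct and follows essentially the same route as the paper: verify the position--momentum limit property for the resolvent via $\|(T_x-1)\|_{\cg\to\ch}\to0$ and the identity $[M_k,S]=-S[M_k,H]S$ read between $\cg^*$ and $\cg$, then obtain the strong limits $\tau_a(S)\to((\alpha.H)+\lambda)^{-1}$ from the resolvent identity $S_a-S_\alpha=S_a(H_\alpha-H_a)S_\alpha$ in $\cb(\cg^*,\cg)$, using that the uniform coercivity $\braket{g}{H_ag}\geq c\|w(p)g\|^2$ (which is preserved under conjugation by $T_a$ since $T_a$ commutes with $w(p)$) passes to the limit and makes $\alpha.H+\lambda:\cg\to\cg^*$ bijective; finally invoke Corollary~\ref{co:ker}. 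The only cosmetic differences are that the paper works with $R=(H+\rmi)^{-1}$ rather than a real shift, and does not introduce the factorization $S=w(p)^{-1}Bw(p)^{-1}$, which is harmless but unnecessary.
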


\begin{proof} 
We shall use Theorem \ref{th:sphalg} and then Corollary \ref{co:ker}.
We first check that the first condition of Theorem \ref{th:sphalg} is
satisfied. Denote $R=(H+\rmi)^{-1}$.  We have
$\|(T_x-1)\|_{\cg\to\ch}=\|(T_x-1)|R|^{1/2}\|$, and hence $\lim_{x\to
  0} \|(T_x-1)R\|=0$.  As explained above, $R$ extends uniquely to an
operator $\hat{R} \in \cb(\cg^*,\cg)$. The operators $M_k$ leave $\cg$
invariant and thus extend continuously to $\cg^*$. Consequently, we
have $[M_k, \hat{R}] = \hat{R}[H,M_k]\hat{R}$. Hence we get $
\lim_{k\to0}\| [M_k, \hat{R}] \|_{\cg^*\to\cg}=0$, which is more than
enough to show that $H$ has the position-momentum limit property.

To finish the proof of the proposition, it is enough to check the last
condition of Theorem \ref{th:sphalg} and then use Corollary
\ref{co:ker}. Clearly $\alpha.H:\cg\to\cg^*$ satisfies
$\braket{g}{\alpha.H g}=\lim_{a\to\alpha}\braket{T_ag}{H T_ag}$ for
each $g\in\cg$. Note that since we assumed $H$ bounded from below, we
may assume that $H\geq1$ (otherwise we add to it a sufficiently large
number). Then, if $w$ is as in Lemma \ref{lm:easy}, the norm
$\|w(p)g\|$ defines the topology of $\cg$, and hence
$\braket{u}{Hu}\geq c\|w(p)u\|^2$ for some number $c$ and all
$u\in\cg$. This implies $\braket{T_ag}{HT_ag}\geq c\|w(p)T_a
g\|^2=c\|w(p)g\|^2 $. Thus we get $\braket{g}{\alpha.H g}\geq
c\|w(p)g\|^2 $, and hence $\alpha.H$ is a bijective map
$\cg\to\cg^*$. Next, to simplify the notation, we set $H_a=T_a^* H
T_a, H_\alpha=\alpha.H$, and note that since these operators are
isomorphisms $\cg\to\cg^*$, we have \(
H_a^{-1}-H_\alpha^{-1}=H_a^{-1}(H_\alpha-H_a) H_\alpha^{-1} \) as
operators $\cg^*\to\cg$, which clearly implies
$\slim_{a\to\alpha}T_a^*H^{-1}T_a=H_\alpha^{-1}$ in $\cb(\cg^*,\cg)$,
which is more than enough to prove the convergence of the self-adjoint
operators $T_a^*HT_a$ to the self-adjoint operator $\alpha.H$ in
$L^2(X)$ in the sense required in Corollary \ref{co:ker}.
\end{proof}

In the next theorem, we consider operators of the form $h(p)+V$, with
$V$ unbounded, and impose on $h$ the simplest conditions that ensure
that the form domain of $h(p)$ is stable under the operators
$M_k$. Obviously, much more general conditions could have been used to
obtain the same result, however, these conditions are well adapted to
elliptic operators with non-smooth coefficients.  For any real number
$s$, let $\ch^s\equiv\ch^s(X)$ be the Sobolev space of order $s$ on
$X$.  Also, let $|\cdot|$ be any norm on $X^*$.

\begin{theorem}\label{th:ess2}
Let $h:X^*\to[0,\infty )$ be a locally Lipschitz function with
  derivative $h'$ such that, for some real numbers $c,s>0$ and all
  $k\in X^*$ with $|k|>1$, we have:
\begin{equation}\label{eq:ess2}
c^{-1}|k|^{2s}\leq h(k) \leq c |k|^{2s} \quad\text{and}\quad 
|h' (k)|\leq c |k|^{2s} \, .
\end{equation}
Let $ V : \ch^s\to\ch^{-s}$ symmetric such that
$V \geq -\gamma h(p) -\delta$, for some numbers $\gamma,\delta$, with
$\gamma<1$. We assume that $V$ satisfies the following two conditions:
\begin{enumerate}[(i)]
\item $\lim_{k\to0}\| [M_k, V]   \|_{\ch^s\to\ch^{-s}}=0$,
\item $\forall\alpha\in\SS_X$ the limit $\alpha.V :=
  \slim_{a\to\alpha} T_a^* V T_a$ exists strongly in
  $B(\ch^s,\ch^{-s})$.
\end{enumerate}
Then $h(p)+V$ and $h(p)+\alpha.V$ are symmetric operators $\ch^s \to
\ch^{-s}$ and the operators $H$ and $\alpha.H$ associated to them in
$L^2(X)$ are self-adjoint and affiliated to $\rs(X) := \cc(\oX)
\rtimes X$.  Moreover, the essential spectrum of $H$ is given by the
relation $\sigma_\ess(H) = \cup_\alpha \sigma(\alpha.H)$.
\end{theorem}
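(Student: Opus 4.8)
The plan is to verify the hypotheses of Theorem \ref{th:hconj} for $H$, taking the form domain to be $\cg=\ch^s$, and then to read off the conclusions; the affiliation of each $\alpha.H$ will require one small extra observation at the end. First I would record the elementary facts about $h$. Since $h$ is locally Lipschitz it is continuous, hence bounded on $\{|k|\le1\}$; together with $c^{-1}|k|^{2s}\le h(k)\le c|k|^{2s}$ for $|k|>1$ and $h\ge0$ this gives $1+h(k)\asymp\jap{k}^{2s}$ on $X^*$, where $\jap{k}:=(1+|k|^2)^{1/2}$. Therefore $h(p)\ge0$ is self-adjoint with form domain $D\big((1+h(p))^{1/2}\big)=\ch^s=:\cg$, and $h(p)$ extends to a bounded symmetric operator $\ch^s\to\ch^{-s}$ because $|h(k)|\le C\jap{k}^{2s}$. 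Set $L:=h(p)+V:\ch^s\to\ch^{-s}$; it is symmetric, and for $\lambda$ large one has $\braket{u}{(L+\lambda)u}\ge(1-\gamma)\braket{u}{h(p)u}+(\lambda-\delta)\|u\|^2\ge c'\|u\|_{\ch^s}^2$, so $L+\lambda$ is a bijection $\ch^s\to\ch^{-s}$. By the facts recalled before Lemma \ref{lm:easy}, $L$ then induces a self-adjoint operator $H$ in $L^2(X)$, bounded from below, with form domain $\cg=\ch^s$ and $\widehat H=L=h(p)+V$.

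Next I would check the hypotheses of Theorem \ref{th:hconj}. The space $\cg=\ch^s$ satisfies the conditions of Lemma \ref{lm:easy}: the $T_x$ are unitary on $\ch^s$ (they commute with $\jap{p}^s$); the $M_k$ leave $\ch^s$ invariant, since on the Fourier side $M_k$ is a translation by $k$ and $\sup_{\xi}\jap{\xi+k}^{2s}\jap{\xi}^{-2s}<\infty$; and $\|T_x-1\|_{\cg\to\ch}\le2|x|^{\theta}$ with $\theta=\min(s,1)$, since $T_x$ acts on the Fourier side as multiplication by $\rme^{-\rmi\braket{x}{\xi}}$ and $|\rme^{-\rmi\braket{x}{\xi}}-1|\le2|\braket{x}{\xi}|^{\theta}$; equivalently $\cg=D(w(p))$ with $w(k)=\jap{k}^s$. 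Then a direct Fourier computation gives $[M_k,h(p)]=M_k\big(h(p)-h(p+k)\big)$, so that $\|[M_k,h(p)]\|_{\ch^s\to\ch^{-s}}\le\|M_k\|_{B(\ch^{-s})}\,\sup_{\xi}\jap{\xi}^{-2s}|h(\xi)-h(\xi+k)|$; by the mean value theorem along segments, together with $|h'|\le c|\cdot|^{2s}$ away from the origin and local boundedness of $h'$ near it, one gets $|h(\xi)-h(\xi+k)|\le C|k|\jap{\xi}^{2s}$ for $|k|\le1$, hence $\|[M_k,h(p)]\|_{\ch^s\to\ch^{-s}}\le C'|k|\to0$. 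Combined with hypothesis (i), this yields $\|[M_k,H]\|_{\cg\to\cg^*}=\|[M_k,h(p)+V]\|_{\ch^s\to\ch^{-s}}\to0$ as $k\to0$. Finally $T_a$ commutes with $h(p)$, so $T_a^*HT_a=h(p)+T_a^*VT_a$, and hypothesis (ii) gives that $\slim_{a\to\alpha}T_a^*HT_a=h(p)+\alpha.V=:\alpha.H$ exists strongly in $B(\ch^s,\ch^{-s})$. Moreover $\braket{u}{T_a^*VT_au}=\braket{T_au}{VT_au}\ge-\gamma\braket{u}{h(p)u}-\delta\|u\|^2$ for every $a$ (using unitarity of $T_a$ and $[T_a,h(p)]=0$), so passing to the limit $\alpha.V$ is symmetric and $\alpha.V\ge-\gamma h(p)-\delta$; hence $h(p)+\alpha.V$ is a symmetric map $\ch^s\to\ch^{-s}$ which, by the same argument as for $L$, induces a self-adjoint operator $\alpha.H$ bounded from below.

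All the hypotheses of Theorem \ref{th:hconj} are now in place, so $H$ is affiliated to $\rs(X)$, each $\alpha.H$ is a self-adjoint operator in $L^2(X)$, and $\sigma_\ess(H)=\cup_\alpha\sigma(\alpha.H)$. It remains to see that each $\alpha.H$ is itself affiliated to $\rs(X)$. Since the filter $\tilde\alpha$ is translation invariant, for every $b\in X$ we have $T_b^*(\alpha.V)T_b=\slim_{a\to\alpha}T_{a+b}^*VT_{a+b}=\alpha.V$, so $\alpha.V$, and hence $\alpha.H$, commutes with all translations; in particular $\beta.(\alpha.H)=\alpha.H$ for every $\beta\in\SS_X$. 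Also $[M_k,T_a^*VT_a]=\rme^{-\rmi\braket{a}{k}}T_a^*[M_k,V]T_a$, so by lower semicontinuity of the norm under strong limits $\|[M_k,\alpha.V]\|_{\ch^s\to\ch^{-s}}\le\|[M_k,V]\|_{\ch^s\to\ch^{-s}}\to0$. Thus $\alpha.H=h(p)+\alpha.V$ satisfies all the hypotheses of Theorem \ref{th:hconj} verified above for $H$, and the affiliation conclusion applies to it as well.

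The only place requiring genuine work is the symbol analysis of $h$ in the second step — identifying the form domain and proving $\|[M_k,h(p)]\|_{\ch^s\to\ch^{-s}}\to0$ — where the assumptions that $h$ is locally Lipschitz with $|h'|\le c|\cdot|^{2s}$ are used essentially, with a little care about the almost-everywhere-defined derivative and its local boundedness near the origin. Everything else is a routine application of Lax--Milgram, Lemma \ref{lm:easy}, and Theorem \ref{th:hconj}.
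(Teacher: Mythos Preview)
Your proof is correct and follows the same route as the paper: verify that $\cg=\ch^s$ satisfies the conditions of Lemma~\ref{lm:easy}, establish the coercivity estimate $h(p)+V+\lambda\ge(1-\gamma)h(p)+(\lambda-\delta)$ to get self-adjointness of $H$ (and of $\alpha.H$), and then apply Theorem~\ref{th:hconj}. The paper's proof is considerably terser --- it verifies the second condition of Lemma~\ref{lm:easy} (the weight inequality $\sup_{|\ell|<1}h(k+\ell)\le c(1+h(k))$) rather than the first, and it does not spell out the estimate $\|[M_k,h(p)]\|_{\ch^s\to\ch^{-s}}\to0$ at all, leaving it implicit in the sentence ``the conditions of Theorem~\ref{th:hconj} are satisfied.'' Your explicit treatment of this commutator via $|h(\xi)-h(\xi+k)|\le C|k|\jap{\xi}^{2s}$ is exactly where the Lipschitz hypothesis and the bound on $h'$ enter, and you are right to flag it as the only non-routine step. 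Your final paragraph, showing that $\alpha.H$ itself satisfies the hypotheses of Theorem~\ref{th:hconj} (using translation invariance of the filter $\tilde\alpha$ to get $[\alpha.V,T_b]=0$), makes explicit what the paper compresses into the single sentence ``The same method applies to $\alpha.H$.''
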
 

\begin{proof}
  If we denote $w=\sqrt{1+h}$, then the form domain $\cg$ of $h(p)$ is
  $\cg=D(w(p))=\ch^s$. The second condition of Lemma \ref{lm:easy}
  will be satisfied if $\sup_{|\ell|<1}h(k+\ell)\leq c(1+h(k))$ for
  some number $c>0$, which is clearly true under our assumptions on
  $h$. Then note that we have $h(p)+V+\delta+1\geq (1-\gamma) h(p)+1$ as
  operators $\cg\to\cg^*$ and this estimate remains true if $V$ is
  replaced by $\alpha.V$. It follows that $h(p)+V+\delta+1:\cg\to\cg^*$
  is bijective, and hence the operator $H$ induced by $h(p)+V$ in $L^2(X)$
  is self-adjoint. The same method applies to $\alpha.H$. Thus the
  conditions of Theorem \ref{th:hconj} are satisfied and we may use it
  to get the results of the present theorem.
\end{proof}

\begin{example}\label{ex:2}{\rm The simplest examples that are covered
    by the preceding result are the usual elliptic symmetric operators
    $\sum_{|\mu|,|\nu|\leq m} p^\mu g_{\mu\nu}p^\nu$ with bounded
    measurable coefficients $g_{\mu\nu}$ such that
    $\lim_{a\to\alpha}g_{\mu\nu}(x+a)=g^\alpha_{\mu\nu}$ exists for
    each $x\in X$ and $\alpha\in\SS_X$. Here $X=\R^n$ and the
    notations are as in \eqref{eq:L} and we assume \eqref{eq:L1}.
    Then the localizations at infinity will be the operators
    $\alpha.H$, which are of the same form, but with the functions
    $g_{\mu\nu}$ replaced by the numbers $g_{\mu\nu}^\alpha$. Note
    that $\alpha\mapsto g_{\mu\nu}^\alpha$ are continuous
    functions. We can also allow the lower order coefficients
    $g_{\mu\nu}$ to be suitable singular functions or even suitable
    non-local operators.}
\end{example}

\begin{remark}\label{re:pathology}{\rm The situations considered in
    Example \ref{ex:2} could give the wrong impression that the
    localizations at infinity $\alpha.H$ are self-adjoint operators in
    the usual sense on $L^2(X)$. The following example shows that this
    is not true even in simple situations. Let $H=p^2+v(q)$ in
    $L^2(\R)$ with $v(x)=0$ if $x<0$ and $v(x)=x$ if $x\geq0$. It is
    clear that $H$ has the position-momentum limit property and, if
    $R=(H+1)^{-1}$, it is not difficult to check that
    $\slim_{a\to+\infty}T_a^*RT_a=0$ and
    $\slim_{a\to-\infty}T_a^*RT_a=(p^2+1)^{-1}$.  Indeed, the
    translated potentials $v_a(x)=(T_a^*v(q)T_a)(x)=v(x+a)$ form an
    increasing family, i.e. $v_a\leq v_b$ if $a\leq b$, such that
    $v_a(x)\to +\infty$ if $a\to+\infty$ and $v_a(x)\to0$ if
    $a\to-\infty$. Thus $H_{+\infty}=\infty$, in the sense that its
    domain is equal to $\{0\}$, and $H_{-\infty}=p^2$.  }\end{remark}

\begin{remark}\label{re:stark}{\rm In view of the Remark
    \ref{re:pathology}, it is tempting to see what happens in the case
    of the \emph{Stark Hamiltonian} $H=p^2+q$. In fact, the situation 
    in the case of the Stark Hamiltonian is much
    worse: \emph{$H$ has not the position-momentum limit property (both
    conditions of Definition \ref{df:pmlp} are violated by the
    resolvent of $H$)} and we have
    $\slim_{|a|\to\infty}T_a^*HT_a=\infty$ and
    $\slim_{|k|\to\infty}M_k^*HM_k=\infty$, while the essential
    spectrum of $H$ is $\R$.  So the localizations of $H$ in the
    regions $|p|\sim\infty$ and $|q|\sim\infty$ say nothing about the
    essential spectrum of $H$.  }
\end{remark}

We now recall some definitions and a result that can be used for
operators that are not semi-bounded and that will be especially useful
in the general context of $N$-body Hamiltonians.

Let $H_0$ be a self-adjoint operator on a Hilbert space $\ch$ with
form domain $\cg$. We say that a continuous sesquilinear form $V$ on
$\cg$ (i.e. a symmetric linear map $V:\cg\to\cg^*$) is a
\emph{standard form perturbation} of $H_0$ if there are positive
numbers $\gamma,\delta$ with $\gamma<1$ such that either
$\pm V\leq \gamma|H_0|+\delta$ or $H_0$ is bounded from below and
$V\geq-\gamma H_0-\delta$. In this case, the operator in $H$ in $\ch$
associated to $H_0+V:\cg\to\cg^*$ is self-adjoint (see the comments at
the beginning of this section).

We do not recall the definition of strict affiliation, but we use the
following fact: a self-adjoint operator $H$ is \emph{strictly
  affiliated} to a $C^*$-algebra $\rc$ of operators on $\ch$ if, and
only if, there is $\theta\in\maC_0(\R)$ with $\theta(0)=1$ such that
$\lim_{\varepsilon\to0}\|\theta(\varepsilon H)C-C\|=0$, for all
$C\in\rc$. The following is a consequence of Theorem 2.8 and Lemma 2.9
in \cite{DG1}.

\begin{theorem}\label{th:recall0}
  Let $H_0$ be a self-adjoint operator, $V$ a standard form
  perturbation of $H_0$, and $H=H_0+V$ the self-adjoint operator
  defined above. Assume that $H_0$ is strictly affiliated to a
  $C^*$-algebra $\rc$ of operators on $\ch$. If there is
  $\phi\in\maC_0(\R)$ with $\phi(x)\sim|x|^{-1/2}$ for large $x$ such
  that $\phi(H_0)^2V\phi(H_0)\in\rc$, then $H$ is also strictly
  affiliated to $\rc$.
\end{theorem}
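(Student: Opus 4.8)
The plan is to obtain the statement as a direct application of the general affiliation criteria of \cite{DG1}, the only work being to recognize that the hypotheses made here are exactly the ones required there. Recall from the comments preceding the theorem that $H=H_0+V$ is a well‑defined self‑adjoint operator because $V$, being a standard form perturbation with relative bound $\gamma<1$, makes $H_0+V-z:\cg\to\cg^*$ bijective for suitable $z$ (a large negative real when $H_0$ is bounded below, $z=i\mu$ with $|\mu|$ large in the two‑sided case), where $\cg=D(|H_0|^{1/2})$ is the common form domain; so it remains only to see that this operator is \emph{strictly} affiliated to $\rc$. A harmless first reduction, using that $\maC_0(\R)$ is stable under $x\mapsto x+c$, lets one add a constant to $H_0$ so as to put oneself in the situation $H_0\ge 1$, $\cg=D(H_0^{1/2})$, without changing either the strict affiliation of $H_0$ or the hypothesis on $V$.

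The mechanism I would use is a resolvent expansion. Since $\gamma<1$, one can express the difference $(H-z)^{-1}-(H_0-z)^{-1}$ in terms of $(H_0-z)^{-1}$, $(H-z)^{-1}$ and $V$ (either by iterating $(H-z)^{-1}=(H_0-z)^{-1}-(H_0-z)^{-1}V(H-z)^{-1}$ once, or by the symmetrized Kato formula built from $|V|^{1/2}$ and the sign of $V$) in such a way that every copy of $V$ comes sandwiched between $\phi(H_0)$‑type smoothing operators and the only non‑polynomial ingredient is the bounded inverse of an operator of the form $1+B$ with $B$ controlled by the hypothesis. Because $H_0$ is strictly affiliated, $\psi(H_0)\in\rc$ for all $\psi\in\maC_0(\R)$ — in particular $(H_0-z)^{-1}\in\rc$ and $\phi(H_0)\in\rc$ — and, as a further consequence of strict affiliation, $f(H_0)C\in\rc$ and $Cf(H_0)\in\rc$ for every $C\in\rc$ and every bounded continuous $f$, by writing $f(H_0)C=\lim_{\varepsilon\to0}\bigl(f\cdot\theta(\varepsilon\,\cdot)\bigr)(H_0)\,C$ with $\theta$ as in the characterization of strict affiliation recalled above. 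Combining these facts with the $C^*$‑algebra axioms (norm‑closedness, and stability under the inverse of $1+B$ in the unitization) yields $(H-z)^{-1}\in\rc$; and the strictness follows from the same expansion, by checking that $(H-z)^{-1}D$ (for $D\in\rc$) and all $\varphi(H)$ lie in the norm‑closed right $\rc$‑submodule $\{C\in\rc:\ \|\theta(\varepsilon H)C-C\|\to0\}$, which then must be all of $\rc$ because $H_0$ is \emph{strictly} affiliated. Packaged together, this is \cite[Theorem 2.8]{DG1}, with \cite[Lemma 2.9]{DG1} supplying the passage from the hypothesis $\phi(H_0)^2V\phi(H_0)\in\rc$ to the input actually needed.

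The step I expect to be the main obstacle is precisely that last bookkeeping. Because $\rc$ need not be an ideal of $\cb(\ch)$ and a bounded but non‑decaying function of $H_0$ need not lie in $\rc$, one must arrange the decomposition so that every factor left stranded next to the $\rc$‑element is itself a $\maC_0(\R)$‑function of $H_0$ (or gets absorbed into one); this is exactly why the regularization is taken in the \emph{asymmetric} form $\phi(H_0)^2V\phi(H_0)$ — one factor of $\phi(H_0)$ heavier on one side than a naive form estimate would suggest — rather than the symmetric $\phi(H_0)V\phi(H_0)$. Granting \cite[Lemma 2.9]{DG1} for this point, the remaining verifications are routine.
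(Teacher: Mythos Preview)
Your proposal is correct and matches the paper's approach exactly: the paper gives no proof of this theorem at all, stating only that it ``is a consequence of Theorem 2.8 and Lemma 2.9 in \cite{DG1},'' which are precisely the results you invoke. Your sketch of the resolvent expansion and the role of the asymmetric regularization $\phi(H_0)^2V\phi(H_0)$ in fact supplies more detail than the paper does; the only quibble is that the reduction to $H_0\ge 1$ by adding a constant applies only in the semibounded case, but you have already flagged the two-sided case separately, so this is harmless in a sketch at this level.
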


We may of course replace $\phi(H_0)^2V\phi(H_0)\in\rc$ by the more
symmetric and simpler looking condition $\phi(H_0)V\phi(H_0)\in\rc$,
but this will not cover in the applications the case when the operator
$V$ is of the same order as $H_0$. For operators bounded from below
we have:

\begin{theorem}\label{th:recall}
  Let $H_0$ be a positive operator strictly affiliated to a
  $C^*$-algebra of operators $\rc$ on a Hilbert space $\ch$.  Let $V$
  be a continuous sesquilinear form on $D(H_0^{1/2})$ such that
  $V\geq -\gamma H_0-\delta$ with $\gamma<1$. If
  $\varphi(H_0)V (H_0+1)^{-1/2}\in\rc$ for any
  $\varphi\in\cc_\rmc(\R)$, then the form sum $H = H_0+V$ is a
  self-adjoint operator strictly affiliated to $\rc$.
\end{theorem}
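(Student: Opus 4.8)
The plan is to deduce the statement from Theorem~\ref{th:recall0}, the non-semibounded affiliation criterion. First I note that, since $H_0\ge0$ and $V\ge-\gamma H_0-\delta$ with $\gamma<1$, the form $V$ is a standard form perturbation of $H_0$ (the bounded-from-below alternative in the definition), so $H=H_0+V$ is already a well-defined self-adjoint operator by the generalities recalled at the beginning of Section~\ref{s:affS}: on $\cg:=D(H_0^{1/2})$ one has $H_0+V+\delta+1\ge(1-\gamma)H_0+1$, which is a coercive form, hence $H_0+V+\delta+1\colon\cg\to\cg^*$ is a bijection and the induced operator in $L^2(X)$ is self-adjoint. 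Thus the only thing left to prove is that there exists $\phi\in\maC_0(\R)$ with $\phi(x)\sim|x|^{-1/2}$ for large $x$ and $\phi(H_0)^2V\phi(H_0)\in\rc$; since $H_0$ is assumed strictly affiliated to $\rc$, Theorem~\ref{th:recall0} then yields strict affiliation of $H$ to $\rc$ immediately.

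The natural candidate is $\phi(x):=(x+1)^{-1/2}$ (for $x\ge0$, extended arbitrarily to an element of $\maC_0(\R)$ on $(-\infty,0)$, which plays no role because $\sigma(H_0)\subseteq[0,\infty)$). For this choice $\phi(H_0)^2V\phi(H_0)=(H_0+1)^{-1}V(H_0+1)^{-1/2}$, which makes sense as a bounded operator on $\ch:=L^2(X)$ since $(H_0+1)^{-1/2}$ maps $\ch$ onto $\cg$, $V$ maps $\cg$ into $\cg^*$, and $(H_0+1)^{-1}$ maps $\cg^*$ back into $\cg\subset\ch$. So the task reduces to showing $(H_0+1)^{-1}V(H_0+1)^{-1/2}\in\rc$, knowing only that $\varphi(H_0)V(H_0+1)^{-1/2}\in\rc$ for every $\varphi\in\cc_\rmc(\R)$.

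This is an approximation argument, and the one subtle point is that the approximation must be carried out in the operator norm $\cg^*\to\ch$ rather than in $\cb(\ch)$, because $V(H_0+1)^{-1/2}$ takes values in $\cg^*$. Concretely I would choose $\chi_n\in\cc_\rmc(\R)$ with $\chi_n\equiv1$ on $[-n,n]$ and $0\le\chi_n\le1$, set $\varphi_n(x):=(x+1)^{-1}\chi_n(x)\in\cc_\rmc(\R)$, so that $\varphi_n(H_0)V(H_0+1)^{-1/2}\in\rc$, and then use $\|\psi(H_0)\|_{\cg^*\to\ch}=\sup_{x\in\sigma(H_0)}|\psi(x)|(x+1)^{1/2}$ (with the norm on $\cg$ taken to be $\|(H_0+1)^{1/2}\cdot\|_\ch$) to estimate
\[
 \big\|\varphi_n(H_0)-(H_0+1)^{-1}\big\|_{\cg^*\to\ch}
 \,\le\,\sup_{x\ge n}(x+1)^{-1/2}\,=\,(n+1)^{-1/2}\,\longrightarrow\,0
 \quad(n\to\infty).
\]
Composing on the right with the bounded map $V(H_0+1)^{-1/2}\colon\ch\to\cg^*$ gives $\varphi_n(H_0)V(H_0+1)^{-1/2}\to(H_0+1)^{-1}V(H_0+1)^{-1/2}$ in $\cb(\ch)$, and since $\rc$ is norm closed the limit lies in $\rc$. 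This furnishes the desired $\phi$, and Theorem~\ref{th:recall0} completes the argument.

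I expect the only genuine obstacle to be the bookkeeping in the last step: keeping track of which of the spaces $\ch,\cg,\cg^*$ each operator maps between, and in particular recognizing that the approximation has to take place in the weighted norm $\cg^*\to\ch$. It is precisely here that the exponent $-\tfrac12$ appearing in the hypothesis $\varphi(H_0)V(H_0+1)^{-1/2}$ is exploited — the extra half power of decay in $(H_0+1)^{-1}$ relative to $\varphi_n$ compensates exactly the half power lost in passing from $\ch$ to $\cg^*$ — so a naive attempt to approximate in $\cb(\ch)$ would fail.
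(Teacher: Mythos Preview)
Your proposal is correct. The paper does not give an explicit proof of this theorem; it presents Theorem~\ref{th:recall} as the semibounded specialization immediately after Theorem~\ref{th:recall0}, both being attributed to \cite{DG1}. Your derivation of Theorem~\ref{th:recall} from Theorem~\ref{th:recall0} via the choice $\phi(x)=(x+1)^{-1/2}$ and the norm approximation of $(H_0+1)^{-1}$ by $\varphi_n(H_0)$ in $\cb(\cg^*,\ch)$ is exactly the natural route implied by the paper's presentation, and the key observation you highlight---that the extra half power of decay in $(H_0+1)^{-1}$ is what makes the approximation work in the $\cg^*\to\ch$ norm---is precisely the point.
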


The next proposition is an immediate consequence of Theorem
\ref{th:recall}. Note that below the form domain of $h(p)$ is the
domain of $k(p)$, where $k$ is the function $|h|^{1/2}$. It is clear
that if $h$ is a proper continuous function, then $h(p)$ is strictly
affiliated to $\rs(X)$.

\begin{proposition}\label{pr:cor}
  Let $H=h(p)+V$, where $h:X^*\to\R$ is a continuous proper function
  and $V$ is a standard form perturbation of $h(p)$. If
  $(1+|h(p)|)^{-1}V(1+|h(p)|)^{-1/2}$ belongs to $\rs(X)$, then $H$ is
  strictly affiliated to $\rs(X)$.
\end{proposition}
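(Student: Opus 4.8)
The plan is to recognise the hypothesis as the concrete form of the abstract affiliation criterion in Theorem~\ref{th:recall0}, applied with $H_0=h(p)$ and $\rc=\rs(X)$. (When $h\ge0$, so that $H_0\ge0$, one may instead invoke the cleaner Theorem~\ref{th:recall}, which is what the sentence preceding the statement has in mind; in general, though, $h$ need not be semibounded, and then Theorem~\ref{th:recall0} is the relevant one.) Since $h$ is continuous and proper, $h(p)$ is strictly affiliated to $\rs(X)$, as recalled just above; so all that must be produced is a function $\phi\in\cc_0(\R)$ with $\phi(x)\sim|x|^{-1/2}$ for large $x$ for which $\phi(h(p))^2\,V\,\phi(h(p))\in\rs(X)$, and the assumption $(1+|h(p)|)^{-1}V(1+|h(p)|)^{-1/2}\in\rs(X)$ is tailored exactly to this.

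Concretely, I would fix a real $\phi\in\cc_0(\R)$ with $\phi(x)=|x|^{-1/2}$ for $|x|\ge1$. The key (elementary) point is the factorisation of commuting functions of $p$,
\[
  \phi(h(p))\ =\ \rho(p)\,(1+|h(p)|)^{-1/2},\qquad \rho(k):=\phi(h(k))\,(1+|h(k)|)^{1/2},
\]
where $\rho$ is continuous and bounded on $X^*$ and, because $h$ is proper (so $|h(k)|\to\infty$), tends to $1$ at infinity; hence $\rho-1\in\cc_0(X^*)$, i.e.\ $\rho(p)\in\C+C^*(X)$. Squaring and pulling the outer factors $\rho(p)$ past the (commuting) weights — they remain on the outside of $V$ — one gets
\[
  \phi(h(p))^2\,V\,\phi(h(p))\ =\ \rho(p)^2\,\big[(1+|h(p)|)^{-1}\,V\,(1+|h(p)|)^{-1/2}\big]\,\rho(p)\ =\ \rho(p)^2\,A\,\rho(p),
\]
with $A\in\rs(X)$ by hypothesis. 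As $\maCX$ is unital we have $C^*(X)\subset\rs(X)$, so both $\rho(p)$ and $\rho(p)^2$ lie in $\C+\rs(X)$; since $\rs(X)$ is an ideal in its unitisation, $\rho(p)^2A\rho(p)\in\rs(X)$. Thus $\phi(h(p))^2V\phi(h(p))\in\rs(X)$, and Theorem~\ref{th:recall0} yields that $H=h(p)+V$ is self-adjoint and strictly affiliated to $\rs(X)$.

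I do not anticipate a genuine obstacle here: the statement is essentially just this translation. The points that need a little care are bookkeeping ones — checking that the operator products above make sense as bounded operators on $L^2(X)$ (which is precisely why the weights $(1+|h(p)|)^{-1/2}$ and $(1+|h(p)|)^{-1}$ are present: they map $L^2(X)$ boundedly into the form domain $\cg$ of $h(p)$, respectively $\cg^*$ into $\cg$, with the bounded $\rho(p)$ commuting through them) — and observing that one cannot avoid Theorem~\ref{th:recall0} by taking $H_0=1+|h(p)|$ when $h$ is unbounded below, since then $h(p)+V-(1+|h(p)|)$ is no longer a standard form perturbation of $1+|h(p)|$. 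When $h\ge0$, the same computation applies verbatim to Theorem~\ref{th:recall}: for $\varphi\in\cc_\rmc(\R)$ the function $\varphi\circ h$ is compactly supported (because $h$ is proper), so $\varphi(h(p))\in C^*(X)$, and the identical manipulation gives $\varphi(h(p))\,V\,(h(p)+1)^{-1/2}\in\rs(X)$ for all such $\varphi$.
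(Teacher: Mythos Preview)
Your argument is correct and is exactly the expansion of the paper's one-line justification: the paper states the proposition as ``an immediate consequence of Theorem~\ref{th:recall}'' and then remarks that $(1+|h|)^{-1/2}$ may be replaced by any $\theta\circ h$ because $\maC_\rmb(X^*)$ lies in the multiplier algebra of $\rs(X)$, which is precisely your $\rho(p)$ manipulation. Your observation that in the non-semibounded case one must fall back on Theorem~\ref{th:recall0} rather than Theorem~\ref{th:recall} is a valid refinement that the paper glosses over.
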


We may replace above $(1+|h|)^{-1/2}$ by any function of the form
$\theta\circ h$ with $\theta$ as in Theorem \ref{th:recall}. Indeed,
$\maC_\rmb(X^*)$ is obviously included in the multiplier algebra of
$\rs(X)$.

For $0\leq s\leq1$, let $\cg^s := D(|h(p)|^s)$ equipped with the graph
topology and let $\cg^{-s}$ be its adjoint space. So $\cg^1=\cg$,
$\cg^0=\ch$, and $\cg^{-1}=\cg^*$. If $V$ is a continuous symmetric
form on $\cg$ such that $V\cg^1\subset\cg^{-s}$ for some $s<1$, then
for each $\gamma>0$ there is a real $\delta$ such that
$\pm V\leq \gamma|h(p)|+\delta$, and hence $V$ is a standard form
perturbation of $h(p)$ and $H$ is well defined.

\begin{corollary}\label{co:cor}
  Let $H=h(p)+V$, where $h:X^*\to\R$ is a continuous proper function,
  and let $V$ be a continuous symmetric form on $\cg$ such that
  $V\cg^1\subset\cg^{-s}$ with $s<1$. Let $\phi$ be a smooth function
  such that $\phi(x)\sim|x|^{-1/2}$ for large $x$ and denote
  $L=\phi(H_0)V\phi(H_0)$.  If $\lim_{k\to 0}\|[M_k,L]\|=0$ and
  $\alpha.V=\slim_{a\to\alpha} T_a^*VT_a$ exists in $\cb(\cg,\cg^*)$
  for each $\alpha\in\SS_X$, then $H$ is affiliated to $\rs(X)$, we
  have $\alpha.H=h(p)+\alpha.V$, and $\sigma_\ess(H)=\cup_\alpha
  \sigma(\alpha.H)$.
\end{corollary}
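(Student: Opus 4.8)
The plan is to reduce everything to Proposition \ref{pr:cor} (for the affiliation statement) together with Corollary \ref{co:ker} (for the essential spectrum formula), so the real work is only to verify the hypotheses of these two results. First I would observe that, since $V\cg^1\subset\cg^{-s}$ with $s<1$, the remark preceding the corollary shows that $V$ is a standard form perturbation of $h(p)$; hence $H=h(p)+V$ is a well-defined self-adjoint operator and it suffices to check that $(1+|h(p)|)^{-1}V(1+|h(p)|)^{-1/2}\in\rs(X)$ in order to apply Proposition \ref{pr:cor} and conclude that $H$ is (strictly) affiliated to $\rs(X)$. Because $\phi(x)\sim|x|^{-1/2}$ for large $x$ and $\maC_\rmb(X^*)$ is contained in the multiplier algebra of $\rs(X)$, membership of $L=\phi(h(p))V\phi(h(p))$ in $\rs(X)$ is equivalent to membership of $(1+|h(p)|)^{-1}V(1+|h(p)|)^{-1}$, and then one more multiplication by the bounded-below multiplier $(1+|h(p)|)^{1/2}\phi(h(p))^{-1}$, again in the multiplier algebra, moves between the two normalizations; so it is enough to show $L\in\rs(X)$.

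To show $L\in\rs(X)$ I would invoke the characterization of $\rs(X)$ in Theorem \ref{th:sphalg}: a bounded operator $S$ lies in $\rs(X)$ iff it has the position-momentum limit property and $\slim_{a\to\alpha}\tau_a(S)^{(*)}$ exists for every $\alpha\in\SS_X$. The operator $L$ is bounded (by the standard-form-perturbation estimate $\pm V\le\gamma|h(p)|+\delta$ together with $\phi(x)^2(|x|+1)$ bounded) and self-adjoint, so the $S^{(*)}$ clauses are automatic. For the position-momentum limit property: the condition $\lim_{k\to0}\|[M_k,L]\|=0$ is a hypothesis; and $\lim_{x\to0}\|(T_x-1)L\|=0$ follows because $L=\phi(h(p))\cdot\big(V\phi(h(p))\big)$, the factor $\phi(h(p))$ lies in $C^*(X)$ (being a $\maC_0$-function of $h(p)$ with $h$ proper continuous, so $\phi\circ h\in\maC_0(X^*)$), and $C^*(X)\subset\rs(X)$ already has this property, while $V\phi(h(p))$ is bounded. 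For the limit along half-lines, $\tau_a(L)=T_a^*\phi(h(p))\,V\,\phi(h(p))T_a=\phi(h(p))\,(T_a^*VT_a)\,\phi(h(p))$ since $\tau_a$ fixes functions of $p$; the hypothesis that $\alpha.V=\slim_{a\to\alpha}T_a^*VT_a$ exists in $\cb(\cg,\cg^*)$, sandwiched between the fixed bounded operators $\phi(h(p)):\ch\to\cg$ and $\phi(h(p)):\cg^*\to\ch$ (bounded because $\phi(x)(|x|+1)^{1/2}$ is bounded), gives $\slim_{a\to\alpha}\tau_a(L)=\phi(h(p))(\alpha.V)\phi(h(p))=:L_\alpha\in B(\ch)$. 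Thus $L\in\rs(X)$.

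Once $H$ is affiliated to $\rs(X)$, Corollary \ref{co:ker} applies and gives that $\alpha.H:=\slim_{a\to\alpha}T_a^*HT_a$ exists in the sense of Remark \ref{re:limit} and that $\sigma_\ess(H)=\cup_\alpha\sigma(\alpha.H)$. It remains to identify this limit as $h(p)+\alpha.V$. Here I would argue as in the proof of Proposition \ref{pr:ess1} / Theorem \ref{th:hconj}: write $R=(H+\rmi)^{-1}$ and note $\tau_a(R)=(\tau_a(H)+\rmi)^{-1}=(h(p)+T_a^*VT_a+\rmi)^{-1}$ as maps $\cg^*\to\cg$, using that $h(p)$ is translation invariant and that $h(p)+T_a^*VT_a+\rmi:\cg\to\cg^*$ is bijective uniformly in $a$ (the standard-form estimate passes to translates since $\|w(p)T_ag\|=\|w(p)g\|$, exactly as in Theorem \ref{th:ess2}); the resolvent identity $\tau_a(R)-(h(p)+\alpha.V+\rmi)^{-1}=\tau_a(R)\big((\alpha.V)-T_a^*VT_a\big)(h(p)+\alpha.V+\rmi)^{-1}$ then shows $\slim_{a\to\alpha}\tau_a(R)=(h(p)+\alpha.V+\rmi)^{-1}$, so $\alpha.H$ is the self-adjoint operator associated to $h(p)+\alpha.V:\cg\to\cg^*$, as claimed. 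The main obstacle is the bookkeeping with the three scales $\cg,\ch,\cg^*$: one must be careful that $\phi(h(p))$ maps $\cg^*$ boundedly into $\ch$ and $\ch$ boundedly into $\cg$ (which is where the precise growth $\phi(x)\sim|x|^{-1/2}$ enters), and that the strong limit of $T_a^*VT_a$ in $\cb(\cg,\cg^*)$ really does transfer, after this sandwiching, to a strong limit of bounded operators on $\ch$ — everything else is a routine repetition of arguments already carried out for Theorems \ref{th:hconj} and \ref{th:ess2}.
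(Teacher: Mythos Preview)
The paper states Corollary \ref{co:cor} without giving a proof, so your write-up is exactly the kind of verification the reader is expected to carry out, and your overall architecture is the right one: prove $L\in\rs(X)$ via the intrinsic characterization of Theorem \ref{th:sphalg}, deduce affiliation of $H$ to $\rs(X)$, then invoke Corollary \ref{co:ker} and identify $\alpha.H$ by a resolvent-identity argument as in Theorems \ref{th:hconj}--\ref{th:ess2}. The check that $L\in\rs(X)$ is carried out correctly, as is the identification of $\alpha.H$ (your remark that the form bound $\pm V\le\gamma|h(p)|+\delta$ is translation invariant, hence survives in the limit, is the key point there).

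There is one genuine slip in the reduction step. You write that membership of $L$ in $\rs(X)$ is ``equivalent'' to membership of $(1+|h(p)|)^{-1}V(1+|h(p)|)^{-1/2}$ by multiplying with factors like $(1+|h(p)|)^{1/2}\phi(h(p))^{-1}$ from the multiplier algebra. But $\phi$ is only assumed smooth with $\phi(x)\sim|x|^{-1/2}$ at infinity; nothing prevents $\phi$ from vanishing somewhere, so $\phi(h(p))^{-1}$ need not make sense, and in any case $(1+|h|)^{1/2}/\phi(h)$ is unbounded, not a multiplier. The fix is to bypass Proposition \ref{pr:cor} and appeal to Theorem \ref{th:recall0} directly: once $L\in\rs(X)$, multiply on the left by $\phi(h(p))\in C^*(X)\subset\rs(X)$ to obtain $\phi(H_0)^2V\phi(H_0)=\phi(H_0)\cdot L\in\rs(X)$, which is precisely the hypothesis of Theorem \ref{th:recall0}. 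With this one-line replacement the argument is complete.
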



\section{N-body type interactions}\label{s:nbd}
\protect\setcounter{equation}{0}
\protect\setcounter{tocdepth}{2}

In this section we introduce and study the algebra of potentials (or
elementary interactions) in the $N$-body case. We will implicitly
assume $X$ of dimension $\geq2$, because in the one dimensional case
the algebra $\re(X)$ defined in \eqref{eq:mainham} coincides with
$\rs(X)$.

\subsection{N-body framework}
\addtocontents{toc}{\protect\setcounter{tocdepth}{2}}

The framework that we introduce here allows us to define and classify
$N$-body Hamiltonians in terms of the complexity of the interactions
inside subsets of particles.

Assume that for each finite dimensional real vector space $E$ a
translation invariant $C^*$-subalgebra $\cp(E)$ of $\cbu(E)$ has been
specified (the letter $\cp$ should suggest ``potentials''). Then, for
each subspace $Y\subset X$, we get a translation invariant subalgebra
$\cp(X/Y)\subset\cbu(X)$. Let us denote by
$\langle \, A_\alpha , \alpha \in I \, \rangle$ the norm closed
subalgebra generated by a family $\{A_\alpha\}_{\alpha\in I}$ of sets
$A_\alpha \subset \cbu(X)$. Then we let
\begin{equation}\label{eq:def.RX}
\cR_{\cp}(X) \,:= \, \langle \, \cp(X/Y) ,\, Y \subset X\rangle  
 \quad \mbox{and} \quad \rr_{\cp}(X) := \cR_{\cp}(X)\rtimes X \,.
\end{equation}
Thus $\cR_{\cp}(X)$ is the norm-closed subalgebra of $\cbu(X)$
generated by the $\cp(X/Y)$, where $Y$ runs over the set of {\em all
  linear subspaces} of $X$. Clearly this is a translation invariant
$C^*$-subalgebra of $\cbu(X)$. We shall regard the crossed product
$\rr_{\cp}(X) := \cR_{\cp}(X)\rtimes X$, as a $C^*$-subalgebra of
$\rb(X)$.  Its structure will play a crucial role in what follows. For
instance, for our approach, it will be convenient to assume that
$\cc_0(E)\subset\cp(E)$ and $\cp(0)=\C$. Clearly then $\rr_{\cp}(X)$
contains $C^*(X)$ and $\rk(X)$.

It will be natural to call $\cR_{\cp}(X)$ the \emph{algebra of
  elementary interactions of type $\cp$} and
$\rr_{\cp}(X) := \cR_{\cp}(X) \rtimes X$ the \emph{algebra of $N$-body
  type Hamiltonians with interactions of type $\cp$}. Indeed,
$\rr_{\cp}(X)$ is the $C^*$-algebra of operators on $L^2(X)$ generated
by the resolvents of the self-adjoint operators of the form $h(p)+V$,
with $h:X^*\to\R_+$ continuous and proper, and $V\in\cR_{\cp}(X)$
\cite[Proposition 3.3]{GI3}.  The self-adjoint operators affiliated to
$\rr_{\cp}(X)$ will be called \emph{$N$-body Hamiltonians with
  interactions of type $\cp$}.

We give three examples of possible choices for $\cp$, in increasing 
order of difficulty.

First, the ``standard'' $N$-body situation, as described for example
in \cite[Section\ 4]{DG1} and \cite[Section\ 6.5]{GI3}, corresponds to
the choice $\cp(E)=\maC_0(E)$. The algebra of elementary interactions
$\cR_{\cp}(X) = \cR_{\cc_0}(X)$ in this case has a remarkable feature:
it is graded by the ordered set of all linear subspaces of $X$, more
precisely $\cR_{\cc_0}(X)$ is the norm closure of
$\sum_{Y \subset X}\maC_0(X/Y)$, this sum is direct, and we have
$\maC_0(X/Y) \maC_0(X/Z)\subset \maC_0(X/(Y\cap Z)$. Then the
corresponding algebra $\rr_{\cc_0}(X)$ of $N$-body Hamiltonians with
interactions of type $\maC_0$ inherits a graded $C^*$-algebra
structure \cite{Mageira,Mageira2}. The usual $N$-body Hamiltonians are
self-adjoint operators affiliated to $\rr_{\maC_0}(X)$, and their
analysis is greatly simplified by the existence of the grading.

Let us now discuss the choice of the space of potential functions
$\cp(X/Y)$ that will used in this paper. Namely, for any real finite
dimensional vector space $E$ we consider the spherical
compactification $\overline{E}$ of $E$ and denote $\overline{\maC}(E)
= \maC(\overline{E})$.  Our main goal in this paper is to treat the
larger class of interactions $\cp(E) = \overline{\maC}(E)$ and to
analyze the $N$-body Hamiltonians associated to them. We recall the
notations already used in the introduction:
\begin{align}
 &\ce(X) \, := \, \cR_{\overline{\maC}}(X) \, := \, \langle\,
 \overline{\maC}(X/Y),\, Y \subset X \rangle \, \subset \, \cbu(X)\,,
 \label{eq:mainNalgintro} \\
 &\re(X) \, := \, \rr_{\overline{\maC}}(X) \, :=\, \ce(X)\rtimes X 
\, \subset \, \rb(X) \,.
\label{eq:mainNalgintro-bis}
\end{align}
One of the main difficulties now comes from the absence of a grading
of the algebra $\ce(X)$ of elementary interactions, which requires
more care in understanding its spectrum.  Observe that, besides the
ideal $\maC_0(X)\rtimes X \simeq \rk(X)$ of compact operators,
$\re(X)$ also contains the spherical algebra
$\rs(X) :=\maC(\oX)\rtimes X$ consisting of two-body type operators.

A third natural choice, which gives an even larger class of elementary
interactions and of $N$-body type Hamiltonians, is to take as $\cp(E)$
the algebra of slowly oscillating functions on $E$, a class of
functions whose importance has been pointed out by H.O.\ Cordes (see
Section 6.2 in \cite{GI3} for a discussion of this point and several
references). In this context, we mention M.E.\ Taylor's thesis
\cite{TaylorTAMS1971} where hypoelliptic operators with slowly
oscillating coefficients of two-body type are considered: this is one
of the first papers where Fredholmness criteria are obtained in a
general setting by using the comparison $C^*$-algebras introduced by
Cordes. In fact, his $C^*$-algebra $\mathfrak{A}$ is just the crossed
product of the $C^*$-algebra of slowly oscillating functions by the
action of $X$.

\subsection{The algebra of elementary interactions}\label{ss:elint}  

The algebra $\ce(X)$ will play a leading role in our approach.  From
the definition, it follows that $\ce(X)$ is a translation invariant
subalgebra since the generating subspaces $\maCXY$ are already
translation invariant.  The algebra $\ce(X)$ is not graded, as in the
standard $N$-body framework of the algebra $\cR_{\cc_0}(X)$, but has a
natural filtration that plays an important role in our analysis.

Let us fix a linear subspace $Z\subset X$. Then $X/Z$ is a finite
dimensional real vector space, and hence the $C^*$-algebra
$\ce(X/Z)\subset\cbu(X/Z)$ is well defined and the embedding
$\cbu(X/Z)\subset\cbu(X)$ allows us to identify $\ce(X/Z)$ with a
$C^*$-subalgebra of $\ce(X)$. If $Y \supset Z$ is another linear
subspace then $Y/Z\subset X/Z$ and we may identify $X/Y =
(X/Z)/(Y/Z)$. Therefore we can identify
\begin{equation}\label{eq:maiNsalg}
  \ce(X/Z) \, = \, \text{ $C^*$-subalgebra of } \ce(X) \text{
    generated by }  \ccup_{Z \subset Y}\,  \maCXY \,.
\end{equation}
Thus, the $C^*$-algebra $\ce(X)$ is equipped with a family of
$C^*$-subalgebras $\ce(X/Y)$, where $Y$ runs over the set of linear
subspaces of $X$, such that, for $0\subset Z\subset Y\subset X$, we
have
\begin{equation}\label{eq:cexyz}
  \C \, = \, \ce(0) \, = \, \ce(X/X) \, \subset \, \ce(X/Y) \, \subset
  \, \ce(X/Z) \, \subset \, \ce(X) \, .
\end{equation}
Recall now that $\SS_X$ consists of the half-lines of $X$. We shall
denote by $[\alpha]$ the one dimensional subspace generated by a
half-line $\alpha \in \SS_{X}$. Observe that the algebras
$\ce(X/[\alpha])$ are maximal among the non-trivial subalgebras of
$\ce(X)$ of the form $\ce(X/Y)$.

Translation at infinity along a direction
$\alpha = \RR_{+} a \in \SS_X$ gives us a linear projection
$\tau_\alpha$ of $\ce(X)$ onto the subalgebra $\ce(X/[\alpha])$ as
follows. For $u \in \ce(X)$ we define
\begin{equation}\label{eq:cpa}
   \tau_\alpha (u)(x) \,:= \, \displaystyle{\lim_{r\to+\infty}}
   u(ra+x)\,.
\end{equation}

\begin{lemma} \label{lemma.tau}
Let $Y \subset X$ be a real, linear subspace and $u \in \maCXY$. Then
 \begin{equation*}
  \tau_{\alpha}(u) \,= \ 
 \begin{cases}
  u(\pi_Y(\alpha)) \in \CC & \mbox{ if }\  \alpha \not\subset Y\\
  \ u & \mbox{ if } \ \alpha \subset Y \,.
 \end{cases}
\end{equation*}
\end{lemma}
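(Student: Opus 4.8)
The plan is to reduce the statement to the definition of $\tau_\alpha$ in \eqref{eq:cpa} together with the basic description of the functions in $\maCXY$ provided by Lemma~\ref{lm:ch}, applied on the quotient space $X/Y$. First I would dispose of the case $\alpha \subset Y$: if $u = w \circ \pi_Y$ with $w \in \maC(\overline{X/Y})$ and $a \in \alpha \subset Y$, then $\pi_Y(ra + x) = \pi_Y(x)$ for every $r$, so $u(ra+x) = u(x)$ for all $r > 0$ and the limit in \eqref{eq:cpa} is trivially $u(x)$; hence $\tau_\alpha(u) = u$ in this case. This is immediate and requires no estimates.

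The substance is the case $\alpha \not\subset Y$. Here $\pi_Y(\alpha) \in \SS_{X/Y}$ is a genuine half-line, and writing $b := \pi_Y(a) \neq 0$ in $X/Y$ and $\bar{x} := \pi_Y(x)$, we have $u(ra+x) = w(rb + \bar{x})$. The claim $\tau_\alpha(u) = w(\pi_Y(\alpha)) = w(\hat b)$ (a constant function on $X$) is then exactly the assertion that $\lim_{r\to+\infty} w(rb + \bar{x}) = w(\hat b)$ for every fixed $\bar x \in X/Y$. This is precisely the content of \eqref{eq:cx3} in Lemma~\ref{lm:ch}, applied to the vector space $X/Y$, the compact sets $A = \{b\}$ and $B = \{\bar x\}$: the uniform radial limit $w(\hat b)$ exists and equals $\lim_{r\to\infty} w(rb + \bar x)$. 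So one only needs to invoke that lemma; no fresh analysis is needed, just the bookkeeping of identifying $u$ with $w\circ\pi_Y$ and tracking the projection.

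I would also remark, for cleanliness, that the resulting constant $w(\hat b)$ is independent of the choice of representative $a \in \alpha$ — if $a' = \lambda a$ with $\lambda > 0$ then $\pi_Y(a') = \lambda b$ generates the same half-line $\hat b$ in $X/Y$, so $w(\pi_Y(\alpha))$ is well defined — which is implicit in the notation $u(\pi_Y(\alpha))$ in the statement. I do not expect any real obstacle here: the only mild subtlety is making sure the limit in \eqref{eq:cpa} is taken pointwise in $x$ and that \eqref{eq:cx3} indeed delivers pointwise (in fact locally uniform) convergence, which it does. The lemma is essentially a translation of Lemma~\ref{lm:ch} from $X$ to the quotient $X/Y$, and the ``exercise'' difficulty level of Lemma~\ref{lm:ch} is inherited.
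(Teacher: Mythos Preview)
Your proof is correct and follows the same approach as the paper, which simply observes that $\pi_Y(\alpha)$ is a well-defined half-line in $X/Y$ when $\alpha\not\subset Y$ and then asserts that ``the limit is as stated follows from the definition.'' You have merely unpacked that one-line justification: the case $\alpha\subset Y$ is immediate from $\pi_Y(ra+x)=\pi_Y(x)$, and for $\alpha\not\subset Y$ you correctly reduce to \eqref{eq:cx3} of Lemma~\ref{lm:ch} applied on $X/Y$, which is exactly what ``the definition'' entails.
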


\begin{proof}
 If $\alpha \not\subset Y$, $\pi_Y(\alpha)$ is a half line in $X/Y$,
 and hence $u(\pi_Y(\alpha))$ is defined. The fact that the limit is
 as stated follows from the definition.
\end{proof}

Note that, in the above lemma, $\tau_\alpha(u)$ is a constant if $\alpha
\not \subset Y$.  The lemma gives right away the following.

\begin{proposition}\label{pr:pe}
  If $\alpha \in\SS_X$ and $u \in \ce(X)$, then the limit in
  \eqref{eq:cpa} exists for all $x\in X$, is independent of the choice
  of $a \in \alpha$, and $\tau_\alpha(u)\in\ce(X)$. The map
  $\tau_\alpha:\ce(X)\to\ce(X)$ is an algebra morphism with range
  $\ce(X/[\alpha])$ and $\tau_\alpha(u)=u$ for all
  $u \in \ce(X/[\alpha])$.
\end{proposition}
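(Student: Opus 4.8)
The plan is to first establish everything on the dense $*$-subalgebra $\ce_0(X)\subset\ce(X)$ consisting of finite linear combinations of finite products $u=u_1\cdots u_n$ with each $u_i\in\maC(\overline{X/Y_i})$ for some subspace $Y_i\subset X$; by the definition \eqref{eq:mainNalgintro} of $\ce(X)$, this $*$-subalgebra is norm dense in $\ce(X)$. For such a product and any $a\in\alpha$, $x\in X$ we have $u(ra+x)=\prod_i u_i(ra+x)$, and Lemma \ref{lemma.tau} applied to each factor shows that $\lim_{r\to+\infty}u_i(ra+x)$ exists: it equals the constant $u_i(\pi_{Y_i}(\alpha))$ if $\alpha\not\subset Y_i$ and it equals $u_i(x)$ if $\alpha\subset Y_i$. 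Hence the limit \eqref{eq:cpa} exists for every $u\in\ce_0(X)$, is manifestly linear and multiplicative in $u$, and is unchanged when $a$ is replaced by $\lambda a$ with $\lambda>0$ (substitute $r\mapsto\lambda^{-1}r$), so it is independent of the choice of $a\in\alpha$. Moreover, by Lemma \ref{lemma.tau}, $\tau_\alpha$ sends each generator $u_i$ either to a scalar --- which lies in $\ce(X/X)=\C\subset\ce(X/[\alpha])$ by \eqref{eq:cexyz} --- or, when $[\alpha]\subset Y_i$, to $u_i$ itself, which lies in $\maC(\overline{X/Y_i})\subset\ce(X/[\alpha])$ by \eqref{eq:maiNsalg}. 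Since $\tau_\alpha$ is multiplicative and linear on $\ce_0(X)$, it follows that $\tau_\alpha(\ce_0(X))\subset\ce(X/[\alpha])$.

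Next I would extend $\tau_\alpha$ to all of $\ce(X)$ and check that the pointwise formula \eqref{eq:cpa} still computes it. The one genuinely substantive point is the uniform-in-$r$ bound $|u(ra+x)|\leq\|u\|_\infty$ for all $r>0$: combined with convergence on the dense set $\ce_0(X)$, a routine $\varepsilon/3$ argument shows that, for each fixed $x$, the net $r\mapsto u(ra+x)$ is Cauchy as $r\to+\infty$ for every $u\in\ce(X)$. Thus the limit \eqref{eq:cpa} exists, is independent of $a\in\alpha$, satisfies $\|\tau_\alpha(u)\|_\infty\leq\|u\|_\infty$, and equals $\lim_n\tau_\alpha(u_n)$ (uniformly) whenever $u_n\to u$ with $u_n\in\ce_0(X)$; in particular $\tau_\alpha(u)$ is a uniform limit of elements of $\cbu(X)$, hence lies in $\cbu(X)$. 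Since $\tau_\alpha$ is contractive and $\ce(X/[\alpha])$ is norm closed, $\tau_\alpha(\ce(X))\subset\ce(X/[\alpha])$, and by continuity together with density of $\ce_0(X)$ the map $\tau_\alpha:\ce(X)\to\ce(X)$ is an algebra ($*$-)morphism.

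Finally I would pin down the range and the last assertion. On the generators $\maC(\overline{X/Y})$ with $Y\supset[\alpha]$ (equivalently $\alpha\subset Y$), Lemma \ref{lemma.tau} gives $\tau_\alpha(u)=u$; by multiplicativity and continuity, $\tau_\alpha$ restricts to the identity on the closed subalgebra these generate, which is exactly $\ce(X/[\alpha])$ by \eqref{eq:maiNsalg}. Hence $\ce(X/[\alpha])=\tau_\alpha(\ce(X/[\alpha]))\subset\tau_\alpha(\ce(X))$, and combined with the inclusion from the previous step this yields $\tau_\alpha(\ce(X))=\ce(X/[\alpha])$ and also $\tau_\alpha(u)=u$ for all $u\in\ce(X/[\alpha])$; in particular $\tau_\alpha$ is a linear projection of $\ce(X)$ onto $\ce(X/[\alpha])$.

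\textbf{Main obstacle.} Everything reduces cleanly to Lemma \ref{lemma.tau} once one works on the dense subalgebra $\ce_0(X)$, so no serious difficulty is expected; the only point requiring (routine) care is the passage from generators to an arbitrary $u\in\ce(X)$ --- i.e.\ verifying that the pointwise limit \eqref{eq:cpa} survives the norm completion and still represents the continuous extension of $\tau_\alpha$ --- which is handled by the uniform bound $|u(ra+x)|\leq\|u\|_\infty$ and the density of $\ce_0(X)$.
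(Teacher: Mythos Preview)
Your proof is correct and follows essentially the same approach as the paper: both use Lemma~\ref{lemma.tau} to handle the generators $\maC(\overline{X/Y})$, then extend to all of $\ce(X)$ by noting that the set of $u$ for which the limit \eqref{eq:cpa} exists is a norm-closed subalgebra (the paper states this directly, while you spell out the underlying $\varepsilon/3$ argument via the uniform bound $|u(ra+x)|\leq\|u\|_\infty$). The identification of the range as $\ce(X/[\alpha])$ and the projection property are also argued the same way in both, by checking $\tau_\alpha=\mathrm{id}$ on the generators $\maC(\overline{X/Y})$ with $Y\supset[\alpha]$.
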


\begin{proof} 
  Lemma \ref{lemma.tau} shows that the map $\tau_\alpha$ maps $\maCXY$
  to itself if $\alpha \subset Y$, and maps $\maCXY$ to $\CC$
  otherwise. The subspace of $B \subset \ce(X)$, for which the limit
  $\tau_{\alpha}(u)(x)$ exists for any $x$ is a norm closed,
  conjugation invariant subalgebra of $\ce(X)$. Since $B$ contains the
  generators of $\ce(X)$, we obtain that $B = \ce(X)$.  Consequently,
  the limit $\tau_{\alpha}(u)(x)$ exists for all $u \in \ce(X)$ and
  all $x \in X$. Also, we obtain that $\tau_{\alpha}$ maps the
  generators of $\ce(X)$ to a system of generators of
  $\ce(X/[\alpha]) \subset \ce(X)$, and hence $\tau_{\alpha}$ maps
  $\ce(X)$ onto $\ce(X/[\alpha])$ surjectively.  To complete the
  proof, we notice that
  $\tau_{\alpha} \circ \tau_{\alpha} = \tau_{\alpha}$ on the standard
  system of generators of $\ce(X)$, and hence $\tau_{\alpha} = id$ on
  the range of $\tau_{\alpha}$, that is, on $\ce(X/[\alpha])$.
\end{proof}

\begin{remark}\label{re:pab}{\rm 
The proof of Proposition \ref{pr:pe} gives that,} 
for each $\alpha\in\SS_X$, \emph{the relation \eqref{eq:cpa}
  defines a unital endomorphism $\tau_\alpha$ of $\ce(X)$, which is
  also a linear projection of $\ce(X)\,$ onto the subalgebra
  $\ce(X/[\alpha])$. We note that $\tau_\alpha$ does not commute
with $\tau_\beta$ in general: if a subspace $Z$ does not contain
$\alpha$ and $\beta$ and $u\in \maC(\overline{X/Z})$ then
$\tau_\alpha\tau_\beta(u)= u(\pi_Z(\beta))$ and $\tau_\beta
\tau_\alpha(u)= u(\pi_Z(\alpha))$.}
\end{remark}

\begin{remark}\label{re:function}{\rm 
For the purpose of this paper, the elements of $\ce(X)$ should be
thought as multiplication operators on the space $L^2(X)$. If,
according to the notational conventions from the beginning of Section
\ref{s:locinf}, we denote by $u(q)$ the operator of multiplication by
$u\in\ce(X)$ and, if we set $\tau_\alpha(u(q))=\tau_\alpha(u)(q)$,
then we get an expression similar to \eqref{eq.def.Pa}:
\begin{equation}\label{eq:cpaop}
  \tau_\alpha(u(q)) \, = \, \slim_{r\to+\infty} T_{ra}^*u(q)T_{ra}
  \, = \, \slim_{r\to+\infty} u(ra+q) \,. 
\end{equation} 
We emphasize however that $\slim_{a\to\alpha} T_{a}^*u(q)T_{a}$
\emph{does not exist} for general $u \in \ce(X)$.  }
\end{remark}

The next few results concern the subalgebras $\ce(X/Y)$.

\begin{proposition}\label{pr:cey}
  Let $\bar\alpha=(\alpha_1, \alpha_2, \ldots, \alpha_n)$ be a system
  of half-lines, which generate a subspace $Y$ of $X$. Then
\begin{equation}\label{eq:ygen}
  \ce(X/Y) \, = \, \ce(X/[\alpha_1]) \cap \dots \cap \ce(X/[\alpha_n]) .
\end{equation}
The morphism $\tau_{ \bar\alpha} :=
\tau_{\alpha_1}\tau_{\alpha_2}\ldots\tau_{\alpha_n}$ is a linear
projection of $\ce(X)$ onto $\ce(X/Y)$.
\end{proposition}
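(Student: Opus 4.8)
The plan is to exploit the endomorphisms $\tau_{\alpha_i}$ of $\ce(X)$ from Proposition \ref{pr:pe} together with their explicit action on generators given by Lemma \ref{lemma.tau}. First I would dispose of the easy inclusion: each $\alpha_i$ is one of the half-lines generating $Y$, so $[\alpha_i]\subseteq Y$, and \eqref{eq:cexyz} gives $\ce(X/Y)\subseteq\ce(X/[\alpha_i])$ for every $i$; hence $\ce(X/Y)\subseteq\bigcap_{i=1}^n\ce(X/[\alpha_i])$. Moreover, since each $\tau_{\alpha_i}$ acts as the identity on $\ce(X/[\alpha_i])$, hence on the smaller algebra $\ce(X/Y)$, the composite $\tau_{\bar\alpha}$ restricts to the identity on $\ce(X/Y)$.

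The crucial step is to show that $\tau_{\bar\alpha}$ maps all of $\ce(X)$ into $\ce(X/Y)$. Each $\tau_{\alpha_i}$ is a $*$-morphism (complex conjugation commutes with the limits defining $\tau_{\alpha_i}$ in \eqref{eq:cpa}), hence so is the composite $\tau_{\bar\alpha}$; being a $*$-morphism of $C^*$-algebras it is norm-continuous, so it suffices to check that $\tau_{\bar\alpha}$ sends every generator $u\in\maC(\overline{X/Z})$ of $\ce(X)$, with $Z\subseteq X$ a linear subspace, into the closed subalgebra $\ce(X/Y)$. I would apply the factors of $\tau_{\bar\alpha}$ from the right, invoking Lemma \ref{lemma.tau} at each step. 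If every $\alpha_i\subseteq Z$ --- equivalently $Y\subseteq Z$, since $Y=\sum_i[\alpha_i]$ --- then each $\tau_{\alpha_i}$ fixes $u$, so $\tau_{\bar\alpha}(u)=u\in\maC(\overline{X/Z})\subseteq\ce(X/Z)\subseteq\ce(X/Y)$, the last inclusion again by \eqref{eq:cexyz}. Otherwise, letting $j$ be the largest index with $\alpha_j\not\subseteq Z$, the factors $\tau_{\alpha_n},\dots,\tau_{\alpha_{j+1}}$ fix $u$, the factor $\tau_{\alpha_j}$ turns it into the scalar $u(\pi_Z(\alpha_j))$, and the remaining factors, being unital, leave that scalar unchanged; a scalar lies in $\ce(X/Y)$. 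In either case $\tau_{\bar\alpha}(u)\in\ce(X/Y)$, and therefore $\tau_{\bar\alpha}\big(\ce(X)\big)\subseteq\ce(X/Y)$.

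With this in hand, the proposition follows quickly. If $u\in\bigcap_{i=1}^n\ce(X/[\alpha_i])$, then $\tau_{\alpha_i}(u)=u$ for every $i$ by Proposition \ref{pr:pe}, hence $\tau_{\bar\alpha}(u)=u$, and consequently $u=\tau_{\bar\alpha}(u)\in\ce(X/Y)$; combined with the first paragraph this proves \eqref{eq:ygen}. Finally, $\tau_{\bar\alpha}$ is an algebra morphism whose range is contained in $\ce(X/Y)$ and which acts as the identity there, so $\tau_{\bar\alpha}^2=\tau_{\bar\alpha}$ and its range is exactly $\ce(X/Y)$; that is, $\tau_{\bar\alpha}$ is a linear projection of $\ce(X)$ onto $\ce(X/Y)$. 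The one subtlety to keep an eye on is that, by Remark \ref{re:pab}, the $\tau_{\alpha_i}$ need not commute, so the precise value $\tau_{\bar\alpha}(u)$ --- which scalar one lands on in the second case above --- may depend on the ordering of the factors; but the only property used, namely $\tau_{\bar\alpha}(u)\in\ce(X/Y)$, holds for any ordering, so the statement is unaffected. This lack of commutativity, rather than any single computation, is the main thing to be careful about.
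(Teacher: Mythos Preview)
Your proof is correct and follows essentially the same approach as the paper's: check the action of $\tau_{\bar\alpha}$ on the generators $u\in\maC(\overline{X/Z})$ via Lemma \ref{lemma.tau}, conclude that the range lies in $\ce(X/Y)$ and that $\tau_{\bar\alpha}$ is the identity there, and then use $\tau_{\alpha_i}(u)=u$ for $u$ in the intersection to obtain the reverse inclusion. Your treatment is in fact a bit more explicit than the paper's (which compresses the case analysis on generators into a single sentence), and your closing remark about the dependence on the ordering of the $\tau_{\alpha_i}$ is a nice clarification that the paper postpones to the paragraph following the proof.
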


\begin{proof}
  If $u \in \maC(\overline{X/Z})$, for some $Z$, then Lemma
  \ref{lemma.tau} gives $\tau_{ \bar\alpha}(u)=u $ if $Y \subset Z$
  and $\tau_{ \bar\alpha}(u) \in \CC\ $ otherwise. In any case,
  $\tau_{ \bar\alpha}(u) \in \ce(X/Y)$. Since $\tau_{ \bar\alpha}$ is
  a morphism, the range of $\tau_{ \bar\alpha}$ is
  included in $\ce(X/Y)$ and $\tau_{ \bar\alpha}(u)=u$ if
  $u\in\ce(X/Y)$. Thus $\tau_{ \bar\alpha}$ is a linear projection of
  $\ce(X)$ onto $\ce(X/Y)$. Let $u\in\ce(X)$. We obtain that $u\in\ce(X/Y)$
  if, and only if, $\tau_{ \bar\alpha}(u)=u$. If $u$ belongs to the
  right hand side of \eqref{eq:ygen}, then $\tau_{ \bar\alpha}(u)=u$,
  so $u\in\ce(X/Y)$.
\end{proof}

Note that a permutation of the $\alpha_1, \alpha_2, \ldots, \alpha_n$
will give a different projection onto $\ce(X/Y)$ (see Remark
\ref{re:pab}). More generally, if $\bar\beta=(\beta_1,\dots,\beta_m)$
is a second system of half-lines that generates $Y$, then
$\tau_{\bar\beta}$ is a projection $\ce(X)\to\ce(X/Y)$ distinct from
$\tau_{\bar\alpha}$ in general.

By using \eqref{eq:cexyz} and \eqref{eq:ygen} we get
\begin{equation}\label{eq:ygen2}
  \ce(X/Y) \, = \, \ccap_{\alpha\subset Y}\ce(X/[\alpha]) \, = \,
  \{u\in\ce(X) , \, \tau_{\alpha}(u) = u \ \forall\,\alpha \subset Y
  \} \,,
\end{equation}
from which we get
\begin{equation}\label{eq:cexy2}
  \ce(X/Y) = \{u\in\ce(X) , \, u(x+y)=u(x) \ \forall\, y\in Y\} =
  \ce(X)\cap\cbu(X/Y).
\end{equation}
Indeed, if $C$ is the middle term in \eqref{eq:cexy2}, then $\ce(X/Y)
\subset C$, by the definition of $\ce(X/Y)$ and the definition of
$\tau_\alpha$ shows that $C$ is included in the right hand side of
\eqref{eq:ygen2}.

\begin{proposition}\label{pr:sum}
If $Y,Z$ are subspaces of $X$ then $\ce(X/(Y+Z))=\ce(X/Y) \cap
\ce(X/Z)$.
\end{proposition}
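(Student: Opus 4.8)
The plan is to reduce the identity to the translation-invariance description of the subalgebras $\ce(X/W)$ already obtained in \eqref{eq:cexy2}: for every linear subspace $W\subset X$ one has $\ce(X/W)=\ce(X)\cap\cbu(X/W)=\{u\in\ce(X):u(x+w)=u(x)\ \text{for all }x\in X,\ w\in W\}$. I would apply this for $W=Y$, for $W=Z$, and for $W=Y+Z$.

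First I would record the easy inclusion $\ce(X/(Y+Z))\subset\ce(X/Y)\cap\ce(X/Z)$: since $Y\subset Y+Z$ and $Z\subset Y+Z$, the monotonicity chain \eqref{eq:cexyz} gives $\ce(X/(Y+Z))\subset\ce(X/Y)$ and $\ce(X/(Y+Z))\subset\ce(X/Z)$ simultaneously. For the reverse inclusion, let $u\in\ce(X/Y)\cap\ce(X/Z)$. By \eqref{eq:cexy2} applied to $Y$ and to $Z$, the function $u$ lies in $\ce(X)$ and is invariant under all translations by vectors of $Y$ and by vectors of $Z$. An arbitrary element of $Y+Z$ has the form $y+z$ with $y\in Y$ and $z\in Z$, so $u(x+y+z)=u(x+y)=u(x)$ for all $x\in X$; hence $u$ is invariant under every translation by an element of $Y+Z$, i.e.\ $u\in\ce(X)\cap\cbu(X/(Y+Z))$. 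Applying \eqref{eq:cexy2} once more, now with $W=Y+Z$, gives $u\in\ce(X/(Y+Z))$, and the two inclusions together prove the proposition.

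I do not expect any real obstacle here: the only substantive point is that a function in $\ce(X)$ invariant under translations by both $Y$ and $Z$ belongs to the \emph{subalgebra} $\ce(X/(Y+Z))$ and not merely to $\ce(X)\cap\cbu(X/(Y+Z))$, but this is exactly the content of the equality \eqref{eq:cexy2} for $W=Y+Z$ (equivalently, of the fixed-point description \eqref{eq:ygen2}), which has already been established. An alternative route would be to use the projections $\tau_{\bar\alpha}$ of Proposition \ref{pr:cey}, choosing a generating system of half-lines for $Y+Z$ by concatenating generating systems for $Y$ and for $Z$; but the translation-invariance argument above is shorter and avoids that bookkeeping.
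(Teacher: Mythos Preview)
Your argument is correct. The only point that needs care is whether \eqref{eq:cexy2} is already available at this stage of the paper, and it is: it was derived just above from \eqref{eq:ygen2}, so your appeal to it is legitimate.

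Your route differs from the paper's. The paper argues via the half-line description \eqref{eq:ygen}: it picks supplements $Y',Z'$ of $Y\cap Z$ in $Y$ and $Z$, builds a basis $a_1,\dots,a_n$ of $Y+Z$ adapted to the decomposition $Y'\oplus(Y\cap Z)\oplus Z'$, and then reads off
\[
\ce(X/Y)\cap\ce(X/Z)=\bigcap_{k=1}^n\ce(X/[\alpha_k])=\ce(X/(Y+Z))
\]
directly from \eqref{eq:ygen}. This is exactly the ``alternative route'' you sketch at the end. Your approach via translation invariance is shorter and avoids the basis bookkeeping; the paper's approach stays closer to the $\tau_\alpha$ machinery that drives the rest of the section. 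Both are equally valid, and since \eqref{eq:cexy2} is itself a consequence of \eqref{eq:ygen2} and hence of the $\tau_\alpha$ projections, the two arguments ultimately rest on the same ingredients.
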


\begin{proof}
  Let $Y',Z'$ be supplements of $Y\cap Z$ in $Y$ and $Z$
  respectively. Choose a basis $a_1,\dots,a_n$ of $Y+Z$ such that
  $a_1,\dots,a_i$ is a basis of $Y'$, then $a_{i+1},\dots,a_j$ is a
  basis of $Y\cap Z$, and $a_{j+1},\dots,a_n$ is a basis of
  $Z'$. Denote $\alpha_k$ the half-line determined by $a_k$. From
  \eqref{eq:ygen} we get
\[
\ce(X/Y) \, = \, \cap_{k<j}\ce(X/[\alpha_k]) \quad\text{and}\quad
\ce(X/Z)  \, =  \, \cap_{k>i}\ce(X/[\alpha_k]) \,,
\]
and hence $\ce(X/Y)\cap\ce(X/Z)=\cap_{k=1}^n\ce(X/[\alpha_k])$, which is
equal to $\ce(X/(Y+Z))$, by \eqref{eq:ygen}. This completes the proof.
\end{proof}

\subsection{The character space}\label{ss:prelim}

We now turn to the study of the spectrum (or character space) of the
algebra $\ce(X)$ of elementary interactions.  We begin with an
elementary remark.

Let $x \in X$.  Then to $x$ there corresponds the character $\chi_x(u)
= u(x)$ on $\cbu(X)$. The character $\chi_x$ is completely determined
by its restriction to the ideal $\maC_0(X)$ of $\cbu(X)$.  Similarly,
if $\alpha \in \oX$, then $\alpha$ defines a character $\chi_\alpha :
\maCX \to \CC$ by $\chi_\alpha(u) = u(\alpha)$.

The following lemma and its corollary will provide a crucial
ingredient in the proof of Theorem \ref{th:char} identifying the
spectrum of $\ce(X)$, which is one of our main results.

\begin{lemma}\label{lemma.quotient}
  Let $ Y \subset X$ be a subspace, let $B$ be the $C^*$--algebra
  generated by $\maCX$ and $\maCXY$ in $\cbu(X)$, and let $\alpha \in
  \SS_X \smallsetminus \SS_Y$. Then the character $\chi_\alpha$ of
  $\maCX$ extends to a unique character of $B$. This extension is the
  restriction of $\tau_{\alpha}$ to $B$.
\end{lemma}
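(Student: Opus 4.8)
The plan is to produce the extension explicitly and then show that it is forced. First I note that $B\subseteq\ce(X)$, since $\maCX=\maC(\overline{X/0})$ and $\maCXY$ are both among the generators of $\ce(X)$; hence the unital endomorphism $\tau_\alpha$ of $\ce(X)$ from Proposition~\ref{pr:pe} restricts to $B$. By Lemma~\ref{lemma.tau}, applied with the subspace $0$ (so that $\alpha\not\subset 0$) and with the subspace $Y$ (using $\alpha\not\subset Y$, which holds since $\alpha\notin\SS_Y$), the morphism $\tau_\alpha$ sends every generator $u\in\maCX$ to the constant $u(\alpha)$ and every generator $v\in\maCXY$ to the constant $v(\pi_Y(\alpha))$. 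Since $\tau_\alpha$ is a $*$-homomorphism and $B$ is the closed $*$-subalgebra of $\cbu(X)$ generated by $\maCX\cup\maCXY$, it follows that $\tau_\alpha(B)\subseteq\C\cdot 1$; as $\tau_\alpha(1)=1$, the restriction $\tau_\alpha|_B$ is a character of $B$, and on $\maCX$ it equals $u\mapsto u(\alpha)=\chi_\alpha(u)$. This already shows that $\chi_\alpha$ admits $\tau_\alpha|_B$ as an extension to a character of $B$.

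For uniqueness, let $\psi$ be any character of $B$ with $\psi|_{\maCX}=\chi_\alpha$. Since $\maCXY=\maC(\oXY)$ and $\oXY$ is compact, the restriction $\psi|_{\maCXY}$ is evaluation at a point $\omega\in\oXY$. Because $B$ is generated by $\maCX\cup\maCXY$ and $\psi$ is continuous, $\psi$ is determined by the pair $(\alpha,\omega)$, so it suffices to prove $\omega=\pi_Y(\alpha)$. As $\maC_0(X)+\C\subseteq\maCX\subseteq B\subseteq\cbu(X)$, the space $X$ is a dense open subset of the character space $\hat B$ (cf.\ Section~\ref{s:locinf}), so we may pick a net $(x_i)$ in $X$ with $x_i\to\psi$ in $\hat B$. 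Evaluating against functions of $\maCX=\maC(\oX)$ gives $x_i\to\alpha$ in $\oX$, and evaluating against $v\in\maCXY$ (viewed on $X/Y$) gives $\pi_Y(x_i)\to\omega$ in $\oXY$.

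To conclude, fix a norm $|\cdot|$ on $X$ and equip $X/Y$ with the quotient norm. By the description of the spherical compactification (Section~\ref{s:sphcomp}), the convergence $x_i\to\alpha\in\SS_X$ in $\oX$ means $|x_i|\to\infty$ and $x_i/|x_i|\to a/|a|$, the unit vector of $\alpha$, for a fixed $a\in\alpha$. Applying the linear map $\pi_Y$ and using $\pi_Y(a)\neq 0$ — which is exactly the hypothesis $\alpha\not\subset Y$ — we get $\pi_Y(x_i)/|x_i|\to\pi_Y(a)/|a|\neq 0$, hence $|\pi_Y(x_i)|\to\infty$ and $\pi_Y(x_i)/|\pi_Y(x_i)|\to\pi_Y(a)/|\pi_Y(a)|$, i.e.\ $\pi_Y(x_i)\to\pi_Y(\alpha)$ in $\oXY$. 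Thus $\omega=\pi_Y(\alpha)$, uniqueness follows, and combined with the first paragraph the unique extension is $\tau_\alpha|_B$. The only step that needs genuine care is the passage from the abstract statement $x_i\to\psi$ in $\hat B$ to the two geometric convergences in $\oX$ and $\oXY$ and their compatibility under $\pi_Y$; once the hypothesis $\alpha\not\subset Y$ is used to keep $\pi_Y(a)$ away from $0$, the rest is routine.
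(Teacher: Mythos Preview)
Your proof is correct. The existence half is the same as the paper's: both observe that $\tau_\alpha$ is a morphism on $\ce(X)\supseteq B$ that collapses the generators $\maCX$ and $\maCXY$ to constants (by Lemma~\ref{lemma.tau}), hence $\tau_\alpha|_B$ is a character extending $\chi_\alpha$.

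For uniqueness the two arguments diverge. The paper proceeds algebraically: given a character $\chi$ on $B$ with $\chi|_{\maCX}=\chi_\alpha$, it shows directly that $\ker\chi_{\pi_Y(\alpha)}\subset\ker(\chi|_{\maCXY})$ by taking $u\in\maCXY$ vanishing near $\pi_Y(\alpha)$, manufacturing $v\in\maCX$ with $v(\alpha)=1$ and $\mathrm{supp}\,v\subset\pi_Y^{-1}(\{u=0\})$, and using $0=\chi(uv)=\chi(u)v(\alpha)=\chi(u)$. Your route is topological: you exploit the density of $X$ in $\hat B$ to approximate $\psi$ by evaluations $\chi_{x_i}$, read off $x_i\to\alpha$ in $\oX$ and $\pi_Y(x_i)\to\omega$ in $\oXY$ from the two generating subalgebras, and then verify by a norm computation that $x_i\to\alpha$ with $\alpha\not\subset Y$ forces $\pi_Y(x_i)\to\pi_Y(\alpha)$. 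In effect you are proving, along the way, the continuity of the extension $\pi_Y:\oX\smallsetminus\SS_Y\to\oXY$ at $\alpha$, a fact the paper simply asserts at the outset. The paper's argument is slightly more self-contained and avoids nets; yours makes the underlying geometric reason (continuity of $\pi_Y$ across the boundary away from $\SS_Y$) transparent and would adapt without change to any pair of translation-invariant compactifications related by a continuous equivariant map.
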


\begin{proof} 
  Recall that the canonical projection $\pi_{Y} : X \to X/Y$ extends
  to a continuous map $\pi_{Y} : \oX \smallsetminus \SS_Y \to
  \overline{X/Y}$, which sends $\SS_X\smallsetminus\SS_Y$ onto
  $\SS_Y$. Thus $\beta := \pi_Y(\alpha)\in\SS_Y$ and $\chi_\beta$ is a
  character of $\maCXY$. Let $\chi$ be a character of $B$ such that
  $\chi \vert_{\maCX} = \chi_\alpha$. We shall verify now that $\chi
  \vert_{\maCXY} =\chi_\beta$.

  To prove that $\chi \vert_{\maCXY} =\chi_\beta$, it suffices to show
  that the kernel of $\chi_\beta$ is included in that of $\chi$, which
  means that for $u\in\maCXY$ with $u(\beta)=0$, we should have
  $\chi(u)=0$. By a density argument, it suffices to assume that $u=0$
  on a neighborhood $V$ of $\beta$ in $\overline{X/Y}$. It is clear
  that we can find $v\in\maCX$ with $v(\alpha)=1$ with support in the
  $\pi_Y^{-1}(V)$, and hence $uv=0$. Since $u,v\in B$, we have
\begin{equation*}
   0 = \chi(uv) = \chi(u) \chi(v) = \chi(u) \chi_{\alpha}(v) =
   \chi(u) v(\alpha) = \chi(u) \,.
\end{equation*}
This proves that $\chi \vert_{\maCXY} =\chi_\beta$, as claimed.
  
  From the relation $\chi \vert_{\maCXY} =\chi_\beta$ just proved, we
  obtain the uniqueness of $\chi$, since $\maCX$ and $\maCXY$ generate
  $B$.  To complete the proof, let us notice that the restriction of
  $\tau_{\alpha}$ to $\maCX$ is $\chi_\alpha$ and its restriction to
  $\maCXY$ is also character, because $\alpha \not\subset Y$. Thus
  $\tau_\alpha$ is a character on $B$ and we get
  $\chi=\tau_{\alpha}\vert_{B}$ by uniqueness.  This completes the
  proof.
\end{proof}

\begin{corollary}\label{cor.equal}
  Let $\chi_1$ and $\chi_2$ be characters of $\ce(X)$. Let us assume
  that there exists
  $\alpha \in \SS_{X}$ such that $\chi_1(u) = \chi_2(u)= u(\alpha)$
  for all $u \in \maCX$ and that $\chi_1 = \chi_2$ on
  $\ce(X/[\alpha])$.  Then $\chi_1 = \chi_2$. 
\end{corollary}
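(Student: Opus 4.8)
The plan is to show that $\chi_1$ and $\chi_2$ agree on each of the generating subalgebras $\maC(\overline{X/Y})$ of $\ce(X)$, where $Y$ ranges over the linear subspaces of $X$. Once this is done, since $\ce(X)$ is the $C^*$-algebra generated by these subalgebras and $\chi_1,\chi_2$ are continuous multiplicative functionals, it follows that $\chi_1=\chi_2$ on all of $\ce(X)$. I would split the verification into two cases according to whether $\alpha\subset Y$ or not.

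First, suppose $\alpha\subset Y$. Then, by the description \eqref{eq:maiNsalg} of $\ce(X/[\alpha])$ as the $C^*$-subalgebra of $\ce(X)$ generated by the $\maC(\overline{X/Z})$ with $[\alpha]\subset Z$, the algebra $\maC(\overline{X/Y})$ is itself contained in $\ce(X/[\alpha])$. The hypothesis that $\chi_1=\chi_2$ on $\ce(X/[\alpha])$ then immediately gives $\chi_1=\chi_2$ on $\maC(\overline{X/Y})$.

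Second, suppose $\alpha\not\subset Y$, that is, $\alpha\in\SS_X\smallsetminus\SS_Y$. Let $B$ be the $C^*$-subalgebra of $\cbu(X)$ generated by $\maCX$ and $\maC(\overline{X/Y})$. Since both of these are among the generators of $\ce(X)$, we have $B\subset\ce(X)$; moreover $B$ is unital, so $\chi_1|_B$ and $\chi_2|_B$ are nonzero, hence are characters of $B$. By hypothesis both restrict to $\chi_\alpha$ on $\maCX$. Lemma \ref{lemma.quotient} says precisely that $\chi_\alpha$ admits a \emph{unique} extension to a character of $B$ (namely $\tau_\alpha|_B$), so $\chi_1|_B=\tau_\alpha|_B=\chi_2|_B$; in particular $\chi_1=\chi_2$ on $\maC(\overline{X/Y})$.

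Combining the two cases, $\chi_1$ and $\chi_2$ coincide on every $\maC(\overline{X/Y})$, hence on the $C^*$-algebra they generate, namely $\ce(X)$, which proves Corollary \ref{cor.equal}. There is no serious obstacle here: the only points to watch are that $\alpha\subset Y$ is exactly the case \emph{not} handled by Lemma \ref{lemma.quotient} (and is instead covered by the second hypothesis of the corollary), and that one must check $B\subset\ce(X)$ so that Lemma \ref{lemma.quotient} can legitimately be combined with hypotheses that concern characters originally defined on all of $\ce(X)$.
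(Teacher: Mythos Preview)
Your proof is correct and follows essentially the same approach as the paper's: split the generating subalgebras $\maC(\overline{X/Y})$ into the cases $\alpha\subset Y$ and $\alpha\not\subset Y$, handling the first via the inclusion $\maC(\overline{X/Y})\subset\ce(X/[\alpha])$ and the second via the uniqueness assertion of Lemma~\ref{lemma.quotient}. You supply slightly more detail (checking $B\subset\ce(X)$ and that the restrictions are genuine characters), but the argument is the same.
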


\begin{proof} It is enough to show that $\chi_1 = \chi_2$ on each of
  the algebras $\maCXY$, since the later generate $\ce(X)$, by
  definition.  Since $\chi_1 = \chi_2= \chi_\alpha$ on $\maCX$, we
  obtain $\chi_1 = \chi_2$ on all $\maCXY$ with $\alpha \not\subset
  Y$, by Lemma \ref{lemma.quotient}.  Since $\maC(X/[\alpha])$
  contains (indeed, it is generated by) all $\maCXY$ with $\alpha
  \subset Y$, the result follows.
\end{proof}

We now proceed to the construction of the characters of $\ce(X)$.  We
begin with a remark concerning the simplest nontrivial case that helps
to understand the general case.

\begin{remark}\label{re:doi} {\rm 
If $\alpha\in\SS_X$ and $\beta\in\SS_{X/[\alpha]}$, then $[\beta]$ is
the one dimensional subspace generated by $\beta$ in $X/[\alpha]$, and
hence $\pi_{[\alpha]}^{-1}([\beta])$ is a two dimensional subspace of
$X$ that we shall denote by $[\alpha,\beta]$. Note that we may and
shall identify $(X/[\alpha])/[\beta]$ with $X/[\alpha,\beta]$. Then
Proposition \ref{pr:pe} gives us two morphisms
$\tau_\alpha:\ce(X)\to\ce(X/[\alpha])$ and
$\tau_\beta:\ce(X/[\alpha])\to\ce(X/[\alpha,\beta])$ that are linear
projections. Thus
$\tau_\beta\tau_\alpha:\ce(X)\to\ce(X/[\alpha,\beta])$ is a morphism
and a projection, and if $a\in X/[\alpha,\beta]$, then
$u\mapsto(\tau_\beta\tau_\alpha u)(a)$ is a character of $\ce(X)$.  }
\end{remark}

We now extend the construction of the above remark to an arbitrary
number of half-lines. However, it will be convenient first to
introduce the following notations.
\begin{notations}{\rm Our construction involves finite sequences
    $\overrightarrow\alpha := (\alpha_1, \alpha_2, \ldots, \alpha_n)$
    with $0\leq n\leq\dim(X)$ and linear subspaces
    $[\overrightarrow\alpha] := [\alpha_1, \alpha_2, \ldots,
    \alpha_n]$
    of $X$ associated to them. If $n=0$, then we define
    $\overrightarrow\alpha$ as the empty set and we associate to it
    the subspace of $X$ reduced to zero: $[\emptyset]=\{0\}$. If $n=1$
    then $\overrightarrow\alpha=(\alpha_1)$ with $\alpha_1\in\SS_X$
    and, as before, $[\alpha_1]$ is the one dimensional subspace of
    $X$ generated by $\alpha_1$.  The case $n=2$ is treated in the
    Remark \ref{re:doi} and we extend the notation to $n\geq3$ by
    induction: $\alpha_n\in \SS_{X/[\alpha_1,\dots,\alpha_{n-1}]}$ and
    $[\alpha_1,\dots,\alpha_{n}]=\pi_Y^{-1}([\alpha_n])$ is an
    $n$-dimensional subspace of $X$ (here
    $Y=[\alpha_1,\dots,\alpha_{n-1}]$).  Note that we may identify
    $X/[\alpha_1,\dots,\alpha_{n}]
    =\big(X/[\alpha_1,\dots,\alpha_{n-1}]\big)/[\alpha_n]$.
    We denote $\tilde\Omega_X\sp{(n)}$ the set of the just defined
    finite sequences $\overrightarrow\alpha$ of length $n$ and
\begin{equation*}
  \Omega_X\sp{(n)} \, := \, \{(a, \overrightarrow\alpha ) ,\,
  \overrightarrow\alpha = (\alpha_1, \alpha_2, \ldots, \alpha_n) \in
  \tilde \Omega_X\sp{(n)} \,,\ a \in X/[\alpha_1, \ldots, \alpha_n] \,
  \} \,.
\end{equation*}
In particular, $\Omega_X\sp{(0)} \equiv X$ and $\Omega_X\sp{(N)}
\equiv \tilde \Omega_X\sp{(N)}$ if $N = \dim(X)$, since $[\alpha_1,
  \ldots, \alpha_N] = X$.  Let
\begin{equation}\label{eq:chr}
\Omega_X=\ccup_{n=0}^{\dim(X)} \, \Omega_X\sp{(n)} \,. 
\end{equation}
}
\end{notations}

\begin{definition}\label{de:tmorph}
If $(a, \overrightarrow\alpha) \in \Omega_X\sp{(n)}$, then we define
\begin{equation}\label{eq:tauchi}
  \tau_{\overrightarrow\alpha} \, = \, \tau_{\alpha_n} \tau_{\alpha_{n-1}}
\dots \tau_{\alpha_1} \quad \mbox{ and } \quad
  \tau_{a, \overrightarrow\alpha} \, = \, \tau_a\tau_{\overrightarrow\alpha} \,,
\end{equation}
which are endomorphisms of $\ce(X)$. We agree that $\tau_\emptyset$ is
the identity of $\ce(X)$.
\end{definition}

In particular, the range of $\tau_{\overrightarrow\alpha}$ is
$\ce(X/[\overrightarrow\alpha])$ and $\tau_{\overrightarrow\alpha}$ is
an endomorphism of $\ce(X)$ and a linear projection of $\ce(X)$ onto
the subalgebra $\ce(X/[\overrightarrow\alpha])$. The morphisms of the
form $\tau_{\bar\alpha}$ considered in Proposition \ref{pr:cey} also
have these properties, but they may be distinct from the
$\tau_{\overrightarrow\alpha}$, the objects $\bar\alpha$ and
$\overrightarrow\alpha$ being different in nature. Note also that,
since $a\in X/[\overrightarrow\alpha]$, translation by $a$ is a
morphism $\tau_a$ of $\ce(X/[\overrightarrow\alpha])$, and hence
$\tau_{a,\overrightarrow\alpha }$ is well defined.

We now introduce what will turn out to be a parametrization of the
characters of $\ce(X)$.

\begin{definition}\label{def.car}
If $(a, \overrightarrow\alpha) \in \Omega_X$, we define the character
$\chi_{a, \overrightarrow\alpha}$ of $\ce(X)$ by the formula
\begin{equation}\label{eq:tauchar} 
  \chi_{a, \overrightarrow\alpha} (u) \, := \,
  \chi_a(\tau_{\overrightarrow\alpha}(u)) \, = \,
  \tau_{\overrightarrow\alpha}(u)(a) \, .
\end{equation}
\end{definition}

We need to explain what happens in the limit case $n=\dim(X)$.

\begin{remark}{\rm
Let $n=\dim(X)$ and $(a, \overrightarrow\alpha) \in \Omega_X\sp{(n)}$.
Then $[\overrightarrow\alpha]=X$, and hence
$X/[\overrightarrow\alpha]=0$, so the only possible choice for $a$ is
$a=0$. Moreover, $\tau_{\overrightarrow\alpha}:\ce(X)\to\C$ is already
a character. Since $\tau_0=\mathrm{id}$, we get
$\chi_{0,\overrightarrow\alpha}=\tau_{\overrightarrow\alpha}$.
}\end{remark}

We are ready now to prove one of our main results, which is a
description of all the characters of the algebra $\ce(X)$.  Recall
that we denote by $\hat{\ce(X)}$ the character space of $\ce(X)$.

\begin{theorem}\label{th:char}
The map $\Omega_X\to \hat{\ce(X)}$ defined by
$(a,\overrightarrow\alpha)\mapsto \chi_{a, \overrightarrow\alpha}$ is
bijective.
\end{theorem}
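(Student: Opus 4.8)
The plan is to prove that $(a,\overrightarrow\alpha)\mapsto\chi_{a,\overrightarrow\alpha}$ is a bijection $\Omega_X\to\hat{\ce(X)}$ by induction on $N:=\dim X$, treating surjectivity and injectivity together; the base case $N=0$ is trivial since then $\ce(X)=\C$ and $\Omega_X$ is a single point. The inductive engine is to sort a character according to its restriction to the distinguished subalgebra $\maCX=\maC(\oX)\subset\ce(X)$: since $\oX$ is compact Hausdorff, the Gelfand spectrum of $\maCX$ is $\oX=X\sqcup\SS_X$, so for any $\chi\in\hat{\ce(X)}$ the restriction $\chi|_{\maCX}$ is evaluation $\chi_z$ at a unique $z\in\oX$, and distinct points of $\oX$ yield distinct restrictions.

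First I would compute $\chi_{a,\overrightarrow\alpha}|_{\maCX}$. Applying Lemma \ref{lemma.tau} with $Y=\{0\}$ and iterating, $\tau_{\overrightarrow\alpha}$ sends every $u\in\maCX$ to the scalar $u(\alpha_1)$ when $\overrightarrow\alpha$ has length $n\geq1$, and to $u$ itself when $n=0$; hence $\chi_{a,\overrightarrow\alpha}|_{\maCX}=\chi_{\alpha_1}$ (a point of $\SS_X$) if $n\geq1$ and $=\chi_a$ (a point of $X$) if $n=0$. In particular, from any equality $\chi_{a,\overrightarrow\alpha}=\chi_{b,\overrightarrow\beta}$ one reads off at once that $n=0\iff m=0$, that $a=b$ if $n=m=0$, and that $\alpha_1=\beta_1$ if $n,m\geq1$. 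For surjectivity in the case $\chi|_{\maCX}=\chi_x$ with $x\in X$: the ideal $\maC_0(X)\trianglelefteq\cbu(X)$ lies inside $\maCX\subset\ce(X)$, hence is a closed ideal of $\ce(X)$; $\chi$ is nonzero on it and agrees there with evaluation at $x$, so $\chi$ is the unique character of $\ce(X)$ extending that point evaluation, namely $\chi=\chi_{x,\emptyset}$, and no other $n=0$ tuple gives $\chi$.

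The heart of the induction is the remaining case $\chi|_{\maCX}=\chi_{\alpha_1}$ with $\alpha_1\in\SS_X$. Put $X_1:=X/[\alpha_1]$ and $\chi':=\chi|_{\ce(X/[\alpha_1])}$, a character of $\ce(X/[\alpha_1])=\ce(X_1)$. I claim $\chi=\chi'\circ\tau_{\alpha_1}$: both are characters of $\ce(X)$; both restrict to $\chi_{\alpha_1}$ on $\maCX$ (because $\tau_{\alpha_1}$ sends $u\in\maCX$ to the scalar $u(\alpha_1)$); and both agree with $\chi'$ on $\ce(X/[\alpha_1])$ because $\tau_{\alpha_1}$ is a linear projection onto that subalgebra by Proposition \ref{pr:pe}; so Corollary \ref{cor.equal} forces equality. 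Since $\dim X_1=N-1$ and $\tau_{\overrightarrow\alpha}=\tau^{X_1}_{(\alpha_2,\dots,\alpha_n)}\circ\tau_{\alpha_1}$ in the notation of Definition \ref{de:tmorph}, the inductive hypothesis applied to $\chi'\in\hat{\ce(X_1)}$ produces a unique $(a,(\alpha_2,\dots,\alpha_n))\in\Omega_{X_1}$ with $\chi'=\chi^{X_1}_{a,(\alpha_2,\dots,\alpha_n)}$, whence $\chi=\chi_{a,\overrightarrow\alpha}$ with $\overrightarrow\alpha=(\alpha_1,\alpha_2,\dots,\alpha_n)\in\tilde\Omega_X^{(n)}$; this gives surjectivity. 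For injectivity, having already reduced to $\alpha_1=\beta_1$, restrict the assumed equality $\chi_{a,\overrightarrow\alpha}=\chi_{b,\overrightarrow\beta}$ to $\ce(X/[\alpha_1])$, on which $\tau_{\alpha_1}$ and $\tau_{\beta_1}$ act as the identity, to get $\chi^{X_1}_{a,(\alpha_2,\dots,\alpha_n)}=\chi^{X_1}_{b,(\beta_2,\dots,\beta_m)}$, and invoke inductive injectivity on $X_1$ to conclude $(a,\overrightarrow\alpha)=(b,\overrightarrow\beta)$.

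The step I expect to be the main obstacle is the clean bookkeeping of the identification $\ce(X/[\alpha_1])=\ce(X_1)$ used throughout the induction: one must verify that under $X_1=X/[\alpha_1]$ the generating subalgebras $\maC(\overline{X_1/\pi_{[\alpha_1]}(Y)})$ (for $[\alpha_1]\subset Y$) are exactly the $\maCXY$ sitting inside $\ce(X/[\alpha_1])$, that a datum $\alpha_i\in\SS_{X/[\alpha_1,\dots,\alpha_{i-1}]}$ is the same object whether read in $X$ or in $X_1$, and that the maps $\tau_{\alpha_i}$ and the point evaluations defined in the two pictures coincide; all of this follows from the explicit limit formula \eqref{eq:cpa} and Definitions \ref{de:tmorph}--\ref{def.car}, but it should be recorded carefully so that the recursion is rigorous. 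A minor point to note in passing is the boundary case $n=N$: then $X/[\overrightarrow\alpha]=0$, forcing $a=0$ and making $\tau_{\overrightarrow\alpha}$ already a character, which is consistent with the scheme above since then $X_1$ has dimension $N-1$ and the peeled tuple $(\alpha_2,\dots,\alpha_N)$ has length $N-1=\dim X_1$.
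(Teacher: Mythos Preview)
Your proof is correct and follows essentially the same approach as the paper: induction on $\dim X$, restriction of a given character to $\maCX$ to locate a point of $\oX$, the ideal argument with $\maC_0(X)$ for points in $X$, and Corollary~\ref{cor.equal} together with the inductive hypothesis on $\ce(X/[\alpha_1])$ for points in $\SS_X$. The only cosmetic difference is that you make the identity $\chi=\chi'\circ\tau_{\alpha_1}$ explicit before invoking induction, and you are more careful than the paper about the bookkeeping of the identification $\ce(X/[\alpha_1])\cong\ce(X_1)$, which the paper treats as implicit via \eqref{eq:maiNsalg}.
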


\begin{proof} 
  The preceding construction shows that $\chi_{a,
    \overrightarrow\alpha}$ is a character, therefore we only need to
  show that every character $\chi$ of $\ce(X)$ is of this form and
  that the pair $(a, \overrightarrow\alpha)$ is uniquely
  determined. To this end, we look at the restriction of $\chi$ to the
  subalgebra $\maCX$ and proceed by induction on the dimension of $X$.

  Every character of $\maCX$ is of the form $u\mapsto u(x) = \chi_{x}$
  for some $x \in \oX$. Hence there is a unique $x \in \oX$ such that
  $\chi\vert_{\maCX} = \chi_x$.  We distinguish two cases: $x \in X$
  and $x \in \oX \smallsetminus X$. In the first case, we have
  $x=a \in X$; that is, $\chi(u) = u(a)$ for all $u \in \ce(X)$. In
  our terminology, this means $\chi=\chi_{a,\emptyset}$. The
  characters $\chi$ of this form are characterized by the fact that
  the restriction of $\chi$ to $\maC_0(X)$ is non-zero. The value of
  $a$ is then determined by restriction to $\maC_0(X)$, since there is
  a one-to-one correspondence between the characters of $\maC_0(X)$
  and the points of $X$.  Thus all the characters
  $\chi_{a, \emptyset}$, $a \in X$, are distinct.

  Now let us assume that $x \notin X$,
  that is, $x = \alpha \in \SS_X := \oX \smallsetminus X$, and that
  the assertion of the theorem is true for all vector spaces of
  dimension strictly less than that of $X$ (induction
  hypothesis). Then the theorem holds for the space $X/[\alpha]$, so
  there is $\overrightarrow\beta=(\beta_1,\dots, \beta_k)$ with
  \begin{equation*}
    \beta_1\in X/[\alpha], \ \beta_2\in X/[\alpha, \beta_1], \ \ldots
    , \ \beta_k \in X/[\alpha, \beta_1, \ldots, \beta_{k-1}] \,,
  \end{equation*}
  such that the restriction of $\chi$ to $\ce(X/[\alpha])$ is given by
  $\chi(u)=(\tau_{\overrightarrow\beta} u)(b)$ for some $b\in
  (X/[\alpha])/\overrightarrow\beta$.  That is, $\chi = \chi_{b,
    \overrightarrow\beta}$ on $\ce(X/[\alpha])$.  Let $a = b$ and let
  $\overrightarrow\alpha$ be obtained by including $\alpha$ in front
  of the sequence $\overrightarrow\beta$, more precisely
  $\overrightarrow\alpha=(\alpha,\beta_1,\dots, \beta_k)$.  Then
  $\chi_{a,\overrightarrow\alpha}(u)
  =(\tau_{\overrightarrow\beta}\circ\tau_{\alpha}u)(b)$ and the
  characters $\chi$ and $\chi_{a, \overrightarrow\alpha}$ coincide on
  $\ce(X/[\alpha])$. On the other hand, on $\maCX$, the characters
  $\chi$ and $\chi_{a, \overrightarrow\alpha}$ coincide with the
  character $\chi_\alpha : \ce(X/[\alpha]) \to \CC$.  Therefore $\chi
  = \chi_{a, \overrightarrow\alpha}$ by Corollary \ref{cor.equal}.
  
  The same argument can be used to show that we obtain a one-to-one
  parametrization of all these characters.  We shall proceed once more
  by induction on the length of $\overrightarrow \alpha$.  If
  $\chi_{a, \overrightarrow \alpha} = \chi_{b, \overrightarrow
    \beta}$, we have two possibilities: first that their restrictions
  to $\maC_0(X)$ is non-zero and, second, that their restrictions to
  $\maC_0(X)$ is zero. In the first case, we must have
  $\overrightarrow \alpha = \emptyset$ and $\overrightarrow \beta =
  \emptyset$, by the discussion earlier in the proof.  By restricting
  to $\maC_0(X)$, we also obtain $a = b \in X$.  Let us assume that
  $\overrightarrow \alpha \neq \emptyset$, then $\chi_{a,
    \overrightarrow \alpha}$ restricts to zero on $\maC_0(X)$, and
  hence $\overrightarrow \beta \neq \emptyset$ as well. Since the
  restrictions of $\chi_{a, \overrightarrow \alpha}$ and $\chi_{b,
    \overrightarrow \beta}$ to $\maCXY$ are $\chi_{\alpha_1}$ and
  $\chi_{\beta_1}$ respectively, we obtain $\alpha_1 = \beta_1$. The
  proof is completed by induction using the restrictions of these
  characters to $\ce(X/[\alpha_1])$, as in the first part of the
  proof.
\end{proof}

We shall describe now the morphism $\tau_\chi$ on $\ce(X)$ defined as
the translation by a character $\chi =
\chi_{a,\overrightarrow\alpha}\in\hat{\ce(X)}$, see Section
\ref{s:locinf}, Definition \ref{def:trinf}.
  
\begin{theorem}\label{thm:morphism}
  The translation morphism associated to the character
  $\chi_{a,\overrightarrow\alpha}$ by Definition \ref{def:trinf} is
  the unital endomorphism $\tau_{a,\overrightarrow\alpha}$ of $\ce(X)$
  introduced in Definition \ref{de:tmorph}.
\end{theorem}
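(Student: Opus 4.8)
The plan is to test the claimed equality of morphisms against an arbitrary $u\in\ce(X)$ and evaluate the resulting functions at an arbitrary $y\in X$, thereby reducing everything to the elementary limit formula $\tau_\alpha(u)(x)=\lim_{r\to+\infty}u(ra+x)$ from \eqref{eq:cpa} and to the way translations by points of $X$ descend through the canonical projections $\pi_{[\overrightarrow\alpha]}:X\to X/[\overrightarrow\alpha]$. Note first that $\chi_{a,\overrightarrow\alpha}\in\hat{\ce(X)}$ by the construction in Definitions \ref{de:tmorph}--\ref{def.car} (and Theorem \ref{th:char}), so the translation morphism $\tau_{\chi_{a,\overrightarrow\alpha}}$ of Definition \ref{def:trinf} is defined, with $(\chi_{a,\overrightarrow\alpha}.u)(y)=\chi_{a,\overrightarrow\alpha}(y.u)$ for $y\in X$, where $y.u$ is the honest translate of $u\in\ce(X)$ (which lies in $\ce(X)$ since $\ce(X)$ is translation invariant).

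First I would isolate the key intertwining lemma: for every $y\in X$, every $u\in\ce(X)$, and every admissible $\overrightarrow\alpha$, one has $\tau_{\overrightarrow\alpha}(y.u)=\bar y.\,\tau_{\overrightarrow\alpha}(u)$, where $\bar y:=\pi_{[\overrightarrow\alpha]}(y)\in X/[\overrightarrow\alpha]$ and, on the right, $\tau_{\bar y}$ is the translation acting on $\ce(X/[\overrightarrow\alpha])$ (equivalently, on $[\overrightarrow\alpha]$-invariant functions on $X$). The case $n=1$ is immediate from \eqref{eq:cpa}:
\[
 \tau_{\alpha}(y.u)(x)=\lim_{r\to+\infty}(y.u)(ra+x)=\lim_{r\to+\infty}u(y+ra+x)=\tau_\alpha(u)(y+x),
\]
and since $\tau_\alpha(u)\in\ce(X/[\alpha])$ is $[\alpha]$-invariant, the right-hand side depends on $y$ only through $\pi_{[\alpha]}(y)$. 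The general case follows by induction on $n$: writing $\tau_{\overrightarrow\alpha}=\tau_{\alpha_n}\cdots\tau_{\alpha_2}\circ\tau_{\alpha_1}$, the $n=1$ step gives $\tau_{\alpha_1}(y.u)=\pi_{[\alpha_1]}(y).\,\tau_{\alpha_1}(u)$, and then the inductive hypothesis applied inside $X/[\alpha_1]$ to the element $\tau_{\alpha_1}(u)\in\ce(X/[\alpha_1])$ and the half-lines $\alpha_2,\dots,\alpha_n$ finishes it, using the identification $\big(X/[\alpha_1]\big)/[\alpha_2,\dots,\alpha_n]=X/[\overrightarrow\alpha]$ to match the two projections of $y$.

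With the lemma in hand I would simply compute both sides. For $u\in\ce(X)$ and $y\in X$, Definition \ref{def:trinf} and \eqref{eq:tauchar} give
\[
 (\chi_{a,\overrightarrow\alpha}.u)(y)=\chi_{a,\overrightarrow\alpha}(y.u)=\tau_{\overrightarrow\alpha}(y.u)(a)=\big(\bar y.\,\tau_{\overrightarrow\alpha}(u)\big)(a)=\tau_{\overrightarrow\alpha}(u)(\bar y+a),
\]
the last equality being the convention $(\tau_{\bar y}v)(z)=v(\bar y+z)$ on $X/[\overrightarrow\alpha]$. On the other hand $\tau_{a,\overrightarrow\alpha}(u)=\tau_a\tau_{\overrightarrow\alpha}(u)\in\ce(X/[\overrightarrow\alpha])$, and viewed as a function on $X$ it satisfies $\tau_{a,\overrightarrow\alpha}(u)(y)=\big(\tau_a\tau_{\overrightarrow\alpha}(u)\big)(\bar y)=\tau_{\overrightarrow\alpha}(u)(a+\bar y)$. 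Since $X/[\overrightarrow\alpha]$ is a vector space, $\bar y+a=a+\bar y$, so the two functions agree; as $y$ and $u$ are arbitrary, $\chi_{a,\overrightarrow\alpha}.u=\tau_{a,\overrightarrow\alpha}(u)$, which is the assertion (and in particular re-proves that, for $\ca=\ce(X)$, the character translation is an endomorphism of $\ca$ itself, as anticipated in Section \ref{s:locinf}). The only real bookkeeping lies in the intertwining lemma, and within it the single genuine point is that $\tau_\alpha$ commutes with $X$-translations — transparent from \eqref{eq:cpa}; the rest is just keeping the successive quotient identifications consistent, so I expect no serious obstacle, only care in handling ``$\tau_a$ with $a\in X/[\overrightarrow\alpha]$'' versus ``translation by $y\in X$ read on the quotient.''
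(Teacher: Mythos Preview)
Your argument is correct and follows essentially the same route as the paper: both compute $(\tau_\chi u)(y)=\chi_{a,\overrightarrow\alpha}(y.u)=\tau_{\overrightarrow\alpha}(y.u)(a)$ and then invoke the covariance $\tau_{\overrightarrow\alpha}(y.u)=y.\tau_{\overrightarrow\alpha}(u)$ to conclude. The only difference is that the paper simply asserts this covariance as clear, whereas you spell it out via the limit formula \eqref{eq:cpa} and induction on $n$, and track the passage from $y$ to $\bar y=\pi_{[\overrightarrow\alpha]}(y)$ more explicitly.
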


\begin{proof}
If $\chi=\chi_a\equiv\tau_{a,\emptyset}$ for some $a\in X$, then this
is just the usual translation by $a$,
i.e. $\tau_{\chi_a}(u)=\tau_a(u)=a.u$ is the function $x\mapsto
u(a+x)$. In general, we have to use the definition in Definition
\ref{def:trinf}, that is, $(\tau_\chi(u))(y)=\chi(y.u)$ for all $y\in
X$. Thus, if $\chi=\chi_{a,\overrightarrow\alpha}$ as above, then from
Definition \ref{def.car} we get
\[
  (\tau_\chi(u))(x)=\chi_{a,\overrightarrow\alpha}(x.u) \, = \,
  \chi_a(\tau_{\overrightarrow\alpha}(x.u)) \,.
\]
It is clear that $X$ acts by translation on each of the algebras
$\ce(X/Y)$ and that the morphism $\tau_{\overrightarrow\alpha} :
\ce(X)\to\ce(X/\overrightarrow\alpha)$ is covariant for this action,
that is, $\tau_{\overrightarrow\alpha}(x.u) =
x.(\tau_{\overrightarrow\alpha}(u))$.  Thus
\[
(\tau_\chi(u))(x) \,= \, \chi_a(x.(\tau_{\overrightarrow\alpha}(u)))
\, = \, (x.(\tau_{\overrightarrow\alpha}(u)))(a)
\, = \, (\tau_{\overrightarrow\alpha}(u))(x+a)\,,
\]
and hence we get $\tau_\chi(u)=\tau_a\tau_{\overrightarrow\alpha}(u)$,
which is \eqref{eq:tauchi}.
\end{proof}

\begin{remark} {\rm Although we shall not use this here, let us
    mention that in view of Remark \ref{re:cross} and of Theorem
    \ref{thm:morphism}, it is interesting to notice that the action of
    $X$ on the space of characters of $\ce(X)$ is given by
    $\tau_x (\chi_{a, \overrightarrow{\alpha}}) =
    \chi_{a-\pi_{\overrightarrow\alpha}(x), \overrightarrow{\alpha}}$,
    where $\pi_{\overrightarrow\alpha}$ is the canonical map
    $X\to X/[\overrightarrow\alpha]$. Hence, for the determination of
    the essential spectrum, it is enough to consider the characters
    $\chi_{0, \overrightarrow{\alpha}}$ and their associated
    translations
    $\tau_{\chi_{0, \overrightarrow{\alpha}}} = \tau_{0,
      \overrightarrow{\alpha}} = \tau_{\overrightarrow{\alpha}} $.
}\end{remark}

\subsection{The Hamiltonian algebra}\label{ss:ham}
We now apply the results we have proved to the study of essential
spectra.  Since $\ce(X)$ is a translation invariant $C^*$-subalgebra
of $\cbu(X)$ such that $\maC_0(X)+\C\subset\ce(X)$, we may take
$\ca=\ce(X)$ in Section \ref{s:locinf}.  The algebra generated by the
Hamiltonians that are of interest for us is the crossed product
\begin{equation}\label{eq:mainham}
\re(X):=\ce(X)\rtimes X.
\end{equation}
As explained in Section \ref{s:locinf}, $\re(X)$ can be thought as the
closed linear subspace of $\rb(X)$ generated by the operators of the
form $u(q)v(p)$ with $u\in\ce(X)$ and $v\in\maC_0(X^*)$.  On the other
hand, since $\maCXY$ is a translation invariant $C^*$-subalgebra of
$\cbu(X)$, we may also consider the crossed product $\maCXY\rtimes X$
and we clearly have
\begin{equation}\label{eq:mainham1}
  \re(X) = \text{$C^*$-subalgebra of $\rb(X)$ generated by } 
\ccup_{Y \subset X} \maCXY\rtimes X .
\end{equation}
Similarly, for any subspace $Y\subset X$, we may consider the crossed
product $\re(X/Y)=\ce(X/Y)\rtimes X$. We thus obtain a family of
$C^*$-subalgebras of $\re(X)$ that, as a consequence of
\eqref{eq:cexyz}, has the following property: if $Z\subset Y$ then
\begin{equation}\label{eq:rexyz}
   C^*(X) = \re(0) = \re(X/X) \subset\re(X/Y)
   \subset\re(X/Z)\subset\re(X).
\end{equation}
From the general facts described in Section \ref{s:locinf}, and by
taking into account the properties of $\ce(X)$ established in the
preceding subsection, we see that for any $A\in\re(X)$ the map
$x\mapsto \tau_x(A)=T_x^*AT_x$ extends to a strongly continuous map
$\chi\mapsto \tau_\chi(A)\in\re(X)$ on the spectrum of $\ce(X)$ such
that
\[
   \tau_\chi\big(u(q)v(p)\big)= \tau_\chi\big(u(q)\big) v(p) \quad
   \text{for all } u\in\ce(X) \text{ and } v\in\maC_0(X^*).
\]
Here $\chi\in\hat{\ce(X)}$, and hence it is of the form described in
Theorem \ref{th:char} and the associated endomorphism $\tau_\chi$ of
$\ce(X)$ is described in \eqref{eq:tauchi}.  Note that, in virtue of
Theorem \ref{th:infty}, we are only interested in the characters that
belong to the boundary $\delta(\ce(X))$ of $X$ in $\hat{\ce(X)}$,
which are those with $\overrightarrow{\alpha}\neq\emptyset$.  Then
Proposition \ref{pr:locinf} and Theorem \ref{th:char} imply:

\begin{proposition}\label{pr:locinfe}
  Let $\chi=\chi_{a,\overrightarrow\alpha}\in\delta(\ce(X))$. Then
  there is a unique continuous linear map
  $\tau_{a,\overrightarrow\alpha}:\re(X)\to\re(X)$ such that
  $\tau_{a,\overrightarrow\alpha}(u(q)v(p))=
  (\tau_{a,\overrightarrow\alpha}u)(q)v(p)$ for all $u\in\ce(X)$ and
  $v\in\maC_0(X^*)$. This map is a morphism and a linear projection of
  $\re(X)$ onto its subalgebra $\ce(X/[\overrightarrow\alpha]) \rtimes
  X$.
\end{proposition}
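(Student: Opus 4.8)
The plan is to obtain this proposition as an application of the general machinery of Section~\ref{s:locinf}, specialized to the algebra $\ca=\ce(X)$, combined with the explicit description of the translations at infinity of $\ce(X)$ established in the previous subsection. Since $\ce(X)$ is translation invariant and $\maC_0(X)+\C\subset\ce(X)\subset\cbu(X)$, Proposition~\ref{pr:locinf} applies directly: for the character $\chi=\chi_{a,\overrightarrow\alpha}\in\delta(\ce(X))$ it produces a morphism $\tau_\chi\colon\re(X)\to\cbu(X)\rtimes X$, unique among norm continuous maps, with $\tau_\chi(u(q)v(p))=(\chi.u)(q)v(p)$ for all $u\in\ce(X)$ and $v\in\maC_0(X^*)$. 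Uniqueness of a continuous \emph{linear} map satisfying this relation is then automatic, since the operators $u(q)v(p)$ span a dense subspace of $\re(X)=\ce(X)\rtimes X$.

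The only point that is not purely formal is to see that the image of $\tau_\chi$ actually lies in $\re(X)$ and not merely in $\cbu(X)\rtimes X$; this is where I would use Theorem~\ref{thm:morphism}. That theorem identifies $\chi.u$, for $u\in\ce(X)$, with the endomorphism $\tau_{a,\overrightarrow\alpha}=\tau_a\tau_{\overrightarrow\alpha}$ of $\ce(X)$ from Definition~\ref{de:tmorph}, so that $\chi.u\in\ce(X/[\overrightarrow\alpha])$, the range of $\tau_{a,\overrightarrow\alpha}$ on $\ce(X)$. Hence $\tau_\chi$ sends each generator $u(q)v(p)$ into $\ce(X/[\overrightarrow\alpha])\rtimes X$, which by \eqref{eq:rexyz} is a \emph{closed} $C^*$-subalgebra of $\re(X)$; since $\tau_\chi$ is norm continuous and the generators are total, we conclude $\tau_\chi(\re(X))\subset\ce(X/[\overrightarrow\alpha])\rtimes X\subset\re(X)$. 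One then renames $\tau_\chi=\tau_{a,\overrightarrow\alpha}$ on $\re(X)$, consistently with the function-level notation, and the intertwining formula holds by construction. The substantive input here — and the thing I would flag as the real obstacle — is the stability of $\ce(X)$ under the translations at infinity $\tau_\chi$, a property special to $\ce(X)$ proved via Proposition~\ref{pr:pe} and Theorem~\ref{thm:morphism}; as noted after Definition~\ref{def:trinf}, it fails for a generic translation invariant $\ca$.

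Finally I would transfer the remaining algebraic assertions from the level of functions to the level of the crossed product. That $\tau_{a,\overrightarrow\alpha}$ is a $*$-morphism is part of the conclusion of Proposition~\ref{pr:locinf}. Surjectivity onto $\ce(X/[\overrightarrow\alpha])\rtimes X$ follows because $\tau_{a,\overrightarrow\alpha}$ maps $\ce(X)$ onto $\ce(X/[\overrightarrow\alpha])$ while $v\mapsto v(p)$ maps $\maC_0(X^*)$ onto $C^*(X)$, so the (closed) range of $\tau_{a,\overrightarrow\alpha}$ contains all $w(q)v(p)$ with $w\in\ce(X/[\overrightarrow\alpha])$ and hence equals $\ce(X/[\overrightarrow\alpha])\rtimes X$; the projection statement is then inherited, through the relation $\tau_\chi=\tau_\chi\rtimes\mathrm{id}_X$ of Remark~\ref{re:cross}, from the corresponding behaviour of $\tau_{a,\overrightarrow\alpha}$ on $\ce(X)$ recorded in Subsection~\ref{ss:elint} (Definition~\ref{de:tmorph} and Proposition~\ref{pr:cey}). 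The only routine work left is bookkeeping with the iterated quotients $X/[\overrightarrow\alpha]$, after which the argument is short.
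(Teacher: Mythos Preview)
Your proposal is correct and follows essentially the same route as the paper: the paper simply states that the proposition follows from Proposition~\ref{pr:locinf} and Theorem~\ref{th:char} (together with the identification of $\tau_\chi$ with $\tau_{a,\overrightarrow\alpha}$ in \eqref{eq:tauchi}/Theorem~\ref{thm:morphism}), and you have unpacked precisely these ingredients and verified the range and projection claims explicitly.
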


Now we shall use the special form of the morphisms
$\tau_{a,\overrightarrow\alpha}$ in order to improve the compactness
criterion of Theorem \ref{th:infty}.

\begin{theorem}\label{th:recomp}
  Let $A\in\re(X)$. Then for each $\alpha\in\SS_X$ and $a\in\alpha$
  the limit $\tau_\alpha(A)\equiv\alpha.A := \slim_{r\to+\infty}
  T_{ra}^*AT_{ra}$ exists and is independent of the choice of $a$. The
  map $\tau_\alpha$ is a morphism and a linear projection of $\re(X)$
  onto its subalgebra $\ce(X/[\alpha]) \rtimes X$.  The operator $A$
  is compact if, and only if, $\tau_\alpha(A)=0$ for all
  $\alpha\in\SS_X$.
\end{theorem}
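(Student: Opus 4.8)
The plan is to derive everything from the general machinery of Section~\ref{s:locinf} (in particular Theorem~\ref{th:infty} and Proposition~\ref{pr:locinf}) together with the explicit description of the characters of $\ce(X)$ obtained in Theorem~\ref{th:char}. The starting point is the observation that the morphisms $\tau_{a,\overrightarrow\alpha}$ furnished by Proposition~\ref{pr:locinfe} are indexed by the boundary points $\chi_{a,\overrightarrow\alpha}$ with $\overrightarrow\alpha\neq\emptyset$, and by Theorem~\ref{th:infty} an operator $A\in\re(X)$ is compact if and only if $\tau_{a,\overrightarrow\alpha}(A)=0$ for \emph{all} such boundary characters. The point of the theorem is that it suffices to test against the much smaller family $\tau_\alpha$, $\alpha\in\SS_X$, corresponding to length-one sequences $\overrightarrow\alpha=(\alpha)$ with $a=0$.

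First I would establish existence of the limit. For $A=u(q)v(p)$ with $u\in\ce(X)$ and $v\in\maC_0(X^*)$, Proposition~\ref{pr:pe} (and Remark~\ref{re:function}, Equation~\eqref{eq:cpaop}) gives $\slim_{r\to+\infty}T_{ra}^*u(q)T_{ra}=\tau_\alpha(u)(q)$, hence $\slim_{r\to+\infty}T_{ra}^*AT_{ra}=\tau_\alpha(u)(q)v(p)$, which is independent of the choice of $a\in\alpha$ and lies in $\ce(X/[\alpha])\rtimes X$. Since such operators span a dense subspace of $\re(X)$ and the maps $A\mapsto T_{ra}^*AT_{ra}$ are isometric, the strong limit exists for all $A\in\re(X)$; this is exactly the content of Proposition~\ref{pr:locinf} applied to the character $\chi_{0,(\alpha)}$, so in fact $\tau_\alpha=\tau_{0,(\alpha)}$ in the notation of Proposition~\ref{pr:locinfe}. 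Thus $\tau_\alpha$ is a morphism and a linear projection of $\re(X)$ onto $\ce(X/[\alpha])\rtimes X$, proving the first three assertions. Note that $\tau_\alpha(A)=\slim_{r\to+\infty}T_{ra}^*AT_{ra}$ should be contrasted with $\slim_{a\to\alpha}T_a^*AT_a$, which need not exist for general $A\in\re(X)$ (cf.\ Remark~\ref{re:function}); the ray limit does exist, and this is why only directions $\alpha$, not sequences, appear.

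For the compactness criterion, one implication is immediate: if $A$ is compact then $\tau_\chi(A)=0$ for every $\chi\in\delta(\ce(X))$ by Theorem~\ref{th:infty}, in particular for $\chi=\chi_{0,(\alpha)}$, i.e.\ $\tau_\alpha(A)=0$ for all $\alpha$. The substance is the converse: assuming $\tau_\alpha(A)=0$ for all $\alpha\in\SS_X$, I must show $\tau_{a,\overrightarrow\alpha}(A)=0$ for every $(a,\overrightarrow\alpha)\in\Omega_X$ with $\overrightarrow\alpha=(\alpha_1,\dots,\alpha_n)$, $n\geq1$, and then invoke Theorem~\ref{th:infty}. Here I would use the factorization $\tau_{a,\overrightarrow\alpha}=\tau_a\tau_{\alpha_n}\cdots\tau_{\alpha_1}$ from Definition~\ref{de:tmorph}, together with the fact (Proposition~\ref{pr:locinfe}) that each $\tau_{\alpha_1}$ restricts to a morphism $\re(X)\to\re(X/[\alpha_1])$ and that the whole composite factors through $\tau_{\alpha_1}$. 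Since $\alpha_1\in\SS_X$, the hypothesis gives $\tau_{\alpha_1}(A)=0$, hence $\tau_{a,\overrightarrow\alpha}(A)=\tau_a\tau_{\alpha_n}\cdots\tau_{\alpha_2}(\tau_{\alpha_1}(A))=0$. Therefore $A\in\bigcap_{\chi\in\delta(\ce(X))}\ker\tau_\chi=\rk(X)$ by Theorem~\ref{th:infty}, which completes the proof.

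The main obstacle I anticipate is the bookkeeping needed to make the last step rigorous: one must check that the operator-level morphism $\tau_{a,\overrightarrow\alpha}$ really is the iterated composition of the operator-level morphisms $\tau_{\alpha_i}$ (and not merely that the function-level morphisms compose this way), and that these operator-level $\tau_{\alpha_i}$ agree, on $\re(X/[\alpha_1,\dots,\alpha_{i-1}])$, with the morphisms obtained by regarding that subalgebra as a crossed product over the smaller vector space $X/[\alpha_1,\dots,\alpha_{i-1}]$. This is where the covariance of $\tau_{\overrightarrow\alpha}$ for the $X$-action (used in the proof of Theorem~\ref{thm:morphism}) and the compatibility of crossed products with the subalgebra inclusions \eqref{eq:rexyz} have to be combined carefully; once the identification $\tau_\chi=\tau_{\alpha_1}$-followed-by-lower-dimensional-data is in place, the vanishing argument is a one-line induction on $n$.
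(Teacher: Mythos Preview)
Your proposal is correct and follows essentially the same route as the paper: existence of the limit is checked on generators $u(q)v(p)$ via Proposition~\ref{pr:pe}, the projection and morphism properties are inherited from Proposition~\ref{pr:locinfe}, and the converse of the compactness criterion is obtained by observing that every boundary morphism $\tau_{a,\overrightarrow\alpha}$ factors through $\tau_{\alpha_1}$ (the paper states this in one line by invoking \eqref{eq:tauchi}). Two minor remarks: for the forward direction the paper uses the elementary fact that $T_{ra}\to0$ weakly rather than appealing to Theorem~\ref{th:infty}; and your final ``bookkeeping'' worry is unnecessary, since by Proposition~\ref{pr:locinfe} the operator-level morphism $\tau_{a,\overrightarrow\alpha}$ is the \emph{unique} continuous morphism sending $u(q)v(p)$ to $(\tau_{a,\overrightarrow\alpha}u)(q)v(p)$, so the function-level factorization \eqref{eq:tauchi} transfers immediately to operators without any inductive compatibility check.
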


\begin{proof}
  The first assertion follows from the preceding results, but it is
  easier to prove it directly. Indeed, it suffices to consider $A$ of
  the form $A=u(q)v(p)$ with $u\in\ce(X)$ and $v\in\maC_0(X^*)$. Then
  $T_{ra}^*AT_{ra}=\tau_{ra}(A)=\tau_{ra}(u(q))v(p)$, which converges
  to $(\alpha.u)(q)v(p)$ by Proposition \ref{pr:pe} (or see Remark
  \ref{re:function}). The properties of the endomorphism $\tau_\alpha$
  are consequences of the same proposition. Everything follows also by
  using general properties of crossed products and the fact that at
  the abelian level $\tau_\alpha:\ce(X)\to\ce(X/[\alpha])$ is a
  covariant morphism. To prove the compactness assertion, note first
  that $\tau_\alpha(A)=0$ if $A$ is compact because $T_{ra}\to0$
  weakly as $r\to\infty$. Then if $A\in\re(X)$ and $\tau_\alpha(A)=0$
  for all $\alpha\in\SS_X$ then it is clear by \eqref{eq:tauchi} that
  $\tau_{a,\overrightarrow\alpha}(A)=0$ if
  $\overrightarrow\alpha\neq\emptyset$, and hence $\tau_\chi(A)=0$ for all
  $\chi\in\delta(\ce(X))$, and so $A$ is compact by Theorem
  \ref{th:infty}.
\end{proof}

\begin{remark}\label{re:tcare}{\rm 
If $Y$ is a linear subspace of $X$, then the algebras $\ce(X/Y)$ and
$\re(X/Y)$ are a priori defined by our formalism as algebras of
operators on $L^2(X/Y)$. In Section \ref{ss:elint}, we have defined
$\ce(X/Y)$ as a subalgebra of $\cbu(X)$ satisfying the relation
\eqref{eq:cexy2}; this definition is natural because of our general convention to
identify subalgebras of $\cbu(X/Y)$ with subalgebras of $\cbu(X)$. On
the other hand, we note that the algebras $\re(X/Y)=\ce(X/Y)\rtimes
(X/Y)$ and $\ce(X/Y) \rtimes X$ are quite different objects: indeed
\begin{equation}\label{eq:tensoro}
      \ce(X/Y) \rtimes X\simeq \re(X/Y) \otimes C^*(Y)
\end{equation}
      by a general fact from the theory of crossed products, namely 
\begin{equation}\label{eq:tensor}
  (\ca\otimes\cb) \rtimes (G\times H)\simeq (\ca \rtimes G) \otimes
  (\cb \rtimes H)
\end{equation}
if $(\ca,G)$ and $(\cb,H)$ are amenable $C^*$-dynamical systems. In
particular:
\begin{equation}\label{eq:x/Y}
  \ce(X/[\alpha]) \rtimes X \simeq \re(X/[\alpha]) \otimes
  C^*([\alpha]).
\end{equation} 
}\end{remark}

\begin{corollary}\label{co:quotient}
The map $\tau(A)=\big(\tau_\alpha(A) \big)_{\alpha\in\SS_X}$ induces
an injective morphism
\begin{equation}\label{eq:quotient}
  \re(X)/\rk(X) \, \hookrightarrow \, 
  \pprod_{\alpha\in\SS_X} \, \ce(X/[\alpha]) \rtimes X  \,.
\end{equation} 
\end{corollary}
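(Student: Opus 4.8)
This is a direct corollary of Theorem \ref{th:recomp}, so the plan is short. First I would record that, by Theorem \ref{th:recomp}, for every $\alpha\in\SS_X$ the translation-at-infinity map $\tau_\alpha\colon\re(X)\to\ce(X/[\alpha])\rtimes X$ is a $*$-homomorphism of $C^*$-algebras; in particular it is contractive, so $\|\tau_\alpha(A)\|\leq\|A\|$ for all $A\in\re(X)$ and all $\alpha$. Consequently the family $\big(\tau_\alpha(A)\big)_{\alpha\in\SS_X}$ is uniformly bounded and defines an element of the $\ell^\infty$-direct product $C^*$-algebra $\pprod_{\alpha\in\SS_X}\ce(X/[\alpha])\rtimes X$, and the assignment $\tau(A):=\big(\tau_\alpha(A)\big)_{\alpha\in\SS_X}$ is a $*$-homomorphism $\re(X)\to\pprod_{\alpha\in\SS_X}\ce(X/[\alpha])\rtimes X$, since it is one in each coordinate. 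Note that here the essential improvement over the general bound \eqref{eq:crk} is that, by Theorem \ref{th:recomp}, each component $\tau_\alpha(A)$ actually lies in the smaller algebra $\ce(X/[\alpha])\rtimes X\subset\cbu(X)\rtimes X$, so we only need to index the product over $\SS_X$ (i.e.\ over the characters with $\overrightarrow\alpha$ of length one) rather than over all of $\delta(\ce(X))$.

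Next I would compute the kernel of $\tau$. By construction, $\tau(A)=0$ if and only if $\tau_\alpha(A)=0$ for every $\alpha\in\SS_X$, and by the last assertion of Theorem \ref{th:recomp} this holds if and only if $A$ is a compact operator, i.e.\ $A\in\rk(X)$. Hence $\ker\tau=\rk(X)$.

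Finally, I would invoke the first isomorphism theorem for $C^*$-algebras: the range of a $*$-homomorphism is a $C^*$-subalgebra of the target, and the induced map on the quotient by the kernel is an isometric $*$-isomorphism onto that range. Applying this to $\tau$ with $\ker\tau=\rk(X)$ yields an injective $*$-homomorphism $\re(X)/\rk(X)\hookrightarrow\pprod_{\alpha\in\SS_X}\ce(X/[\alpha])\rtimes X$, which is exactly \eqref{eq:quotient}. There is essentially no obstacle in this argument; the only points deserving a line of care are that the target should be taken as the $\ell^\infty$-product in the category of $C^*$-algebras (so that $\tau(A)$ is well defined and bounded, which is guaranteed by contractivity of the $\tau_\alpha$) and that, once $\ker\tau$ has been identified with $\rk(X)$, injectivity of the induced quotient map is automatic.
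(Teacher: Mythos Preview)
Your proposal is correct and is exactly the approach the paper has in mind: the corollary is stated with no proof, as it follows immediately from Theorem \ref{th:recomp} (each $\tau_\alpha$ is a morphism onto $\ce(X/[\alpha])\rtimes X$, and their common kernel is $\rk(X)$), together with the standard factorization of a $*$-homomorphism through its kernel. You have simply written out the details the paper leaves implicit.
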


The following theorem is an immediate consequence of the preceding
corollary.

\begin{theorem}\label{th:ess-last} 
  Let $H$ be a self-adjoint operator on $L^2(X)$ affiliated to
  $\re(X)$.  Then for each $\alpha\in\SS_X$ and $a\in\alpha$ the limit
  $\tau_\alpha(H) \equiv \alpha.H = \slim_{r\to+\infty} T_{ra}^* H
  T_{ra}$ exists and is independent of the choice of $a$. We have
  $\sigma_\ess(H) \, = \, \overline{\cup}_{\alpha\in\SS_X}
  \sigma(\alpha.H)$.
\end{theorem}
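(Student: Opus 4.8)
The plan is to deduce Theorem \ref{th:ess-last} from Corollary \ref{co:quotient} exactly as Corollary \ref{co:infty2} was deduced from Theorem \ref{th:infty}, but replacing the boundary $\delta(\ce(X))$ by the sphere $\SS_X$. First I would observe that since $H$ is self-adjoint and affiliated to $\re(X)$, for any $\varphi\in\maC_0(\R)$ we have $\varphi(H)\in\re(X)$, so Theorem \ref{th:recomp} applies to $\varphi(H)$: the strong limit $\tau_\alpha(\varphi(H)) = \slim_{r\to+\infty} T_{ra}^*\varphi(H)T_{ra}$ exists for each $\alpha\in\SS_X$ and is independent of the choice of $a\in\alpha$. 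Fixing $\lambda\in\C\smallsetminus\R$ and taking $\varphi(t)=(t-\lambda)^{-1}$, the existence of $\slim_{r\to+\infty} T_{ra}^*(H-\lambda)^{-1}T_{ra}$ is precisely the statement (by Remark \ref{re:limit}) that $\slim_{r\to+\infty} T_{ra}^* H T_{ra}$ exists in the generalized sense; this gives the operator $\alpha.H =: \tau_\alpha(H)$, understood as a possibly-non-densely-defined self-adjoint operator, and the morphism property of $\tau_\alpha$ on $\re(X)$ shows $\tau_\alpha((H-\lambda)^{-1}) = (\alpha.H - \lambda)^{-1}$ in the sense of the functional calculus of $\alpha.H$.

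Next I would compute $\sigma_\ess(H)$. By Atkinson's theorem, $\lambda\in\R$ lies outside $\sigma_\ess(H)$ if and only if the image of $(H-\rmi)^{-1}$ ... more precisely, $\lambda\notin\sigma_\ess(H)$ iff there is $\varphi\in\maC_0(\R)$ with $\varphi(\lambda)\neq 0$ and $\varphi(H)$ compact (Remark \ref{rem.e.sp}). Since $\varphi(H)\in\re(X)$, Theorem \ref{th:recomp} says $\varphi(H)$ is compact iff $\tau_\alpha(\varphi(H))=0$ for all $\alpha\in\SS_X$, i.e.\ iff $\varphi(\alpha.H)=0$ for all $\alpha$. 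Hence $\lambda\notin\sigma_\ess(H)$ iff there is $\varphi\in\maC_0(\R)$, $\varphi(\lambda)\neq0$, vanishing on $\sigma(\alpha.H)$ for every $\alpha\in\SS_X$; equivalently $\lambda$ has a neighborhood disjoint from $\bigcup_{\alpha\in\SS_X}\sigma(\alpha.H)$, i.e.\ $\lambda\notin\overline{\cup}_{\alpha\in\SS_X}\sigma(\alpha.H)$. This yields $\sigma_\ess(H)=\overline{\cup}_{\alpha\in\SS_X}\sigma(\alpha.H)$. Alternatively, and perhaps more cleanly, I would invoke the injective morphism \eqref{eq:quotient} of Corollary \ref{co:quotient}: the image $\widehat{(H-\rmi)^{-1}}$ of $(H-\rmi)^{-1}$ in $\re(X)/\rk(X)$ has the same spectrum as $(H-\rmi)^{-1}$ modulo $\{0\}$... actually one works with $\varphi(H)$ for $\varphi$ ranging over $\maC_0(\R)$ and uses that the spectrum of a normal element of a product of $C^*$-algebras is the closure of the union of the spectra of its components, applied componentwise to $(\tau_\alpha(\varphi(H)))_{\alpha}=(\varphi(\alpha.H))_\alpha$; combined with the characterization $\lambda\in\sigma_\ess(H)\Leftrightarrow \exists\,\varphi,\ \varphi(\lambda)\neq0,\ \varphi(H)$ compact, this gives the formula.

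The point requiring the most care is the precise interpretation of $\alpha.H$ and of $\sigma(\alpha.H)$ when $\alpha.H$ is not densely defined (cf.\ Remark \ref{re:pathology}): one must use the framework of Remark \ref{re:limit}, where $\slim_r T_{ra}^*HT_{ra}$ is encoded by the morphism $\Phi(\varphi)=\slim_r \varphi(T_{ra}^*HT_{ra})=\tau_\alpha(\varphi(H))$, associated to a pair $(\ck_\alpha, K_\alpha)$ with $K_\alpha$ self-adjoint on a closed subspace $\ck_\alpha\subset L^2(X)$; one then sets $\alpha.H:=K_\alpha$ and $\sigma(\alpha.H):=\sigma(K_\alpha)$, with the convention $\sigma(\infty)=\emptyset$ when $\ck_\alpha=\{0\}$. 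With this convention $\sigma(\alpha.H)$ is exactly the set of $\lambda$ such that $\varphi(\alpha.H)=\tau_\alpha(\varphi(H))\neq 0$ for some $\varphi\in\maC_0(\R)$ with $\varphi(\lambda)\neq 0$, which is what the argument above needs, so no genuine difficulty arises beyond bookkeeping. The independence of $\alpha.H$ from the choice of $a\in\alpha$ is already part of Theorem \ref{th:recomp}. I would close by noting that, in contrast to Theorem \ref{th:sphalg-intro}, here the union genuinely needs to be closed, since $\alpha\mapsto\tau_\alpha(A)$ need not be norm continuous on $\SS_X$.
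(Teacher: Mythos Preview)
Your argument is correct and is exactly the approach the paper intends: the paper simply says that Theorem \ref{th:ess-last} is an immediate consequence of Corollary \ref{co:quotient} (equivalently Theorem \ref{th:recomp}), and you have spelled out precisely that deduction via the functional-calculus characterization of $\sigma_\ess$ from Remark \ref{rem.e.sp}, together with the generalized strong-limit framework of Remark \ref{re:limit}. Nothing is missing or done differently.
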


The question whether the union $\cup_{\alpha\in\SS_X}
\sigma(\alpha.H)$ is closed or not will not be treated in this paper
(see \cite {NistorPrudhon} for related results).  That the union is
closed if $\re(X)$ is replaced by the standard $N$-body algebra
$\re_{\maC_0}(X)$ is shown in \cite[Theorem 6.27]{GI3} and is a
consequence of the fact that $\{\tau_\alpha(A) , \, \alpha\in\SS_X\}$
is a compact subset of $\re_{\maC_0}(X)$ for each
$A\in\re_{\maC_0}(X)$. Unfortunately this is not true in the present
case.

\begin{lemma}\label{lm:unclosed}
  If $A\in\re(X)$, then $\{\tau_\alpha(A) , \, \alpha\in\SS_X\}$ is a
  relatively compact subset of $\re(X)$, but is not compact in
  general.
\end{lemma}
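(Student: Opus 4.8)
The relative compactness is the easy half. By Proposition \ref{pr:locinfe} and Theorem \ref{th:recomp}, the map $\SS_X \ni \alpha \mapsto \tau_\alpha(A) \in \re(X)$ is the composition of the (strongly) continuous map $\hat\ca \ni \chi \mapsto \tau_\chi(A)$ from Proposition \ref{pr:locinf} with the embedding $\SS_X \hookrightarrow \delta(\ce(X))$ sending $\alpha$ to $\chi_{0,(\alpha)}$. First I would verify that this map is actually \emph{norm} continuous: for $A = u(q)v(p)$ with $u \in \ce(X)$ and $v \in \maC_0(X^*)$ one has $\tau_\alpha(A) = (\tau_\alpha u)(q)v(p)$, and by Lemma \ref{lemma.tau} (applied on the generators $\maC(\overline{X/Y})$) together with the uniform radial limit property \eqref{eq:cx3}, the function $\alpha \mapsto \tau_\alpha u \in \cbu(X)$ is continuous for the topology of local uniform convergence; composing with $v(p) \in C^*(X)$ turns this into norm continuity of $\alpha \mapsto \tau_\alpha(A)$, and one passes to general $A \in \re(X)$ by density and the uniform bound $\|\tau_\alpha(A)\| \le \|A\|$. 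Since $\SS_X$ is compact, the continuous image $\{\tau_\alpha(A) : \alpha \in \SS_X\}$ is compact, hence in particular relatively compact in $\re(X)$.

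The substantive half is to exhibit an $A \in \re(X)$ for which the set $\{\tau_\alpha(A)\}$ is \emph{not} closed, equivalently for which $\alpha \mapsto \tau_\alpha(A)$ fails to be continuous in norm \emph{as a map into a finite union of orbits} — the point being that although each individual map is norm continuous, the range need not be closed because distinct $\alpha$'s can give operators in genuinely different summands $\ce(X/[\alpha]) \rtimes X$. The cleanest construction is to take $X = \R^2$, pick a sequence of distinct half-lines $\alpha_n \to \alpha_\infty$ in $\SS_X$, and build $A = u(q)v(p)$ where $u = \sum_n 2^{-n} u_n \circ \pi_{[\alpha_n]}$ with each $u_n \in \maC_0(X/[\alpha_n]) \cong \maC_0(\R)$ chosen so that $u_n(\pi_{[\alpha_n]}(\alpha_\infty)) = 0$ but $u_n$ is not identically zero. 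Then $\tau_{\alpha_n}(u) = u$ restricted to the line $[\alpha_n]$ picks up the full nonzero contribution of the $n$-th term (since $\alpha_n \subset [\alpha_n]$, Lemma \ref{lemma.tau} gives $\tau_{\alpha_n}(u_n \circ \pi_{[\alpha_n]}) = u_n \circ \pi_{[\alpha_n]}$), whereas $\tau_{\alpha_\infty}(u)$ kills the $n$-th term for every $n$ (because $\alpha_\infty \not\subset [\alpha_n]$ and $u_n$ vanishes at the relevant radial limit point). A short computation with the convergence in \eqref{eq:cx3} then shows $\tau_{\alpha_n}(A) \to u^{\flat}(q)v(p)$ for some function $u^{\flat}$ which genuinely involves all the lines $[\alpha_n]$ and in particular does not lie in any single $\ce(X/[\beta]) \rtimes X$ — while every $\tau_\alpha(A)$ for $\alpha \in \SS_X$ does lie in such a summand, being supported on finitely many lines for each fixed $\alpha$. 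Hence the limit of $\tau_{\alpha_n}(A)$ is not of the form $\tau_\beta(A)$, so the set is not closed.

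The main obstacle, and the step I would be most careful about, is making the last assertion precise: one must identify exactly what $\slim \tau_{\alpha_n}(A)$ is and argue that it escapes $\bigcup_\beta (\ce(X/[\beta]) \rtimes X)$. The natural way is to note that $\tau_{\alpha_n}(u)$, viewed in $\ce(X) \subset \cbu(X)$, converges locally uniformly to a function $u^{\flat}$ whose restriction to each line $[\alpha_m]$ is $2^{-m} u_m$ plus the constants coming from the radial limits of the other terms along $\alpha_\infty$; since the $u_m$ are nonzero on infinitely many distinct lines accumulating at $\alpha_\infty$, $u^{\flat}$ is not invariant under translation by any nonzero vector, so $u^{\flat} \notin \ce(X)\cap\cbu(X/[\beta]) = \ce(X/[\beta])$ for any $[\beta]$ by \eqref{eq:cexy2}. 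Consequently $u^{\flat}(q)v(p) \notin \ce(X/[\beta]) \rtimes X$, which is precisely what is needed. One should also double-check that $A = u(q)v(p)$ does lie in $\re(X)$: this is automatic since $u \in \ce(X)$ (a norm-convergent sum of generators $\maC_0(X/[\alpha_n]) \subset \maC(\overline{X/[\alpha_n]})$) and $v \in \maC_0(X^*)$.
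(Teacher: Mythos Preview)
Your argument for relative compactness contains a genuine error that actually undermines the whole lemma. You claim that $\alpha\mapsto\tau_\alpha(A)$ is \emph{norm} continuous on $\SS_X$; but if that were true, the image would be compact (being the continuous image of the compact space $\SS_X$), and the second half of the lemma would be false. The claim itself fails already on generators: for $u\in\maC(\oXY)$ with $Y$ a nontrivial subspace, Lemma~\ref{lemma.tau} gives $\tau_\alpha(u)=u$ when $\alpha\subset Y$ and $\tau_\alpha(u)=u(\pi_Y(\alpha))\in\C$ when $\alpha\not\subset Y$, so as $\alpha_n\to\alpha$ with $\alpha\subset Y$ and $\alpha_n\not\subset Y$ the values $\tau_{\alpha_n}(u)$ are constants while $\tau_\alpha(u)=u$ is a genuine (nonconstant) function. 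Moreover, local uniform convergence of $w_n\to w$ together with a uniform bound does \emph{not} imply $\|w_n(q)v(p)-w(q)v(p)\|\to0$ for $v\in\maC_0(X^*)$ (translate a fixed $w\in\maC_0(X)$ to infinity to see this). The paper instead argues relative compactness purely algebraically: for $A=u(q)v(p)$ with $u\in\maC(\oXY)$, the set $\{\tau_\alpha(A)\}$ is just $\{A\}\cup\{c\,v(p):c\in\text{bounded subset of }\C\}$, which is visibly relatively compact; one then passes to finite products and sums of such generators, and finally to norm limits.

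Your counterexample for non-closedness also does not work. With $u=\sum_n 2^{-n}u_n\circ\pi_{[\alpha_n]}$ and $u_n\in\maC_0(X/[\alpha_n])$, one computes
\[
\tau_{\alpha_n}(u)=2^{-n}\,u_n\circ\pi_{[\alpha_n]}+\sum_{m\neq n}2^{-m}u_m\big(\pi_{[\alpha_m]}(\alpha_n)\big),
\]
the first term having sup-norm $\le 2^{-n}\|u_n\|_\infty\to0$ and the second being a constant tending (by dominated convergence, since each $u_m$ vanishes at the boundary point $\pi_{[\alpha_m]}(\alpha_\infty)$) to $0$. Thus $\tau_{\alpha_n}(u)\to0=\tau_{\alpha_\infty}(u)$ in norm, and no ``$u^\flat$ involving all the lines'' appears. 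The paper's example is both simpler and works: take $X=\R^2$, a single line $Y$, and $u=u_0+u_Y$ with $u_0\in\maC(\oX)$, $u_Y\in\maC(\oXY)$. For $\alpha_n\to\beta$ with $\beta\subset Y$ and $\alpha_n$ in the open right half-plane, $\tau_{\alpha_n}(u)=u_0(\alpha_n)+u_Y(+\infty)\to u_0(\beta)+u_Y(+\infty)$, a constant; one then chooses $u_0$ injective on $\SS_X$ and $u_Y$ with a large jump $u_Y(+\infty)-u_Y(-\infty)$ so that this constant equals no $\tau_\gamma(u)$ (the cases $\gamma\subset Y$, $\gamma$ in the right half-plane, $\gamma$ in the left half-plane are ruled out one by one). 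The key phenomenon you missed is precisely the jump discontinuity of $\alpha\mapsto\tau_\alpha(u_Y)$ at $\alpha\in\SS_Y$: that is what prevents the limit point from lying in the image.
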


\begin{proof}
  We first show that $\{\tau_\alpha(A) , \, \alpha\in\SS_X\}$ is a
  relatively compact set in $\re(X)$. Since the product and the sum of
  two relatively compact subsets is relatively compact, it suffices to
  prove this for $A$ in a generating subset of the algebra $\re(X)$,
  so we may assume that $A=u(q)v(p)$ with $u\in\maCXY$ and
  $v\in\maC_0(X^*)$ for some subspace $Y$. Then $\tau_\alpha(A)=A$ if
  $\alpha\subset Y$ and $\tau_\alpha(A)=\tau_\alpha(u)v(p)$ if
  $\alpha\not\subset Y$. In the second case we have
  $\tau_\alpha(u)\in\C$ and $|\tau_\alpha(u)|\leq\|u\|$, so it is
  clear that the set of the $\tau_\alpha(A)$ is relatively compact.

  We shall give now an example when this set is not closed. Let
  $X=\R^2$, $Y=\{0\}\times\R$, and let us identify
  $X/Y=\R\times\{0\}$. The operator $A$ will be of the form
  $A=u(q)v(p)$ so that $\tau_\alpha(A)=\tau_\alpha(u)(q)v(p)$ with
  $u=u_0+u_Y$ for some $u_0\in\maC(\oX)$ and $u_Y\in\maC(\oXY)$.  We
  have $\oXY=\overline{\R\times\{0\}}\equiv[-\infty,+\infty]$, and hence
  $\SS_{\oXY}$ consists of two points $\pm\infty$.  If
  $\alpha\in\SS_X$ then $\tau_\alpha(u)=u_0(\alpha)+\tau_\alpha(u_Y)$
  where $\tau_\alpha(u_Y)=u_Y$ if $\alpha\subset Y$ and
  $\tau_\alpha(u_Y)=u_Y(\pi_Y(\alpha))$ if $\alpha\not\subset Y$.  In
  the last case we have only two possibilities:
  $\tau_\alpha(u_Y)=u_Y(+\infty)$ if $\alpha$ is in the open right
  half-plane and $\tau_\alpha(u_Y)=u_Y(-\infty)$ if $\alpha$ is in the
  open left half-plane.

  Let $\beta$ be the upper half-axis, i.e.\ $\beta=\{(0,y), \, y>0\}$,
  and let us choose $u_0$ such that $u_0(\gamma)\neq u_0(\beta)$ for
  all $\gamma\in\SS_X,\gamma\neq\beta$. Then choose $u_Y$ such that
  $u_Y(+\infty)-u_Y(-\infty)$ be strictly larger than
  $u_0(\gamma)-u_0(\beta)$ for all $\gamma\in\SS_X$.  Then
  $\{\tau_\alpha(u), \, \alpha\in\SS_X\}$ consists of the following
  elements: $u_0(\beta)+u_Y$, $u_0(-\beta)+u_Y$,
  $u_0(\alpha)+u_Y(+\infty)$ if $\alpha$ is in the open right
  half-plane, and $u_0(\alpha)+u_Y(-\infty)$ if $\alpha$ is in the
  open left half-plane. We shall prove that this set is not
  closed. Indeed, let $\{\alpha_n\}$ be a sequence of rays in the open
  right half-plane that converges to $\beta$. Then
  $\tau_{\alpha_n}(u)=u_0(\alpha_n)+u_Y(+\infty)$ is a sequence of
  complex numbers that converges to $u_0(\beta)+u_Y(+\infty)$. This
  number cannot be of the form $\tau_\gamma(u)$ for some
  $\gamma\in\SS_X$ because, if $\gamma\subset Y$, then
  $\tau_\gamma(u)=u_0(\gamma)+u_Y$ is not a number. If $\gamma$ is in
  the open right half-plane, then
  $\tau_\gamma(u)=u_0(\gamma)+u_Y(+\infty)$, which cannot be equal to
  $u_0(\beta)+u_Y(+\infty)$, because $u_0(\gamma)\neq u_0(\beta)$.  On
  the other hand, if $\gamma$ is in the open left half-plane, then
  $\tau_\gamma(u)=u_0(\gamma)+u_Y(-\infty)$, which cannot be equal to
  $u_0(\beta)+u_Y(+\infty)$ because $u_0(\gamma)-u_0(\beta)<
  u_Y(+\infty)-u_Y(-\infty)$.
\end{proof}

\begin{remark}{\rm It is important to notice that finding good
    compactifications of $X$ related to the $N$-body problem is useful
    for the problem of approximating numerically the eigenvalues and
    eigenfunctions of $N$-body Hamiltonians \cite{ACN, Faupin, Flad3,
      Flad1, Fournais1, VasyReg}. In particular, this gives a further
    justification for trying to find the structure of the character
    space of $\ce(X)$.  }
\end{remark}

\subsection{Self-adjoint operators affiliated to 
\texorpdfstring{$\re(X)$}{} } \label{ss:affnb}

Our purpose here is to show that the class of self-adjoint operators
affiliated to $\re(X)$ is quite large. As mentioned at the beginning
of Section \ref{s:nbd}, we may and shall assume  $\dim X\geq2$. 

We first prove Theorem \ref{th:intro1}. We recall the definition of
$\ce^\sharp(X)$ in terms of the algebras $\cb(\oXY)$ defined as in
\eqref{eq:bg} and Lemma \ref{lm:bg}.  Note that, according to our
notational conventions, $\cb(\oXY)$ is identified with a $C^*$-algebra
of functions on $X$.

\begin{definition}\label{df:bex}
  $\ce^\sharp(X)$ is the $C^*$-subalgebra of $L^\infty(X)$ generated
  by the functions of the form $v\circ\pi_Y$, where $Y$ runs over the
  set of linear subspaces of $X$ and $v\in\cb(\oXY)$. 
\end{definition}

\begin{proposition}\label{pr:bgx}
  $u\in\ce^\sharp(X)$ if, and only if, there is a sequence of functions
  $u_n\in\ce(X)$ such that $\sup_n\|u_n\|_{L^\infty(X)}<\infty$ and
  $\lim_n\|u_n(q)-u(q)\|_{\ch^s(X)\to\ch(X)}=0$ for some $s>0$.
\end{proposition}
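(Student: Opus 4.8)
The plan is to reduce this statement to the scalar case already treated in Proposition \ref{pr:bg}, but using the Gagliardo-completion point of view uniformly across all the quotients $X/Y$. Recall that Proposition \ref{pr:bg} characterizes $\cb(\oXZ)$, for a single vector space $X/Z$, as the set of $L^\infty$ functions that are $B(\ch^s,\ch)$-limits of uniformly bounded sequences from $\maC(\oXZ)$. Write $\mathcal{G}$ for the set of $u\in L^\infty(X)$ that are $B(\ch^s(X),\ch(X))$-limits of uniformly bounded sequences from $\ce(X)$; we must show $\mathcal{G}=\ce^\sharp(X)$. The inclusion $\mathcal{G}\subset\ce^\sharp(X)$ follows because $\mathcal{G}$ is a $C^*$-algebra (the set of such Gagliardo limits is norm-closed and stable under products and adjoints, by the usual interpolation estimate $\|T\|_{\ch^t\to\ch}\le\|T\|_{\ch^s\to\ch}^{t/s}\|T\|_{\ch\to\ch}^{1-t/s}$ for multiplication operators, exactly as in the proof of Proposition \ref{pr:bg}) and it contains each $\cb(\oXY)$: indeed, given $v\in\cb(\oXY)$, Proposition \ref{pr:bg} applied on $X/Y$ furnishes $v_n\in\maC(\oXY)$ with $\sup_n\|v_n\|_\infty<\infty$ and $\|v_n(q)-v(q)\|_{\ch^s(X/Y)\to\ch(X/Y)}\to 0$, and I would then check that tensoring with the identity on $L^2(Y)$ (i.e. passing from $X/Y$ to $X$ via $\pi_Y$) does not increase the $B(\ch^s,\ch)$-norm of a multiplication operator, so $v_n\circ\pi_Y\in\maC(\oXY)\subset\ce(X)$ still converges to $v\circ\pi_Y$ in the required sense. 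Since $\mathcal G$ is a $C^*$-algebra containing all these generators, $\ce^\sharp(X)\subset\mathcal G$.

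The converse inclusion $\mathcal{G}\subset\ce^\sharp(X)$ is the part requiring an idea, and I expect it to be the main obstacle. One cannot argue naively that a single Gagliardo-approximating sequence $u_n\in\ce(X)$ forces $u$ into $\ce^\sharp(X)$, because $\ce^\sharp(X)$ is itself only a norm-closure of products, not of Gagliardo limits; the point is that Gagliardo convergence is a \emph{weaker} notion than norm convergence, so we need to show it does not take us outside $\ce^\sharp(X)$. The strategy is the one used in Proposition \ref{pr:bg}: it suffices to handle the "local at infinity'' part. More precisely, I would first reduce to showing that if $u\in\mathcal G$ then $u$ can be written $u=v+w$ with $v\in\ce(X)$ and $w$ in the ideal $\ce^\sharp_0(X)$ of $\ce^\sharp(X)$ consisting of functions whose mean-limits vanish along every ray. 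For this, given $u\in\mathcal G$ with approximating sequence $u_n\in\ce(X)$, I would show that the mean-limit $\mlim_\alpha u$ exists for every $\alpha\in\SS_X$: writing $\eta_a$ for a translated fixed cutoff $\eta\in\cc^\infty_\rmc$ with $\|\eta\|_{\ch^s}=1$, constant $\neq 0$ on $\Lambda$, the estimate $\|(u-u_n)\eta_a\|_\ch\le\varepsilon_n$ (with $\varepsilon_n=\|u_n(q)-u(q)\|_{\ch^s\to\ch}$) together with the fact that $u_n\in\ce(X)$ has honest radial limits forces the averages $\int_{a+\Lambda}|u(x)-c_n|\,dx$ to be controlled, and a Cauchy argument in $n$ produces a limit $V(\alpha)$. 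This shows $u\in\cb^\sharp(X)$ in the sense that it has mean-radial-limits — but one must be careful: $\cb(\oX)$-type membership was for scalar functions; here I need the richer structure of $\ce^\sharp$.

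At this point I would invoke the structure more carefully: $\mathcal G$ is generated, as a $C^*$-algebra, by its intersections with the Gagliardo completions of the $\maC(\oXY)$, and each such intersection lies in $\cb(\oXY)$ by Proposition \ref{pr:bg} applied on $X/Y$ — but to \emph{use} this I need to know that a Gagliardo limit of elements of $\ce(X)=\langle\maC(\oXY):Y\subset X\rangle$ decomposes compatibly with the generating subalgebras. The cleanest route is: show that the Gagliardo completion of a sum $\sum \maC(\oXY)$ (finitely many $Y$'s) inside $L^\infty$ is contained in the $C^*$-algebra generated by the Gagliardo completions $\cb(\oXY)$ of the summands, using that the Gagliardo completion of $\maC_0(X/Y)$ is $\cb_0(X/Y)\subset\cb(\oXY)$ (the "error terms'' land in the small ideal) — this is exactly the mechanism of Proposition \ref{pr:bg}'s proof, lifted to the multi-$Y$ setting. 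Concretely, I would prove: if $u_n\to u$ in Gagliardo sense with $u_n\in\ce(X)$ uniformly bounded, then by the computation above $u$ has a well-defined continuous radial-limit function $V$ on $\SS_X$, I can subtract an element $v_0\in\maC(\oX)$ realizing $V$ (via the cutoff construction $v_0(x)=\theta(x)V(\hat x)$ as in the proof of Lemma \ref{lm:bg}), and the remainder $u-v_0$ is then a Gagliardo limit of $u_n-v_0\in\ce(X)$ with \emph{vanishing} radial limits; iterating this descent on the "depth'' of the $N$-body structure (the length of the sequences $\overrightarrow\alpha$ from Section \ref{ss:prelim}), each step peeling off an element of $\ce(X)$ and leaving a remainder in a deeper small ideal, terminates because $\dim X<\infty$, and all the peeled-off pieces together with the final small-ideal remainder (which lies in $\ce^\sharp_0(X)\subset\ce^\sharp(X)$ by the scalar Proposition \ref{pr:bg} argument) exhibit $u\in\ce^\sharp(X)$. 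The genuine difficulty is organizing this induction so that the decomposition $u=v+w$ is compatible with \emph{all} the subspaces $Y$ simultaneously rather than one at a time; I would handle this by working with the composition series / filtration of $\ce(X)$ by the ideals associated to the strata $\Omega_X^{(n)}$ of its character space (Theorem \ref{th:char}), passing to successive subquotients where the situation becomes a product of scalar cases to which Proposition \ref{pr:bg} applies verbatim, and then reassembling.
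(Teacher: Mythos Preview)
For the inclusion $\ce^\sharp(X)\subset\mathcal G$ your approach coincides with the paper's: lift Proposition~\ref{pr:bg} from $X/Y$ to $X$ (the paper does this via the tensor decomposition $\ch(X)\simeq\ch(Y)\otimes\ch(Z)$ and $\ch^s(X)\supset\ch(Y)\otimes\ch^s(Z)$) to see that each generator $\cb(\oXY)$ lies in $\mathcal G$, then close under products and norm limits. The paper handles a finite product $u=u^1\cdots u^k$ directly by approximating each factor and setting $u_n=u_n^1\cdots u_n^k$; your phrasing ``$\mathcal G$ is a $C^*$-algebra containing the generators'' packages the same estimate. (Your opening sentence has the inclusion written the wrong way round, though you state it correctly at the end of the paragraph.)

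The divergence is that the paper's proof addresses \emph{only} this direction. The converse $\mathcal G\subset\ce^\sharp(X)$, on which you spend most of your effort, is never argued there; only $\ce^\sharp(X)\subset\mathcal G$ is used downstream (in the proof of Theorem~\ref{th:intro1}, to push $\jap{p}^{-s}u(q)$ into $\re(X)$).

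Your sketch for the converse --- extract a continuous radial-limit piece in $\maC(\oX)$, subtract it, and descend through the filtration by the $\Omega_X^{(n)}$ --- identifies the right obstacle but does not yet overcome it. The reduction you propose, ``Gagliardo completion of a finite sum $\sum\maC(\oXY)$ is contained in the algebra generated by the $\cb(\oXY)$'', is not the situation you actually face: elements of $\ce(X)$ are already products across different $Y$'s, so after one $\tau_\alpha$ the residual lives in $\ce(X/[\alpha])$ rather than in $\C$, and the peel-and-iterate mechanism of Proposition~\ref{pr:bg} does not apply verbatim. A cleaner line would be to show that Gagliardo convergence of uniformly bounded sequences is preserved by each $\tau_\alpha$, so that $\tau_\alpha(u)\in\ce^\sharp(X/[\alpha])$ by induction on $\dim X$, and then argue that $u-\sum\text{(lifts of the }\tau_\alpha(u))$ lands in a controllable ideal; but that argument is not written out in your proposal, and it is substantially more than the paper itself supplies.
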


\begin{proof}
  We will need the following consequence of Proposition \ref{pr:bg}:
  $u\in\cb(\oXY)$ if, and only if, there is a sequence of functions
  $u_n\in\maCXY$ such that $\|u_n\|_{L^\infty}\leq C$ with $C$
  independent of $n$ and $\lim_n\|u_n(q)-u(q)\|_{\ch^s(X)\to\ch(X)}=0$
  for some $s>0$. For the proof, it is useful to distinguish between
  the function $u$ on $X$ and the function $u'$ on $X/Y$ related to it
  by $u=u'\circ\pi_Y$. Then Proposition \ref{pr:bg} gives us functions
  $u'_n:X/Y\to\C$ of class $\maCXY$ such that
  $\|u'_n\|_{L^\infty(X/Y)}\leq C$ and $u'_n(q)\to u'(q)$ in norm
  in the space of bounded operators $\ch^s(X/Y)\to\ch(X/Y)$. Thus, if
  we set $u_n=u'_n\circ\pi_Y$, it suffices to show that
  $\lim_n\|u_n(q)-u(q)\|_{\ch^s(X)\to\ch(X)}=0$.  But this is clear
  because, if $Z$ is a subspace supplementary to $Y$ in $X$, then we
  have $X/Y\simeq Z$, $\ch(X)\simeq\ch(Y)\otimes\ch(Z)$ and
  $\ch^s(X)\simeq\big(\ch^s(Y)\otimes\ch(Z)\big) \cap
  \big(\ch(Y)\otimes\ch^s(Z)\big)$.

  Since $\ce^\sharp$ is the norm closure in $L^\infty$ of the space of
  linear combinations of products of functions in $\cb(\oXY)$ with $Y$
  running over all subspaces of $X$, it remains to prove that if $u$
  is a finite product $u=u^1\dots u^k$ of functions
  $u^i\in\cb(\overline{X/Y_i})$, then one may construct a sequence
  $\{u_n\}$ as in the statement of the proposition. By what we have
  proved, such a sequence $\{u^i_n\}$ exists for each $i$ and clearly
  it suffices to take $u_n=u^1_n\dots u^k_n$.
\end{proof}

\textbf{Proof of Theorem \ref{th:intro1}. } We consider first the
operator $H$ defined in \eqref{eq:Hh}. Since $\dim X\geq2$, the
function $h$ is either lower or upper semi-bounded, and hence we may
assume $h\geq0$.  In Theorem \ref{th:recall} we take $H_0=h(p)$, so
$H_0$ is a positive operator strictly affiliated to $\re(X)$. Then,
according to Theorem \ref{th:recall}, if $V$ is a bounded
self-adjoint operator on $\ch$ such that
\begin{equation}\label{eq:seccond}
\varphi(H_0)V(H_0+1)^{-1/2}\in\re(X) \quad\text{for all }
\varphi\in\cc_\rmc(\R)
\end{equation}
then $H=H_0+V$ is a self-adjoint operator strictly affiliated to
$\re(X)$. Since $h$ is a proper, continuous function we have
$\varphi\circ h\in\cc_\rmc(\R)$, and hence for any $s>0$ the function
$\psi(k)=\varphi(h(k))\jap{k}^s$ also belongs to $\cc_\rmc(\R)$,
so $\psi(p)\in\re(X)$. Then $\varphi(H_0)=\psi(p)\jap{p}^{-s}$ hence
\eqref{eq:seccond} is satisfied if $\jap{p}^{-s}V\in\re(X)$. This last
fact is clearly true if $V=u(q)$ with $u\in\ce(X)$ and remains true if
$u\in\ce^\sharp(X)$ by Proposition \ref{pr:bgx}.

Now let $H$ be the self-adjoint operator associated to the operator
$L:\ch^m\to\ch^{-m}$ defined by \eqref{eq:L}. Then we take
$H_0=1+\sum_{|\mu|=m}p^{2\mu}$ and equip $\ch^m$ with the scalar
product $\braket{u}{H_0u}$. If we set $V=L-H_0$, then \eqref{eq:L1}
implies $V\geq-(1-\delta)H_0-\gamma$. Since $\delta>0$ we see that the
conditions on $V$ in Theorem \ref{th:recall} are satisfied (with a
change of notation). The second condition on $V$ is clearly
satisfied if $H_0^{-1}VH_0^{-1/2}\in\re(X)$. Observe that the operator
$V$ has the same form as $L$, only the coefficients $g_{\mu\nu}$ in
the principal part being changed in an irrelevant manner (replaced by
$g_{\mu\nu}-\delta_{\mu\nu}$ and $g_{00}-1$ respectively). Thus it
remains only to check that $p^\mu H_0^{-1}g_{\mu\nu} p^\nu H_0^{-1/2}$
belongs to $\re(X)$ if $|\mu|,|\nu|\leq m$. Since $p^\nu H_0^{-1/2}$
belongs to the multiplier algebra of $\re(X)$, it suffices to have
$p^\mu H_0^{-1}g_{\mu\nu}\in\re(X)$. Since $H_0$ is of order
$\jap{p}^{2m}$ and $p^\mu$ is of order at most $m$, this follows from
what we proved before in the case $H=h(p)+V$.
\qed

\begin{remark}\label{re:unbcoeff}{\rm One may treat, by the technique
    of the preceding proof, operators $L$ with unbounded coefficients
    in the terms of lower order. Assume that for each $\mu,\nu$ the
    operator of multiplication by $g_{\mu\nu}$ maps $\ch^{m-|\nu|}$
    into $\ch^{|\mu|-m}$. Then $L:\ch^m\to\ch^{-m}$ is well defined
    and the condition \eqref{eq:L1} ensures the existence of the
    self-adjoint operator $H$ associated to it.  It has been shown in
    \cite[Example 4.13]{GI3} that this operator is affiliated to the
    crossed product $\cbu(X)\rtimes X$, and hence its essential spectrum
    can be described in terms of localizations at infinity of
    $H$. However, its affiliation to the smaller algebra $\re(X)$
    would give a much more precise characterization of the essential
    spectrum.  For this, by the argument of the preceding proof, it
    suffices to have
    $p^\mu H_0^{-1}g_{\mu\nu} p^\nu H_0^{-1/2}\in\re(X)$ for all
    $\mu,\nu$. And this is satisfied if the operator $g_{\mu\nu}(q)$
    is the norm limit in $\cb(\ch^{m-|\nu|},\ch^{|\mu|-m-1})$ of a
    sequence of operators $g^k_{\mu\nu}(q)$ with
    $g^k_{\mu\nu}\in\ce(X)$.  }\end{remark}

In the rest of this section we consider only potentials that have a
simpler $N$-body type structure, as explained in Subsection
\ref{page:nbody} (page \pageref{page:nbody}), and we shall prove 
Theorems \ref{th:ess3-intro} and
\ref{th:ess4-intro}.  We will be able to cover a large class of such
interactions by using a more explicit description of the algebras
$\maCXY\rtimes X$ that we describe now.

Observe first that if $Z$ is a supplement of $Y$ in $X$, so $Z$ is a
linear subspace of $X$ such that $Y\cap Z=\{0\}$ and $Y+Z=X$, then:
\begin{equation}\label{eq:yz}
  \maCXY\rtimes X = C^*(Y) \otimes \rs(Z) \quad\text{relatively to }
  L^2(X)=L^2(Y)\otimes L^2(Z) .
\end{equation}
Indeed, $\maCXY\rtimes X$ is the norm closed subspace generated by the
operators of the form $u(q)v(p)$ with $u\in \maCXY$ and $v(p)\in
C^*(X)$. But once $Z$ is chosen, we may identify $\maCXY=1\otimes
\maC(\overline{Z})$ and $C^*(X)=C^*(Y)\otimes C^*(Z)$, and hence
\eqref{eq:yz}. Of course, this is a particular case of the relation
\eqref{eq:tensor} from Remark \ref{re:tcare}.

It is useful to express \eqref{eq:yz} in an intrinsic way, independent
of the choice of $Z$. This is in fact an extension of Theorem
\ref{th:sphalg} to the present setting.

Observe first that if $A$ is a bounded operator on $L^2(X)$ and
$[A,T_y]=0$ for all $y\in Y$, then $T^*_xAT_x$ depends only on the
class $z=\pi_Y(x)$ of $x$ in $X/Y$. Thus we have an action $\tau$ of
$X/Y$ on the set of operators $A$ in the commutant of $\{T_y\}_{y\in
  Y}$ such that $\tau_z(A)=T^*_xAT_x$ if $\pi_Y(x)=z$.  Later on we
shall keep the notation $\tau_a(A)=T^*_aAT_a$ for $a\in X/Y$ since the
correct interpretation should be clear from the context.

\begin{theorem}\label{th:sphalgY} The set
  $\maCXY\rtimes X$ consists of the operators $A \in \rb(X)$ that have
  the position-momentum limit property and are such that
 \begin{enumerate}[(i)]
\item $[A,T_y]=0$ for all $y\in Y$,
\item for each $\alpha\in\SS_{X/Y}$ the limit $\slim \tau_a(A)^{(*)}$
  with $a\to\alpha$ in $X/Y$ exists.
\end{enumerate}
\end{theorem}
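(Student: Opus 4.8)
The plan is to imitate the proof of Theorem \ref{th:sphalg}, using the abstract characterization from \cite[Theorem 3.7]{GI3} of those $C^*$-subalgebras of $\cbu(X)\rtimes X$ that are crossed products. First I would let $\ra$ be the set of operators $A\in\rb(X)$ satisfying the position-momentum limit property together with conditions (i) and (ii) of the statement. One checks directly from the concrete realization of the crossed product that $\maCXY\rtimes X\subset\ra$: on a generator $A=u(q)v(p)$ with $u\in\maCXY$ and $v\in\maC_0(X^*)$, condition (i) holds because $u$ is $Y$-invariant and $v(p)$ commutes with all $T_y$, while condition (ii) follows from the uniform radial limit property \eqref{eq:cx3} applied to $u$ viewed as a function on $X/Y$, giving $\slim_{a\to\alpha}\tau_a(A)=u(\pi_Y^{-1}(\alpha)\text{-limit})\,v(p)$. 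So the content is the reverse inclusion $\ra\subset\maCXY\rtimes X$.

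For the reverse inclusion I would first verify that $\ra$ is a $C^*$-algebra stable under left multiplication by $T_x$ and under conjugation by $M_k$; the commutation identities $T_a^*T_xAT_a=T_xT_a^*AT_a$ and $T_a^*M_kAM_k^*T_a=M_kT_a^*AT_aM_k^*$ (exactly as in the proof of Theorem \ref{th:sphalg}) show conditions (i) and (ii) are preserved, and the position-momentum property is preserved as well. Hence by \cite[Theorem 3.7]{GI3}, $\ra=\ca\rtimes X$ for the unique translation invariant $C^*$-subalgebra $\ca\subset\cbu(X)$ consisting of those $u\in\cbu(X)$ such that $u(q)v(p),\bar u(q)v(p)\in\ra$ for all $v\in\maC_0(X^*)$. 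Condition (i) applied to $u(q)v(p)$ forces $u$ to be $Y$-invariant, i.e.\ $u=u'\circ\pi_Y$ with $u'\in\cbu(X/Y)$; then condition (ii), combined with the fact that $T_a^*u(q)^{(*)}T_a=u^{(*)}(q+a)$ are normal, uniformly bounded, and that the ranges of the $v(p)$ are total in $L^2(X)$, says precisely that $\slim_{a\to\alpha}u'(q+a)$ exists in $L^2(X/Y)$ for every $\alpha\in\SS_{X/Y}$. As in the proof of Theorem \ref{th:sphalg}, since the filter $\tilde\alpha$ on $X/Y$ is translation invariant this strong limit is a constant $c_\alpha$, and Lemma \ref{lm:help} applied to $u'-c_\alpha$ on $X/Y$ gives $\lim_\alpha u'=c_\alpha$; Lemma \ref{lm:ch} then identifies $\ca$ with $\{u'\circ\pi_Y : u'\in\maC(\overline{X/Y})\}=\maCXY$. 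Therefore $\ra=\maCXY\rtimes X$.

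The main obstacle is bookkeeping the passage between $X$ and $X/Y$: one must be careful that the hypotheses stated on operators on $L^2(X)$ (which carries a tensor factor $L^2(Y)$ that is inert under the relevant dynamics by condition (i)) translate correctly into statements about operators on $L^2(X/Y)$, so that Lemmas \ref{lm:help} and \ref{lm:ch}, stated for a vector space, can be applied with that vector space taken to be $X/Y$. Concretely, after fixing a supplement $Z$ of $Y$ one has the identification $\maCXY\rtimes X\simeq C^*(Y)\otimes\rs(Z)$ of \eqref{eq:yz}, and the argument above is really Theorem \ref{th:sphalg} for $Z\cong X/Y$ tensored with the trivial action on the $C^*(Y)$ factor; the only genuine check is that condition (i) is exactly what isolates this tensor structure at the level of $\ca$, which is immediate. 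Everything else is a routine transcription of the proof of Theorem \ref{th:sphalg}.
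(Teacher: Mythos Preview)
Your proposal is correct and follows the same overall strategy as the paper: reduce to \cite[Theorem 3.7]{GI3} and identify the resulting abelian algebra $\ca$ as $\maCXY$ by combining condition (i) (which forces $Y$-invariance of $u$) with condition (ii) and Lemma \ref{lm:help}.

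The one noteworthy difference is in how the ``bookkeeping between $X$ and $X/Y$'' that you flag as the main obstacle is handled. You propose fixing a supplement $Z$ of $Y$ and working on $L^2(Z)\cong L^2(X/Y)$ via the tensor decomposition \eqref{eq:yz}, so that Lemmas \ref{lm:help} and \ref{lm:ch} can be applied on $X/Y$. The paper instead stays entirely on $X$: it pulls the filter $\tilde\alpha$ on $X/Y$ back to a translation invariant filter $\check\alpha:=\pi_Y^{-1}(\tilde\alpha)$ on $X$, observes that condition (ii) is exactly the existence of $\slim_{x\to\check\alpha}T_x^*AT_x$, and then invokes Lemma \ref{lm:help} on $X$ with $\xi=\check\alpha$ (recall that lemma is stated for an arbitrary translation invariant filter). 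This sidesteps the supplement choice and the tensor bookkeeping altogether; once $u$ is known to be $Y$-invariant from condition (i), the identity $\lim_{\check\alpha}u=\lim_{\tilde\alpha}u'$ for $u=u'\circ\pi_Y$ is immediate. Both routes are valid, but the pulled-back-filter device is the cleaner way to dispose of exactly the difficulty you identify.
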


\begin{proof}
  Let $\check{\alpha}=\pi_Y^{-1}(\tilde\alpha)$ be the inverse image
  of the filter $\tilde\alpha$ through the map $\pi_Y$, i.e.\ the set
  of subsets of $X$ of the form $\pi_Y^{-1}(F)$ with
  $F\in\tilde\alpha$. This is a translation invariant filter of
  subsets of $X$ and, if $f$ is a function defined on $X/Y$ with values
  in a topological space $\cb$, then $\lim_{z\to\alpha}f(z)=b$ if, and
  only if, $\lim_{x\to\check\alpha}f\circ\pi_Y(x)=b$. It is then clear
  that the condition (ii) above is equivalent to the fact that
  $\slim_{x\to\check\alpha} T_x^*AT_x$ exists for each
  $\alpha\in\SS_{X/Y}$. Now the proof is essentially a repetition of
  the proof of Theorem \ref{th:sphalg}, the filter $\tilde\alpha$ on
  $X/Y$ being replaced by the translation invariant filter
  $\check\alpha$ on $X$.
\end{proof}

There is no simple analogue of Theorem \ref{th:hconj} in the present
context, but one can extend Proposition \ref{pr:ess1} and Theorem
\ref{th:ess2}. Indeed, both Theorems \ref{th:ess3-intro} and
\ref{th:ess4-intro} follow from Theorems \ref{th:recall0},
\ref{th:recall} and \ref{th:sphalgY}. To prove Theorem
\ref{th:ess4-intro} for example, let us set
$\jap{p}=(1+|p|^2)^{1/2}$. Since we have $1+h(p)\sim\jap{p}^{2s}$, it
suffices to prove that for each $Y$ the operator
$\jap{p}^{-2s}V_Y\jap{p}^{-s}$ is in $\maCXY\rtimes X$.  This clearly
follows from Theorem \ref{th:sphalgY}.

\def\cprime{$'$} \def\cprime{$'$}

\end{document}